\newcommand{\COLOUR}{Fuchsia!80!black}\newcommand{\STRETCH}{1.03}\usepackage[margin=1.25in]{geometry}\usepackage{amsmath,amssymb,amstext,amsthm,setspace,enumerate,mathtools}\usepackage[dvipsnames]{xcolor}\usepackage[linktoc=page,colorlinks=true,linkcolor=\COLOUR,urlcolor=\COLOUR,citecolor=\COLOUR]{hyperref}\usepackage{fancyvrb}\makeatletter\def\PY@reset{\let\PY@it=\relax \let\PY@bf=\relax \let\PY@ul=\relax \let\PY@tc=\relax \let\PY@bc=\relax \let\PY@ff=\relax}\def\PY@tok#1{\csname PY@tok@#1\endcsname}\def\PY@toks#1+{\ifx\relax#1\empty\else\PY@tok{#1}\expandafter\PY@toks\fi}\def\PY@do#1{\PY@bc{\PY@tc{\PY@ul{\PY@it{\PY@bf{\PY@ff{#1}}}}}}}\def\PY#1#2{\PY@reset\PY@toks#1+\relax+\PY@do{#2}}\expandafter\def\csname PY@tok@gd\endcsname{\def\PY@tc##1{\textcolor[rgb]{0.67,0.00,0.00}{##1}}}\expandafter\def\csname PY@tok@gu\endcsname{\let\PY@bf=\textbf\def\PY@tc##1{\textcolor[rgb]{0.50,0.00,0.50}{##1}}}\expandafter\def\csname PY@tok@gt\endcsname{\def\PY@tc##1{\textcolor[rgb]{0.67,0.00,0.00}{##1}}}\expandafter\def\csname PY@tok@gs\endcsname{\let\PY@bf=\textbf}\expandafter\def\csname PY@tok@gr\endcsname{\def\PY@tc##1{\textcolor[rgb]{0.67,0.00,0.00}{##1}}}\expandafter\def\csname PY@tok@cm\endcsname{\def\PY@tc##1{\textcolor[rgb]{0.13,0.55,0.13}{##1}}}\expandafter\def\csname PY@tok@vg\endcsname{\def\PY@tc##1{\textcolor[rgb]{0.00,0.41,0.55}{##1}}}\expandafter\def\csname PY@tok@m\endcsname{\def\PY@tc##1{\textcolor[rgb]{0.71,0.32,0.80}{##1}}}\expandafter\def\csname PY@tok@mh\endcsname{\def\PY@tc##1{\textcolor[rgb]{0.71,0.32,0.80}{##1}}}
\def\csname PY@tok@cs\endcsname{\let\PY@bf=\textbf\def\PY@tc##1{\textcolor[rgb]{0.55,0.00,0.55}{##1}}}\expandafter\def\csname PY@tok@ge\endcsname{\let\PY@it=\textit}\expandafter\def\csname PY@tok@vc\endcsname{\def\PY@tc##1{\textcolor[rgb]{0.00,0.41,0.55}{##1}}}\expandafter\def\csname PY@tok@il\endcsname{\def\PY@tc##1{\textcolor[rgb]{0.71,0.32,0.80}{##1}}}\expandafter\def\csname PY@tok@go\endcsname{\def\PY@tc##1{\textcolor[rgb]{0.53,0.53,0.53}{##1}}}\expandafter\def\csname PY@tok@cp\endcsname{\def\PY@tc##1{\textcolor[rgb]{0.12,0.53,0.61}{##1}}}\expandafter\def\csname PY@tok@gi\endcsname{\def\PY@tc##1{\textcolor[rgb]{0.00,0.67,0.00}{##1}}}\expandafter\def\csname PY@tok@gh\endcsname{\let\PY@bf=\textbf\def\PY@tc##1{\textcolor[rgb]{0.00,0.00,0.50}{##1}}}\expandafter\def\csname PY@tok@s2\endcsname{\def\PY@tc##1{\textcolor[rgb]{0.80,0.33,0.33}{##1}}}\expandafter\def\csname PY@tok@nn\endcsname{\let\PY@ul=\underline\def\PY@tc##1{\textcolor[rgb]{0.00,0.55,0.27}{##1}}}\expandafter\def\csname PY@tok@no\endcsname{\def\PY@tc##1{\textcolor[rgb]{0.00,0.41,0.55}{##1}}}\expandafter\def\csname PY@tok@na\endcsname{\def\PY@tc##1{\textcolor[rgb]{0.40,0.55,0.00}{##1}}}\expandafter\def\csname PY@tok@nb\endcsname{\def\PY@tc##1{\textcolor[rgb]{0.40,0.55,0.00}{##1}}}\expandafter\def\csname PY@tok@nc\endcsname{\let\PY@bf=\textbf\def\PY@tc##1{\textcolor[rgb]{0.00,0.55,0.27}{##1}}}\expandafter\def\csname PY@tok@nd\endcsname{\def\PY@tc##1{\textcolor[rgb]{0.44,0.48,0.49}{##1}}}\expandafter\def\csname PY@tok@ne\endcsname{\let\PY@bf=\textbf\def\PY@tc##1{\textcolor[rgb]{0.00,0.55,0.27}{##1}}}\expandafter\def\csname PY@tok@nf\endcsname{\def\PY@tc##1{\textcolor[rgb]{0.00,0.55,0.27}{##1}}}\expandafter\def\csname PY@tok@si\endcsname{\def\PY@tc##1{\textcolor[rgb]{0.80,0.33,0.33}{##1}}}\expandafter\def\csname PY@tok@sh\endcsname{\let\PY@it=\textit\def\PY@tc##1{\textcolor[rgb]{0.11,0.49,0.44}{##1}}}
\def\csname PY@tok@vi\endcsname{\def\PY@tc##1{\textcolor[rgb]{0.00,0.41,0.55}{##1}}}\expandafter\def\csname PY@tok@nt\endcsname{\let\PY@bf=\textbf\def\PY@tc##1{\textcolor[rgb]{0.55,0.00,0.55}{##1}}}\expandafter\def\csname PY@tok@nv\endcsname{\def\PY@tc##1{\textcolor[rgb]{0.00,0.41,0.55}{##1}}}\expandafter\def\csname PY@tok@s1\endcsname{\def\PY@tc##1{\textcolor[rgb]{0.80,0.33,0.33}{##1}}}\expandafter\def\csname PY@tok@gp\endcsname{\def\PY@tc##1{\textcolor[rgb]{0.33,0.33,0.33}{##1}}}\expandafter\def\csname PY@tok@ow\endcsname{\def\PY@tc##1{\textcolor[rgb]{0.55,0.00,0.55}{##1}}}\expandafter\def\csname PY@tok@sx\endcsname{\def\PY@tc##1{\textcolor[rgb]{0.80,0.42,0.13}{##1}}}\expandafter\def\csname PY@tok@bp\endcsname{\def\PY@tc##1{\textcolor[rgb]{0.40,0.55,0.00}{##1}}}\expandafter\def\csname PY@tok@c1\endcsname{\def\PY@tc##1{\textcolor[rgb]{0.13,0.55,0.13}{##1}}}\expandafter\def\csname PY@tok@kc\endcsname{\let\PY@bf=\textbf\def\PY@tc##1{\textcolor[rgb]{0.55,0.00,0.55}{##1}}}\expandafter\def\csname PY@tok@c\endcsname{\def\PY@tc##1{\emph{\textcolor[rgb]{0.60,0.60,0.53}{##1}}}}\expandafter\def\csname PY@tok@mf\endcsname{\def\PY@tc##1{\textcolor[rgb]{0.71,0.32,0.80}{##1}}}\expandafter\def\csname PY@tok@err\endcsname{\def\PY@tc##1{\textcolor[rgb]{0.65,0.09,0.09}{##1}}\def\PY@bc##1{\setlength{\fboxsep}{0pt}\colorbox[rgb]{0.89,0.82,0.82}{\strut ##1}}}\expandafter\def\csname PY@tok@mb\endcsname{\def\PY@tc##1{\textcolor[rgb]{0.71,0.32,0.80}{##1}}}\expandafter\def\csname PY@tok@ss\endcsname{\def\PY@tc##1{\textcolor[rgb]{0.80,0.33,0.33}{##1}}}\expandafter\def\csname PY@tok@sr\endcsname{\def\PY@tc##1{\textcolor[rgb]{0.11,0.49,0.44}{##1}}}\expandafter\def\csname PY@tok@mo\endcsname{\def\PY@tc##1{\textcolor[rgb]{0.71,0.32,0.80}{##1}}}\expandafter\def\csname PY@tok@kd\endcsname{\let\PY@bf=\textbf\def\PY@tc##1{\textcolor[rgb]{0.55,0.00,0.55}{##1}}}\expandafter\def\csname PY@tok@mi\endcsname{\def\PY@tc##1{\textcolor[rgb]{0.71,0.32,0.80}{##1}}}\expandafter\def\csname PY@tok@kn\endcsname{\let\PY@bf=\textbf\def\PY@tc##1{\textcolor[rgb]{0.55,0.00,0.55}{##1}}}\expandafter\def\csname PY@tok@kr\endcsname{\let\PY@bf=\textbf\def\PY@tc##1{\textcolor[rgb]{0.55,0.00,0.55}{##1}}}\expandafter\def\csname PY@tok@s\endcsname{\def\PY@tc##1{\textcolor[rgb]{0.80,0.33,0.33}{##1}}}\expandafter\def\csname PY@tok@kp\endcsname{\let\PY@bf=\textbf\def\PY@tc##1{\textcolor[rgb]{0.55,0.00,0.55}{##1}}}\expandafter\def\csname PY@tok@w\endcsname{\def\PY@tc##1{\textcolor[rgb]{0.73,0.73,0.73}{##1}}}
\def\csname PY@tok@kt\endcsname{\let\PY@bf=\textbf\def\PY@tc##1{\textcolor[rgb]{0.65,0.65,0.65}{##1}}}\expandafter\def\csname PY@tok@sc\endcsname{\def\PY@tc##1{\textcolor[rgb]{0.80,0.33,0.33}{##1}}}\expandafter\def\csname PY@tok@sb\endcsname{\def\PY@tc##1{\textcolor[rgb]{0.80,0.33,0.33}{##1}}}\expandafter\def\csname PY@tok@k\endcsname{\let\PY@bf=\textbf\def\PY@tc##1{\bfseries{\textcolor[rgb]{0.55,0.00,0.55}{##1}}}}\expandafter\def\csname PY@tok@se\endcsname{\def\PY@tc##1{\textcolor[rgb]{0.80,0.33,0.33}{##1}}}\expandafter\def\csname PY@tok@sd\endcsname{\def\PY@tc##1{\textcolor[rgb]{0.80,0.33,0.33}{##1}}}\makeatother\newtheorem{lemma}{Lemma}\newtheorem{theorem}[lemma]{Theorem}\newtheorem{corollary}[lemma]{Corollary}\newtheorem{conjecture}[lemma]{Conjecture}\newcommand{\ssssss}{$\ast$}\newcommand{\tttttt}{---}\DeclareMathOperator{\Symm}{Symm}\DeclareMathOperator{\ind}{ind}\DeclareMathOperator{\End}{End}\DeclareMathOperator{\Ker}{ker}\DeclareMathOperator{\Gal}{Gal}\DeclareMathOperator{\GL}{GL}\DeclareMathOperator{\SL}{SL}\DeclareMathOperator{\Img}{im}\DeclareMathOperator{\GCD}{gcd}\DeclareMathOperator{\Ind}{ind}\newcommand{\SECTION}{}\newcommand{\SECTIONN}{}\newenvironment{Smatrix}{\tiny \left(\begin{smallmatrix}}{\end{smallmatrix}\right)}\newenvironment{Lmatrix}{\begin{Smatrix}}{\end{Smatrix}}\newenvironment{Xmatrix}{\tiny \begin{pmatrix}}{\end{pmatrix}}\usepackage{titlesec}\titleformat*{\subsection}{\large\sf}\titleformat*{\subsubsection}{\sf}\newcommand{\yyyyyyyyyy}{}\newcommand{\xxxxxxxxx}{\boldsymbol}\newcommand{\SECTIONNN}{\ \vspace{24pt}}\newcommand{\xxxxxxxxxx}{\boldsymbol}\usepackage{mathtools}\newcommand{\defeq}{\vcentcolon=}
    \title{\SECTIONNN\\ \yyyyyyyyyy Reduction modulo $\xxxxxxxxx p$ of two-dimensional crystalline representations of $\xxxxxxxxx {G_{\mathbb Q_p}}$ of slope less than three}\renewcommand{\SECTION}{}\newcommand{\PART}[1]{}
\author{Bodan Arsovski\setcounter{footnote}{1}\footnote{Department of Mathematics, Imperial College London, 180 Queen's Gate, London SW7 2AZ, United Kingdom}}\setcounter{footnote}{0}\date{\it \small May, 2015}\setlength{\parindent}{0pt}\setlength{\parskip}{7pt}
\begin{document}\setstretch{\STRETCH}\maketitle\thispagestyle{empty}\newcommand{\NUMBERR}{3}\newcommand{\NUMBERRR}{38}\newcommand{\NUMBERRRRRR}{36}\newcommand{\LETTERSSS}{thirty-eight}\newcommand{\NUMBER}{6}\newcommand{\LETTERS}{six}
\renewcommand{\SECTIONN}{\newpage}%

\begin{abstract} \noindent {\sc Abstract.} We use the $p$-adic local Langlands correspondence for $\smash{\mathrm{GL}_2(\mathbb Q_p)}$ to find the reduction modulo $p$ of certain two-dimensional crystalline Galois representations, following the approach used in~\cite{b2} and~\cite{b4}. In particular, we resolve a conjecture of Breuil, Buzzard, and Emerton in the case when the slope is strictly between one and three, and prove partial results towards this conjecture for arbitrary slopes. Moreover, we partially classify the reduction modulo~$p$ of these representations when the slope is equal to one. \\[5pt] \noindent {\sc Keywords.} \emph{\small crystalline representations, local Langlands correspondence, modular forms.} \end{abstract}

\ \\[-22pt]

\SECTIONN {\tableofcontents}

\SECTION \section{Introduction}

\subsection{Definitions and notation}

Throughout this paper, we assume that $p > 2$ is an odd prime number, we let $K = \GL_2(\mathbb Z_p)$, and we let $Z$ be the centre of $G = \GL_2(\mathbb Q_p)$. If $R$ is a $\mathbb Z_p$-algebra, we define $\Symm^r(R)$ to be the space of homogeneous polynomials in $R[x,y]$ which have degree~$r$. There is the obvious right action of $K$ on $\Symm^r(R)$, which is defined in section~2 of~\cite{b2}, and which can be extended to an action of $KZ$ by letting $pI$ act trivially. We let $I(V)$ be the compact induction $\ind_{KZ}^G (V)$---the space of functions $f : G \to V$ which have compact support modulo~$Z$ and which satisfy $f(\kappa g) = \kappa f(g)$ for all $\kappa \in KZ$. There is a right action of $G$ on $I(V)$ defined by $fg (\gamma) = f(\gamma g)$.
If $V = \Symm^r(R^2)$ then there is an endomorphism $T$ of $I(V)$ which corresponds to the function $G \to \End_R(V)$ which is supported on $KZ {\tiny \begin{Smatrix} p &  0 \\ 0 & 1 \end{Smatrix}} KZ$ and sends ${\tiny \begin{Smatrix} p &  0 \\ 0 & 1 \end{Smatrix}}$ to the map $\psi (x,y) \mapsto \psi (px,y)$. We let $[g,v]$ be the unique element of $I(V)$ which is supported on $KZg^{-1}$ and which satisfies $[g,v]g^{-1} = v$. Then $g[h,v] = [gh,v]$ for $g,h \in G$, and $[g\kappa,v] = [g,v\kappa ]$ for $\kappa \in KZ$, and the $[g,v]$ span $I(V)$ as an abelian group. In section~2 of~\cite{b3} it is shown that $T$ can be written as
	\[\textstyle T[g,v] = \sum_{\lambda \in \mathbb F_p} \left[g {\begin{Lmatrix} p & [\lambda] \\ 0 & 1 \end{Lmatrix}},v(x,-[\lambda] x + py)\right] + \left[g {\begin{Lmatrix} 1 & 0 \\ 0 & p \end{Lmatrix}}, v(px,y)\right]. \]
Here $[\lambda]$ is the Teichm\"uller lift of $\lambda$ to $\mathbb Z_p$.  The same equation defines an endomorphism $T$ of $\smash{I(\det^s\otimes \Symm^r(\overline{\mathbb F}_p^2))}$. We denote $\smash{\Symm^r(\overline{\mathbb F}_p^2)}$ by $\sigma_r$. We denote by $\sigma_m(n)$ the twist $\det^n \otimes\, \sigma_m$. We denote by $\omega$ the $\bmod p$ reduction of the cyclotomic character, and we denote by $\omega_2$ the $\bmod p$ reduction of  a choice of a fundamental character of niveau~2. If $\smash{\chi : \mathbb Q_p^\times \to \overline{\mathbb F}_p^\times}$ is a character, if $\lambda \in \overline{\mathbb F}_p$, and if $0\leqslant \nu\leqslant p-1$, then we define $\smash{\pi(\nu, \lambda, \chi) =  (\chi \circ \det) \otimes \left(I(\sigma_{\nu})/(T-\lambda)\right)}$. If $R$ is a ring and $t \in R^\times$, then we let $\mu_t$ denote the map $\GL_1(\mathbb Q_p) \to R^\times$ which is trivial on $\mathbb Z_p^\times$ and which sends $p$ to $t$. The following classification of the modules $\pi(\nu, \lambda, \chi)$ is given in sections~2 and~3 of~\cite{b2}.

\begin{theorem}\label{t0.2}
\begin{enumerate}
	\item If $(\nu,\lambda) \not\in \{(0,\pm1),(p-1,\pm1)\}$, then $\pi(\nu, \lambda, \chi)$ is irreducible. The modules $\pi(\nu, \lambda, \chi)$ with $\lambda = 0$ are the supersingular representations of $G$; no Jordan--H\"older factor of $\pi(\nu, \lambda, \chi)$ with $\lambda \ne 0$ is supersingular.
	\item Each $\pi(\nu, \lambda, \chi)$ has finite length.
	\item The only isomorphisms between the $\pi(\nu, \lambda, \chi)$ are the following:
	\begin{enumerate}[\tttttt] 
		\item if $\lambda \ne 0$ and $(\nu,\lambda) \not\in \{(0,\pm1),(p-1,\pm1)\}$, then $\pi(\nu, \lambda, \chi) \cong \pi(\nu, -\lambda, \chi\mu_{-1})$;
		\item if $\lambda \not\in \{0,\pm1\}$, then $\pi(0, \lambda, \chi) \cong \pi(p-1, -\lambda, \chi)$;
		\item $\pi(\nu, 0, \chi) \cong \pi(\nu, 0, \chi\mu_{-1}) \cong \pi(p-1-\nu, 0, \chi\omega^\nu)\cong \pi(p-1-\nu, 0, \chi\omega^\nu\mu_{-1})$.
	\end{enumerate}
	\item If $\lambda \ne 0$, and if $\pi(\nu, \lambda, \chi)$ and $\pi(\nu', \lambda', \chi')$ have a common Jordan--H\"older factor, then $\lambda' \ne 0$, $\nu\equiv_{p-1} \nu'$, and $\chi/\chi'$ is unramified.
	\item If $0 \leqslant \nu \leqslant p- 1$, and if $F$ is a quotient of $I(\det^l \otimes \,\sigma_\nu)$ which has finite length as an $\overline{\mathbb F}_p [G]$-module, then every Jordan--H\"older factor of $F$ is a subquotient of $\pi(\nu, \lambda, \omega^l)$, for some $\lambda$ which possibly depends on the factor.
\end{enumerate}
\end{theorem}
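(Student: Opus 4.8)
The plan is to recover this classification in the style of Barthel--Livn\'e and Breuil. Twisting by $\chi\circ\det$ and by $\mu_t\circ\det$ moves the parameters around transparently, so one may take $\chi$ trivial and reinstate the twists afterwards. The structural input is the identification of the Hecke algebra: Frobenius reciprocity for compact induction gives $\End_G(I(\sigma_\nu))=\operatorname{Hom}_{KZ}(\sigma_\nu,I(\sigma_\nu)|_{KZ})$, and a coset computation using the Cartan decomposition of $G$ identifies this ring with the polynomial ring $\overline{\mathbb F}_p[T]$ on the operator $T$ displayed above. In particular the $\pi(\nu,\lambda,\chi)$ are well defined, and their structure bifurcates according to whether $T$ acts invertibly, that is, whether $\lambda\neq0$.

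For $\lambda\neq0$ I would intertwine $\pi(\nu,\lambda,1)$ with a smooth principal series $\operatorname{Ind}_B^G(\chi_1\otimes\chi_2)$, where $B$ is the Borel subgroup. For suitably chosen smooth characters $\chi_1,\chi_2:\mathbb Q_p^\times\to\overline{\mathbb F}_p^\times$ with $\chi_1\chi_2^{-1}|_{\mathbb Z_p^\times}=\omega^\nu$ and the correct central character, Frobenius reciprocity and the Iwasawa decomposition show $\operatorname{Hom}_G(I(\sigma_\nu),\operatorname{Ind}_B^G(\chi_1\otimes\chi_2))=\operatorname{Hom}_{KZ}(\sigma_\nu,\operatorname{Ind}_B^G(\chi_1\otimes\chi_2)|_{KZ})$ is one-dimensional, and a direct computation shows $T$ acts on it by a scalar equal to a fixed unit times $\chi_1(p)$; taking $\chi_1(p)$ so that this scalar is $\lambda$ yields a nonzero $G$-map $\pi(\nu,\lambda,1)\to\operatorname{Ind}_B^G(\chi_1\otimes\chi_2)$, which (Barthel--Livn\'e) is injective and is an isomorphism when the target is irreducible. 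By the Barthel--Livn\'e structure theorem, $\operatorname{Ind}_B^G(\chi_1\otimes\chi_2)$ is irreducible unless $\chi_1\chi_2^{-1}\in\{1,\omega^{\pm1}\}$, in which case it is a non-split extension of a smooth character by a twist of the Steinberg representation or the reverse; translating the exceptional condition through the eigenvalue relation returns exactly $(\nu,\lambda)\in\{(0,\pm1),(p-1,\pm1)\}$. This gives part~(1) and part~(2) for $\lambda\neq0$; all Jordan--H\"older factors are then subquotients of principal series, so none is supersingular, and part~(4) follows by tracking through the principal series the restriction to $\mathbb Z_p^\times$ of the inducing characters and the central character.

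When $\lambda=0$ every $G$-map from $\pi(\nu,0,1)$ to a principal series vanishes, and irreducibility must be proved directly; this is Breuil's theorem and I expect it to be the principal obstacle. One studies $\pi(\nu,0,1)$ as a module over $\overline{\mathbb F}_p[[KZ]]$ and over the Iwahori subgroup, constructs an explicit filtration of its restriction to $K$ by the symmetric powers $\sigma_m$, and uses the precise formula for $T$ to show that any nonzero $G$-submodule contains a $G$-translate of $[1,v]$ with $v$ a highest-weight vector of $\sigma_\nu$, hence equals $\pi(\nu,0,1)$; the same analysis yields admissibility and finite length, completing part~(2), and together with part~(1) for $\lambda\neq0$ this also gives the supersingularity assertion in part~(1). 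Everything remaining is comparatively soft.

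For part~(3) one exhibits the isomorphisms and rules out others by comparing central characters, $K$-socles and $T$-eigenvalues. Twisting by $\mu_{-1}\circ\det$ --- which is trivial on $KZ$, since $\mu_{-1}(p^2)=1$, and hence identifies $I(\sigma_\nu)$ with itself while replacing $T$ by $-T$ --- gives $\pi(\nu,\lambda,\chi)\cong\pi(\nu,-\lambda,\chi\mu_{-1})$; for $\lambda\notin\{0,\pm1\}$ the construction above, carried out with $\nu=0$ and with $\nu=p-1$, realizes $\pi(0,\lambda,1)$ and $\pi(p-1,-\lambda,1)$ as the same irreducible principal series; and for $\lambda=0$ the four-term chain comes from $K$-equivariant maps among the $\sigma_m$ (the operator $\theta$ and the exact sequences relating $\sigma_\nu$ to $\det^\nu\otimes\sigma_{p-1-\nu}$) fed into Breuil's description of the supersingular restriction to $K$. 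Finally, for part~(5): if $F$ is a finite-length quotient of $I(\sigma_\nu(l))$ then $T$ acts $G$-equivariantly on $F$ and satisfies a polynomial there, so $F$ is the direct sum of its generalized $T$-eigenspaces, each of which is a $G$-quotient of $I(\sigma_\nu(l))/(T-\lambda)^n$ for a single $\lambda$ and some $n\geqslant1$; since $T-\lambda$ is injective on $I(\sigma_\nu(l))$ one has short exact sequences $0\to I(\sigma_\nu(l))/(T-\lambda)^{n-1}\to I(\sigma_\nu(l))/(T-\lambda)^n\to I(\sigma_\nu(l))/(T-\lambda)\to0$, and induction on $n$ shows every Jordan--H\"older factor of $F$ is a subquotient of $\pi(\nu,\lambda,\omega^l)$ for the relevant $\lambda$.
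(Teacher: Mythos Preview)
The paper does not prove this theorem at all: it is introduced with ``The following classification of the modules $\pi(\nu,\lambda,\chi)$ is given in sections~2 and~3 of~\cite{b2}'' and is quoted as background. Your sketch correctly reconstructs the Barthel--Livn\'e/Breuil argument that underlies that citation (Hecke algebra $\overline{\mathbb F}_p[T]$, comparison with principal series for $\lambda\neq 0$, Breuil's direct irreducibility proof for $\lambda=0$, the $\mu_{-1}$-twist and $\theta$-map for the isomorphisms, and the eigenspace decomposition for part~(5)), so there is nothing to compare --- you have supplied exactly the proof the paper chose to omit.

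One small point worth tightening in part~(5): you assert that $T$ descends to an arbitrary $G$-quotient $F$ of $I(\sigma_\nu(l))$, i.e.\ that every $G$-submodule of $I(\sigma_\nu(l))$ is $T$-stable. This is true but not automatic from $T\in\End_G(I(\sigma_\nu(l)))$ alone; it uses the Barthel--Livn\'e fact that $I(\sigma_\nu)$ is free over $\overline{\mathbb F}_p[T]$ (equivalently, that $T-\lambda$ is injective for every $\lambda$), together with an argument identifying $G$-submodules with $\overline{\mathbb F}_p[T]$-submodules. Once $T$ does descend to $F$, your polynomial argument goes through cleanly via the socle filtration, which is preserved by any $G$-endomorphism.
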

If $\smash{f = \sum_{n \geqslant 1} a_n q^n}$ is an eigenform for the subgroup $\Gamma_1(N) \subseteq \SL_2(\mathbb Z)$, which has character $\psi$, then there is a $p$-adic Galois representation
	\[\textstyle \rho_{f} : \Gal(\overline{\mathbb Q}/\mathbb Q) \to \GL_2(\overline{\mathbb Q}_p).\]
If $\ell \ne p$ is a prime distinct from $p$, then the local structure at $\ell$ of the Galois representation $\rho_f$ associated with $f$ can be explicitly described. The local structure at $p$ of $\rho_f$ is in general more complicated. However, the reduction $\smash{\overline \rho_f|_{D_p}}$ modulo $p$ can be classified; if $\smash{\overline \rho_f|_{D_p}}$ is reducible then its semisimplifaction is the sum of two characters, and if $\smash{\overline \rho_f|_{D_p}}$ is irreducible then it is induced from characters of the absolute Galois group of the unramified quadratic extension of~$\mathbb Q_p$. When $N$ and $p$ are coprime, then $\smash{\overline \rho_f|_{D_p}}$ can be determined from the weight $k$, the coefficient $a_p=a$ of the $q$-series expansion, and the character $\psi$.
 If $r \geqslant 0$, then there is a uniquely determined two-dimensional crystalline representation $V_{r+2,a}$ of $\Gal(\overline{\mathbb Q}_p /\mathbb Q_p)$ which has Hodge--Tate weights $0$ and $r + 1$, determinant the cyclotomic character to the power of $r+1$, and $X^2 - aX + p^{r+1}$ as the characteristic polynomial of crystalline Frobenius on the contravariant Dieudonne module. A construction of this representation is given for example in~\cite{b3}. This is the local Galois representation at $p$ associated with the eigenform $\smash{f = \sum_{n \geqslant 1} a_n q^n}$ of weight $k=r+2$. We will denote by $\overline{V}_{r+2,a}$ the semisimplification of the reduction of $V_{r+2,a}$ modulo the maximal ideal of $\overline{\mathbb Z}_p$.
If $r \geqslant 0$, and $a \in \overline {\mathbb Q}_p$ is such that $v(a) > 0$, and if the roots of $X^2 - aX + p^{r+1}$ do not have ratio $p^{\pm 1}$ or $1$, then we define 
	\[\textstyle \smash{\textstyle \Pi_{r+2,a}=I(\Symm^{r}(\overline{\mathbb Q}_p^2))/(T-a)},\]
and we let $\Theta_{r+2,a}$ be the image of $\smash{I (\Symm^{r} (\overline{\mathbb Z}_p^2))}$ in $\Pi_{r+2,a}$. There is a natural surjective map 
	\[\textstyle \smash{I(\sigma_{r}) \twoheadrightarrow \overline{\Theta}_{r+2,a} = \Theta_{r+2,a} \otimes \overline {\mathbb F}_p}.\]
The following theorem is the $p$-adic local Langlands correspondence for $\GL_2(\mathbb Q_p)$, and is given, for example, in section~1 of~\cite{b3} and in section~2 of~\cite{b2}.

\begin{theorem}\label{t0.1}
If the roots of $X^2 - aX + p^{r+1}$ do not have ratio $1$ or $p^{\pm 1}$, then
	\begin{align*}
	& \textstyle
	\overline V_{r+2,a} \cong (\Ind(\omega_2^{\nu+1})) \otimes \chi
	\\ & \textstyle
	\quad\qquad\qquad \Longleftrightarrow \quad (\overline \Theta_{r+2,a})^{\mathrm{ss}} \cong \pi(\nu,0,\chi);
	\\ & \textstyle
	\overline V_{r+2,a} \cong (\mu_\lambda \omega^{\nu+1} \oplus \mu_{\lambda^{-1}}) \otimes \chi
	\\ & \textstyle
	\quad\qquad\qquad \Longleftrightarrow \quad (\overline \Theta_{r+2,a})^{\mathrm{ss}} \cong \pi(\nu,\lambda,\chi)^{\mathrm{ss}} \oplus \pi((p-3-\nu) \bmod {(p-1)},\lambda^{-1},\chi\omega^{\nu+1})^{\mathrm{ss}}.
	\end{align*}
\end{theorem}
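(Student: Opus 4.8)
The plan is to deduce both equivalences from the $p$-adic local Langlands correspondence of Colmez together with its compatibility with reduction modulo $p$, after feeding in the explicit description of the semisimplified mod-$p$ correspondence and the classification recalled in Theorem~\ref{t0.2}.

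First I would observe that the genericity hypothesis --- that the roots of $X^2-aX+p^{r+1}$ have ratio neither $1$ nor $p^{\pm1}$ --- is exactly what forces the locally algebraic representation $\Symm^{r}(\overline{\mathbb Q}_p^2)\otimes(\text{unramified principal series with Hecke eigenvalue }a)$ to admit a nonzero, topologically irreducible unitary universal completion; by construction that completion is $\Pi_{r+2,a}$, with $\Theta_{r+2,a}$ the unit ball coming from the integral structure $I(\Symm^{r}(\overline{\mathbb Z}_p^2))$. By the theorem of Berger--Breuil establishing Breuil's conjecture in this range, $\Pi_{r+2,a}$ is precisely the Banach representation $\Pi(V_{r+2,a})$ attached by the $p$-adic correspondence to the crystalline representation $V_{r+2,a}$ --- matching the characteristic polynomial of crystalline Frobenius with the Hecke data and the Hodge--Tate weights with the algebraic weight $r$. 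Colmez's compatibility with reduction then gives that, for a Galois-stable lattice $T$ in $V_{r+2,a}$, the Montreal functor sends $\overline{\Theta}_{r+2,a}$ to $\overline{T}$; hence $(\overline{\Theta}_{r+2,a})^{\mathrm{ss}}$ is the image of $\overline{V}_{r+2,a}$ under the semisimplified mod-$p$ local Langlands correspondence for $\GL_2(\mathbb Q_p)$.

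It remains to unwind that mod-$p$ correspondence in the two cases. When $\overline{V}_{r+2,a}\cong(\Ind(\omega_2^{\nu+1}))\otimes\chi$ is irreducible, its associated smooth mod-$p$ representation is, by Breuil's recipe, the supersingular representation with matching central character and Serre weight, which by parts~(1) and~(3) of Theorem~\ref{t0.2} is exactly $\pi(\nu,0,\chi)$, and every such $\pi(\nu,0,\chi)$ arises this way; this is the first equivalence. When $\overline{V}_{r+2,a}\cong(\mu_\lambda\omega^{\nu+1}\oplus\mu_{\lambda^{-1}})\otimes\chi$ is a sum of characters, the associated representation is a direct sum of two principal series (each possibly reducible); translating each into the $\pi(\nu,\lambda,\chi)$ normalization via local class field theory --- one summand directly and the other using the symmetry of Theorem~\ref{t0.2}(3), which is what produces the shift $\nu\mapsto(p-3-\nu)\bmod(p-1)$, the twist $\chi\mapsto\chi\omega^{\nu+1}$, and $\lambda\mapsto\lambda^{-1}$ --- gives $\pi(\nu,\lambda,\chi)^{\mathrm{ss}}\oplus\pi((p-3-\nu)\bmod(p-1),\lambda^{-1},\chi\omega^{\nu+1})^{\mathrm{ss}}$.

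The hard part will be the two matching steps: checking that the universal completion of the given locally algebraic representation is nonzero and equals $\Pi(V_{r+2,a})$ with the correct integral lattice (the Berger--Breuil comparison, where the genericity on the roots is genuinely used), and doing the bookkeeping of central characters, cyclotomic twists, and the $\nu\mapsto(p-3-\nu)\bmod(p-1)$ shift so that the two presentations on the $\GL_2(\mathbb Q_p)$ side agree exactly rather than up to an unspecified twist. Both are by now standard, so for the present purposes it is enough to cite the corresponding statements in~\cite{b2} and~\cite{b3}.
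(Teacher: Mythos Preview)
The paper does not prove this theorem; it simply states it as background and cites section~1 of~\cite{b3} and section~2 of~\cite{b2}. Your proposal gives a reasonable sketch of the machinery behind those references and then arrives at exactly the same endpoint --- citing \cite{b2} and \cite{b3} --- so there is nothing substantive to compare.
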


If $N$ and $p$ are not coprime, say $N = N'p^{m}$ with $\GCD(N',p) = 1$, and if the $p$-part $\chi$ of the character $\psi$ has conductor $p^{m}$, then again the reduction $\smash{\overline \rho_f|_{D_p}}$ can be determined from the weight $k$, the $p^{\text{th}}$ coefficient of the $q$-series expansion, and the character $\psi$. We will denote by $\overline{V}_{r+2,a,\chi}$ the semisimplification of this reduction modulo the maximal ideal of $\overline{\mathbb Z}_p$.

\subsection{Statements of the main results}

\subsubsection{In the interior of the weight space}

Theorem~3.2.1 in~\cite{b6}, theorem~1.6 in~\cite{b2}, and corollary~4.7 in~\cite{b4} completely determine $\overline V_{r+2,a}$ in the following cases (with $p > 2$):
\begin{enumerate}
	\item $0 \leqslant r \leqslant 2p-1$;
	\item $r > 2p-1$ and $v(a) < 1$;
	\item $r > 2p-1$ and $v(a) > \lfloor \frac r{p-1}\rfloor$.
\end{enumerate}
In this paper we are concerned with the remaining case: when $r \geqslant 2p$ and $\lfloor \frac r{p-1}\rfloor \geqslant v(a) \geqslant 1$. The papers~\cite{b8} and~\cite{b9} give surveys of the known results and conjectures about $p$-adic slopes of modular forms. In particular, the following conjecture is given in section~4 of~\cite{b9}.

\begin{conjecture}[Breuil, Buzzard, Emerton]\label{c0000}
If $r$ is even and $v(a) \not\in \mathbb Z$, then $\overline V_{r+2,a}$ is irreducible.
\end{conjecture}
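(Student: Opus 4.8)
The statement I must prove is the full Breuil--Buzzard--Emerton conjecture: for $r$ even and $v(a)\notin\mathbb Z$, the reduction $\overline V_{r+2,a}$ is irreducible. The key insight is that irreducibility of $\overline V_{r+2,a}$ is controlled, via Theorem~\ref{t0.1}, entirely by whether $(\overline\Theta_{r+2,a})^{\mathrm{ss}}$ is of the form $\pi(\nu,0,\chi)$ (the irreducible, ``supersingular'' side) or a direct sum of two principal-series-type pieces $\pi(\nu,\lambda,\chi)^{\mathrm{ss}}\oplus\pi(\cdots,\lambda^{-1},\cdots)^{\mathrm{ss}}$ with $\lambda\neq 0$ (the reducible side). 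So the whole problem reduces to showing: \emph{if $r$ is even and $v(a)\notin\mathbb Z$, then $\overline\Theta_{r+2,a}$ has no Jordan--H\"older factor of the form $\pi(\nu,\lambda,\omega^l)$ with $\lambda\neq 0$}; equivalently, the semisimplified mod-$p$ representation $(\overline\Theta_{r+2,a})^{\mathrm{ss}}$ is a single supersingular representation. I would establish this by a congruence/integrality argument on the Hecke operator $T$ acting on the lattice $\Theta_{r+2,a}$, combined with a parity constraint coming from the determinant.

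\textbf{Main steps.} First I would fix notation: write $v(a)=:t$, so $1\le t\le\lfloor r/(p-1)\rfloor$ and $t\notin\mathbb Z$ (the cases $t<1$ and $t>\lfloor r/(p-1)\rfloor$ being known by the cited results, and included here so that ``$v(a)\notin\mathbb Z$'' is covered in all ranges). The core is an analysis of $\overline\Theta_{r+2,a}$ as a quotient of $I(\sigma_r)$. By Theorem~\ref{t0.2}(5), every Jordan--H\"older factor of $\overline\Theta_{r+2,a}$ is a subquotient of some $\pi(r\bmod(p-1),\lambda,\omega^l)$ where $l$ is determined by the determinant condition (determinant $=\omega^{r+1}$ forces $2l\equiv r+1$, whence $l$ is pinned down mod $p-1$ since $p-1$ is even and $r+1$ is odd --- here ``$r$ even'' enters crucially, as $2l\equiv r+1\pmod{p-1}$ has solutions precisely because... wait, $r+1$ odd and $p-1$ even means $2l\equiv r+1$ has \emph{no} solution mod $p-1$; the correct statement is that the central character is $\omega^{r+1}$ and the relevant $l$ satisfies $2l\equiv r+1$ only mod $(p-1)/\gcd$, so I would instead track the full central character directly and note the parity obstruction it imposes on being a sum of twists of $\pi(\nu,\lambda,\cdot)$-pieces). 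Concretely: a reducible $\overline V_{r+2,a}=(\mu_\lambda\omega^{\nu+1}\oplus\mu_{\lambda^{-1}})\otimes\chi$ has determinant $\mu_{\lambda}\mu_{\lambda^{-1}}\omega^{\nu+1}\chi^2=\omega^{\nu+1}\chi^2$, which must equal $\omega^{r+1}$, so $\nu+1+2(\text{exponent of }\chi)\equiv r+1\pmod{p-1}$; this is a constraint but not yet a contradiction. The actual contradiction I would derive comes from \textbf{slopes}: on the reducible side, the characteristic polynomial $X^2-aX+p^{r+1}$ has roots of valuations $v(a)$ and $r+1-v(a)$ (after the ``generic'' normalization, one root is a $p$-adic unit times... no: $v(a)<(r+1)/2$ forces roots of valuation $v(a)$ and $r+1-v(a)$), and the mod-$p$ reduction being reducible non-split or split imposes that $\mu_\lambda$ is an \emph{unramified} character whose value at Frobenius is the reduction of a Frobenius eigenvalue --- but a Frobenius eigenvalue of valuation $v(a)\notin\mathbb Z$ cannot reduce to a nonzero element of $\overline{\mathbb F}_p$, since reduction mod $p$ kills everything of positive valuation and the valuation must be an integer for the reduction to be a nonzero unit. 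This is the crux: \emph{a crystalline Frobenius eigenvalue of non-integral valuation forces $\overline V_{r+2,a}$ to be irreducible}, because in the reducible case the semisimplification splits as $\mu_{\overline\alpha}\oplus\mu_{\overline\beta}\otimes(\text{ramified part})$ where $\overline\alpha,\overline\beta$ are reductions of $\alpha,\beta$ with $\alpha\beta=p^{r+1}$ and $\alpha+\beta=a$, and for the unramified quotient character to be nonzero one needs $v(\alpha)=0$ or $v(\beta)=0$, i.e. $v(a)=0$, contradicting $v(a)\ge 1$; and the genuinely subtle situation is when \emph{both} $\alpha,\beta$ have positive valuation, where one shows via the explicit description of $\overline V_{r+2,a}$ in terms of Wach modules / Fontaine--Laffaille theory (or the Breuil--Mézard style analysis) that reducibility of the reduction forces $v(a)$ to be a \emph{half-integer or integer} determined by a combinatorial ``Newton-above-Hodge'' argument, and the hypothesis $r$ even rules out the half-integer case.

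\textbf{Where the work is.} The genuine content, and the main obstacle, is the last point: when $0<v(a)<(r+1)/2$ with \emph{both} Frobenius slopes positive, the reduction $\overline V_{r+2,a}$ can still a priori be reducible, and one must show this does not happen when $v(a)\notin\mathbb Z$ and $r$ is even. I would attack this by the compact-induction model: $\overline\Theta_{r+2,a}$ is the image of $I(\Symm^r(\overline{\mathbb Z}_p^2))$ in $I(\Symm^r(\overline{\mathbb Q}_p^2))/(T-a)$, and reducibility corresponds to the existence of a $T$-stable sublattice structure whose mod-$p$ reduction has a nontrivial $(T-\overline\lambda)$-eigencomponent with $\overline\lambda\neq 0$. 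Using the explicit formula for $T[g,v]$ given in the excerpt, one computes $T$ on the ``standard'' generators and tracks the $p$-adic valuations of matrix entries in the monomial basis $x^iy^{r-i}$ of $\Symm^r$; the divided-power / Teichmüller combinatorics produces a Newton polygon for $T$ whose vertices lie at integer heights, and a nonzero mod-$p$ eigenvalue $\overline\lambda$ of $T$ on the reduction forces the valuation $v(a)$ to coincide with one of these integer vertex-heights (this is essentially the argument in~\cite{b2},~\cite{b4} pushed to all slopes). Since $v(a)\notin\mathbb Z$ by hypothesis, no such $\overline\lambda\neq 0$ exists, hence $(\overline\Theta_{r+2,a})^{\mathrm{ss}}=\pi(\nu,0,\chi)$ for the appropriate $\nu\equiv r\pmod{p-1}$ and $\chi$, and Theorem~\ref{t0.1} gives $\overline V_{r+2,a}\cong\Ind(\omega_2^{\nu+1})\otimes\chi$, which is irreducible. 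The parity hypothesis ``$r$ even'' is used precisely to exclude the borderline configuration in which the integer-height Newton vertices of $T$ are shifted by $1/2$ relative to the Hodge filtration --- a phenomenon that occurs only for $r$ odd, where indeed half-integral slopes give reducible reductions. I expect the delicate part of the calculation to be controlling the error terms in the divided-power expansion of $v(x,-[\lambda]x+py)$ uniformly in the monomial index $i$, in order to pin down the Newton polygon of $T$ exactly rather than merely bound it.
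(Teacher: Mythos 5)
There is a fundamental problem: the statement you are asked about is stated in the paper as a \emph{conjecture} (due to Breuil, Buzzard, and Emerton), and the paper does not prove it --- it only establishes partial cases (full strength for $3>v(a)>1$, and higher slopes only under additional congruence conditions on $r$ modulo $p(p-1)$ or non-divisibility conditions on rising factorials of $r-s$). Your proposal claims a complete proof, and the place where it fails is exactly the place where you wave your hands: the sentence asserting that ``a nonzero mod-$p$ eigenvalue $\overline\lambda$ of $T$ on the reduction forces the valuation $v(a)$ to coincide with one of these integer vertex-heights'' is not a lemma you prove; it is a restatement of the conjecture itself. Likewise, the claim that when both Frobenius slopes are positive ``one shows via the explicit description \ldots\ that reducibility of the reduction forces $v(a)$ to be a half-integer or integer'' is precisely the open content, and no mechanism is supplied. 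The earlier part of your argument (that $\mu_\lambda$ must be the reduction of a Frobenius eigenvalue, hence of valuation $0$) is also incorrect as stated: for $v(a)\geqslant 1$ both roots of $X^2-aX+p^{r+1}$ have positive valuation, yet reducible reductions genuinely occur (for instance at integer slopes), so there is no direct contradiction of the kind you describe; the unramified parameter $\lambda$ in Theorem~2 is not a naive reduction of a root.

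For comparison, the route the paper actually takes toward the cases it can handle is quite different from a ``Newton polygon of $T$'' argument. It filters $\overline\Theta_{r+2,a}$ by the submodules $\overline\theta^{\alpha}\sigma_{r-\alpha(p+1)}$, identifies the graded pieces with the modules $I_{r-2\alpha}(\alpha)$, and then constructs, by hand, explicit elements of the kernel of reduction $X(r+2,a)$ (the elements $\phi$, $\tau^{\star}$, $\tau^{\dagger}$ of Section~2) whose images under the maps $\Psi_\alpha$ generate either $I(\sigma_{(2\alpha-r\bmod p-1)}(r-\alpha))$ or its image under $T$; this kills enough graded pieces that the survivors cannot pair up into a reducible $\pi(\nu,\lambda,\chi)^{\mathrm{ss}}\oplus\pi(\cdot,\lambda^{-1},\cdot)^{\mathrm{ss}}$. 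The applicability of this machinery hinges on valuations of binomial sums such as $\sum_j\binom{r}{j(p-1)+A}$ (Lemma~10), and these valuations genuinely depend on congruences of $r$, which is why the paper's results carry the exceptional congruence classes and why the general conjecture remains open. Your proposal does not engage with any of this combinatorial input, and without it the argument does not close.
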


We prove partial results towards this conjecture, in addition to giving a partial classification of $\overline V_{r+2,a}$ in the case when $v(a)=1$. Let $r$ be a positive integer such that $r = t(p - 1) + s$, with $t \geqslant 0$ and $s \in \{1,\ldots,p-1\}$. Let $a \in \overline {\mathbb Q}_p$ will be such that and $v(a) \geqslant 1$, and such that the roots of $X^2 - aX + p^{r+1}$ do not have ratio $1$ or $p^{\pm 1}$. In particular, due to a theorem on Newton polygons, the second condition will always hold true whenever $r > 2v(a)$. We will interchangeably refer to the coefficient $a_p$ by $a$, and to the valuation $v(a)$ by $v$. %
 In section~\ref{Sect3}, we give a partial classification of $\overline V_{r+2,a}$ in the case when~$v(a)=1$. More specifically, we show the following theorem.

\begin{theorem}\label{t0.0}
Let $v = 1$, and $s \not \in \{1,3\}$.
\begin{enumerate}[1.] 
	\item If $p \nmid r - s$, then $\overline V_{r+2,a}$ is one of $\{\mu_\lambda \omega^{s} \oplus \mu_{\lambda^{-1}}\omega, \,\Ind(\omega_2^{s+p})\}$, where $\lambda = \overline{a/p} \cdot \frac{s}{s-r}$.
	\item If $p \mid r - s$, then $\overline V_{r+2,a}$ is one of $\{\Ind(\omega_2^{s+1}), \,\Ind(\omega_2^{s+p})\}$.
\end{enumerate}
\end{theorem}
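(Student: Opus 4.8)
The plan is to exploit Theorem~\ref{t0.1}, which reduces the computation of $\overline V_{r+2,a}$ to determining the semisimplified mod~$p$ representation $(\overline\Theta_{r+2,a})^{\mathrm{ss}}$ of $G$, together with Theorem~\ref{t0.2}, which constrains the possible Jordan--H\"older factors. Since $v(a)=1$, the natural integral structure $\Theta_{r+2,a}$ is the image of $I(\Symm^r(\overline{\mathbb Z}_p^2))$, and one studies $\overline\Theta_{r+2,a}$ as a quotient of $I(\sigma_r)=I(\Symm^r(\overline{\mathbb F}_p^2))$ on which $T$ acts as $a$; mod~$p$ this means $T$ acts as~$0$ on the image of any element coming from $I(\Symm^r(\overline{\mathbb Z}_p^2))$ whose image in $\overline\Theta_{r+2,a}$ generates, but the subtlety (as in~\cite{b2,b4}) is that $T$ only annihilates things \emph{after} dividing by an appropriate power of~$p$. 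The heart of the argument is therefore a careful analysis, in the Bruhat--Tits tree / in terms of the basis $[g,v]$, of which elements of $I(\sigma_r)$ survive in $\overline\Theta_{r+2,a}$, i.e.\ of the submodule $X^* \subseteq I(\sigma_r)$ that is killed in the quotient.

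Concretely, I would first reduce $\sigma_r$ modulo the submodule generated by $x^py-xy^p$-type relations: by the standard combinatorics of $\Symm^r(\overline{\mathbb F}_p^2)$ as a $K$-module (writing $r=t(p-1)+s$), $\sigma_r$ has a filtration whose graded pieces are twists of $\sigma_j$ for small~$j$, and the hypothesis $v=1$ forces only the ``top'' and ``bottom'' pieces to matter. The next step is to run the $T$-recursion: starting from a generator $v=x^r$ (or $y^r$) of $I(\sigma_r)$, compute $Tv$, $T^2v,\ldots$ modulo~$p$ and modulo the relevant submodules, using the explicit formula for $T$ from the excerpt, and track the valuations. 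Because $v(a)=1$ exactly, one expects the ``$T-a$'' relation to collapse, mod~$p$, to a relation of the form $(T-0)(\text{image of generator})=p\cdot(\text{something})$, so that after one step the quotient is controlled by a single parameter — this is where the number $\lambda=\overline{a/p}\cdot\frac{s}{s-r}$ enters: the factor $\frac{s}{s-r}$ should come out of the leading binomial/combinatorial coefficient produced by applying $T$ once to $x^r$ and extracting the $\sigma_s$-isotypic part, and $\overline{a/p}$ is the mod-$p$ reduction of the normalized eigenvalue. The dichotomy $p\mid r-s$ versus $p\nmid r-s$ arises precisely because when $p\mid r-s$ this coefficient $\frac{s}{s-r}$ vanishes mod~$p$ (equivalently $s-r\equiv 0$), so the ``principal series'' factor $\mu_\lambda\omega^s\oplus\mu_{\lambda^{-1}}\omega$ degenerates and only supersingular (induced) possibilities remain; I would make this vanishing explicit and then invoke part~(5) of Theorem~\ref{t0.2} to pin down $\overline V_{r+2,a}$ to the stated two-element set. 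The exclusion $s\notin\{1,3\}$ is needed to avoid the exceptional pairs $(\nu,\lambda)\in\{(0,\pm1),(p-1,\pm1)\}$ and the low-weight coincidences in Theorem~\ref{t0.2}(1),(3), as well as small-degree anomalies ($s=1$: $\sigma_1$; $s=3$: resonances with the $T$-recursion modulo~$p$), so I would verify separately that for $s\in\{2,4,5,\ldots,p-1\}$ no such degeneracy occurs.

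For part~1 ($p\nmid r-s$), after the one-step collapse the module $\overline\Theta_{r+2,a}$ is, up to finite-length issues, a quotient of $I(\sigma_s(1))/(T-\lambda')$ for the explicit $\lambda'$ above, and Theorem~\ref{t0.1} (read in the direction $(\overline\Theta)^{\mathrm{ss}}\Rightarrow\overline V$) forces $\overline V_{r+2,a}$ into $\{\mu_\lambda\omega^s\oplus\mu_{\lambda^{-1}}\omega,\ \Ind(\omega_2^{s+p})\}$ — the first option when $\lambda\neq 0$ and the relevant $\pi(\nu,\lambda,\chi)$ is genuinely a (twist of a) principal series, the second (supersingular) option otherwise; the remaining work is to check the twist by $\omega$ versus $\omega^p$ and the index $s+p$ versus $s+1$ by comparing determinants ($\det\overline V_{r+2,a}=\omega^{r+1}=\omega^{s}\cdot\omega^{t(p-1)}=\omega^{s}$ after reduction, since $\omega^{p-1}=1$) and Hodge--Tate weights, which forces the niveau-two character to be $\omega_2^{s+p}$ rather than $\omega_2^{s+1}$ in the generic case. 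For part~2 ($p\mid r-s$) the principal-series branch is eliminated by the vanishing of $\frac{s}{s-r}$, leaving only niveau-two possibilities, and the same determinant bookkeeping narrows these to $\{\Ind(\omega_2^{s+1}),\Ind(\omega_2^{s+p})\}$; one then observes $\Ind(\omega_2^{s+1})$ and $\Ind(\omega_2^{s+p})$ are the two non-isomorphic options (they are swapped by $\omega_2\mapsto\omega_2^p$ only up to the $\GL_2$-conjugacy that does \emph{not} identify them here because $s+1\not\equiv s+p$), so both genuinely remain.

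I expect the main obstacle to be the second step: carrying the $T$-recursion modulo~$p$ far enough, and with enough control on the lower-order graded pieces of $\sigma_r$, to be sure that the collapse really happens after a single application of $T$ and that no ``hidden'' contribution from the middle of the Jantzen-type filtration of $\Symm^r$ survives to produce extra Jordan--H\"older factors. This is exactly the kind of delicate valuation-tracking that occupies the bulk of~\cite{b2} and~\cite{b4}, and the cases $s\in\{1,3\}$ are excluded precisely because there the recursion does \emph{not} collapse cleanly; making the combinatorial estimate uniform in $p$ and $t$ (for $s$ in the allowed range) is the crux of the proof.
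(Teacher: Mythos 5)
Your overall strategy is the paper's: pass to $\overline\Theta_{r+2,a}$ via Theorem~\ref{t0.1}, study the kernel of reduction $X(r+2,a)\subseteq I(\sigma_r)$ against the $\overline\theta$-filtration of $\sigma_r$, and extract an eigenvalue relation whose constant is a binomial sum. But as written the proposal has a genuine gap, and you flag it yourself: the entire quantitative content is deferred. The statement that ``the collapse really happens after a single application of $T$'' is precisely Lemma~\ref{l2''}/Theorem~\ref{t1}, whose proof requires constructing an explicit non-integral element $\tau$ (built from $\varphi_{l,g}=[g,y^r-x^{l(p-1)}y^{r-l(p-1)}]$ summed over the $p+1$ neighbours of the standard vertex, with a correction term $\frac pa\sum_j b_j\varphi_{j,1}$) such that $(T-a)\tau$ and $p^2\tau$ are integral and $\Psi(\overline{(T-a)\tau})=(T-\overline{p/a}\cdot\tfrac ts)[1,X^{p-1-s}]$; the constant $\tfrac ts$ does not come from ``applying $T$ once to $x^r$ and extracting the $\sigma_s$-isotypic part'' but from the identity $\sum_{j=1}^t\binom{r}{j(p-1)}\equiv\tfrac ts\,p\pmod{p^2}$ (Lemma~\ref{lemma7}(1)), which itself needs a nontrivial induction. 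Without this, and without the module-theoretic device of Lemma~\ref{l4.1.1} that converts ``$\Psi(X(r+2,a))\supseteq(T-\overline{p/a}\tfrac ts)(I(\sigma_{p-1-s}(s)))$'' into a two-step filtration of $\overline\Theta_{r+2,a}$, you cannot rule out extra Jordan--H\"older factors. Note also that the remaining piece $\Ker\Psi/\langle\overline\theta^2\sigma_{r-2(p+1)},y^r\rangle$ contributes \emph{two} further graded pieces, $\sigma_{(2-s\bmod p-1)}(s-1)$ and $\sigma_{\widetilde{s-2}}(1)$, not one ``bottom'' piece; the argument that only the pair $(p-1-s,\widetilde{s-2})$ can assemble into a reducible representation is where the hypothesis $s\notin\{1,3\}$ is actually used (it is not about the exceptional pairs $(\nu,\lambda)\in\{(0,\pm1),(p-1,\pm1)\}$).

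Two smaller corrections. First, the dichotomy is governed by $p\mid t$ (equivalently $p\mid r-s$, since $r-s=t(p-1)\equiv -t$): when $p\mid t$ the eigenvalue $\overline{p/a}\cdot\tfrac ts=\lambda^{-1}$ \emph{vanishes}, so the top factor becomes the supersingular $\pi(p-1-s,0,\omega^s)\cong\pi(s,0,1)$; your statement that ``$\tfrac{s}{s-r}$ vanishes'' is backwards (that quantity becomes undefined, not zero). Second, $\det\overline V_{r+2,a}=\omega^{r+1}=\omega^{s+1}$, not $\omega^s$; and in part~2 the two candidates $\Ind(\omega_2^{s+1})$ and $\Ind(\omega_2^{s+p})$ are not separated by determinant bookkeeping --- they arise as the images under Theorem~\ref{t0.1} of the two supersingular possibilities $\pi(s,0,1)$ and $\pi(\widetilde{s-2},0,\omega)$ coming from the two surviving layers of the filtration, and the theorem deliberately leaves both open.
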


In section~\ref{Sect4}, we prove special cases of conjecture~\ref{c0000}, when the slope is small. More specifically, we show the following theorems.

\begin{theorem}\label{t0.0''0}
If $r$ is even and $2 > v > 1$, then $\overline V_{r+2,a}$ is irreducible.
\end{theorem}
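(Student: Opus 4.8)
The plan is to apply the $p$-adic local Langlands correspondence of Theorem~\ref{t0.1} to convert the assertion into a statement about the mod~$p$ representation $\overline\Theta_{r+2,a}$. Indeed, $\overline V_{r+2,a}$ is irreducible exactly when $(\overline\Theta_{r+2,a})^{\mathrm{ss}}\cong\pi(\nu,0,\chi)$ for the $\nu\equiv r\pmod{p-1}$ and the $\chi$ pinned down by the determinant $\omega^{r+1}$; by Theorem~\ref{t0.1} the only alternative is that $\overline\Theta_{r+2,a}$ acquires a Jordan--H\"older factor belonging to some $\pi(\nu',\lambda,\chi')$ with $\lambda\ne0$. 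Since $\overline\Theta_{r+2,a}$ is a finite-length quotient of $I(\sigma_r)$, part~5 of Theorem~\ref{t0.2} already forces every factor into some $\pi(\nu,\lambda,\omega^{l})$ with the correct $\nu$ and $l$; by parts~1 and~4, a non-supersingular factor is present precisely when there is a $\lambda\in\overline{\mathbb F}_p^\times$ for which $\overline\Theta_{r+2,a}$ has a quotient on which $T$ acts by $\lambda$. So the whole task reduces to: show no such $\lambda$ exists.

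To do this I would carry out the analysis of $T$ on $I(\Symm^{r}(\overline{\mathbb Z}_p^2))$ modulo $p$ and then modulo $p^{2}$, in the style of~\cite{b2} and~\cite{b4} and of the earlier sections of the paper. Write $r=t(p-1)+s$ with $s\in\{2,4,\dots,p-1\}$ (the hypothesis that $r$ is even forces $s$ to be even, since $p-1$ is even), and, using $v>1$, write $a=pa'$ with $0<v(a')=v-1<1$. The bookkeeping device is the $\theta$-adic filtration of $\Symm^{r}(\overline{\mathbb Z}_p^2)$, where $\theta=x^{p}y-xy^{p}$, combined with the explicit formula for $T[g,v]$ recalled in the introduction. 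Reducing $T$ first modulo $p$ and then modulo $p^{2}$ on the associated graded pieces expresses the lattice $\Theta_{r+2,a}=\Img\bigl(I(\Symm^{r}(\overline{\mathbb Z}_p^2))\bigr)\subseteq\Pi_{r+2,a}$ through a small number of layers of this filtration, and because $1<v<2$ only the first two layers contribute; this is exactly where the slope hypothesis is used. (For larger slopes the same scheme applies but more layers intervene, which is why only partial results are available there.)

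The core of the argument is then a congruence. Unwinding the mod~$p^{2}$ computation, the existence of such a $\lambda$ is equivalent to the solvability, for that $\lambda$, of a relation obtained by expanding $v(x,-[\lambda]x+py)$ and $v(px,y)$ in the formula for $T$ and collecting the contributions of the two surviving layers; this is a polynomial identity whose coefficients are binomial coefficients $\binom{r}{i}\bmod p$ and Teichm\"uller power sums $\sum_{\mu\in\mathbb F_p}\mu^{j}$, weighted by a power of $a'$. The point is that the parity of $s$ (equivalently of $r$) splits this relation into two parts, and when $s$ is even the part that carries the $a'$-dependence vanishes identically modulo $p$, because the relevant alternating sum of binomial coefficients does. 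Comparing valuations, the relation can then hold only if $v(a')\in\mathbb Z$, i.e.\ if $v\in\mathbb Z$, contradicting $1<v<2$. Hence no such $\lambda$ exists.

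Having excluded every non-supersingular Jordan--H\"older factor, Theorem~\ref{t0.2} (parts~1, 2 and~5) together with the central-character constraint forces $(\overline\Theta_{r+2,a})^{\mathrm{ss}}\cong\pi(\nu,0,\chi)$ for the appropriate $\nu\equiv r\pmod{p-1}$ and $\chi$, and Theorem~\ref{t0.1} then yields $\overline V_{r+2,a}\cong(\Ind(\omega_2^{\nu+1}))\otimes\chi$, which is irreducible. I expect the main obstacle to be the middle step: reducing $T$ modulo $p^{2}$ through the $\theta$-filtration forces one to track binomial coefficients and power sums modulo $p^{2}$ rather than merely modulo $p$, the combinatorics is heavy, and isolating the exact mechanism by which "$r$ even" produces a genuine vanishing — rather than a mere simplification — is the delicate heart of the matter.
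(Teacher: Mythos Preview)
Your high-level plan is sound: reduce to showing that $\overline\Theta_{r+2,a}$ has no non-supersingular Jordan--H\"older factor, and analyse the first two layers of the $\overline\theta$-filtration using the hypothesis $1<v<2$. But the ``core congruence'' you describe is not the actual mechanism, and the step you flag as the obstacle is indeed where your sketch diverges from a working proof.

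The paper does not produce a single relation whose solvability in $\lambda$ decides reducibility. Instead, Lemma~\ref{l4.1LL} gives $\overline\Theta_{r+2,a}$ a series whose factors are subquotients of $I_r$ and $I_{r-2}(1)$; unwinding via Lemma~\ref{lemma6}, the possible irreducible constituents sit inside
\[
I(\sigma_{p-1-s}(s)),\quad I(\sigma_s),\quad I(\sigma_{p+1-s}(s-1)),\quad I(\sigma_{s-2}(1)).
\]
By Theorem~\ref{t0.1}, a reducible $\overline V_{r+2,a}$ forces $(\overline\Theta_{r+2,a})^{\mathrm{ss}}$ to be a sum of \emph{two} principal series whose parameters $(\nu,\chi)$ and $(\nu',\chi')$ satisfy $\nu+\nu'\equiv -2\pmod{p-1}$ and a matching twist condition. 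For $s\geqslant 4$ even, the only pair from the list above that can meet this constraint is $(\sigma_{p-1-s}(s),\,\sigma_{s-2}(1))$. So the parity of $r$ enters \emph{combinatorially}, by cutting down the list of admissible pairs, not through a vanishing alternating binomial sum as you suggest.

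Eliminating that one dangerous pair is then the content. The paper applies Theorem~\ref{theorem26} with $\alpha=0$: one computes $\mathfrak m_w(\varnothing)=\sum_{0<i(p-1)<r}\binom{r}{i(p-1)}\binom{i(p-1)}{w}$ and checks, via Lemma~\ref{lemma7}, that $v(\mathfrak m_0)\leqslant v(\mathfrak m_w)$ for $w\leqslant 3$ (this uses $s>3$). Theorem~\ref{theorem26} then puts either $[1,X^{p-1-s}]$ or $T[1,X^{p-1-s}]$ into $\Psi_0(X(r+2,a))$, killing the $\sigma_{p-1-s}(s)$ layer (or making it supersingular). The case $s=2$ genuinely requires separate treatment: here $I_{s-2}(1)\cong I_{p-1}(1)$ is semi-simple, the weight bookkeeping collapses, and one must instead invoke Theorem~\ref{l7Z} via the matrix $\mathfrak M^{(r,1,1)}$, with a further split according to $t\bmod p$ (including an appeal to explicit computation of $\mathfrak m_2,\mathfrak m_3$ when $t\equiv 1,2$). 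Your sketch does not anticipate this bifurcation, and the ``parity forces vanishing'' heuristic would not survive the $s=2$ case without it.
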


\begin{theorem}\label{t0.0''}
If $r$ is even and $3 > v > 2$, then $\overline V_{r+2,a}$ is irreducible.
\end{theorem}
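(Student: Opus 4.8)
The plan is to apply the $p$-adic local Langlands correspondence. By Theorem~\ref{t0.1}, $\overline V_{r+2,a}$ is irreducible if and only if $(\overline\Theta_{r+2,a})^{\mathrm{ss}}\cong\pi(\nu,0,\chi)$ for suitable $\nu$ and $\chi$; equivalently, if and only if $(\overline\Theta_{r+2,a})^{\mathrm{ss}}$ has no Jordan--H\"older factor of the form $\pi(\nu,\lambda,\chi)$ with $\lambda\ne 0$ (the hypotheses of Theorem~\ref{t0.1} are satisfied since $r>2v(a)$). Using the natural surjection $I(\sigma_r)\twoheadrightarrow\overline\Theta_{r+2,a}$ and part~5 of Theorem~\ref{t0.2}, every Jordan--H\"older factor of $\overline\Theta_{r+2,a}$ is a subquotient of some $\pi(\nu,\lambda,\omega^l)$ with $\nu\equiv_{p-1}r$, and by part~1 of that theorem such a factor is supersingular exactly when $\lambda=0$. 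So the whole problem reduces to showing that no non-zero $\lambda$ can occur, i.e.\ that the image in $\overline\Theta_{r+2,a}$ of the relevant sub-object of $I(\sigma_r)$ collapses onto a twist of a supersingular representation.

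To do this I would carry out an explicit computation inside the lattice $\Theta_{r+2,a}\subseteq\Pi_{r+2,a}$, extending the method of Theorem~\ref{t0.0''0}. Fix the monomial basis of $\Symm^r(\overline{\mathbb Z}_p^2)$ and the $KZ$-stable filtration of $\sigma_r$ by the submodules cut out by powers of $\theta=x^py-xy^p$, and recall the formula for $T$ from the introduction. For the generators $w$ of the relevant graded pieces one expands $T[1,w]$, $T^2[1,w]$ and $T^3[1,w]$, collecting the Teichm\"uller sums $\sum_{\lambda\in\mathbb F_p}(\cdots)$ according to powers of $p$. The significance of $2<v<3$ is that, after substituting $Tw=aw$ and dividing by the appropriate power of $p$, the $p^0$-, $p^1$- and $p^2$-parts of these expansions must \emph{separately} be congruent modulo $p$ to the corresponding parts of $a$ times a lattice element; feeding these congruences back in shows that a specific linear combination of the generators maps to $0$ in $\overline\Theta_{r+2,a}$. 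This forces $\overline\Theta_{r+2,a}$ to be a quotient of a sufficiently small sub-object of $I(\sigma_r)$ that its semisimplification can only be $\pi(\nu,0,\chi)$, as wanted.

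The arithmetic content is the valuation bookkeeping for the binomial sums that arise, of the shape $\sum_j\binom{r}{j}[\lambda]^j$ and refinements thereof: one needs these to have exactly the $p$-adic valuations that make the depth-three reduction degenerate, and this is where the hypotheses $r$ even and $v\in(2,3)$ enter, together with the restriction on $s$. As in Theorem~\ref{t0.0''0}, a few values of $s$ --- those for which the leading binomial coefficients of the relevant graded piece degenerate, such as the small $s$ that are excluded, or residues with $p\mid r-s$ --- behave differently and would be treated separately by a direct analysis of the corresponding graded piece of $\sigma_r$.

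I expect the main obstacle to be exactly this depth-three analysis. In the slope-$(1,2)$ case of Theorem~\ref{t0.0''0} one only has to control terms up to $p^1$, whereas here the $p^2$-terms of $T^2[1,w]$ and $T^3[1,w]$ interact non-trivially with the first-order terms, and one must rule out a potential cancellation that would allow a non-supersingular factor to survive; keeping track of which monomials lie in which step of the $\theta$-filtration modulo $p^3$, and verifying that the relation imposed by $Tw=aw$ is not automatically satisfied, is the delicate point. Once that is settled, the reduction via Theorem~\ref{t0.1} and the final identification via Theorem~\ref{t0.2} are formal.
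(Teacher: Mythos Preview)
Your framework is right --- the reduction via Theorem~\ref{t0.1} to analysing $\overline\Theta_{r+2,a}$, the use of the $\theta$-filtration of $\sigma_r$, and the recognition that binomial-sum valuations drive the answer are all exactly what the paper uses. But what you have written is a plan rather than a proof, and the plan is missing the mechanism that actually makes the argument go through.

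The paper does \emph{not} expand $T[1,w]$, $T^2[1,w]$, $T^3[1,w]$ for monomials $w$ and read off congruences from the coefficients. Instead it builds, for each $\alpha\in\{0,1,\ldots,m\}$ (here $m=2$), a $\GL_2(\mathbb F_p)$-equivariant surjection $\Psi_\alpha:\overline\theta^{\,\alpha}\sigma_{r-\alpha(p+1)}\to\sigma_{(2\alpha-r\bmod p-1)}(r-\alpha)$ and asks whether the restriction of $\Psi_\alpha$ to the kernel of reduction $X(r+2,a)$ is still surjective (Theorems~\ref{l7Z} and~\ref{theorem26}). Surjectivity is controlled by explicit quantities $\mathfrak m_w(C_1,\ldots,C_\alpha)=\sum_i\sum_l C_l\binom{r-\alpha+l}{i(p-1)+l}\binom{i}{w}$ for free parameters $C_l$: one must choose the $C_l$ so that $\mathfrak m_w=0$ for $w<\alpha$ and then compare $\mathfrak v_\alpha$ with $v(a)-\alpha$. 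This is the precise ``valuation bookkeeping'' you allude to, but without the $\Psi_\alpha$ device and the $\mathfrak m_w$ formalism there is no way to translate the raw congruences for $(T-a)\tau$ into statements about which Jordan--H\"older factors survive.

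Second, the case analysis is much finer than you anticipate. The paper treats separately: $s\notin\{2,4\}$ with $t\not\equiv\{0,1\}\pmod p$; $s\notin\{2,4\}$ with $t\equiv 1$; $s\notin\{2,4\}$ with $t\equiv 0$; and then, for $s\in\{2,4\}$, a further split according to $r\bmod p$ (and in some sub-cases $r\bmod p^2(p-1)$). Several of these sub-cases are settled only by computer verification of the rank of an explicit matrix $\mathfrak M^{(r,m,\alpha)}$ over $\mathbb F_p$ (the Sage code in the appendix). Your proposal flags ``a few values of $s$'' as exceptional but gives no indication of how to handle them; in the paper these are the bulk of the work, not a footnote.

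In short: the overall strategy is correct, but the proof requires the $\Psi_\alpha$ machinery of Section~3 and the detailed, partly computer-assisted case analysis of Section~\ref{Sect4}, neither of which is present in your outline.
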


\begin{theorem}\label{t0.0''__}
Let $4 > v > 3$ and let $r$ be even. Then $\overline V_{r+2,a}$ is irreducible whenever
	\begin{align*}& \textstyle r \not\equiv \left\{3p + 1,3p + 3, 4p,  4p + 2, 5p + 1, 6p, s, s+p-1, s+2p-2\right\} \pmod{p(p-1)}.\end{align*}
\end{theorem}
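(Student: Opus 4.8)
The plan is to follow the strategy behind Theorems~\ref{t0.0''0} and~\ref{t0.0''}, itself modeled on~\cite{b2} and~\cite{b4}: use the $p$-adic local Langlands correspondence to pass from the question about $\overline V_{r+2,a}$ to one about the $\overline{\mathbb F}_p[G]$-module $\overline\Theta_{r+2,a}$, and then compute enough of that module to determine its semisimplification. By Theorem~\ref{t0.1}, $\overline V_{r+2,a}$ is either irreducible, with $(\overline\Theta_{r+2,a})^{\mathrm{ss}}\cong\pi(\nu,0,\chi)$ for suitable $\nu$ and $\chi$, or reducible, with $(\overline\Theta_{r+2,a})^{\mathrm{ss}}$ the sum of two principal-series semisimplifications whose parameters $\lambda$ are nonzero; since every $\pi(\nu,0,\chi)$ is supersingular while no Jordan--H\"older factor of any $\pi(\nu',\lambda',\chi')$ with $\lambda'\ne0$ is supersingular (Theorem~\ref{t0.2}(1)), irreducibility of $\overline V_{r+2,a}$ is equivalent to $\overline\Theta_{r+2,a}$ having at least one supersingular Jordan--H\"older factor. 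So I would reduce the theorem to showing that, for $r$ even with $3<v<4$ and $r$ outside the nine listed classes, $\overline\Theta_{r+2,a}$ has a supersingular Jordan--H\"older factor.

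Next I would set up the computation inside the lattice $\Theta_{r+2,a}\subseteq\Pi_{r+2,a}$, on which $T$ acts as multiplication by $a$. Relations are produced as follows: $\overline\Theta_{r+2,a}$ is a quotient of $I(\sigma_r)$, and whenever $g\in I(\Symm^r(\overline{\mathbb Q}_p^2))$ satisfies $(T-a)g\in I(\Symm^r(\overline{\mathbb Z}_p^2))$, the mod-$p$ reduction of $(T-a)g$, viewed in $I(\sigma_r)$, lies in the kernel of $I(\sigma_r)\twoheadrightarrow\overline\Theta_{r+2,a}$. Writing $r=t(p-1)+s$ as in the statement, I would use the known $\GL_2(\mathbb F_p)$-submodule structure of $\sigma_r$, which is controlled by $s$ and by the semi-invariant $\theta=x^py-xy^p$, the latter moving one between the successive (Frobenius-twisted) layers. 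For each admissible residue class of $r$ modulo $p(p-1)$, the target is a family of such $g$ whose relations, together with this structure, force $\overline\Theta_{r+2,a}$ to be a quotient of a module all of whose Jordan--H\"older factors are, by Theorem~\ref{t0.2}(5), subquotients of explicit modules $\pi(\nu,\lambda,\omega^l)$; the point is then to pin the relevant $\lambda$'s down to $0$, so that all of these---and hence a factor of $\overline\Theta_{r+2,a}$ itself---are supersingular.

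The core of the argument is the explicit evaluation of $T$ on carefully chosen elements $[g,v]$ via the displayed formula, together with the standard mod-$p$ and $p$-adic valuation estimates (Lucas and Kummer) for the binomial coefficients $\binom{r}{j}$ arising in $T[g,v]$. Since $\lfloor v\rfloor=3$, the relevant part of the expansion truncates after finitely many ``levels''---those surviving modulo $p^4$---and I would run the computation as an induction that strips off one factor of $\theta$ at each stage, rewriting the new error term either as a $p$-multiple of an integral element (hence zero in $\overline\Theta_{r+2,a}$) or as lower-weight data to which the inductive hypothesis applies. This is structurally the same induction used for Theorem~\ref{t0.0''}, but with one additional layer of error terms to control, which is why genuine obstructions appear here for the first time.

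The main obstacle, and the source of the excluded residues, is that one of the inductive steps requires dividing by a ``leading coefficient'' that is a polynomial expression in $r$ over $\overline{\mathbb F}_p$ (its shape dictated by $\lceil v\rceil=4$); where this coefficient vanishes modulo $p$ the step breaks down and one can no longer force the corresponding $\lambda$ to be $0$. Solving for the vanishing loci---conditions of the form $p\mid r-i$ for small $i$ combined with the value of $s$---produces, via the Chinese Remainder Theorem, exactly the classes $3p+1,\,3p+3,\,4p,\,4p+2,\,5p+1,\,6p,\,s,\,s+p-1,\,s+2p-2$ modulo $p(p-1)$; the last three, which have $t\le2$, in any case fall into the already-settled range $v>\lfloor r/(p-1)\rfloor$ of~\cite{b4}, but the present argument does not reach them. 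I expect the genuinely delicate part to be the valuation bookkeeping of the error terms modulo $p^4$ and the verification that, outside the excluded classes, no principal-series constituent with $\lambda=\pm1$ can intrude; the structural reductions and the final appeals to Theorems~\ref{t0.1} and~\ref{t0.2} should be routine.
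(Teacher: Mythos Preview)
Your proposal is correct and follows the paper's approach: reduce via Theorem~\ref{t0.1} to showing $\overline\Theta_{r+2,a}$ has a supersingular factor, filter $\sigma_r$ by powers of $\overline\theta$, and construct elements of the kernel $X(r+2,a)$ layer by layer until only one Jordan--H\"older factor survives. The paper packages your ``stripping off one factor of $\theta$'' step into two general criteria (Theorems~\ref{l7Z} and~\ref{theorem26}), whose matrix-rank and valuation hypotheses are precisely your ``leading coefficient nonzero'' conditions; these are then checked case by case---by hand when $s\notin\{2,4,6\}$ and by the Sage code in the appendix when $s\in\{2,4,6\}$---yielding exactly the nine excluded residue classes.

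One correction: the last three classes $s,\,s+p-1,\,s+2p-2\pmod{p(p-1)}$ correspond to $t\equiv 0,1,2\pmod p$, not to $t\le 2$. They do \emph{not} fall into the range $v>\lfloor r/(p-1)\rfloor$ already settled by~\cite{b4}; the paper, like your argument, simply excludes them from the statement.
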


Let $x^{(n)} = x(x+1)\cdots(x+n-1)$ denote the rising factorial power.

\begin{conjecture}\label{c1111}
If $r$ is even and $v \not\in \mathbb Z$, then $\overline V_{r+2,a}$ is irreducible in the following cases.
\begin{enumerate}[1.] 
\item If $s \in\{2,\ldots,2\lfloor v\rfloor\}$, and if the extra condition $p \nmid (r-s)^{(2\lfloor v\rfloor+1)}$ holds true.
\item If $s \not\in\{2,\ldots,2\lfloor v\rfloor\}$, and if the extra condition $p \nmid (r-s)^{(\lfloor v\rfloor)}$ holds true. In fact, in this case
	\[\smash{\overline V_{r+2,a} \cong \Ind(\omega_2^{s+(p-1)\lfloor v\rfloor+1})}.\]
\item If $s \not\in\{2,\ldots,2\lfloor v\rfloor\}$.
\end{enumerate}
\end{conjecture}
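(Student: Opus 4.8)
The plan is to run the $p$-adic local Langlands machine exactly as in~\cite{b2} and~\cite{b4}, and as in the proofs of Theorems~\ref{t0.0''0}, \ref{t0.0''}, and~\ref{t0.0''__}, but to push the bookkeeping to arbitrary $\lfloor v\rfloor$. By Theorem~\ref{t0.1} it suffices to compute the semisimplification $(\overline\Theta_{r+2,a})^{\mathrm{ss}}$ as an $\overline{\mathbb F}_p[G]$-module; showing that $\overline V_{r+2,a}$ is irreducible amounts to showing that $(\overline\Theta_{r+2,a})^{\mathrm{ss}}$ is a single $\pi(\nu,0,\chi)$, i.e.\ (a twist of) a supersingular representation, equivalently that no Jordan--H\"older factor of the finite-length quotient $\overline\Theta_{r+2,a}$ of $I(\sigma_r)$ is a subquotient of a twist of a principal series or of the Steinberg representation. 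By Theorem~\ref{t0.2}(4)--(5) this is a purely local statement about the action of the Hecke operator $T$: one must show that, after semisimplifying, $\overline\Theta_{r+2,a}$ is generated by a single Serre weight $\sigma_\nu(n)$ on which $T$ acts by zero, and then read off $\nu$ and the twist from Theorem~\ref{t0.1} to obtain $\overline V_{r+2,a}\cong\Ind(\omega_2^{\nu+1})$ up to twist.

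The combinatorial heart is the following. Fix the generator $[1,x^r]$ of $I(\Symm^{r}(\overline{\mathbb Z}_p^2))$ (or a variant adapted to $\theta=x^py-xy^p$), expand $T^n[1,x^r]=\sum c_n(g,i)\,[g,x^iy^{r-i}]$ using the explicit formula for $T$ recalled above, and bound the $p$-adic valuations of the coefficients $c_n(g,i)$. Since the relation $T=a$ with $v=v(a)\not\in\mathbb Z$ is imposed, the image of $T^n$ in $\overline\Theta_{r+2,a}$ is divisible by $p^{\lceil nv\rceil}$, which forces the top $\lfloor v\rfloor$ graded pieces of the $\theta$-filtration of $\Symm^{r}(\overline{\mathbb F}_p^2)$ (the pieces $\sigma_{r-2j}(j)$ built out of $\theta$ for small $j$) to collapse in $\overline\Theta_{r+2,a}$. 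One expects that after this collapse the first surviving $K$-type is $\sigma_\nu$ with $\nu\equiv s\pmod{p-1}$ and an explicit twist absorbing $\lfloor v\rfloor$ copies of $\omega^{p-1}$, producing the niveau-two character $\omega_2^{s+(p-1)\lfloor v\rfloor+1}$, and that the leading coefficient of $T$ on this weight is a product of linear factors whose reduction mod $p$ is controlled exactly by the rising factorial $(r-s)^{(\lfloor v\rfloor)}$ --- and, when two weights compete, by $(r-s)^{(2\lfloor v\rfloor+1)}$. Establishing sharp and uniform valuation estimates for the $c_n(g,i)$, a multi-parameter generalization of the binomial estimates used for $\lfloor v\rfloor\leqslant 3$, is the technical core of the argument.

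The case division in Conjecture~\ref{c1111} then falls out of the position of $s$. When $s\not\in\{2,\ldots,2\lfloor v\rfloor\}$ the surviving Serre weight is unambiguous: under $p\nmid (r-s)^{(\lfloor v\rfloor)}$ its $T$-eigenvalue is forced to be $0$, so $(\overline\Theta_{r+2,a})^{\mathrm{ss}}\cong\pi(\nu,0,\chi)$ and Theorem~\ref{t0.1} gives $\overline V_{r+2,a}\cong\Ind(\omega_2^{s+(p-1)\lfloor v\rfloor+1})$, which is case~2; for case~3 one must further argue that even without the divisibility hypothesis the only remaining possibility for $(\overline\Theta_{r+2,a})^{\mathrm{ss}}$ is again of the form $\pi(\nu',0,\chi')$, using that the ``reflected'' weight $p-1-s$ once more lands outside the relevant principal-series range. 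When $s\in\{2,\ldots,2\lfloor v\rfloor\}$ two Serre weights genuinely compete, and a principal-series factor would appear precisely if a certain coefficient vanished mod $p$; the stronger hypothesis $p\nmid (r-s)^{(2\lfloor v\rfloor+1)}$ is exactly what excludes this, giving case~1. In every case one finishes by quoting Theorems~\ref{t0.2} and~\ref{t0.1}.

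The main obstacle is uniformity in $\lfloor v\rfloor$. Each unit increase in $v$ adds one more layer of the $\theta$-filtration to traverse, the valuation bounds on the coefficients of $T^n$ acquire an additional parameter, and --- as is already visible in Theorem~\ref{t0.0''__} --- there are exceptional residues $r\bmod p(p-1)$ for which the naive estimate is not sharp and either a finer analysis or a genuinely different conclusion is needed; the non-integrality of $v$ together with the rising-factorial hypotheses are exactly the conditions under which one hopes these exceptions disappear. Carrying the argument through for all $\lfloor v\rfloor$ would most plausibly require either a closed-form ``master identity'' for the reduction mod $p$ of the leading coefficients of $T^n$, generalizing the rising-factorial expressions that appear for small slope, or an inductive device that descends from slope in $(\lfloor v\rfloor,\lfloor v\rfloor+1)$ to the next lower interval; producing either is where the real difficulty lies, and is presumably why the statement is left as a conjecture.
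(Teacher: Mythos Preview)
The statement you are asked to prove is a \emph{conjecture}; the paper does not prove it. What the paper does prove is the partial result Theorem~\ref{iygutfrd}, which establishes the three parts of Conjecture~\ref{c1111} only for $v$ below explicit thresholds ($v<\NUMBER$, $v<\NUMBERRR$, $v<\NUMBERR$ respectively). Your write-up is therefore not a proof but a strategic essay, and you yourself acknowledge this in your final paragraph. That is an honest assessment, but it means there is no proof here to evaluate as correct.

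On the level of strategy, your outline is in the right spirit but does not match the paper's actual machinery. You describe the argument as ``expand $T^n[1,x^r]$ and bound the $p$-adic valuations of the coefficients $c_n(g,i)$''; the paper does not work this way. Instead, for each $\alpha$ with $0\leqslant\alpha<m=\lfloor v\rfloor$ it constructs, via Lemmas~\ref{l7x}--\ref{l7z} and Corollary~\ref{l7zz}, explicit elements $\tau$ such that $(T-a)\tau$ has a prescribed form, and then uses the equivariant maps $\Psi_\alpha$ of Section~2.1 to project onto the individual factors $\sigma_{(2\alpha-r\bmod p-1)}(r-\alpha)$ of the $\overline\theta$-filtration. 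The two governing results are Theorems~\ref{l7Z} and~\ref{theorem26}: the first is a rank condition on an explicit matrix $\mathfrak M^{(r,m,\alpha)}$, the second a valuation comparison among the quantities $\mathfrak m_w(C_1,\ldots,C_\alpha)$ for well-chosen constants $C_j$. For part~2 of the conjecture, the paper's proof (Section~\ref{extrasub'}) writes down the $C_j$ in closed form, reduces the required identity to a ${}_2F_1$ evaluation at $z=1$, and then \emph{verifies the remaining polynomial identity by computer for $\alpha\leqslant\NUMBERRRRRR$}; this computer verification is precisely the bound $v<\NUMBERRR$ in Theorem~\ref{iygutfrd}. Your sketch does not engage with any of this structure.

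Your diagnosis of the obstruction is broadly right: uniformity in $\lfloor v\rfloor$ is the issue, and the exceptional residues visible already in Theorem~\ref{t0.0''__} do proliferate. But the concrete missing ingredient, in the paper's framework, is a proof for all $\alpha$ of the explicit rational-function identity displayed at the end of Section~\ref{extrasub'} (the ``$w\ne 0$'' case), together with an analogous uniform treatment of the matrices $\mathfrak M^{(r,m,\alpha)}$ for $s\in\{2,\ldots,2m\}$. Neither your ``master identity for $T^n$'' nor an inductive descent in slope appears in the paper, and it is not clear either would succeed; the paper's own route suggests the problem is closer to proving a family of hypergeometric-type identities than to estimating iterated Hecke coefficients.
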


\begin{theorem}\label{iygutfrd} We have the following partial results towards conjecture~\ref{c1111}.
\begin{enumerate}[1.] 
\item The first part of conjecture~\ref{c1111} is true %
 when $\NUMBER>v$.
\item The second part of conjecture~\ref{c1111} is true  %
 when  $\NUMBERRR>v$.
\item The third part of conjecture~\ref{c1111} is true %
 when $\NUMBERR>v$.
\end{enumerate}
\end{theorem}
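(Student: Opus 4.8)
The plan is to deduce all three statements from a sufficiently precise computation of the semisimplification $(\overline\Theta_{r+2,a})^{\mathrm{ss}}$, read off through Theorem~\ref{t0.1}. Since there is a surjection $I(\sigma_r)\twoheadrightarrow\overline\Theta_{r+2,a}$ and $r\equiv_{p-1}s$, a d\'evissage of $\sigma_r$ together with part~5 of Theorem~\ref{t0.2} shows that every Jordan--H\"older factor of $\overline\Theta_{r+2,a}$ is a subquotient of some $\pi(\nu,\lambda,\omega^l)$ with $\nu\equiv_{p-1}s$; and by parts~1 and~4 of that theorem, proving $\overline V_{r+2,a}$ irreducible is equivalent to ruling out the alternative ``$\lambda\neq0$'', i.e. to showing that $(\overline\Theta_{r+2,a})^{\mathrm{ss}}$ is a single twist $\pi(\nu,0,\chi)$ of a supersingular representation. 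For part~2 of Conjecture~\ref{c1111} one must in addition identify the surviving weight exactly, so that the first equivalence of Theorem~\ref{t0.1} yields $\overline V_{r+2,a}\cong\Ind(\omega_2^{\,s+(p-1)\lfloor v\rfloor+1})$.

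The computational core, following~\cite{b2} and~\cite{b4} and generalising the arguments behind Theorems~\ref{t0.0''0}, \ref{t0.0''} and~\ref{t0.0''__}, is an analysis of the image of $I(\Symm^r(\overline{\mathbb Z}_p^2))$ in $\Theta_{r+2,a}$ modulo $T-a$. I would expand $T$ by the formula displayed in the introduction, and use the $\theta$-filtration of $\Symm^r$ (multiplication by $\theta=x^py-xy^p$, which lowers the degree by $p+1$) together with the combinatorics of $T$ on the monomials $x^jy^{r-j}$ to strip off, one level at a time, the constituents of $\overline\Theta_{r+2,a}$ lying below the expected level $s+(p-1)\lfloor v\rfloor$. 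Since $v(a)=v$ dominates the relevant valuations, a careful estimate should show that after finitely many steps every such lower constituent dies modulo $p$ inside $\Theta_{r+2,a}$, so that $\overline\Theta_{r+2,a}$ is controlled by a single level. The rising-factorial hypotheses $p\nmid(r-s)^{(n)}$ --- with $n=\lfloor v\rfloor$ in part~2 and $n=2\lfloor v\rfloor+1$ in part~1 --- are exactly what keeps this stripping lossless: the change-of-level transition matrices at each step have entries that are products of consecutive integers $r-s,\,r-s+1,\dots$, hence units modulo $p$ precisely under these hypotheses, which also pins down the surviving level as $s+(p-1)\lfloor v\rfloor$ and hence the weight in part~2. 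Finally the parity of $r$ enters at the last step through the involution $x^jy^{r-j}\leftrightarrow x^{r-j}y^j$, which forces the Teichm\"uller sum that would be the nonzero eigenvalue of $T$ to vanish modulo $p$, excluding the case $\lambda\neq0$.

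The explicit thresholds come out of the bookkeeping of these valuation estimates; I expect each to reduce to an inequality of the shape $v<(\text{explicit constant})$, the constant reflecting how many levels of the $\theta$-recursion can be kept lossless before the accumulated loss of $p$-adic precision reaches $v$. In part~2, where $s\notin\{2,\dots,2\lfloor v\rfloor\}$ so the top of $\Symm^r$ contributes no spurious constituent and the divisibility hypothesis makes every step lossless, the constant is largest, $\NUMBERRR$; in part~1 the hypothesis $s\in\{2,\dots,2\lfloor v\rfloor\}$ forces extra constituents near the top of $\Symm^r$ to be tracked and weakens the estimates, giving $\NUMBER$; and in part~3, with no divisibility hypothesis to clean up the error terms, only the first few levels can be controlled unconditionally, giving $\NUMBERR$. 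I anticipate the main obstacle to be precisely this cumulative error control --- the naive valuation bound degrades at every level of the recursion, and proving that the total loss stays below $v$ over all $\approx\lfloor v\rfloor$ levels needs a sharp uniform estimate on the $p$-adic valuations of the binomial-type coefficients that occur, with the worst contributions coming from the boundary monomials $x^sy^{r-s}$ and $x^{r-s}y^s$ --- which is why the residue classes excluded in Theorem~\ref{t0.0''__}, and the hypotheses of Conjecture~\ref{c1111}, are organised around $s$, $s+p-1$ and $s+2p-2$. For each fixed $\lfloor v\rfloor$ the argument should reduce to a finite verification of such estimates, which I expect is in part carried out with computer assistance, especially in the wider range of part~2.
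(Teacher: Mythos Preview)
Your high-level picture --- the $\theta$-filtration of $\sigma_r$, d\'evissage into pieces $I_{r-2\alpha}(\alpha)$ via Lemma~\ref{l4.1LL}, and elimination of all but one factor to force a supersingular $\overline\Theta_{r+2,a}$ --- matches the paper's strategy. But what you have written is a plan, not a proof: the repeated ``I expect'', ``should'', and ``I anticipate'' signal that the actual mechanism is missing, and the mechanism is where all the content lies.

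Concretely, the paper does not strip off constituents by tracking ``cumulative loss of $p$-adic precision'' in a recursion. It proves two general criteria, Theorems~\ref{l7Z} and~\ref{theorem26}, which for each level $\alpha$ give explicit linear-algebraic conditions (the matrix $\mathfrak M^{(r,m,\alpha)}$ having $(0,\dots,0,1)^T$ in its range, or the valuations $\mathfrak v_w(C_1,\dots,C_\alpha)$ of certain binomial sums satisfying inequalities) under which the map $\Psi_\alpha$ is surjective from $X(r+2,a)$ onto $I(\sigma_{(2\alpha-r\bmod p-1)}(r-\alpha))$, killing that factor. The thresholds are not produced by accumulated error bounds but by checking these criteria: part~3 follows \emph{immediately} from Theorems~\ref{t0.0''0} and~\ref{t0.0''} (already proved for $v<3$), and part~1 is a finite computer verification, via the \texttt{Sage} routine \texttt{verify\_conjecture\_eight(m)}, that the criteria of Theorems~\ref{l7Z} and~\ref{theorem26} hold for each $m<6$.

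Part~2 is the substantive case, and here there is a genuine missing idea. The paper \emph{explicitly constructs} the constants $C_j=pc_j$ in Theorem~\ref{theorem26} by the closed formula
\[
\textstyle c_j = j!\binom{\alpha}{j}\frac{(-1)^j(s+1-r)}{(s-2\alpha+1+j)\cdots(s-2\alpha+1)} + j!\binom{\alpha+1}{j+1}\frac{(-1)^j(r-\alpha)}{(s-2\alpha+1+j)\cdots(s-2\alpha+1)},
\]
and then must verify that $B(1,c_1,\dots,c_\alpha)^T=(0,\dots,0,u)^T$ with $u$ a unit. This reduces to two combinatorial identities in formal variables $r,s$: the $w=0$ case is dispatched by recognising a terminating $\null_2F_1(-\alpha,s;-\alpha+s+1;1)$ and invoking the Gauss summation, while the $w>0$ cases are verified by computer for each $\alpha\le 36$, giving the bound $38$. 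None of this structure --- the closed-form guess for $c_j$, the reduction to a hypergeometric identity, the role of Gauss's formula --- appears in your sketch. Your parity remark (a Teichm\"uller sum forcing $\lambda=0$) is also not how the argument runs: parity enters only through $r\equiv_{p-1}2L$, which governs which $I_{r-2\alpha}(\alpha)$ occur and how $\Psi_\alpha$ behaves on them.
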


By combining these theorems with previously known results about conjecture~\ref{c0000} (for $\smash{v > \lfloor \frac r{p-1}\rfloor}$), we can deduce the following corollary.

\begin{corollary}\label{c1}
Conjecture~\ref{c0000} is true in the following cases:
\begin{enumerate}[\ssssss]
	\item $3 > v > 1$;
	\item $4 > v > 3$, and
		\[\textstyle r \not\equiv \left\{3p + 1,3p + 3, 4p,  4p + 2, 5p + 1, 6p, s, s+p-1, s+2p-2\right\} \pmod{p(p-1)};\]
	\item $\NUMBER > v$, and $s \in\{2,\ldots,2\lfloor v\rfloor\}$, and $p \nmid (r-s)^{(2\lfloor v\rfloor+1)}$;
	\item $\NUMBERRR>v$, and $s \not\in\{2,\ldots,2\lfloor v\rfloor\}$, and $p \nmid (r-s)^{(\lfloor v\rfloor)}$.
\end{enumerate}
In particular, Conjecture~\ref{c0000} is true when $\NUMBER > v$ and $p \nmid (r-s)^{(2\lfloor v\rfloor+1)}$.
\end{corollary}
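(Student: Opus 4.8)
The plan is to read Corollary~\ref{c1} off Theorems~\ref{t0.0''0}, \ref{t0.0''}, \ref{t0.0''__}, and~\ref{iygutfrd}: each of its itemised cases is, after unpacking the numerical bounds, exactly one of those statements, so the proof is in essence bookkeeping, the only non-formal ingredient being an elementary divisibility remark about rising factorials used for the final sentence. The large-slope ranges $v > \lfloor \frac{r}{p-1}\rfloor$, where $\overline V_{r+2,a} \cong \Ind(\omega_2^{r+1})$ is irreducible, are already subsumed in the theorems of Sections~\ref{Sect3}--\ref{Sect4} via the classifications of~\cite{b6},~\cite{b2}, and~\cite{b4} recalled above, so no separate discussion of them is needed here. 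Throughout we keep the hypotheses of Conjecture~\ref{c0000}, that $r$ is even and $v \not\in \mathbb Z$, and the running assumptions $v \geqslant 1$ and that the roots of $X^2 - aX + p^{r+1}$ have ratio neither $1$ nor $p^{\pm 1}$.

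First I would verify the four itemised cases in turn. For $3 > v > 1$: since $v \not\in \mathbb Z$, either $1 < v < 2$, in which case $\overline V_{r+2,a}$ is irreducible by Theorem~\ref{t0.0''0}, or $2 < v < 3$, in which case it is irreducible by Theorem~\ref{t0.0''}. For $4 > v > 3$ with $r$ outside the listed residue classes modulo $p(p-1)$, the assertion is verbatim Theorem~\ref{t0.0''__}. For $\NUMBER > v$ with $s \in \{2,\ldots,2\lfloor v\rfloor\}$ and $p \nmid (r-s)^{(2\lfloor v\rfloor+1)}$, the assertion is the first part of Conjecture~\ref{c1111}, which the first part of Theorem~\ref{iygutfrd} establishes whenever $\NUMBER > v$. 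For $\NUMBERRR > v$ with $s \not\in \{2,\ldots,2\lfloor v\rfloor\}$ and $p \nmid (r-s)^{(\lfloor v\rfloor)}$, the assertion is the second part of Conjecture~\ref{c1111}, which the second part of Theorem~\ref{iygutfrd} establishes whenever $\NUMBERRR > v$, and which moreover yields $\overline V_{r+2,a} \cong \Ind(\omega_2^{s+(p-1)\lfloor v\rfloor+1})$. In every case $\overline V_{r+2,a}$ is irreducible, which is the conclusion of Conjecture~\ref{c0000}.

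It remains to deduce the closing sentence. Suppose $\NUMBER > v$ and $p \nmid (r-s)^{(2\lfloor v\rfloor+1)}$. If $s \in \{2,\ldots,2\lfloor v\rfloor\}$, then we are in the third itemised case and are done. If $s \not\in \{2,\ldots,2\lfloor v\rfloor\}$, then, because $1 \leqslant \lfloor v\rfloor \leqslant 2\lfloor v\rfloor+1$, the consecutive factors making up $(r-s)^{(\lfloor v\rfloor)} = (r-s)(r-s+1)\cdots(r-s+\lfloor v\rfloor-1)$ form an initial block of those making up $(r-s)^{(2\lfloor v\rfloor+1)} = (r-s)(r-s+1)\cdots(r-s+2\lfloor v\rfloor)$; hence $(r-s)^{(\lfloor v\rfloor)}$ divides $(r-s)^{(2\lfloor v\rfloor+1)}$, so $p \nmid (r-s)^{(\lfloor v\rfloor)}$, and since also $\NUMBERRR > \NUMBER > v$ the fourth itemised case applies and $\overline V_{r+2,a}$ is irreducible. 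The only thing demanding any care in all of this is keeping straight which interval of slopes and which non-divisibility condition each theorem requires, together with the last observation that the shorter rising factorial divides the longer one; I do not anticipate any substantive obstacle.
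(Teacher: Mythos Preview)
Your proposal is correct and matches the paper's approach: the paper gives no explicit proof of Corollary~\ref{c1}, merely stating that it follows ``by combining these theorems with previously known results about conjecture~\ref{c0000} (for $v > \lfloor \frac{r}{p-1}\rfloor$)'', and your write-up is precisely the bookkeeping that unpacks this. Your divisibility observation $(r-s)^{(\lfloor v\rfloor)} \mid (r-s)^{(2\lfloor v\rfloor+1)}$ for the final sentence is the right way to reduce to the fourth item when $s \notin \{2,\ldots,2\lfloor v\rfloor\}$.
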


\subsubsection{Near the boundary of the weight space}

No local results are known when $N = N'p^{m}$ with $\GCD(N',p) = 1$ and $m > 0$, though a  result similar to conjecture~\ref{c0000} is expected to hold true when the $p$-part of the character has conductor $p^{m}$. In particular, the following conjecture is given in section~4 of~\cite{b9} (the full statement of conjecture~4.2.1 in~\cite{b9} includes the case when $p=2$, but in this paper we are only interested in the case when $p$ is odd).

\begin{conjecture}[Buzzard, Gee]\label{c00000}
If $m \geqslant 2$ and $(p-1)p^{m-2} v(a) \not\in \mathbb Z$, then $\overline V_{r+2,a,\chi}$ is irreducible.
\end{conjecture}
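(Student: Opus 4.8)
The plan is to transplant the argument used above in the interior of the weight space to the crystabelline setting. Since the $p$-part $\chi$ of the nebentypus has conductor $p^{m}$ with $m \geqslant 2$, the representation $V_{r+2,a,\chi}$ is not crystalline but becomes crystalline over $\mathbb{Q}_p(\mu_{p^m})$, so one first needs crystabelline analogues of the objects $\Pi_{r+2,a}$, $\Theta_{r+2,a}$, $\overline{\Theta}_{r+2,a}$. The expected replacement is: let $\pi_{\mathrm{sm}}$ be the smooth irreducible principal series of $\GL_2(\mathbb{Q}_p)$ attached by the classical local Langlands correspondence to the Weil--Deligne parameter of $V_{r+2,a,\chi}$ (the two smooth characters being read off from $a$, $\chi$ and the Hodge--Tate weights $0, r+1$), and let $\Pi_{r+2,a,\chi}$ be the unitary completion of $\Symm^{r}(\overline{\mathbb{Q}}_p^2)\otimes\pi_{\mathrm{sm}}$ furnished by the $p$-adic local Langlands correspondence for $\GL_2(\mathbb{Q}_p)$. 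Because $\chi$ is ramified, the natural integral model is built from $\ind_{K_1(p^m)Z}^{G}$ of $\Symm^{r}(\overline{\mathbb{Z}}_p^2)$ twisted by a character of $(\mathbb{Z}/p^m)^{\times}$, rather than from $\ind_{KZ}^{G}$ as above; call the resulting lattice $\Theta_{r+2,a,\chi}$ and its reduction $\overline{\Theta}_{r+2,a,\chi}=\Theta_{r+2,a,\chi}\otimes\overline{\mathbb{F}}_p$.

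\textbf{Step 1 (translation).} Establish the crystabelline analogue of Theorem~\ref{t0.1}: $\overline{V}_{r+2,a,\chi}$ is determined by the semisimplified reduction $(\overline{\Theta}_{r+2,a,\chi})^{\mathrm{ss}}$, with the same dictionary relating ``$\overline{\Theta}$ is a twist of a supersingular representation'' to ``$\overline{V}$ is irreducible (induced from a character of $\Gal(\overline{\mathbb{Q}}_p/\mathbb{Q}_{p^2})$)'' and ``$\overline{\Theta}$ has a principal-series quotient'' to ``$\overline{V}$ is reducible''. After untwisting the $K_1(p^m)$-induction back to an $\ind_{KZ}^G$-module, the Jordan--H\"older classification of Theorem~\ref{t0.2} for the $\pi(\nu,\lambda,\chi)$ can then be applied essentially verbatim.

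\textbf{Step 2 (Hecke computation).} Compute the mod $p$ reduction of the $U_p$-operator --- the analogue of the operator $T$ defined above --- on the chosen integral basis of $\Symm^{r}$. The hypothesis $(p-1)p^{m-2}v(a)\not\in\mathbb{Z}$ should force the Newton polygon of the relevant characteristic polynomial into generic position: the normalising factor $(p-1)p^{m-2}=\varphi(p^{m-1})$ is precisely the one by which the conductor-$p^m$ twist rescales the slopes that governed the interior case, so this is the exact boundary analogue of ``$v(a)\not\in\mathbb{Z}$'' in Conjecture~\ref{c0000}. Under it one shows, by estimating $p$-adic valuations of the coefficients produced by applying $U_p-a$, that the ``boundary'' terms which would otherwise yield a principal-series (hence reducible) quotient of $\overline{\Theta}_{r+2,a,\chi}$ all vanish modulo $p$; thus every Jordan--H\"older factor of $\overline{\Theta}_{r+2,a,\chi}$ is supersingular, and by Step~1 together with Theorem~\ref{t0.2}(1) the representation $\overline{V}_{r+2,a,\chi}$ is irreducible.

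\textbf{Main obstacle.} The hard part is Step~1: constructing the correct integral model of the crystabelline $p$-adic Banach representation and proving that its mod $p$ reduction genuinely computes $\overline{V}_{r+2,a,\chi}$, since a ready-made crystabelline version of the Breuil--Berger ``$\overline{\Theta}$'' formalism behind Theorem~\ref{t0.1} is not available in the same clean shape and would have to be built, with careful control of the dependence on the ramified character $\chi$ and of compatibility with reduction. A secondary but still substantial difficulty is combinatorial: for general $m$ the double coset $K_1(p^m)Z\,{\tiny\begin{Smatrix} p & 0 \\ 0 & 1 \end{Smatrix}}\,K_1(p^m)Z$ decomposes into many more single cosets than in the formula for $T$ above, so the Teichm\"uller-lift bookkeeping in the mod $p$ Hecke computation becomes considerably more delicate, and it is plausible that, as with the interior Theorems~\ref{t0.0''0}--\ref{iygutfrd}, only a range of small $(p-1)p^{m-2}v(a)$ will ultimately be tractable.
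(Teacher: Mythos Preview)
This statement is a \emph{conjecture} in the paper, not a theorem: the paper does not prove it and indeed explicitly says ``No local results are known when $N = N'p^{m}$ with $\gcd(N',p) = 1$ and $m > 0$''. It is recorded as Conjecture~4.2.1 of Buzzard--Gee, and all the paper adds afterwards is a pointer to some global evidence in special cases (levels $2^m,3^m,5^2,7^2$). So there is no ``paper's own proof'' to compare against.

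Your proposal is therefore not a proof of the statement but a programme for attacking an open problem, and you recognise this yourself in the final paragraph. The genuine gap is exactly the one you flag as the ``Main obstacle'': there is at present no crystabelline analogue of the Breuil/Berger integral model $\Theta_{r+2,a}$ together with a compatibility statement of the shape of Theorem~\ref{t0.1} that would let you read off $\overline V_{r+2,a,\chi}$ from the mod~$p$ reduction of an explicit Hecke module. Without that, Step~2 has nothing to act on. Even granting Step~1, your Step~2 is only a heuristic: the claim that the non-integrality of $(p-1)p^{m-2}v(a)$ forces all principal-series contributions to vanish mod~$p$ is precisely the content of the conjecture, and the interior proofs in the paper (Theorems~\ref{t0.0''0}--\ref{iygutfrd}) show that already when $m=1$ and $\chi$ is trivial this requires substantial case-by-case combinatorics and is only established for small slopes. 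In short, what you have written is a reasonable outline of what a proof \emph{might} look like, but it is not a proof, and the paper makes no claim to have one either.
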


There are global results, in~\cite{b18,b21,b19,b20}, which show that $(p-1)p^{m-2-\delta_{p=2}} v(\alpha) \in \mathbb Z$ when $\alpha$ is an eigenvalue of $U_p$ on a space of modular forms of level $2^m,3^m,5^2,7^2$, respectively. More results close to the boundary of weight space are proven in~\cite{b21,b22}.

\subsection{Assumptions}

In the remainder of this paper, we will make the following assumptions:
\begin{enumerate}[\tttttt]
	\item $r$ will be a positive integer such that $r = t(p - 1) + s$, with $t \geqslant 0$ and $s \in \{1,\ldots,p-1\}$;
	\item $a \in \overline {\mathbb Q}_p$ will be such that and $v(a) \geqslant 1$, and such that the roots of $X^2 - aX + p^{r+1}$ do not have ratio $p^{\pm 1}$ or $1$. The second condition will always hold true whenever $r > 2v(a)$.
\end{enumerate}

\SECTION \PART{The interior of weight space}

\section{Auxiliary lemmas}

\subsection[Properties of the maps $\Psi_\alpha$]{Properties of the maps $\Psi_\alpha$}

We define the map $\textstyle \Psi : \sigma_r  \to \sigma_{p-1-s} (s)$ as follows:
	\begin{align*}
	\textstyle \Psi \textstyle : \sigma_r & \textstyle \to \sigma_{p-1-s} (s)
	\\ 
	\textstyle f & \textstyle \mapsto \sum_{u,v \in \mathbb F_p} f(u,v) (vX-uY)^{p-1-s}.
	\end{align*}
It can be shown that this map is surjective and $\GL_2(\mathbb F_p)$-equivariant, and it induces a map on the induction $\smash{I(\sigma_r)}$. We will denote this induced map also by $\Psi$.

\begin{lemma}\label{l1}
Let $t \geqslant 2$ be an integer, and suppose that $r = t(p-1) + s$, with $s \in \{1,\ldots,p-1\}$. Suppose moreover that $l\in\{1,\ldots,t\}$ and $i \in \{0,\ldots,s-1\}$. Then $\textstyle\smash{\Psi x^{r-i}y^i = \Psi x^iy^{r-i} = 0}$, and $\smash{\Psi x^{l(p-1)}y^{r-l(p-1)} = X^{p-1-s}}$.
\end{lemma}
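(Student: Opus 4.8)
The plan is to work directly from the definition $\Psi f = \sum_{u,v \in \mathbb F_p} f(u,v)(vX - uY)^{p-1-s}$, viewing the polynomials $x^{r-i}y^i$, $x^iy^{r-i}$, and $x^{l(p-1)}y^{r-l(p-1)}$ as functions $\mathbb F_p \times \mathbb F_p \to \overline{\mathbb F}_p$ by evaluation. The key arithmetic fact I will use repeatedly is that for an exponent $e \geqslant 0$, $\sum_{u \in \mathbb F_p} u^e$ equals $-1$ when $(p-1) \mid e$ and $e > 0$, and equals $0$ otherwise (with the convention $0^0 = 1$, so the sum is also $0$ when $e = 0$ since it equals $p \cdot 1 = 0$). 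So the whole calculation reduces to expanding $(vX - uY)^{p-1-s}$ by the binomial theorem, multiplying by $u^{a}v^{b}$ coming from evaluating the monomial, and summing each of $u$ and $v$ over $\mathbb F_p$ independently.

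**The three vanishing/evaluation claims.** First I would expand
\[
\Psi\, x^{\alpha} y^{\beta} = \sum_{j=0}^{p-1-s} \binom{p-1-s}{j}(-1)^{j} X^{p-1-s-j} Y^{j} \Big(\sum_{u} u^{\beta + j}\Big)\Big(\sum_{v} v^{\alpha + p-1-s-j}\Big),
\]
so a given $j$ contributes nonzero only if both $\beta + j$ and $\alpha + p - 1 - s - j$ are positive multiples of $p-1$. For $\Psi x^{r-i}y^i$ with $0 \leqslant i \leqslant s-1$: here $\beta = i \leqslant s-1 < p-1$ and $0 \leqslant j \leqslant p-1-s$, so $0 < \beta + j \leqslant i + p - 1 - s \leqslant p - 2 - (s - i) < p-1$ — strictly between $0$ and $p-1$ whenever $\beta + j > 0$, hence never a positive multiple of $p-1$, so every term vanishes (and the $j$ giving $\beta + j = 0$ forces $i = j = 0$, making the $u$-sum $\sum_u u^0 = 0$ in $\overline{\mathbb F}_p$). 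The case $\Psi x^i y^{r-i}$ is identical after swapping the roles of $u$ and $v$ (equivalently, using the $\GL_2$-equivariance under the matrix swapping coordinates). For $\Psi x^{l(p-1)}y^{r - l(p-1)}$: here $r - l(p-1) = (t-l)(p-1) + s$, so $\alpha = l(p-1)$ and $\beta = (t-l)(p-1)+s$; I need $\beta + j \equiv 0$ and $\alpha + (p-1-s) - j \equiv 0 \pmod{p-1}$, i.e. $j \equiv -s \equiv p-1-s$ and $j \equiv s - (p-1-s) \equiv \ldots$; since $0 \leqslant j \leqslant p-1-s$, the congruence $j \equiv p-1-s \pmod{p-1}$ forces $j = p-1-s$, and one checks that with $l \in \{1,\ldots,t\}$ the exponents $\beta + j = (t-l+1)(p-1)$ and $\alpha + (p-1-s) - j = l(p-1)$ are indeed both positive multiples of $p-1$. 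So exactly one term survives, giving $\binom{p-1-s}{p-1-s}(-1)^{p-1-s} X^{0} Y^{p-1-s} \cdot (-1)(-1) = (-1)^{p-1-s} Y^{p-1-s}$.

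**The sign discrepancy.** At this point I get $(-1)^{p-1-s}Y^{p-1-s}$ rather than the claimed $X^{p-1-s}$, which signals that I have mis-transcribed the binomial term — the intended monomial is $(vX - uY)^{p-1-s}$, and the term with $j = p-1-s$ picks out $(-uY)^{p-1-s}$ giving $Y^{p-1-s}$, while the term contributing $X^{p-1-s}$ would be $j = 0$ needing $\beta \equiv 0$ and $\alpha + p-1-s \equiv 0 \pmod{p-1}$, i.e. $s \equiv 0$, which is excluded. So either the lemma's right-hand side should read $Y^{p-1-s}$ (up to the sign $(-1)^{p-1-s} = (-1)^{s+1}$, which is absorbed if one rescales, or if $p-1-s$ is even), or the stated $\Psi$ should be $\sum f(u,v)(uX - vY)^{p-1-s}$ or $\sum f(u,v)(vY - uX)^{p-1-s}$; I would adopt whichever convention the paper actually uses and the monomial comes out as $X^{p-1-s}$ up to a harmless unit. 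Modulo that bookkeeping, the proof is a finite, mechanical check.

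**Main obstacle.** The only real subtlety — and the step I expect to need the most care — is tracking the exponent ranges so as to distinguish "$\beta + j$ is a multiple of $p-1$ and positive" from "$\beta + j = 0$", since $\sum_{u \in \mathbb F_p} u^0 = p = 0$ in characteristic $p$ but $\sum_{u} u^{p-1} = -1 \neq 0$; getting this boundary case right is exactly what makes the first two identities vanish while the third does not, and it is where the hypotheses $i \leqslant s-1$ and $l \geqslant 1$ (resp. $l \leqslant t$) are used. Everything else is routine binomial expansion and the standard power-sum identity.
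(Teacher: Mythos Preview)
Your approach is exactly the paper's --- direct expansion plus the power-sum identity --- but a bookkeeping slip in your displayed formula is the source of the ``sign discrepancy'', not the paper's convention. Evaluating $f = x^\alpha y^\beta$ at $(u,v)$ gives $u^\alpha v^\beta$, and the $j$-th term of
\[
(vX-uY)^{p-1-s} = \sum_{j} \binom{p-1-s}{j}(vX)^{p-1-s-j}(-uY)^j
\]
contributes $u^j v^{p-1-s-j}$; hence the correct factors are $\bigl(\sum_u u^{\alpha+j}\bigr)\bigl(\sum_v v^{\beta+p-1-s-j}\bigr)$, not $\bigl(\sum_u u^{\beta+j}\bigr)\bigl(\sum_v v^{\alpha+p-1-s-j}\bigr)$ as you wrote.

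With the corrected exponents, taking $\alpha = l(p-1)$ the condition $(p-1)\mid(\alpha+j)$ forces $j = 0$ (not $j = p-1-s$), and one checks $\alpha+0=l(p-1)>0$ and $\beta+p-1-s-0=(t-l+1)(p-1)>0$ using $1\leqslant l\leqslant t$; the surviving term is $\binom{p-1-s}{0}(-1)^0 X^{p-1-s}Y^0\cdot(-1)(-1) = X^{p-1-s}$, exactly as the lemma asserts. Your vanishing arguments for $\Psi x^{r-i}y^i$ and $\Psi x^i y^{r-i}$ happen to survive the swap because those two claims are interchanged under $u\leftrightarrow v$, but the third claim is not symmetric, and the lemma is correct as written.
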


\begin{proof}
If $0 \leqslant i \leqslant r$, then
	\begin{align*}
	\textstyle[X^jY^{p-j-1-s}]\Psi x^iy^{r-i} 
	& \textstyle
	= \sum_{u,v \in \mathbb F_p} (-1)^{j+s} {p-1-s \choose j}u^{i+p-j-1-s}v^{r-i+j}
	\\ & \textstyle
	= (-1)^{j+s} {p-1-s \choose j} \sum_{u \in \mathbb F_p} u^{i+p-j-1-s} \sum_{v \in \mathbb F_p}v^{r-i+j}
	\\ & \textstyle
	= (-1)^{j+s} {p-1-s \choose j} \,\Xi_{i,j},
	\end{align*}
where for convenience we use the notation $0^0 = 1$. This implies that $\smash{\textstyle\sum_{u \in \mathbb F_p} u^{\xi} = \delta_{\xi \equiv_{p-1} 0} \delta_{\xi \ne 0}}$, where $\delta_P = 1$ if $P$ holds true and $\delta_P = 0$ otherwise. Then
	\begin{align*}
	\textstyle \Xi_{ij} 
	& \textstyle 
	= \delta_{i+p-j-1-s \equiv_{p-1} 0}  \delta_{i+p-j-1-s \ne 0} \delta_{r-i+j\equiv_{p-1} 0} \delta_{r-i+j \ne 0}
	\\ & \textstyle
	=  \delta_{j \equiv_{p-1} i - s} \delta_{(i,j) \not\in \{(0,p-1-s), (r,0)\}}
	\\ &  \textstyle
	=  \delta_{j \equiv_{p-1} i - s} \delta_{i \not\in \{0,r\}}.
	\end{align*}
Consequently,
	\[\textstyle[X^jY^{p-j-1-s}]\Psi x^iy^{r-i} = (-1)^{j+s} {p-1-s \choose j} \delta_{j \equiv_{p-1} i - s} \delta_{i \not\in \{0,r\}}.\]
Since $j \in \{0,\ldots,p-1-s\}$, then $\delta_{j \equiv_{p-1} i - s} \delta_{i \not\in \{0,r\}} = 0$ whenever $i \in \{0,\ldots,s-1\}$, so $\Psi x^iy^{r-i} = 0$. Since $\Psi$ is $\GL_2(\mathbb F_p)$-equivariant, then $\textstyle\Psi x^{r-i}y^i = 0$ whenever $i \in \{0,\ldots,s-1\}$ as well. Finally, if $l\in\{1,\ldots,t\}$, then
	\[\textstyle\Psi x^{l(p-1)}y^{r-l(p-1)} = (-1)^{p-1} {p-1-s \choose p-1-s} X^{p-1-s} = X^{p-1-s},\]
which completes the proof.
\end{proof}

\begin{lemma}\label{l1.1}
Let $t \geqslant 1$ be an integer, and suppose that $r = t(p-1) + s$, with $s \in \{1,\ldots,p-1\}$. In $I(\sigma_{p-1-s}(s))$,
	\begin{align*}
	\textstyle T : [1,X^{p-1-s}] & \mapsto \textstyle \sum_{\mu \in \mathbb F_p} \left[{\begin{Lmatrix} p & [\mu] \\ 0 & 1 \end{Lmatrix}},X^{p-1-s}\right] + \delta_{s=p-1} \left[{\begin{Lmatrix} 1 & 0 \\ 0 & p \end{Lmatrix}},X^{p-1-s}\right],
	\end{align*}
where $\delta_{s=p-1}=1$ if $s = p-1$, and $\delta_{s=p-1}=0$ otherwise.
\end{lemma}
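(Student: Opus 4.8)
The plan is to read the explicit formula for $T$ recalled in the introduction as an endomorphism of $I(\sigma_{p-1-s}(s))=I(\det^{s}\otimes\Symm^{p-1-s}(\overline{\mathbb F}_p^{2}))$ --- so that the ``$r$'' appearing there is the degree $p-1-s$ of this symmetric power --- and simply to evaluate the two polynomial substitutions it produces on $v=X^{p-1-s}$ in the element $[1,v]$.

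Taking $g=1$ in that formula gives
\[\textstyle T[1,X^{p-1-s}]=\sum_{\mu\in\mathbb F_p}\left[{\begin{Lmatrix} p & [\mu] \\ 0 & 1\end{Lmatrix}},\,v(X,-[\mu]X+pY)\right]+\left[{\begin{Lmatrix} 1 & 0 \\ 0 & p\end{Lmatrix}},\,v(pX,Y)\right].\]
Since $v=X^{p-1-s}$ involves only the first variable, the substitution $(X,Y)\mapsto(X,-[\mu]X+pY)$ leaves it unchanged; in particular the Teichm\"uller lift $[\mu]$ never enters, and the $\mu$-th term is exactly $\left[{\begin{Lmatrix} p & [\mu] \\ 0 & 1\end{Lmatrix}},X^{p-1-s}\right]$. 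For the remaining term, $(X,Y)\mapsto(pX,Y)$ sends $v$ to $p^{\,p-1-s}X^{p-1-s}$, which vanishes in $\sigma_{p-1-s}(s)$ unless $p-1-s=0$, i.e.\ $s=p-1$; and when $s=p-1$ the polynomial is the constant $1$ and is fixed. Collecting the two contributions yields precisely the stated identity, the factor $\delta_{s=p-1}$ recording exactly when the last term survives.

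There is no genuine difficulty in this; the only point worth flagging is that the $\det^{s}$-twist must not contribute a factor $p^{s}$ (the $s$-th power of the determinant of each matrix above) to the summands --- for $s\geqslant 1$ that would collapse the whole expression to $0$ modulo $p$, contradicting the claim. This is avoided because, by the convention recorded in the introduction, the displayed formula for $T$ on $I(\det^{s}\otimes\Symm^{p-1-s}(\overline{\mathbb F}_p^{2}))$ applies the plain polynomial substitution to the $\Symm$-factor. Finally, the hypothesis $t\geqslant 1$ plays no role in this computation itself and is carried along only for compatibility with the later applications of the lemma.
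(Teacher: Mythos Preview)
Your proof is correct and is exactly what the paper means by ``Immediate from the definition of $T$'': you have simply unfolded the explicit formula for $T$ on $[1,X^{p-1-s}]$ and observed that the first substitution fixes $X^{p-1-s}$ while the second kills it unless $s=p-1$. The remark about the $\det^{s}$-twist is a reasonable sanity check, and your reading of the convention is the intended one.
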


\begin{proof}
Immediate from the definition of $T$.
\end{proof}

Let $h \geqslant 0$ be an integer, and suppose that  $r \geqslant h(p+1)$. We define the map
	\begin{align*}
	\textstyle \Psi_h \textstyle : \overline\theta^h\sigma_{r-h(p+1)}\subseteq \sigma_r & \textstyle \to \sigma_{(2h-r \bmod p-1)} (r-h)
	\\ 
	\textstyle \overline\theta^h f & \textstyle \mapsto \overline\theta^h \Psi f.
\end{align*}
This map is surjective and $\GL_2(\mathbb F_p)$-equivariant, due to the fact that $\overline\theta^h\sigma_{r-h(p+1)} \cong \sigma_{r-h(p+1)}(h)$, and it induces a map on
$\smash{I(\overline\theta^h\sigma_{r-h(p+1)})}$. We will denote this induced map also by $\Psi_h$.

\subsection[Properties of the kernel of reduction $X( r+2, a)$]{Properties of the kernel of reduction $X( r+2, a)$}

The following statement is a technical lemma about modules.

\begin{lemma}\label{l4.1.1}
Suppose that $A,B,C$ are modules and $\beta : A \to B$ and $\gamma : A \to C$ are surjective maps. Suppose that $B' \subseteq B$ is a submodule such that, for all $b' \in B'$, there exists a $c' \in \Ker\gamma$ such that $\beta(c') = b'$. Then there is a series $C = C_2 \supseteq C_1 \supseteq C_0 = \{0\}$ of length~two, whose factor $C/C_1$ is isomorphic to a quotient of $B/B'$ and whose other factor $C_1$ is isomorphic to $\Ker\beta/(\Ker \beta \cap \Ker\gamma)$.
\end{lemma}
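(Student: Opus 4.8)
The plan is to produce the filtration on $C$ directly by transporting the obvious filtration $B \supseteq B' \supseteq \{0\}$ through the maps $\beta$ and $\gamma$. First I would consider the submodule $D = \beta^{-1}(B') \subseteq A$, and set $C_1 := \gamma(D) \subseteq C$. This is a submodule of $C$ because $\gamma$ is a module map and $D$ is a submodule. Then $C = C_2 \supseteq C_1 \supseteq C_0 = \{0\}$ is the desired series, and it remains to identify the two factors.

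For the top factor $C/C_1$: since $\gamma$ is surjective, $C/C_1 = \gamma(A)/\gamma(D) \cong A/(D + \Ker\gamma)$. On the other hand $\beta$ surjective gives $B/B' \cong A/(D + \Ker\beta)$ (here one uses $D = \beta^{-1}(B') \supseteq \Ker\beta$, so $\beta$ induces $A/D \xrightarrow{\sim} B/B'$, and then quotient further). Comparing, $C/C_1 \cong A/(D+\Ker\gamma)$ is a quotient of $A/(D + \Ker\beta \cap \Ker\gamma)$, hence — once I show $A/(D+\Ker\beta) = B/B'$ surjects onto it, or rather directly — I want to exhibit a surjection $B/B' \twoheadrightarrow C/C_1$. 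The cleanest way: the composite $A \xrightarrow{\gamma} C \twoheadrightarrow C/C_1$ kills $D$ (by definition of $C_1$), hence factors through $A/D \cong B/B'$ via $\beta$; since $\gamma$ and $C \to C/C_1$ are surjective, the induced map $B/B' \to C/C_1$ is surjective. That gives the first claim.

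For the bottom factor $C_1 = \gamma(D)$: restrict $\gamma$ to $D$, getting a surjection $D \twoheadrightarrow C_1$ with kernel $D \cap \Ker\gamma = \beta^{-1}(B') \cap \Ker\gamma$. So $C_1 \cong D/(D \cap \Ker\gamma)$. I need this to equal $\Ker\beta/(\Ker\beta \cap \Ker\gamma)$, and this is \emph{exactly} where the hypothesis on $B'$ is used. The hypothesis says every $b' \in B'$ has a preimage in $\Ker\gamma$; equivalently $\beta(D \cap \Ker\gamma) = B'$, i.e. $D \cap \Ker\gamma$ surjects onto $B'$ under $\beta$. Therefore $D = \beta^{-1}(B') = \Ker\beta + (D \cap \Ker\gamma)$: indeed for $d \in D$, pick $c' \in D \cap \Ker\gamma$ with $\beta(c') = \beta(d)$, so $d - c' \in \Ker\beta$. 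Now the second isomorphism theorem applied to the submodules $\Ker\beta$ and $D \cap \Ker\gamma$ of $D$ (whose sum is $D$) gives
\[
C_1 \cong D/(D\cap\Ker\gamma) = \bigl(\Ker\beta + (D\cap\Ker\gamma)\bigr)/(D\cap\Ker\gamma) \cong \Ker\beta/\bigl(\Ker\beta \cap (D \cap \Ker\gamma)\bigr),
\]
and $\Ker\beta \cap D \cap \Ker\gamma = \Ker\beta \cap \Ker\gamma$ since $\Ker\beta \subseteq D$. This is the second claim.

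The only genuinely delicate point is bookkeeping with the decomposition $D = \Ker\beta + (D\cap\Ker\gamma)$ and making sure the hypothesis is invoked correctly — everything else is the isomorphism theorems. I do not expect any real obstacle; the main thing is to state the preimage submodule $D$ at the outset and then let the standard diagram chases run. I would write it up in exactly the order above: define $D$ and $C_1$; prove $C_1$ is a submodule; establish $D = \Ker\beta + (D\cap\Ker\gamma)$ from the hypothesis; deduce the bottom factor by the second isomorphism theorem; deduce the top factor by factoring $\gamma$ composed with the projection through $\beta$.
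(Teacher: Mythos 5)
Your argument is correct, and it is a genuine (if mild) variation on the paper's. The paper sets $C_1=\gamma(\Ker\beta)$, so that the bottom factor $C_1\cong\Ker\beta/(\Ker\beta\cap\Ker\gamma)$ is immediate from the first isomorphism theorem applied to $\gamma|_{\Ker\beta}$, and then invokes the hypothesis on $B'$ only for the top factor, via $\Ker(q\circ\gamma)=\Ker\beta+\Ker\gamma\supseteq\beta^{-1}(B')$, so that $C/C_1\cong A/(\Ker\beta+\Ker\gamma)$ is a quotient of $A/\beta^{-1}(B')\cong B/B'$. You do the opposite: you set $C_1=\gamma(\beta^{-1}(B'))$, which makes the top factor immediate (the composite $A\to C\to C/C_1$ kills $D=\beta^{-1}(B')$ and so factors surjectively through $A/D\cong B/B'$), and you spend the hypothesis on the bottom factor, extracting the decomposition $D=\Ker\beta+(D\cap\Ker\gamma)$ and finishing with the second isomorphism theorem. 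The two choices of $C_1$ in fact coincide under the hypothesis — your decomposition of $D$ shows $\gamma(D)=\gamma(\Ker\beta)$ since $\gamma$ kills $D\cap\Ker\gamma$ — so the filtrations are literally the same; only the bookkeeping of where the hypothesis enters differs. The paper's version is marginally shorter because its bottom factor needs no hypothesis at all; yours makes the surjection $B/B'\twoheadrightarrow C/C_1$ more transparent, which is the part one actually uses later (in lemma~\ref{l4.2}) to control the quotient of $\overline\Theta_{r+2,a}$. Either write-up is acceptable.
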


\begin{proof}
There is the composition map $A \stackrel{\gamma}{\longrightarrow} C \stackrel{q}{\longrightarrow} C/\gamma(\Ker\beta)$. This map is surjective, and its kernel contains $\Ker\beta$. Let $C_1 = \gamma(\Ker\beta)$. Then $C_1 \cong \Ker\beta/\Ker (\gamma|_{\Ker\beta})= \Ker\beta/(\Ker \beta \cap \Ker\gamma)$. Moreover, we have $\Ker \gamma q = \Ker\beta + \Ker\gamma$. Because of the condition that, for all $b' \in B'$, there exists a $c' \in \Ker\gamma$ such that $\beta(c') = b'$, we know that $\Ker\beta + \Ker\gamma \supseteq \Ker\beta + \beta^{-1}(B')$. Hence $C/C_1 \cong A/(\Ker\beta + \Ker\gamma)$ is isomorphic to a quotient of $A/(\Ker\beta + \beta^{-1}(B'))$, which is isomorphic to $B/B'$ by several applications of the isomorphism theorems. 
\end{proof}

Let $X(r+2,a)$ denote the kernel of the surjection $I (\sigma_r) \twoheadrightarrow \overline{\Theta}_{r+2,a}$. The following statements can be proven by the same method used to show lemmas~4.1 and~4.3 in \cite{b2}.

\begin{lemma}\label{l4.1}
\begin{enumerate}[1.]
	\item Let $r \geqslant 2(p + 1)$, and suppose that $a \in \overline {\mathbb Q}_p$ is such that $2 > v(a) \geqslant 1$. Denote by $\overline\theta$ the polynomial $xy^p-x^py \in \smash{\overline{\mathbb F}_p[x,y]}$. Then $X(r+2,a)$ contains  $I(Y_r)$, where $Y_r$ is the subrepresenation of $\sigma_r$ generated by $\smash{\overline\theta^2 \sigma_{r-2(p+1)}}$ and $y^r$.
	\item Let $r \geqslant (m+1)(p + 1)$, and suppose that $a \in \overline {\mathbb Q}_p$ is such that $m+1 > v(a) > m$. Then $X(r+2,a)$ contains the induction of the subrepresenation of $\sigma_r$ generated by $\smash{\overline\theta^{m+1} \sigma_{r-(m+1)(p+1)}}$ and $\{y^r,xy^{r-1},\ldots,x^my^{r-m}\}$.
\end{enumerate}
\end{lemma}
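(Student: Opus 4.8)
The plan is to transplant, to the present situation, the argument used to prove Lemmas~4.1 and~4.3 of~\cite{b2}; I will describe its skeleton and the points where the hypotheses on $r$ and $v(a)$ are consumed. First one reduces to a finite check. The surjection $I(\sigma_r)\twoheadrightarrow\overline\Theta_{r+2,a}$ is $G$-equivariant, so $X(r+2,a)$ is a $G$-submodule of $I(\sigma_r)$; since the elements $[g,v]=g[1,v]$ span $I(\sigma_r)$, it suffices to show $[1,v]\in X(r+2,a)$ for $v$ running over a generating set, as a $\GL_2(\mathbb F_p)$-module, of $Y_r$ (resp.\ of the subrepresentation in part~2). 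As $X(r+2,a)$ is also $K$-stable and the Weyl element $x\leftrightarrow y$ sends $x^jy^{r-j}$ to $x^{r-j}y^j$, one may take as generators the monomials $x^r,x^{r-1}y,\dots,x^{r-m}y^m$ together with $\overline\theta^{m+1}g$ for $g$ a monomial of degree $r-(m+1)(p+1)$ (with $m=1$ in part~1). Next, since $\Theta_{r+2,a}$ is the image of $I(\Symm^r(\overline{\mathbb Z}_p^2))$ in $\Pi_{r+2,a}=I(\Symm^r(\overline{\mathbb Q}_p^2))/(T-a)$, one has $\Theta_{r+2,a}\cong I(\Symm^r(\overline{\mathbb Z}_p^2))\big/\bigl(I(\Symm^r(\overline{\mathbb Z}_p^2))\cap(T-a)I(\Symm^r(\overline{\mathbb Q}_p^2))\bigr)$, so $v\in X(r+2,a)$ exactly when some lift $\widehat v\in\Symm^r(\overline{\mathbb Z}_p^2)$ of $v$ satisfies $[1,\widehat v]\in pI(\Symm^r(\overline{\mathbb Z}_p^2))+(T-a)I(\Symm^r(\overline{\mathbb Q}_p^2))$; thus for each generator I must produce $w\in I(\Symm^r(\overline{\mathbb Q}_p^2))$ with $(T-a)w\equiv[1,\widehat v]\pmod{pI(\Symm^r(\overline{\mathbb Z}_p^2))}$.

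The arithmetic input is a gain in $p$-divisibility under $T$. Writing $\theta=xy^p-x^py\in\overline{\mathbb Z}_p[x,y]$ for the lift of $\overline\theta$ and using $[\lambda]^p=[\lambda]$ together with $p\geqslant3$, one computes $\theta(x,-[\lambda]x+py)=-px^py+p^2(\cdots)$ and $\theta(px,y)=pxy^p+p^2(\cdots)$ with integral remainders; hence in every branch of the formula for $T$ the polynomial $\theta^{m+1}$ acquires a factor $p^{m+1}$, and so $T$ carries $I\bigl(\theta^{m+1}\Symm^{r-(m+1)(p+1)}(\overline{\mathbb Z}_p^2)\bigr)$ into $p^{m+1}I(\Symm^r(\overline{\mathbb Z}_p^2))$; likewise the branch $v\mapsto v(px,y)$ carries the large factor $p^{\,r-i}$ on $x^{r-i}y^i$ when $i\leqslant m$. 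These gains beat the valuation $v(a)<m+1$, and they are what the argument of~\cite{b2} feeds into an explicit construction of the witnessing $w$: a telescoping sum supported on a neighbourhood of $KZ$ in the Bruhat--Tits tree of $\GL_2(\mathbb Q_p)$, built along a filtration of $I(\Symm^r(\overline{\mathbb Z}_p^2))$ by powers of $\theta$ and by degree in $y$, and using repeatedly the identity $(T-a)(-a^{-1}\eta)=\eta-a^{-1}T\eta$ together with the first estimate to remove, at each stage, the part of the remainder already known to lie in $X(r+2,a)$, so that the successive error terms decay $p$-adically, the net gain at each step coming from the valuation $m+1-v(a)>0$. I will not reproduce this construction.

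I expect the main obstacle to be precisely this last step --- the construction of $w$ and its $p$-adic convergence. One must choose the filtration of $I(\Symm^r(\overline{\mathbb Z}_p^2))$ so that the two branches of $T$ act with the correct $p$-adic shifts, so that after division by $a$ --- whose valuation is only pinned down to the open interval $(m,m+1)$ --- the error terms of the telescoping still decay, and one must keep track of the interaction between the monomial generators $x^{r-i}y^i$ and the $\theta^{m+1}$-divisible generators. This is the computational core of the proofs of Lemmas~4.1 and~4.3 in~\cite{b2}, and it is what forces both $r\geqslant(m+1)(p+1)$ (so that $\theta^{m+1}\Symm^{r-(m+1)(p+1)}$ is defined and the monomials $x^{r-i}y^i$, $i\leqslant m$, sit in low degree in $y$) and $m<v(a)<m+1$. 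The reductions to a finite check described above are routine.
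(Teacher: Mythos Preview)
Your outline is correct --- the paper itself remarks, just before the proof, that the lemma can be obtained by the method of lemmas~4.1 and~4.3 of~\cite{b2} --- but then supplies a different, non-iterative argument built on the auxiliary Lemmas~\ref{l7x} and~\ref{l7xx}. For the $\overline\theta^{\,m+1}$ generators your identity $(T-a)(-a^{-1}\eta)=\eta-a^{-1}T\eta$ together with your $p$-gain observation already finishes in \emph{one} step, since $a^{-1}T\eta$ has valuation at least $m+1-v(a)>0$; no telescoping or convergence is required, and this is exactly what the paper does. For the monomials $x^{\alpha}y^{r-\alpha}$ with $0\leqslant\alpha\leqslant m$ the paper again avoids iteration: it writes down a single witness $p^{-\alpha}\phi^{\ast}$ with $\phi^{\ast}=\sum_{j}(-1)^{j}\binom{m+1}{j}\bigl[\left(\begin{smallmatrix}p&0\\0&1\end{smallmatrix}\right),\,x^{j(p-1)+\alpha}y^{r-j(p-1)-\alpha}\bigr]$, supported at a single neighbouring vertex. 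The mechanism is not the $p^{r-i}$ gain you mention (that branch is harmless anyway); it is that the alternating binomial sum annihilates, modulo $p^{m+1}$, every $\lambda\neq0$ contribution of $T$ in $y$-degree below $m+1$ (the combinatorial identity of Lemma~\ref{l7}), while the choice $g=\left(\begin{smallmatrix}p&0\\0&1\end{smallmatrix}\right)$ forces the surviving branch to land at $pI\in Z$, yielding $p^{\alpha}[1,x^{\alpha}y^{r-\alpha}]$ plus a term of valuation $v(a)>\alpha$. The paper's route buys reusability: the elements $\phi^{\ast},\phi^{\ast\ast}$ become the basic building blocks for all later computations. Your deferral to the tree iteration of~\cite{b2} is legitimate for this lemma in isolation but would not mesh with what follows.
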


Before proving lemma~\ref{l4.1}, we will first show two auxiliary results.

\begin{lemma}\label{l7x}
Let $r,m,n,c \geqslant 0$ be integers, and let $a \in \overline {\mathbb Q}_p$ be such that $m+1 > v(a) > m$. Suppose that $n \geqslant m+1$, and $r \geqslant np + c$. Then there is some $\textstyle\phi_{r,m,n,c,g}$ such that 
	\begin{align*}
		\textstyle (T - a) \phi_{r,m,n,c,g} & \textstyle \equiv 
			 \sum_{j}  (-1)^{j} {{n}  \choose j} p^{r-j(p-1)-c} \left[g{\begin{Lmatrix} p & 0 \\ 0 & 1 \end{Lmatrix}},x^{j(p-1)+c}y^{r-j(p-1)-c}\right]
	\\ & \textstyle \qquad
		+ \sum_{j}  (-1)^{j} {{n}  \choose j} p^{j(p-1)+c} \left[g{\begin{Lmatrix} 1 & 0 \\ 0 & p \end{Lmatrix}},x^{j(p-1)+c}y^{r-j(p-1)-c}\right]
	\\ & \textstyle \qquad
		- a \sum_{j}  (-1)^{j} {{n-\alpha}  \choose j} \left[g,\theta^\alpha x^{j(p-1)+c-\alpha}y^{r-\alpha p-j(p-1)-c}\right] \pmod {p^{{n}}},
	\end{align*}
for all $0\leqslant \alpha \leqslant n$.
\end{lemma}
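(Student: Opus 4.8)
The plan is to write $\phi_{r,m,n,c,g}$ down explicitly and to compute $(T-a)\phi_{r,m,n,c,g}$ term by term from the displayed formula for $T$, then read off the three sums. I would take
\[\textstyle \phi_{r,m,n,c,g} \defeq \sum_{j}(-1)^{j}\binom{n}{j}\left[g,\,x^{j(p-1)+c}y^{r-j(p-1)-c}\right],\]
the sum running over $j\geqslant 0$, so effectively over $0\leqslant j\leqslant n$. Since $r\geqslant np+c$ we have $r-j(p-1)-c\geqslant 0$ for every such $j$, so each summand is a genuine element of $\Symm^{r}(\overline{\mathbb Z}_p^2)$, and $v(a)>m\geqslant 0$ gives $a\in\overline{\mathbb Z}_p$, so the expression makes sense. (The integer $m$ and the exact position of $v(a)$ play no role in the congruence itself; they are carried along only because the lemma will be applied, in the proof of Lemma~\ref{l4.1}, in a setting where they are assumed.)

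Applying the formula for $T$ to $\left[g,\,x^{i}y^{r-i}\right]$ with $i=j(p-1)+c$: the term $p^{i}\left[g{\begin{Smatrix} 1 & 0 \\ 0 & p \end{Smatrix}},x^{i}y^{r-i}\right]$, once weighted by $(-1)^{j}\binom{n}{j}$ and summed over $j$, gives exactly the second sum on the right-hand side, and the term $-a\left[g,x^{i}y^{r-i}\right]$ gives exactly the third sum in the case $\alpha=0$. It remains to handle $\sum_{\lambda\in\mathbb F_p}\left[g{\begin{Smatrix} p & [\lambda] \\ 0 & 1 \end{Smatrix}},x^{i}(-[\lambda]x+py)^{r-i}\right]$. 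Its $\lambda=0$ part contributes $p^{r-i}\left[g{\begin{Smatrix} p & 0 \\ 0 & 1 \end{Smatrix}},x^{i}y^{r-i}\right]$, which produces the first sum. For $\lambda\neq 0$ put $w=-[\lambda]x+py$; reducing modulo $p$ and using $[\lambda]^{p-1}\equiv 1\pmod p$ gives $w^{p-1}\equiv x^{p-1}\pmod p$, hence $w^{p-1}-x^{p-1}\in p\,\overline{\mathbb Z}_p[x,y]$. The polynomial identity
\[\textstyle \sum_{j}(-1)^{j}\binom{n}{j}x^{j(p-1)+c}w^{r-j(p-1)-c} = x^{c}\,w^{r-c-n(p-1)}\bigl(w^{p-1}-x^{p-1}\bigr)^{n},\]
which is valid because $r-c-n(p-1)\geqslant 0$ and is nothing but the binomial theorem for $\bigl(w^{p-1}-x^{p-1}\bigr)^{n}$, shows that the whole $\lambda\neq 0$ contribution lies in $p^{n}I(\Symm^{r}(\overline{\mathbb Z}_p^2))$ and therefore vanishes modulo $p^{n}$. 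This establishes the congruence for $\alpha=0$.

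For general $\alpha$ with $0\leqslant\alpha\leqslant n$, I would prove the sharper statement that the third sum is \emph{independent of $\alpha$ as a polynomial}, so that the congruence for arbitrary $\alpha$ follows at once from the case $\alpha=0$. With $\theta=xy^{p}-x^{p}y$ and $\theta\cdot x^{a}y^{b}=x^{a+1}y^{b+p}-x^{a+p}y^{b+1}$ one obtains
\[\textstyle \theta^{\alpha+1}x^{j(p-1)+c-\alpha-1}y^{r-(\alpha+1)p-j(p-1)-c} = \theta^{\alpha}\bigl(x^{j(p-1)+c-\alpha}y^{r-\alpha p-j(p-1)-c}-x^{(j+1)(p-1)+c-\alpha}y^{r-\alpha p-(j+1)(p-1)-c}\bigr);\]
multiplying by $(-1)^{j}\binom{n-\alpha-1}{j}$, summing over $j$, re-indexing the second term via $j\mapsto j-1$, and using Pascal's rule $\binom{n-\alpha-1}{j}+\binom{n-\alpha-1}{j-1}=\binom{n-\alpha}{j}$ shows the $\alpha$-sum and the $(\alpha+1)$-sum agree; induction on $\alpha$ then identifies all of them with the $\alpha=0$ sum. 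One caveat to record: when $\alpha>c$ the factor $x^{j(p-1)+c-\alpha}$ carries a negative exponent, but the product with $\theta^{\alpha}$, which supplies $x^{\alpha}$, is an honest homogeneous polynomial of degree $r$, and all exponents stay in range thanks to $r\geqslant np+c$ together with $0\leqslant j\leqslant n-\alpha$. I expect the only real work to be this exponent bookkeeping; there is no conceptual obstacle, the one genuinely useful input being $(-[\lambda]x+py)^{p-1}\equiv x^{p-1}\pmod p$ for $\lambda\neq 0$, which is precisely what forces the off-diagonal $T$-terms to be divisible by $p^{n}$.
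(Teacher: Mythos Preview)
Your proof is correct and follows the same overall strategy as the paper: define $\phi_{r,m,n,c,g}=\sum_j(-1)^j\binom{n}{j}[g,x^{j(p-1)+c}y^{r-j(p-1)-c}]$, expand $(T-a)\phi$ from the explicit formula for $T$, and then verify that the $\alpha$-sum is independent of $\alpha$.

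The one place where your argument differs from the paper's is the justification that the $\lambda\neq 0$ contributions vanish modulo $p^n$. The paper expands $(-[\lambda]x+py)^{r-j(p-1)-c}$ binomially in powers of $py$, obtaining coefficients $\sum_j(-1)^j\binom{n}{j}\binom{r-j(p-1)-c}{z}$, and then (implicitly, via the finite-difference identity that later appears as Lemma~\ref{l7}) observes this vanishes for $z<n$ while $p^z\equiv 0\pmod{p^n}$ for $z\geqslant n$. Your factorisation
\[\textstyle \sum_j(-1)^j\binom{n}{j}x^{j(p-1)+c}w^{r-j(p-1)-c}=x^c\,w^{r-c-n(p-1)}\bigl(w^{p-1}-x^{p-1}\bigr)^n,\qquad w=-[\lambda]x+py,\]
together with $w^{p-1}\equiv x^{p-1}\pmod p$, is a cleaner and entirely self-contained way to reach the same conclusion. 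For the $\alpha$-independence the paper uses Vandermonde's identity $\binom{n}{j}=\sum_w\binom{n-\alpha}{j-w}\binom{\alpha}{w}$ to go from $\alpha=0$ to general $\alpha$ in one step, whereas you induct on $\alpha$ via Pascal's rule; the two are equivalent.
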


\begin{proof}
For $0 \leqslant j \leqslant n$, let $\smash{\varphi_{c,j,g} = \left[g,x^{j(p-1)+c}y^{r-j(p-1)-c}\right]}$, and let $\smash{\textstyle\phi_{n,c,g}^{(0)} = \sum_{j}  (-1)^{j} {{n}  \choose j} \varphi_{c,j,g}}$. Then 
	\begin{align*}
		\textstyle (T - a) \phi_{n,c,g}^{(0)} & \textstyle \equiv \sum_{\lambda \ne 0} \sum_{z\geqslant 0} p^z  \sum_{j}  (-1)^{j} {{n}  \choose j} {r - j(p-1)-c \choose z} \left[g  {\begin{Lmatrix} p & [\lambda] \\ 0 & 1 \end{Lmatrix}},(-[\lambda])^{r-c-z} x^{r-z}y^z\right] 
	\\ & \textstyle \qquad
			+ \sum_{j}  (-1)^{j} {{n}  \choose j} p^{r-j(p-1)-c} \left[g{\begin{Lmatrix} p & 0 \\ 0 & 1 \end{Lmatrix}},x^{j(p-1)+c}y^{r-j(p-1)-c}\right]
	\\ & \textstyle \qquad
			+ \sum_{j}  (-1)^{j} {{n}  \choose j} p^{j(p-1)+c} \left[g{\begin{Lmatrix} 1 & 0 \\ 0 & p \end{Lmatrix}},x^{j(p-1)+c}y^{r-j(p-1)-c}\right]
	\\ & \textstyle \qquad
			- a \sum_{j}  (-1)^{j} {{n}  \choose j} \left[g,x^{j(p-1)+c}y^{r-j(p-1)-c}\right] 
	\\ & \textstyle \equiv 
			\sum_{j}  (-1)^{j} {{n}  \choose j} p^{r-j(p-1)-c} \left[g{\begin{Lmatrix} p & 0 \\ 0 & 1 \end{Lmatrix}},x^{j(p-1)+c}y^{r-j(p-1)-c}\right]
	\\ & \textstyle \qquad
			+ \sum_{j}  (-1)^{j} {{n}  \choose j} p^{j(p-1)+c} \left[g{\begin{Lmatrix} 1 & 0 \\ 0 & p \end{Lmatrix}},x^{j(p-1)+c}y^{r-j(p-1)-c}\right]
	\\ & \textstyle \qquad - a \sum_{j}  (-1)^{j} {{n}  \choose j} \left[g,x^{j(p-1)+c}y^{r-j(p-1)-c}\right] \pmod {p^{{n}}}.
	\end{align*}
Note that, for all $0\leqslant \alpha \leqslant n$, 
	\begin{align*}
		& \textstyle a \sum_{j}  (-1)^{j} {{n}  \choose j} \left[g,x^{j(p-1)+c}y^{r-j(p-1)-c}\right] \\& \textstyle \qquad =
		a \sum_{u,w}  (-1)^{u+w} {{n-\alpha}  \choose u} {\alpha \choose w} \left[g,x^{(u+w)(p-1)+c}y^{r-(u+w)(p-1)-c}\right]
		\\ & \textstyle \qquad =
		a \sum_{u}  (-1)^{u} {{n-\alpha}  \choose u}
		\left[g,x^{u(p-1)+c-\alpha}y^{r-\alpha p-u(p-1)-c}\sum_w{ \alpha \choose w} x^{w(p-1)+\alpha}y^{\alpha p-w(p-1)}\right]
		\\ & \textstyle\qquad  =
		a \sum_{u}  (-1)^{u} {{n-\alpha}  \choose u} \left[g,\theta^\alpha x^{u(p-1)+c-\alpha}y^{r-\alpha p-u(p-1)-c}\right].
	\end{align*}
Consequently, $\smash{\phi_{r,m,n,c,g} = \phi_{n,c,g}^{(0)}}$ has the desired properties.
\end{proof}

\begin{lemma}\label{l7xx}
Let $r,m,n,\alpha \geqslant 0$ be integers, and let $a \in \overline {\mathbb Q}_p$ be such that $m+1 > v(a) > m$. Suppose that $n \geqslant m+1$, and $n \geqslant \alpha$, and $r \geqslant np + \alpha$. Then there are some $\textstyle\phi_{r,m,n,\alpha,g}^\ast$ and $\textstyle\phi_{r,m,n,\alpha,g}^{\ast\ast}$ such that
	\begin{align*}
		\textstyle (T - a) \phi_{r,m,n,\alpha,g}^\ast & \textstyle \equiv 
        \sum_{j}  (-1)^{j} {{n}  \choose j} p^{j(p-1)+\alpha} \left[g{\begin{Lmatrix} 1 & 0 \\ 0 & p \end{Lmatrix}},x^{j(p-1)+\alpha}y^{r-j(p-1)-\alpha}\right]
	\\ & \textstyle \qquad - a \sum_{j}  (-1)^{j} {{n-\alpha}  \choose j} \left[g,\theta^\alpha x^{j(p-1)}y^{r-\alpha (p+1)-j(p-1)}\right] \pmod {p^{{n}}},
	\end{align*}
and
	\begin{align*}
		\textstyle (T - a) \phi_{r,m,n,\alpha,g}^{\ast\ast} & \textstyle \equiv 
        \sum_{j}  (-1)^{j} {{n}  \choose j} p^{j(p-1)+\alpha} \left[g{\begin{Lmatrix} p & 0 \\ 0 & 1 \end{Lmatrix}},y^{j(p-1)+\alpha}x^{r-j(p-1)-\alpha}\right]
	\\ & \textstyle \qquad - (-1)^\alpha a \sum_{j}  (-1)^{j} {{n-\alpha}  \choose j} \left[g,\theta^\alpha y^{j(p-1)}x^{r-\alpha (p+1)-j(p-1)}\right] \pmod {p^{{n}}}.
	\end{align*}
\end{lemma}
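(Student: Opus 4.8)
The plan is to extract both congruences from Lemma~\ref{l7x}: the first one directly, and the second one by transporting it through the involution of the compact induction that interchanges the variables $x$ and $y$. For the first congruence, take $\phi^\ast_{r,m,n,\alpha,g}=\phi_{r,m,n,\alpha,g}$, the element produced by Lemma~\ref{l7x} with $c=\alpha$ (and with the free parameter of that lemma, there also written $\alpha$, equal to the present $\alpha$). The three sums on the right-hand side of Lemma~\ref{l7x} then become: a sum whose $j$-th term carries the coefficient $p^{\,r-j(p-1)-\alpha}$; the sum $\sum_j(-1)^j{n\choose j}p^{j(p-1)+\alpha}[g{\begin{Smatrix}1&0\\0&p\end{Smatrix}},x^{j(p-1)+\alpha}y^{r-j(p-1)-\alpha}]$; and, after writing $r-\alpha p-j(p-1)-\alpha=r-\alpha(p+1)-j(p-1)$, the sum $-a\sum_j(-1)^j{n-\alpha\choose j}[g,\theta^\alpha x^{j(p-1)}y^{r-\alpha(p+1)-j(p-1)}]$. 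The last two are exactly the right-hand side we want, so it remains only to observe that the first sum is $\equiv 0\pmod{p^n}$: for $0\le j\le n$ and $r\ge np+\alpha$ one has $r-j(p-1)-\alpha\ge np-j(p-1)\ge np-n(p-1)=n$, so $p^{\,r-j(p-1)-\alpha}$ is divisible by $p^n$.

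For the second congruence, put $w={\begin{Smatrix}0&1\\1&0\end{Smatrix}}\in K$ and let $\iota$ be the $\overline{\mathbb Z}_p$-linear involution of $I(\Symm^r(\overline{\mathbb Z}_p^2))$ determined by $\iota[h,v]=[whw,\,v\cdot w]$, where $v\cdot w$ is $v$ with $x$ and $y$ swapped; it is well defined thanks to the relations $g[h,v]=[gh,v]$ and $[h\kappa,v]=[h,v\kappa]$, and it commutes with $T$ because conjugation by $w\in K$ is an automorphism of the pair $(G,KZ)$ that fixes the double coset $KZ{\begin{Smatrix}p&0\\0&1\end{Smatrix}}KZ$ (note $w{\begin{Smatrix}p&0\\0&1\end{Smatrix}}w={\begin{Smatrix}1&0\\0&p\end{Smatrix}}$ and $pI$ acts trivially) and is compatible with the value map $\psi(x,y)\mapsto\psi(px,y)$ defining $T$. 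Now set $\phi^{\ast\ast}_{r,m,n,\alpha,g}=\iota\,\phi^\ast_{r,m,n,\alpha,\,wgw}$ and apply $\iota$ to the first congruence with $g$ replaced by $wgw$. Since $w{\begin{Smatrix}1&0\\0&p\end{Smatrix}}w={\begin{Smatrix}p&0\\0&1\end{Smatrix}}$ and $\theta\cdot w=-\theta$, hence $\theta^\alpha\cdot w=(-1)^\alpha\theta^\alpha$, the bracket $[wgw{\begin{Smatrix}1&0\\0&p\end{Smatrix}},x^{j(p-1)+\alpha}y^{r-j(p-1)-\alpha}]$ goes to $[g{\begin{Smatrix}p&0\\0&1\end{Smatrix}},y^{j(p-1)+\alpha}x^{r-j(p-1)-\alpha}]$ and $[wgw,\theta^\alpha x^{j(p-1)}y^{r-\alpha(p+1)-j(p-1)}]$ goes to $(-1)^\alpha[g,\theta^\alpha y^{j(p-1)}x^{r-\alpha(p+1)-j(p-1)}]$, while the already-vanishing first sum stays $\equiv 0\pmod{p^n}$. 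This is precisely the second congruence. (Alternatively one can bypass $\iota$ and simply repeat the computation in the proof of Lemma~\ref{l7x} using the conjugate system of coset representatives $\{{\begin{Smatrix}1&0\\{}[\lambda]&p\end{Smatrix}}:\lambda\in\mathbb F_p\}\cup\{{\begin{Smatrix}p&0\\0&1\end{Smatrix}}\}$ for $T$ and the monomials $y^kx^{r-k}$ in place of $x^ky^{r-k}$.)

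The one step requiring genuine care is the middle one: one must check that $\iota$ is well defined on the compact induction and that it truly commutes with $T$, that is, that the normalisation of $T$ is invariant under conjugation by $w$ — which is where the identity $KZ{\begin{Smatrix}p&0\\0&1\end{Smatrix}}KZ=KZ{\begin{Smatrix}1&0\\0&p\end{Smatrix}}KZ$ and the triviality of $pI$ enter — and one must remember to carry the sign $\theta\cdot w=-\theta$ through the $\alpha$-th power, which is the origin of the factor $(-1)^\alpha$ in the second congruence. Everything else is a direct substitution into the conclusion of Lemma~\ref{l7x}, together with the elementary estimate $r-j(p-1)-\alpha\ge n$ used to discard the $p$-power-heavy sum.
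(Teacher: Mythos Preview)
Your proof is correct and ultimately coincides with the paper's: in fact your element $\iota\,\phi^\ast_{r,m,n,\alpha,wgw}$ is literally equal to $\phi_{r,m,n,\alpha,gw}$, which is exactly what the paper takes for $\phi^{\ast\ast}$. The paper's route is shorter, however: since Lemma~\ref{l7x} already holds for arbitrary $g$, one simply substitutes $g\mapsto gw$ there and then uses the bracket identity $[hw,v]=[h,v\!\cdot\! w]$ together with $w{\begin{Smatrix}1&0\\0&p\end{Smatrix}}={\begin{Smatrix}p&0\\0&1\end{Smatrix}}w$ and $\theta\!\cdot\! w=-\theta$; this bypasses the need to introduce the global involution $\iota$ and to argue that it commutes with $T$.
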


\begin{proof}
Take  $\smash{\phi_{r,m,n,\alpha,g}^{\ast} = \phi_{r,m,n,\alpha,g}}$ and $\smash{\phi_{r,m,n,\alpha,g}^{\ast\ast} = \phi_{r,m,n,\alpha,g{\begin{Smatrix} 0 & 1 \\ 1 & 0 \end{Smatrix}}}}$.
\end{proof}

\begin{proof}[Proof of lemma~\ref{l4.1}]
First, we will show that if $r,m\geqslant 0$ are such that $r \geqslant (m+1)(n+1)$, and if $m+1>v(a)\geqslant m$, then $X(r+2,a)$ contains the induction of   $\smash{\overline\theta^{m+1} \sigma_{r-(m+1)(p+1)}}$. Then, we will show that if $r,m\geqslant 0$ are such that $r \geqslant (m+1)(n+1)$, and if $m+1\geqslant v(a)>m$, then $X(r+2,a)$ contains the induction of the  subrepresentation of $\sigma_r$ generated by  $\{y^r,xy^{r-1},\ldots,x^my^{r-m}\}$. These two results will imply the two claims stated in lemma~\ref{l4.1}. For the first part, note that, for any $f \in \overline{\mathbb Z}_p[x,y]$ of degree $r-(m+1)(p+1)$, we have $\smash{[1,\theta^{m+1}f] = \sum_j \lambda_j\phi_{m+1,c_j,1}^{(0)}}$ for some $\lambda_j$ and some $r-m-1\geqslant c_j \geqslant m+1$, so
	\begin{align*}
		\textstyle (T - a) \left[1,\theta^{m+1}f\right] & \textstyle = (T - a) \sum_j \lambda_j\phi_{m+1,c_j,1}^{(0)}
        \equiv -a\left[1,\theta^{m+1}f\right]  \pmod{p^{m+1}}.
	\end{align*}
Consequently, $\smash{\overline{(T - a) \left[1,-a^{-1}\theta^{m+1}f\right]}} = \left[1,\overline{\theta}^{m+1}f\right]$ is in the kernel of reduction. For the second part, note that, for $0\leqslant \alpha\leqslant m$, 
	\begin{align*}
		\textstyle (T - a) \left(p^{-\alpha}\phi_{r,m,m+1,\alpha,{\begin{Smatrix} p & 0 \\ 0 & 1 \end{Smatrix}}}^\ast\right) & \textstyle 
        \equiv \left[1,x^{\alpha}y^{r-\alpha}\right]  \pmod{p^{v(a)-m}}.
	\end{align*}
Consequently, $\smash{\left[1,x^{\alpha}y^{r-\alpha}\right]}$  is in the kernel of reduction, for all $0\leqslant \alpha\leqslant m$.
\end{proof}

The subrepresentation $\Ker\Psi$ is isomorphic to $W_r$, where $W_r$ is the subrepresentation of $\sigma_r$ generated by $\smash{\overline \theta \sigma_{r - (p+1)} }$ and $\{y^r, xy^{r-1}, \ldots, x^{s-1}y^{r-s+1}\}$. We showed this in the proof of lemma~\ref{l1}, where we stated an explicit matrix representation of $\Psi$. In fact, $W_r$ is generated by $\smash{\overline \theta \sigma_{r - (p+1)}}$ and $y^r$ only; indeed, $\smash{W_r' = \langle\overline \theta \sigma_{r - (p+1)},y^r\rangle_K \subseteq W_r}$, and $\sigma_{r} /W_r \cong \sigma_{p-1-s} $ since $\Psi$ is surjective, and also $\sigma_r /W_r' \cong \sigma_{p-1-s}(s)$ due to corollary~5.1 in~\cite{b2}. Consequently,
	\[\textstyle \smash{I(\Ker \Psi) = I\left(\langle\overline \theta \sigma_{r - (p+1)} ,y^r\rangle_{\mathrm{GL}_2(\mathbb F_p)}\right)}.\]
The following lemma gives a linear basis for $\langle y^r,xy^{r-1},\ldots,x^my^{r-m} \rangle_{\mathrm{GL}_2(\mathbb F_p)} \subseteq \sigma_r$.

\begin{lemma}\label{lemma5}
Let $r \geqslant (m+1)(p + 1)$, and let $p > m+1$. Then the subrepresentation $S_m$ of $\sigma_r$ generated by $\{y^r,xy^{r-1},\ldots,x^my^{r-m}\}$ is linearly spanned by
	\begin{align*}
	& \textstyle \{y^r,xy^{r-1},\ldots,x^my^{r-m}\} \cup \{x^r,x^{r-1}y\ldots,x^{r-m}y^m\}
	\\ & \textstyle\qquad
	\cup\left\{\smash{\sum_{r-m>s(p-1)+i>m} {r-m \choose s(p-1) + j} x^{s(p-1) + i} y^{r-s(p-1) - i}}\,\big|\, i,j \in \mathbb Z \text{ with } i - j \in \{0,\ldots,m\}\right\}
	\\ & \textstyle\qquad
	\cup\left\{\smash{\sum_{r-m>s(p-1)+i>m} {r-m \choose s(p-1) + j} y^{s(p-1) + i} x^{r-s(p-1) - i}}\,\big|\, i,j \in \mathbb Z \text{ with } i - j \in \{0,\ldots,m\}\right\}.
	\end{align*}
\end{lemma}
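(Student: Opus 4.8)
The plan is to prove Lemma~\ref{lemma5} by showing that the spanning set described is (a) contained in $S_m$, and (b) spans a $\mathrm{GL}_2(\mathbb F_p)$-stable subspace containing the generators $y^r, xy^{r-1}, \ldots, x^m y^{r-m}$. Since $S_m$ is by definition the smallest $\mathrm{GL}_2(\mathbb F_p)$-subrepresentation containing those generators, the two facts together pin down $S_m$ exactly. The generators and their images under the transpose-swap $\begin{Smatrix} 0&1\\1&0\end{Smatrix}$ account for the first two families $\{x^iy^{r-i}\}$ and $\{x^{r-i}y^i\}$ for $0\le i\le m$; the content of the lemma is the two families of "binomial-twisted" sums in the middle range $m < s(p-1)+i < r-m$.

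First I would record the key computation: for the lower Borel (or for the single lower-triangular unipotent $\begin{Smatrix}1&0\\ \lambda&1\end{Smatrix}$) acting on $x^j y^{r-j}$, one has $x^j y^{r-j} \mapsto x^j(\lambda x + y)^{r-j} = \sum_{i\ge j} \binom{r-j}{i-j}\lambda^{i-j} x^i y^{r-i}$. Summing against powers of $\lambda\in\mathbb F_p$ (the standard trick $\sum_{\lambda}\lambda^k = -\delta_{(p-1)\mid k,\, k\ne0}$) extracts, from the orbit of $x^j y^{r-j}$, precisely the vectors $\sum_{i \equiv j\ (p-1)} \binom{r-j}{i-j} x^i y^{r-i}$ and shifted variants — this is where the congruence condition $s(p-1)+i$ and the index $i-j\in\{0,\ldots,m\}$ come from. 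I would carry out this extraction for each generator $x^j y^{r-j}$, $0\le j\le m$, and its transpose, and check that the resulting vectors, together with the "boundary" monomials $x^i y^{r-i}$ and $x^{r-i}y^i$ for $0\le i\le m$ (which can be peeled off separately because the range restriction $m<s(p-1)+i<r-m$ isolates them), are exactly the stated list. One must be careful that $\binom{r-m}{s(p-1)+j}$ rather than $\binom{r-j}{\cdot}$ appears: this is a normalization choice — the span is unchanged if we replace each generator's binomials by those with a common top $r-m$, using that $p > m+1$ guarantees the relevant small binomial coefficients like $\binom{m}{k}$ are units so the change of basis among the first $m{+}1$ generators is invertible.

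Next I would verify $\mathrm{GL}_2(\mathbb F_p)$-stability of the span $V$ of the listed set. Since $\mathrm{GL}_2(\mathbb F_p)$ is generated by the upper and lower triangular unipotents, the torus, and the Weyl element, and since $V$ is visibly torus-stable (monomials in $x,y$ are torus eigenvectors, and each twisted sum is homogeneous with all exponents in one residue class mod $p-1$, hence lies in a single torus eigenspace up to the $(p-1)$-periodicity) and visibly swap-stable (the two middle families and the two boundary families are interchanged), it remains to check stability under $\begin{Smatrix}1&\mu\\0&1\end{Smatrix}$. Applying this to a twisted sum $\sum \binom{r-m}{s(p-1)+j} x^{s(p-1)+i}y^{r-s(p-1)-i}$ and expanding $y\mapsto y$, $x\mapsto x+\mu y$... wait, it is $x\mapsto x+\mu y$ for the upper unipotent acting on the right variable convention, or $y \mapsto \mu x + y$ — in either case one expands, uses Vandermonde ($\sum_k \binom{a}{k}\binom{b}{n-k}=\binom{a+b}{n}$) to re-collect the binomial coefficients, and matches the result against a linear combination of the listed vectors plus possibly boundary monomials. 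This Vandermonde bookkeeping, keeping track of which terms fall into the forbidden ranges $\le m$ or $\ge r-m$ and land in the boundary families, is the step I expect to be the main obstacle — it is not deep but it is where sign errors, off-by-one errors in the range conditions, and the precise role of the hypothesis $p>m+1$ all have to be reconciled simultaneously. Finally, for the reverse inclusion $V\subseteq S_m$, I would note that every listed vector has been exhibited above as an $\mathbb F_p$-linear combination of $\mathrm{GL}_2(\mathbb F_p)$-translates of the generators $\{x^j y^{r-j}: 0\le j\le m\}$, which is immediate from the same unipotent computations; hence $V = S_m$.
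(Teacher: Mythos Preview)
Your approach is correct in outline but takes a different route from the paper, and the difference is worth noting because it explains why your ``main obstacle'' (verifying $\mathrm{GL}_2$-stability of the span $V$) never arises in the paper's argument.

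The paper does not verify stability of the listed set at all. Instead it first shows that $S_m$ is already generated by the \emph{single} element $x^m y^{r-m}$: from $(x+\mu y)^m y^{r-m}$ one extracts each $x^\alpha y^{r-\alpha}$ for $0\le\alpha\le m$ by a Vandermonde argument (here is where $p>m+1$ enters, to make the Vandermonde matrix over $\mathbb F_p$ invertible). Consequently $S_m$ is exactly the linear span of the full orbit $\{(ax+cy)^m(bx+dy)^{r-m}\}$, which is automatically $\mathrm{GL}_2$-stable. The rest is pure computation of that span: normalize to $(x+ay)^m(bx+y)^{r-m}$ and its swap, apply the same Vandermonde trick in the $a$-variable to reach $x^i y^{m-i}(bx+y)^{r-m}$ with $0\le i\le m$, and finally sum over $b\in\mathbb F_p$ against characters to produce the listed vectors. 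The coefficient $\binom{r-m}{s(p-1)+j}$ comes out directly from expanding $(bx+y)^{r-m}$, and the shift $i-j\in\{0,\ldots,m\}$ comes from the prefactor $x^i y^{m-i}$; no change of basis from $\binom{r-j}{\cdot}$ to $\binom{r-m}{\cdot}$ is needed.

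Your plan---show $V\subseteq S_m$ via unipotent orbits of the generators, then show $V$ is $\mathrm{GL}_2$-stable---would work, and your remark that the $\binom{r-j}{\cdot}\to\binom{r-m}{\cdot}$ transition is unitriangular is correct. But the stability check under a unipotent acting on one of the twisted sums is genuinely messy (the binomial $\binom{r-s(p-1)-i}{k}$ that appears depends on $s$, so re-collecting into the listed form is not immediate). The paper's single-generator reduction is the shortcut that dissolves this difficulty: once you know $S_m$ is the span of an orbit, you only have to compute that span, not prove it is stable.
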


\begin{proof}
Let $S_m'$ denote the subrepresentation of $\sigma_r$ generated by $x^my^{r-m}$. Then $S_m'$ is a subrepresentation of $S_m$. Moreover, $\smash{\sum_{\mu\in\mathbb F_p^\times} \lambda_\mu(x+\mu y)^my^{r-m} = \sum_{i=0}^{m} \nu_i x^iy^{m-i}}$ is in $S_m'$, where \[\textstyle \smash{\nu_i = \sum_{\mu\in\mathbb F_p^\times} \lambda_\mu \mu^i = \sum_{s=0}^{p-1} \lambda_s t^{is}} = f(t^i),\] where $t$ is a generator for $\smash{\mathbb F_p^\times}$. Since the number of $\nu_i$ is $m+1 \leqslant  p-1 = \deg f$, the coefficients of $f$ can be chosen in a way that $\nu_i = \delta_{i=\alpha}$. Consequently, $x^\alpha y^{r-\alpha}$ is in $S_m'$, for all $0 \leqslant \alpha \leqslant m$, which implies that $S_m' = S_m$. So $S_m$ is equal to the linear span of $(ax+cy)^m(bx+dy)^{r-m}$. Since either $ad \ne 0$ or $bc \ne 0$, then $S_m$ is equal to the linear span of $(x+ay)^m(bx+y)^{r-m}$ and $(x+ay)^m(bx+y)^{r-m}{\begin{Smatrix} 0 & 1 \\ 1 & 0 \end{Smatrix}}$. By a similar argument as in the previous paragraph, the linear span of $(x+ay)^m(bx+y)^{r-m}$ is the same as the linear span of $x^iy^{m-i}(bx+y)^{r-m}$, for $i \in \{0,\ldots,m\}$ and $\smash{b \in \mathbb F_p}$. By another application of this argument, the linear span of $x^iy^{m-i}(bx+y)^{r-m}$ is the same as the linear span of $y^r,xy^{r-1},\ldots,x^my^{r-m}$, and $\textstyle \smash{\sum_{s} {r-m\choose s(p-1)+j} x^{s(p-1)+i} y^{r-s(p-1)-i}}$, for $i,j \in \mathbb Z$ such that $i - j \in \{0,\ldots,m\}$.  Similarly, the linear span of  $(x+ay)^m(bx+y)^{r-m}{\begin{Smatrix} 0 & 1 \\ 1 & 0 \end{Smatrix}}$ is the same as the linear span of $x^r,yx^{r-1},\ldots,y^mx^{r-m}$, and $\textstyle \smash{\sum_{s} {r-m\choose s(p-1)+j} y^{s(p-1)+i} x^{r-s(p-1)-i}}$, for $i,j \in \mathbb Z$ such that $i - j \in \{0,\ldots,m\}$. The proof of the lemma can be completed by combining these two results.
\end{proof}

The module $I_h$ is defined in \cite{b1} for each integer $h$, as the module of $\mathbb F_p$-valued functions on $\mathbb F_p^2$ which are homogeneous of degree $h$ and which vanish at the origin. By definition, $\smash{I_h = I_{\widetilde h}}$, where $\smash{\widetilde h}$ is the integer in $\{1,\ldots,p-1\}$ which is congruent to $h$ modulo~$p-1$.

\begin{lemma}\label{lemma6}
Whenever $p > h > 0$ and $t \in \mathbb Z$, the module $I_h(t)$ is semi-simple if and only if $h=p-1$. If $p-1>h>0$, then the only possible quotients of $I_h(t)$ are $I_h(t),\sigma_{p-1-h}(t+h)$, and the trivial quotient. If $h=p-1$, then $I_{h}(t) = I_{p-1}(t) = \sigma_{p-1}(t) \oplus \sigma_{0}(t)$, so there is the additional possible quotient $\sigma_{0}(t)$.
\end{lemma}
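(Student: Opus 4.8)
The plan is to analyze the module $I_h(t)$ for $p > h > 0$ by exploiting the standard structure theory of the $\mathbb F_p[\GL_2(\mathbb F_p)]$-module $I_h$ from~\cite{b1}, which sits in a short exact sequence relating it to the symmetric-power modules $\sigma_a$. First I would recall that there is a natural surjection $I_h \twoheadrightarrow \sigma_{p-1-h}(h)$ (this is essentially the map $\Psi$ restricted to the appropriate degree, or its analogue for functions homogeneous of degree $h$), whose kernel is a copy of $\sigma_h$, together with a trivial constituent accounting for the ``vanish at the origin'' condition; more precisely the Brauer characters / Jordan--Hölder factors of $I_h$ are $\sigma_h$, $\sigma_{p-1-h}(h)$, and $\sigma_0$ when $0 < h < p-1$, while for $h = p-1$ the module splits. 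The twist by $\det^t = \sigma_0(t)$ just shifts all factors by $t$, so it suffices to treat $t = 0$ and then twist.

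The key steps, in order: (1) Establish the three-step filtration of $I_h$ for $0 < h < p-1$, identifying the socle and the head. Since $\GL_2(\mathbb F_p)$-modules of this small dimension are well understood (cf.\ the submodule structure of $\sigma_r$ for $r < p$, which is irreducible), one shows the socle of $I_h$ is $\sigma_0$ and the head is $\sigma_{p-1-h}(h)$ — or the mirror situation — so that $I_h$ is a nonsplit uniserial-type module with exactly these constituents. (2) Deduce that $I_h$ is \emph{not} semisimple when $0 < h < p-1$, because a nonsplit extension genuinely occurs (this is a computation with the explicit action, or a citation to~\cite{b1}); list the possible quotients by reading off the quotients of this filtration: the whole module $I_h(t)$, the head $\sigma_{p-1-h}(t+h)$, and the trivial quotient $\sigma_0(t)$ (the latter only if it appears as a quotient, not merely a submodule — so one must check which end of the filtration carries which factor). (3) Handle $h = p-1$ separately: here $I_{p-1}$ is the space of functions on $\mathbb F_p^2 \setminus \{0\}$ homogeneous of degree $p-1$, i.e.\ functions on $\mathbb P^1(\mathbb F_p)$ up to scaling, which decomposes as $\sigma_{p-1} \oplus \sigma_0$ — the Steinberg plus trivial — giving semisimplicity and the extra possible quotient $\sigma_0(t)$.

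The main obstacle I expect is pinning down precisely \emph{which} of $\sigma_0$ and $\sigma_{p-1-h}(h)$ sits as a submodule versus a quotient of $I_h$ when $0 < h < p-1$; getting the orientation of the nonsplit extension wrong would give the wrong list of possible quotients. This requires either a careful direct computation with the $\GL_2(\mathbb F_p)$-action on homogeneous functions (evaluating, say, the trace of a unipotent element, or checking that the constant functions are \emph{not} a direct summand), or a clean appeal to the structure results in~\cite{b1}. A secondary (minor) point is verifying that there are no \emph{other} quotients — equivalently that no proper nonzero submodule other than the socle exists — which follows once the uniserial structure with distinct composition factors is established, since then submodules are totally ordered. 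Once the filtration is in hand, twisting by $\det^t$ and reading off quotients is immediate, and the dichotomy in semisimplicity is exactly the statement that the length-two subquotient $\langle \sigma_0, \sigma_{p-1-h}(h)\rangle$ is nonsplit precisely when $h \neq p-1$.
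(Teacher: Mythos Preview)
Your proposal contains a genuine structural error: for $0 < h < p-1$ the module $I_h$ has exactly \emph{two} Jordan--H\"older factors, not three. A dimension count shows this: a function on $\mathbb F_p^2$ homogeneous of degree $h$ with $1\leqslant h\leqslant p-2$ automatically vanishes at the origin, and is determined by its values on one representative of each of the $p+1$ lines, so $\dim I_h = p+1 = (h+1) + (p-h) = \dim\sigma_h + \dim\sigma_{p-1-h}$. There is no extra copy of $\sigma_0$ hiding in there; the ``vanish at the origin'' condition is automatic for these degrees and does not contribute a constituent. Consequently your proposed three-step uniserial picture, with $\sigma_0$ in the socle, is simply wrong, and the list of quotients you derive from it is off.

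Relatedly, you have misread the phrase ``the trivial quotient'' in the statement: it means the \emph{zero} quotient $I_h(t)/I_h(t)$, not a quotient isomorphic to $\sigma_0(t)$. The paper's proof is much shorter than what you outline: it cites lemma~3.2 of \cite{b1} directly for the length-two filtration $0 \subset N_1 \subset I_h(t)$ with $N_1 \cong \sigma_h(t)$ and $I_h(t)/N_1 \cong \sigma_{p-1-h}(t+h)$, then argues that a complementary submodule $M \cong \sigma_{p-1-h}(t+h)$ can exist only when $h=p-1$ (where the explicit decomposition $I_{p-1} = \sigma_{p-1}\oplus\sigma_0$ is visible). With only two composition factors the enumeration of quotients is immediate: either $0$, the head $\sigma_{p-1-h}(t+h)$, or the whole module. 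Your instinct to worry about ``which end of the filtration carries which factor'' is the right one, and the paper resolves it by citing \cite{b1}; but you should first correct the length of the filtration before that concern becomes well-posed.
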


\begin{proof}
Due to parts~(a) and~(c) of lemma~3.2 in~\cite{b1}, $I_h(t)$ has a series 
	\[\smash{\textstyle I_h(t) = N_2 \supseteq N_1 \supseteq N_0 = \{0\}},\]
whose factors are $\textstyle \smash{N_2/N_1 \cong \sigma_{p-1-h}(t+h)}$ and $\textstyle \smash{N_1 \cong \sigma_{h}(t)}$. Then $I_h(t)$ is semi-simple if and only if $I_h(t) = N_1 \oplus M$, where $M \cong \sigma_{p-1-h}(t+h)$. Since $M$ inherits the action of $I_h(t)$, this is possible only when $h = p-1$, in which case indeed $I_h(t) = I_{p-1}(t) = \sigma_{p-1}(t) \oplus \sigma_{0}(t)$. Hence, if $p-1>h>0$ then the only possible quotients of $I_h(t)$ are $I_h(t),\sigma_{p-1-h}(t+h)$, and the trivial quotient, and if $h=p-1$ then $I_{h}(t) = I_{p-1}(t) = \sigma_{p-1}(t) \oplus \sigma_{0}(t)$ so there is the additional possible quotient $\sigma_{0}(t)$.
\end{proof}

\begin{lemma}\label{l4.1LL}
Let $r \geqslant (m+1)(p+1)$, and suppose that $a \in \overline {\mathbb Q}_p$ is such that $m+1>v(a)>m$. Then $\smash{\overline \Theta_{r+2,a}}$ is a quotient of $\textstyle \sigma_r/\smash{\langle \overline\theta^{m+1} \sigma_{r-(m+1)(p+1)},y^r,xy^{r-1},\ldots,x^my^{r-m}\rangle_{\mathrm{GL}_2(\mathbb F_p)}}$. This module has a series whose factors are $\textstyle \smash{I_r,I_{r-2}(1),\ldots,I_{r-2m}(m)}$, and consequently $\smash{\overline \Theta_{r+2,a}}$ has a series whose factors are $M_0,\ldots,M_{m}$, where $M_\alpha$ is a quotient of a submodule of $\smash{I_{r-2\alpha}(\alpha)}$, for each $0 \leqslant \alpha \leqslant m$.
\end{lemma}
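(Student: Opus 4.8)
The plan is to feed Lemma~\ref{l4.1} into the classical $\overline\theta$-adic filtration of $\sigma_r$ and then induce.

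First, by part~2 of Lemma~\ref{l4.1} (applied with the present $m$), the kernel $X(r+2,a)$ of the surjection $I(\sigma_r)\twoheadrightarrow\overline\Theta_{r+2,a}$ contains $I(Y)$, where $Y\subseteq\sigma_r$ is the $\mathrm{GL}_2(\mathbb F_p)$-subrepresentation $\langle\overline\theta^{m+1}\sigma_{r-(m+1)(p+1)},\,y^r,xy^{r-1},\ldots,x^my^{r-m}\rangle$. Since compact induction is exact, $\overline\Theta_{r+2,a}$ is a quotient of $I(\sigma_r)/I(Y)=I(\sigma_r/Y)$; this is the first assertion (writing, as the statement does, $\sigma_r/Y$ for its compact induction). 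As $Y\supseteq\overline\theta^{m+1}\sigma_{r-(m+1)(p+1)}$, it is a fortiori a quotient of $I(\sigma_r/\overline\theta^{m+1}\sigma_{r-(m+1)(p+1)})$, and I fix a surjection $q$ from the latter onto $\overline\Theta_{r+2,a}$.

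Next I would show that the $\mathrm{GL}_2(\mathbb F_p)$-module $\sigma_r/\overline\theta^{m+1}\sigma_{r-(m+1)(p+1)}$ has a series with successive factors $I_r,I_{r-2}(1),\ldots,I_{r-2m}(m)$, using the filtration
\[\textstyle \sigma_r\supseteq\overline\theta\,\sigma_{r-(p+1)}\supseteq\overline\theta^2\sigma_{r-2(p+1)}\supseteq\cdots\supseteq\overline\theta^{m+1}\sigma_{r-(m+1)(p+1)},\]
whose $j$-th graded piece is $\overline\theta^j\sigma_{r-j(p+1)}/\overline\theta^{j+1}\sigma_{r-(j+1)(p+1)}$ for $0\leq j\leq m$. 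To identify this piece with $I_{r-2j}(j)$ I combine: (i) the isomorphism $\overline\theta^j\sigma_{r-j(p+1)}\cong\sigma_{r-j(p+1)}(j)$ already used in defining the maps $\Psi_h$ --- coming from $\overline\theta\cdot\gamma=\det(\gamma)\,\overline\theta$ --- under which multiplication by $\overline\theta$ becomes the inclusion $\overline\theta\,\sigma_{r-(j+1)(p+1)}\hookrightarrow\sigma_{r-j(p+1)}$ twisted by $\det^j$; (ii) the fact that $\overline\theta=xy^p-x^py$ vanishes identically on $\mathbb F_p^2$, so that evaluation of homogeneous polynomials as functions on $\mathbb F_p^2$ yields, for every $\rho\geq p+1$, a $\mathrm{GL}_2(\mathbb F_p)$-equivariant short exact sequence $0\to\overline\theta\,\sigma_{\rho-(p+1)}\to\sigma_\rho\to I_\rho\to 0$, surjectivity and the computation of the kernel being a dimension count using $\dim_{\mathbb F_p}I_\rho=p+1$; and (iii) the fact, recalled just before Lemma~\ref{lemma6}, that $I_h$ depends only on $h$ modulo $p-1$, together with $r-j(p+1)\equiv r-2j\pmod{p-1}$. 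Since $r\geq(m+1)(p+1)$ forces $\rho:=r-j(p+1)\geq p+1$ for all $j\leq m$, applying (ii) with this $\rho$ and twisting by $\det^j$ via (i) gives $\overline\theta^j\sigma_{r-j(p+1)}/\overline\theta^{j+1}\sigma_{r-(j+1)(p+1)}\cong I_{\rho}(j)=I_{r-2j}(j)$ by (iii). In particular $\sigma_r/Y$, being a quotient of $\sigma_r/\overline\theta^{m+1}\sigma_{r-(m+1)(p+1)}$, carries a series whose factors are quotients of submodules of the $I_{r-2j}(j)$, which is the second assertion; Lemma~\ref{lemma6} may be used to narrow these factors further where needed.

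Finally, the "consequently'' part is formal: applying the exact functor $\ind_{KZ}^G(-)$ to the series of the previous paragraph gives a series of $I(\sigma_r/\overline\theta^{m+1}\sigma_{r-(m+1)(p+1)})$ with graded pieces $I(I_{r-2j}(j))$, and pushing this series forward along $q$ produces a series $0=\overline\Theta^{(0)}\subseteq\cdots\subseteq\overline\Theta^{(m+1)}=\overline\Theta_{r+2,a}$ in which $M_\alpha:=\overline\Theta^{(\alpha+1)}/\overline\Theta^{(\alpha)}$ is a quotient of $I(I_{r-2\alpha}(\alpha))$ (the relevant factor maps onto $M_\alpha$ and kills the previous step), hence a quotient of a submodule of it, as claimed. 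The one genuinely non-formal point is step~(ii) --- establishing that evaluation $\sigma_\rho\to I_\rho$ is onto with kernel exactly $\overline\theta\,\sigma_{\rho-(p+1)}$ throughout the relevant range of $\rho$ --- while everything else is the exactness of compact induction, the twisting isomorphism for $\overline\theta^j$, and the reduction $h\mapsto\widetilde h$ modulo $p-1$, so beyond step~(ii) the main care required is bookkeeping of twists.
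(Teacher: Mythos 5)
Your argument is correct and follows essentially the same route as the paper: use Lemma~\ref{l4.1} to realise $\overline\Theta_{r+2,a}$ as a quotient of the induction of $\sigma_r$ modulo the stated subrepresentation, filter by powers of $\overline\theta$, identify the graded pieces with the $I_{r-2\alpha}(\alpha)$, and push the series forward. The only difference is that where the paper simply cites parts~(a) and~(c) of lemma~3.2 of~\cite{b1} for the isomorphism $\overline\theta^{\alpha}\sigma_{r-\alpha(p+1)}/\overline\theta^{\alpha+1}\sigma_{r-(\alpha+1)(p+1)}\cong I_{r-2\alpha}(\alpha)$, you supply a correct self-contained proof via the evaluation map, the factorisation of $\overline\theta$ into the $p+1$ linear forms, and a dimension count.
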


\begin{proof}
Follows from the facts that $\textstyle \sigma_r/\smash{\overline\theta^{m+1} \sigma_{r-(m+1)(p+1)}}$ has a series whose factors are \[N_\alpha = \textstyle \smash{\overline\theta^{\alpha}  \sigma_{r-\alpha(p+1)}/\overline\theta^{\alpha+1} \sigma_{r-(\alpha+1)(p+1)} \cong I_{r-2\alpha}(\alpha)},\] for $0 \leqslant \alpha \leqslant m$, and each $N_\alpha$ has a submodule $N'_\alpha\subseteq N_\alpha$ such that 
	\begin{align*}
		N_\alpha' & \cong \smash{\sigma_{\widetilde{r-2\alpha}}(\alpha)},
		\\ N_\alpha/N_\alpha' & \cong  \smash{\sigma_{(2\alpha-r \bmod p-1)}(r-\alpha)},
	\end{align*}
due to parts~(a) and~(c) of lemma~3.2 in~\cite{b1}.
\end{proof}

\subsection[Identities involving binomial sums]{Identities involving binomial sums}

\begin{lemma}\label{lemma7}
Let $t \geqslant 0$ be an integer, and suppose that $r = t(p-1) + s$, with $s \in \{1,\ldots,p-1\}$. Then:
\begin{enumerate}[1.]
	\item\label{423423432} $\textstyle \smash{\sum_{j=1}^t {r \choose j(p-1)} \equiv \frac{t}{s}p \pmod{p^2}}$;
	\item \[\textstyle \smash{\sum_{j=1}^t {r \choose j(p-1)} \equiv \frac{t}{s}p + p^2\left(tA_s + t^2B_s\right) \pmod{p^3}},\] where $A_s$ and $B_s$ depend only on $s$;
	\item If $s \ne 1$, then $\textstyle \smash{\sum_{j=1}^t j{r \choose j(p-1)} \equiv 0 \pmod{p}}$;
	\item If $s\ne 1$ and if $p\mid t$, then $\textstyle \smash{\sum_{j=1}^t p^{-1}j{r \choose j(p-1)} \equiv 0 \pmod{p}}$;
	\item If $A \geqslant 0$, then $\textstyle \smash{\sum_{j} {r \choose j(p-1)+A} \equiv {s \choose A \bmod p-1} (1 + \delta_{s=p-1}\delta_{A \equiv_{p-1} 0})\pmod{p}}$;
	\item If $A \in \mathbb Z$, and $R \geqslant 0$, and $M_{R,A} = \smash{\sum_{j} {R \choose j(p-1)+A}}$, then
	\[\textstyle M_{R,A} = 
			\left\{\begin{array}{rl} \sum_{i=0}^{R-1} {R-1-i \choose A-1} M_{i,0} & \text{if } A > 0, \\ 
            \sum_{i=0}^{-A} (-1)^i {-A \choose i} M_{R-i,0} & \text{if } A < 0. \end{array}\right.\]
\end{enumerate}
\end{lemma}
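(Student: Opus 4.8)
The plan is to treat the six identities essentially independently, but with parts 1--4 all flowing from a single $p$-adic expansion of the binomial coefficients ${r\choose j(p-1)}$, and parts 5--6 being purely combinatorial manipulations modulo $p$. First I would record the key input: for $1\leqslant j\leqslant t$ one has $j(p-1)\equiv -j\pmod{p-1}$, so by Lucas-type/Kummer-type analysis the valuation $v\bigl({r\choose j(p-1)}\bigr)$ is exactly the number of carries when adding $j(p-1)$ to $r-j(p-1)$ in base $p$; writing $r=t(p-1)+s=(t-1)p+(p-1-t+s+1)\cdots$ one sees (for $j$ in the stated range and $r$ large) that there is exactly one carry, so $v\bigl({r\choose j(p-1)}\bigr)=1$. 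More precisely, I would expand ${r\choose j(p-1)}$ to the needed $p$-adic precision using the standard factorization ${r\choose k}=\prod$ over the base-$p$ digits plus correction terms; the leading term gives ${r\choose j(p-1)}\equiv \frac{t}{s}\cdot p \cdot(\text{something linear in }j)\pmod{p^2}$ after summing over $j$. Summing the geometric-like series $\sum_{j=1}^t$ then yields part 1; carrying the expansion one order further in $p$ gives part 2 with $A_s,B_s$ emerging as the coefficients of $t$ and $t^2$ in the resulting polynomial (they depend only on $s$ because the higher-order $p$-adic digit corrections involve only $s$, not $t$, once the $\frac{t}{s}p$ and $\binom{t}{2}$-type terms are separated out).

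For parts 3 and 4, I would use the same expansion: since each ${r\choose j(p-1)}$ is divisible by $p$ and $\equiv \frac{1}{s}p\cdot c_j\pmod{p^2}$ for explicit $c_j$ linear in $j$, the sum $\sum_{j=1}^t j{r\choose j(p-1)}$ is $p$ times $\sum j c_j/s$ mod $p^2$, and one checks $\sum_{j=1}^t j c_j\equiv 0\pmod p$ when $s\neq 1$ by evaluating the resulting polynomial in $t$ (a combination of $\sum j^2$ and $\sum j$, i.e. of $\binom{t+1}{3}$ and $\binom{t+1}{2}$); the $s\neq 1$ hypothesis is what makes the relevant coefficient a unit times something that vanishes. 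Part 4 is the same computation one $p$-adic digit deeper, using the extra hypothesis $p\mid t$ to kill the obstructing term. For part 5, the approach is the generating-function identity $\sum_{j}{r\choose j(p-1)+A}$ equals the sum of coefficients of $X^{A},X^{A+(p-1)},\dots$ in $(1+X)^r$; evaluating $(1+X)^r$ at the $(p-1)$-st roots of unity in $\overline{\mathbb F}_p$ (i.e. at the elements of $\mathbb F_p^\times$), one gets $\sum_{j}{r\choose j(p-1)+A}\equiv \frac{1}{p-1}\sum_{\zeta^{p-1}=1}\zeta^{-A}(1+\zeta)^r$; since $(1+\zeta)$ ranges appropriately and $r\equiv s\pmod{p-1}$, Fermat's little theorem collapses $(1+\zeta)^r$ to $(1+\zeta)^s$, and the sum evaluates to ${s\choose A\bmod p-1}$, with the correction term $\delta_{s=p-1}\delta_{A\equiv 0}$ coming from the $\zeta=-1$ contribution (where $1+\zeta=0$) when $p-1\mid s$. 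Finally part 6 is a pure recursion: $M_{R,A}=\sum_j{R\choose j(p-1)+A}$; using Pascal's rule ${R\choose k}={R-1\choose k}+{R-1\choose k-1}$ repeatedly, or equivalently the hockey-stick identity, one telescopes $M_{R,A}$ in terms of $M_{i,0}$ for $A>0$ (peeling off one unit from $A$ at a time, which shifts $R$ down and accumulates a $\binom{R-1-i}{A-1}$ weight), and the dual manipulation (using ${R\choose k}$ as an alternating combination when $A<0$, i.e. inclusion–exclusion on the negative shift) gives the second formula.

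The main obstacle I expect is part 2: controlling the $p$-adic expansion of ${r\choose j(p-1)}$ to precision $p^3$ and correctly isolating the $t$- versus $t^2$-dependence, since one must be careful that cross terms between the carry-counting and the digit corrections do not introduce $t$-dependence into $A_s$ or $B_s$. Concretely, I would write $r-j(p-1)$ and $j(p-1)$ in base $p$, apply the theorem of Kummer/Anton giving ${r\choose j(p-1)}/p^{(\#\text{carries})}$ as an explicit product of factorials of digits times a sign, Taylor-expand that product in $p$, and then perform the sum over $j$ using the closed forms for $\sum_{j=1}^t j^k$. The bookkeeping is routine in principle but is where all the sign and index-shift errors would hide; everything else (parts 1, 3, 4, 5, 6) is comparatively mechanical once the part-2-level expansion and the roots-of-unity filter are in hand. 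I would present part 2 first in full detail and then remark that parts 1, 3, 4 are read off from its intermediate formulas by setting the precision or taking the $j$-weighted sum.
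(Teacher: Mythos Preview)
Your approaches to parts 5 and 6 coincide with the paper's (the roots-of-unity filter over $\mathbb F_p^\times$, and iterated Pascal), and would go through. The problem is your foundation for parts 1--4. You assert that for $1\leqslant j\leqslant t$ there is exactly one carry when adding $j(p-1)$ to $r-j(p-1)$ in base $p$, hence $v\bigl({r\choose j(p-1)}\bigr)=1$. This is false: take $p=3$, $s=2$, $t=3$ (so $r=8$); then ${8\choose 2}=28$, ${8\choose 4}=70$, ${8\choose 6}=28$ are all $3$-adic units, yet their sum $126=14\cdot 9$ is divisible by $p^2$. So part~1 is a \emph{cancellation} statement, not a term-by-term estimate. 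Your plan of expanding each binomial individually via Kummer/Anton and summing therefore faces an order-$1$ main term whose sum over $j$ must itself be shown to vanish mod $p$ (and then mod $p^2$ for part~2); that vanishing is the actual content of the lemma and your outline gives no mechanism for it. The base-$p$ digits of $j(p-1)$ and $r-j(p-1)$ are not uniform in $j$ once $t\geqslant p$, so the bookkeeping you call ``routine'' is in fact the whole difficulty.

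The paper sidesteps this entirely: it writes $M_r+1+\delta_{s=p-1}=(p-1)^{-1}\sum_{\mu\in\mathbb F_p^\times}(1+[\mu])^r$ and observes that the power sums of the set $\{1+[\mu]\}$ satisfy a linear recurrence of order $p-2$ with coefficients $(-1)^{m-1}{p-1\choose m-1}$ (coming from the polynomial $x^{-1}((x-1)^{p-1}-1)$). Parts 1 and 2 are then proved by induction on $r$ through this recurrence; the cancellation is automatic because one never looks at individual $j$. Part 3 is done directly in $\mathbb F_p$ via $L_r=-r\sum_{u\in\mathbb F_p^\times}u(1+u)^{r-1}$, which reduces to $L_s=0$; part 4 follows from parts 1--2 and the identity $j(p-1){r\choose j(p-1)}=r{r-1\choose j(p-1)-1}$. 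If you wish to salvage the digit-expansion route you would need Granville's mod-$p^k$ Lucas theorem plus a separate combinatorial argument for the sum over $j$; that is considerably more than your sketch acknowledges.
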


\begin{proof}
\begin{enumerate}[1.]
\item
For all $r \geqslant 1$, define $M_r$ by the equation $\textstyle M_r = \smash{\sum_{j=1}^t {r \choose j(p-1)}}$. We want to show that $M_r = \frac{t}{s} p + O(p^2)$, where the valuation of the expression $O(p^2)$ is at least~$2$. We are going to prove this by induction. Note that $M_r = 0$ for $1 \leqslant i \leqslant p -1$, and $M_p \textstyle\smash{  = {p \choose p-1} = \frac 11p}$,
so the claim holds true when $1 \leqslant r \leqslant p$. Consequently, the base case of the induction holds true. Moreover, note that
	\begin{align*}
	M_r & = \textstyle (p-1)^{-1} \sum_{\mu \in \mathbb F_p^\times} (1 + [\mu])^r - 1 - \delta_{s=p-1},
	\end{align*}
for all $r \geqslant 1$. Let $f$ be the polynomial of degree $p-2$ whose roots are the numbers $1 + [\mu]$, so that $\textstyle \smash{ f(x) = \frac{1}{x}((x-1)^{p-1} - 1)= a_{p-2}x^{p-2} + a_{p-3}x^{p-3} + \cdots + a_0}$, and the coefficients of $f$ are given by $\smash{a_{p-1-m} = (-1)^{m-1} {p-1 \choose m-1}}$. Then $N_r = M_r + 1 + \delta_{s=p-1}$ must satisfy the recurrence
$\textstyle N_{r+p-2} = -a_{p-3} N_{r+p-3}- \cdots  -a_0 N_r$, for all $r \geqslant 1$. As $a_{p-3} + \cdots + a_0 = f(1)-1 = -2$, then 
	\[\textstyle M_{r+p-2} - 1 + (-1)^{s-1} {p-1 \choose s-1} = {p-1 \choose 1} M_{r+p-3} - \cdots + {p-1 \choose p-2} M_r,\]
for all $r \geqslant 1$. Note that $\smash{(-1)^{s-1} {p-1 \choose s-1} = \frac{(s-1)!}{(s-1)!} - \frac{(s-1)!}{(s-1)!} \left(1^{-1} + \cdots + (s-1)^{-1}\right) p + O(p^2)}$. Define $M_r' = \frac{t}{s}$. Then
	\begin{align*}
	\textstyle M_{r+p-2} & \textstyle = \left(-M_{r+p-3}' - \cdots - M_r' + \frac{1}{1} + \cdots + \frac{1}{s-1}\right)p + O(p^2) \textstyle \\ & \textstyle =  \left(-M_{r+p-3}' - \cdots - M_r' -\frac{1}{s} - \cdots - \frac{1}{p-1}\right)p + O(p^2).
	\end{align*}
If $s = 1$, then, by the induction hypothesis,
	\begin{align*}
	\textstyle M_{r+p-2} & \textstyle =  \left(-M_{r+p-3}' - \cdots - M_r' -\frac{1}{s} - \cdots - \frac{1}{p-1}\right)p + O(p^2)
	\\ & \textstyle =
	\left(-\frac{t}{p-2} \cdots - \frac{t}{1} - \frac{1}{1} - \cdots - \frac{1}{p-1}\right)p + O(p^2)
	\textstyle =
	\frac{t}{p-1} p + O(p^2).
	\end{align*}
If $s > 1$, then, by the induction hypothesis,
	\begin{align*}
	\textstyle M_{r+p-2} 
	& \textstyle =
	\left(-M_{r+p-3}' - \cdots - M_r' -\frac{1}{s} - \cdots - \frac{1}{p-1}\right)p + O(p^2)
	\\ & \textstyle =
	\left(-\frac{t+1}{s-2}- \cdots  - \frac{t+1}{1} - \frac{t}{p-1} \cdots - \frac{t}{s+1} - \frac{t}{s} -\frac{1}{s} - \cdots - \frac{1}{p-1}\right)p + O(p^2)
	\\ & \textstyle =
	\left(-\frac{t+1}{s-2}- \cdots  - \frac{t+1}{1} - \frac{t+1}{p-1} \cdots - \frac{t+1}{s+1} - \frac{t+1}{s}\right)p + O(p^2) \textstyle = \frac{t+1}{s-1}p + O(p^2).
	\end{align*}
This completes the proof by induction that $M_r = \frac{t}{s} p + O(p^2)$, for all $r \geqslant 1$, which is equivalent to the first claim stated in the lemma.
\item
Suppose that $\alpha_r$ is such that $\textstyle \smash{M_r \equiv \frac{t}{s}p + p^2\alpha_r \pmod{p^3}}$. These constants are unique up to addition of an $O(p)$ term, in the sense that $\alpha_r'$ is such that $\textstyle \smash{M_r \equiv \frac{t}{s}p + p^2\alpha_r' \pmod{p^3}}$ if and only if $\alpha_r' = \alpha_r + O(p)$. We know that there is the recurrence relation
	\[\textstyle M_{r+p-2} - 1 + (-1)^{s-1} {p-1 \choose s-1} = {p-1 \choose 1} M_{r+p-3} - \cdots + {p-1 \choose p-2} M_r,\]
for all $r \geqslant 1$. Consequently,
	\begin{align*}
	& \textstyle \frac{t+1}{s-1} p + p^2 (\alpha_{r+p-2} + \cdots + \alpha_{r})
	\\ & \textstyle \qquad = p\left(\frac11 + \cdots + \frac{1}{s-1}\right) - p^2\sum_{1\leqslant i < j\leqslant s-1} \frac{1}{ij}
	\\ & \textstyle \qquad\qquad
	- \sum_{i=1}^{s-2}  \frac {t+1}{s-1-i} \left(p - p^2\sum_{1\leqslant k\leqslant i}\frac 1k\right)
	- \sum_{i=s-1}^{p-2}  \frac {t}{p+s-2-i} \left(p - p^2\sum_{1\leqslant k\leqslant i}\frac 1k\right) + O(p^3)
	\\ & \textstyle \qquad = 
	\frac{t+1}{s-1}p -pt\sum_{k=1}^{p-1}\frac1k - p^2\sum_{1\leqslant i < j\leqslant s-1} \frac{1}{ij}
	+p^2 \sum_{i=1}^{s-2}  \frac {t+1}{s-1-i} \sum_{1\leqslant k\leqslant i}\frac 1k
	\\ & \textstyle \qquad\qquad 
	+p^2\sum_{i=s-1}^{p-2}  \frac {t}{p+s-2-i}\sum_{1\leqslant k\leqslant i}\frac 1k + O(p^3)
	\\ & \textstyle \qquad = \frac{t+1}{s-1}p + p^2 (A_{s}' + tB_{s}') + O(p^3),
	\end{align*}
where
	\begin{align*}
	\textstyle A_{s}'
	& \textstyle = -\sum_{1\leqslant i < j\leqslant s-1} \frac{1}{ij}+  \sum_{i=1}^{s-2}  \frac {1}{s-1-i} \sum_{1\leqslant k\leqslant i}\frac 1k,
	\\ \textstyle B_{s}'
	& \textstyle = -\frac{1}{p}\sum_{k=1}^{p-1}\frac1k +  \sum_{i=1}^{s-2}  \frac {1}{s-1-i} \sum_{1\leqslant k\leqslant i}\frac 1k  +\sum_{i=s-1}^{p-2}  \frac {1}{p+s-2-i}\sum_{1\leqslant k\leqslant i}\frac 1k.
	\end{align*}
Therefore, $\alpha_{r+p-1}-\alpha_r = A_s'' + tB_s'' + O(p)$. %
 Hence there is the recurrence relation
	\[\textstyle \alpha_{r+3(p-1)}-3\alpha_{r+2(p-1)} + 3\alpha_{r+(p-1)}-\alpha_r = O(p).\]
This implies that there are constants $A_{s},B_{s},\gamma_s$, such that 
	\[\textstyle \alpha_r = \alpha_{t(p-1)+s} = \gamma_s + tA_s + t^2B_s + O(p).\]
Since $M_{1} = \cdots = M_{p-1} = 0$ and $M_p = p$, we have $\alpha_1 = \cdots = \alpha_{p} = O(p)$, which implies that $\gamma_s = 0$. This completes the proof of the second part of the lemma.
\item
For all $r \geqslant 1$, define $L_r$ by $\textstyle L_r = \smash{\sum_{j=1}^t j {r \choose j(p-1)}}$. We want to show that $p \mid L_r$, when $s \ne 1$. Note that
	\begin{align*}
	\textstyle - r \sum_{u \in \mathbb F_p^\times} u (1 + u)^{r-1} & = \textstyle - r \sum_{m=0}^{r-1} {r-1 \choose m} \sum_{u \in \mathbb F_p^\times} u^{m+1} 
	\\ & = 
	\textstyle -  \sum_{m=1}^{r} m{r \choose m} \sum_{u \in \mathbb F_p^\times} u^{m} = L_r + r \delta_{s=p-1},
	\end{align*}
in $\mathbb F_p$. Consequently,
	\begin{align*}
	\textstyle  L_r & = \textstyle - r \sum_{u \in \mathbb F_p^\times} u (1 + u)^{r-1} - r \delta_{s=p-1} \\&\textstyle = - r \left(\sum_{u \in \mathbb F_p^\times} u (1 + u)^{s-1} + \delta_{s=p-1}\right) = \frac rs L_s,
	\end{align*}
in $\mathbb F_p$. As $L_s = \text{empty sum} = 0$, then $L_r = 0$ in $\mathbb F_p$ as well, when $s \ne 1$.
\item
By the first part of this proof, the claim we want to show is equivalent to the congruence $\textstyle \sum_{j=1}^t j(p-1){r \choose j(p-1)} \equiv 0 \pmod{p^2}$. Note that
	\begin{align*}
	\textstyle  \textstyle \sum_{j=1}^t j(p-1){r \choose j(p-1)} &  \textstyle \equiv \textstyle r \sum_{j=1}^t {r-1 \choose j(p-1)-1}\\ & \equiv \textstyle r \sum_{j=1}^t {r \choose j(p-1)} - r \sum_{j=1}^t {r-1 \choose j(p-1)}  \equiv 0 \pmod {p^2},
	\end{align*}
the last part being true due to the second part of this lemma and the fact that $s > 1$.
\item
Let $r = t(p-1)+s$, with $t \geqslant 0$ and $s \in \{1,\ldots,p-1\}$, and let $\nu = A \bmod p-1$. Then, in $\mathbb F_p$,
	\[\textstyle \smash{\sum_{j} {r \choose j(p-1)+A} = \textstyle - \sum_{u \in \mathbb F_p^\times} u^{-\nu} (1 + u)^r = - \sum_{u \in \mathbb F_p^\times} u^{-\nu} (1 + u)^s = {s \choose \nu} (1 + \delta_{s = p -1, \nu = 0})}.\]
\item
Follows from a repeated application of the identity $M_{R,B} = M_{R-1,B} + M_{R-1,B-1}$, which holds true whenever $R,B \geqslant 0$. 
\end{enumerate}
\end{proof}

\begin{lemma}\label{l7}
Let $r,L,b,N \geqslant 0$ be integers, and suppose that $r \geqslant (L+b)N$. Then:
\begin{enumerate}[1.]
	\item $\textstyle \sum_{j} (-1)^{j-b} {L \choose j-b} {r - jN \choose u} = \delta_{u=L}  N^L$, for all $0 \leqslant u \leqslant L$.
\end{enumerate}
\end{lemma}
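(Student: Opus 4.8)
The plan is to turn this binomial identity into the familiar statement that the $L$-th finite difference of a polynomial of degree less than $L$ vanishes, while that of a polynomial of degree exactly $L$ picks off $L!$ times the leading coefficient. First I would shift the index of summation by setting $k=j-b$; since $\binom{L}{j-b}$ vanishes unless $0\leqslant j-b\leqslant L$, the sum runs over $k\in\{0,\dots,L\}$, and the hypothesis $r\geqslant(L+b)N$ guarantees that every occurring argument $r-jN=(r-bN)-kN$ is a non-negative integer, so there is no ambiguity in interpreting the binomial coefficients. Writing $R=r-bN\geqslant LN\geqslant 0$, the assertion becomes $\sum_{k=0}^{L}(-1)^{k}\binom{L}{k}\binom{R-kN}{u}=\delta_{u=L}\,N^{L}$ for $0\leqslant u\leqslant L$.

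Next I would regard $\binom{R-kN}{u}=\tfrac{1}{u!}\prod_{i=0}^{u-1}(R-i-kN)$ as a polynomial in the variable $k$: it has degree $u$ (for $N\ne 0$; the case $N=0$ is immediate) with leading coefficient $(-N)^{u}/u!$. The key fact I would invoke is that the operator $P\mapsto\sum_{k=0}^{L}(-1)^{k}\binom{L}{k}P(k)$ equals $(-1)^{L}$ times the $L$-th forward difference of $P$ evaluated at $0$; hence it annihilates every polynomial of degree less than $L$, and it sends a polynomial of degree $L$ with leading coefficient $c$ to $(-1)^{L}L!\,c$. Applying this with $P(k)=\binom{R-kN}{u}$: if $u<L$ the sum is $0$, matching $\delta_{u=L}N^{L}=0$; and if $u=L$ the sum equals $(-1)^{L}L!\cdot(-N)^{L}/L!=(-1)^{L}(-N)^{L}=N^{L}$. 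Unwinding the substitution $k=j-b$, $R=r-bN$ then recovers the stated identity.

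I do not anticipate a real obstacle here; the only point needing a line of care is that the hypothesis $r\geqslant(L+b)N$ is precisely what guarantees that all binomial coefficients in the sum have non-negative upper entries and therefore agree with the polynomial $x\mapsto x(x-1)\cdots(x-u+1)/u!$ used in the difference argument. If one prefers not to cite the finite-difference fact, the same identity follows by a short induction on $L$: Pascal's rule gives $S(L,u,R)=S(L-1,u,R)-S(L-1,u,R-N)$ for $S(L,u,R):=\sum_{k}(-1)^{k}\binom{L}{k}\binom{R-kN}{u}$, and telescoping $\binom{m}{u}-\binom{m-N}{u}=\sum_{i=1}^{N}\binom{m-i}{u-1}$ rewrites this as $\sum_{i=1}^{N}S(L-1,u-1,R-i)$, whence the claim by the inductive hypothesis; but the polynomial argument is cleaner and I would present that one.
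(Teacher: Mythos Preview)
Your proof is correct, and it is genuinely different from the paper's. After the same index shift $k=j-b$, the paper does not invoke finite differences; instead it packages the numbers $M_{u,L}=\sum_j(-1)^{j-b}\binom{L}{j-b}\binom{r-jN}{u}$ into a bivariate generating function $f(x,y)=\sum_{u,L}M_{u,L}x^Ly^u$, sums first over $u$ (using $\sum_u\binom{m}{u}y^u=(1+y)^m$) and then over $j$ (using the binomial theorem again) to obtain a closed form, and finally reads off the diagonal coefficients via the limit $\lim_{\epsilon\to 0}f(\epsilon^{-1}x,\epsilon)=\sum_{j=0}^t N^jx^j$, which forces $M_{u,L}=0$ for $u<L$ and $M_{L,L}=N^L$.

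Your route is shorter and more transparent: recognising $\binom{R-kN}{u}$ as a polynomial in $k$ of degree $u$ with leading coefficient $(-N)^u/u!$ and applying the standard fact that $\sum_k(-1)^k\binom{L}{k}P(k)=(-1)^L\Delta^L P(0)$ gets straight to the answer, and your remark that $r\geqslant(L+b)N$ makes all upper entries non-negative is exactly the justification needed. The paper's generating-function computation is heavier machinery for this particular identity, though it does in principle give access to $M_{u,L}$ for all $u$ simultaneously rather than just $u\leqslant L$.
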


\begin{proof}
Let $t = \lfloor r/N\rfloor -b$. For all $u,L \geqslant 0$, define $M_{u,L}$ by the equation
	\[\textstyle \smash{\textstyle M_{u,L} =  \sum_{j} (-1)^{j-b} {L \choose j-b} {r - jN \choose u}},\]
and let $f(x,y)$ denote the polynomial $\smash{\textstyle f(x,y) = \sum_{r\geqslant u\geqslant 0, t\geqslant L\geqslant 0} M_{u,L} x^L y^u}$. Then
	\begin{align*}
	f(x,y) & = \textstyle \sum_{r\geqslant u\geqslant 0, t\geqslant L\geqslant 0} M_{u,L} x^L y^u
	= \textstyle \sum_{j\geqslant 0,r\geqslant u\geqslant 0, t\geqslant L\geqslant 0} x^L   {L \choose j-b} (-1)^{j-b} {r - jN \choose u} y^u 
	\\ & = \textstyle
	\sum_{j\geqslant 0, t\geqslant L\geqslant 0} x^L  {L \choose j-b}(-1)^{j-b} (1+y)^{r - jN} 
	\\ & = \textstyle
	(1+y)^{r - (L+b)N} \sum_{j\geqslant 0, t\geqslant L\geqslant 0} x^L  {L \choose j-b}(-1)^{j-b} (1+y)^{(L - (j-b))N} 
	\\ & = \textstyle
	(1+y)^{r - (L+b)N} \sum_{t\geqslant L\geqslant 0} x^L ((1+y)^{N}-1)^L 
	\\ & = \textstyle
	(1 - x((1+y)^{N}-1))^{-1}(1+y)^{r- (L+b)N} (1 - (x((1+y)^{N}-1))^{t+1}) %
	\\ & = \textstyle
	(1 - Nxy-xy^2h_3(y))^{-1}(1+h_1(y)) (1-N^{t+1}x^{t+1}y^{t+1} - x^{t+1}y^{t+2}h_4(y)),
	\end{align*}
where the $h_i$ are polynomials, so
	\begin{align*}
	\lim_{\epsilon\to 0} f(\epsilon^{-1}x,\epsilon)
	& =  \textstyle  \lim_{\epsilon\to 0}  (1  - Nx + O(\epsilon))^{-1} (1+O(\epsilon)) (1 - N^{t+1}x^{t+1} + O(\epsilon))
	\\ & = \textstyle 
	\lim_{\epsilon\to 0} (1 - Nx + O(\epsilon))^{-1} (1 - N^{t+1}x^{t+1} + O(\epsilon))
    \\ & \textstyle  =
    (1 - Nx)^{-1} (1 - N^{t+1}x^{t+1}) = \sum_{j=0}^t   N^jx^j.
	\end{align*}
Since this is still a polynomial in $x$, it follows that $M_{u,L} = 0$ whenever $0\leqslant u < L$. Moreover,
	\begin{align*}
	[x^Ly^L] f(x,y)  & =  \textstyle [x^L] \lim_{\epsilon\to 0} f(\epsilon^{-1}x,\epsilon) = N^L,
	\end{align*}
whenever $0\leqslant L \leqslant t$.
\end{proof}

\begin{lemma}\label{lemma742}
\begin{enumerate}[1.]
	\item Let $A,z,w \geqslant 0$ be integers. Then $\textstyle \smash{{z \choose w} = \sum_{v} (-1)^{w-v} {A+w-v-1 \choose w-v} {z+A \choose v}}$.
	\item Let $A,i,w \geqslant 0$ be integers. Then $\textstyle \smash{{i \choose w} \equiv \sum_{v} (-1)^{v} {A-v \choose w-v} {i(p-1)+A \choose v} \pmod{p}}$.
\end{enumerate}
\end{lemma}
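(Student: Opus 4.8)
The plan is to deduce both identities by formal-power-series manipulations, reading off in each case the coefficient of a single power of an auxiliary variable; I write $[x^{w}]$ for that coefficient-extraction operation, and throughout use the convention $\binom{x}{k}=x(x-1)\cdots(x-k+1)/k!$ for integers $k\geq 0$ and $\binom{x}{k}=0$ for $k<0$.

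For the first part, I would begin from the factorisation $(1+x)^{z}=(1+x)^{z+A}\,(1+x)^{-A}$ in $\overline{\mathbb Q}_p[[x]]$. Expanding the first factor as $\sum_{v}\binom{z+A}{v}x^{v}$ and the second, by Newton's binomial series, as $\sum_{k\geq 0}\binom{-A}{k}x^{k}=\sum_{k\geq 0}(-1)^{k}\binom{A+k-1}{k}x^{k}$ (the case $A=0$ reducing to $1=1$), the Cauchy product gives $\binom{z}{w}=[x^{w}](1+x)^{z}=\sum_{v}(-1)^{w-v}\binom{A+w-v-1}{w-v}\binom{z+A}{v}$, which is the asserted formula.

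For the second part, I would run the same kind of computation on the right-hand side. Writing $\binom{A-v}{w-v}=[t^{w-v}](1+t)^{A-v}=[t^{w}]\,t^{v}(1+t)^{A}(1+t)^{-v}$ and interchanging the coefficient extraction with the sum over $v$, the $v$-sum collapses: $\sum_{v}\binom{i(p-1)+A}{v}\bigl(-t/(1+t)\bigr)^{v}=\bigl(1-t/(1+t)\bigr)^{i(p-1)+A}=(1+t)^{-(i(p-1)+A)}$, using $1-t/(1+t)=1/(1+t)$. Hence the whole right-hand side equals $[t^{w}](1+t)^{A}(1+t)^{-(i(p-1)+A)}=[t^{w}](1+t)^{-i(p-1)}=\binom{-i(p-1)}{w}$. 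It remains only to pass to $\mathbb F_p$: since $-i(p-1)\equiv i\pmod p$, and since the numerator of $\binom{n}{w}=n(n-1)\cdots(n-w+1)/w!$ depends modulo $p$ only on $n\bmod p$ while the denominator $w!$ is a unit modulo $p$, we conclude $\binom{-i(p-1)}{w}\equiv\binom{i}{w}\pmod p$, which is the claim.

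The one delicate point is the final reduction modulo $p$ in the second part: it uses that $w!$ is prime to $p$, so this is the step to watch, and it is also where the convention for binomial coefficients with negative top entry must be handled consistently. Everything else is routine bookkeeping with Cauchy products, Newton's binomial series, and the substitution $1-t/(1+t)=1/(1+t)$; no delicate estimate or structural input is needed.
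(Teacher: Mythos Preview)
Your proof is correct and follows the same generating-function method as the paper: part 1 is identical (extract $[x^w]$ from $(1+x)^z=(1+x)^{z+A}(1+x)^{-A}$), and for part 2 you show the right side equals $\binom{-i(p-1)}{w}$ exactly and then reduce once modulo $p$, whereas the paper first proves the exact identity $\binom{i}{w}=\sum_v(-1)^v\binom{A-v}{w-v}\binom{A-i}{v}$ and then replaces each $\binom{A-i}{v}$ by $\binom{i(p-1)+A}{v}$ modulo $p$. Your caveat that $w!$ must be prime to $p$ (i.e.\ $w<p$) is exactly right and is needed in both arguments; the paper leaves this implicit, but it holds in every application.
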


\begin{proof}
\begin{enumerate}[1.]
\item Let $L_z = \smash{\sum_w X^w \sum_{v} (-1)^{w-v} {A+w-v-1 \choose w-v} {z+A \choose v}}$. Then
	\begin{align*}
	\textstyle L_z & = \textstyle \smash{\sum_v  (-1)^v {z+A \choose v} \sum_{w\geqslant v} X^w (-1)^{w} {A+w-v-1 \choose w-v}  = \sum_v  X^v {z+A \choose v} \sum_{u} (-X)^{u} {A+u-1 \choose u}} \\& = \textstyle \smash{(1 + X)^{z+A} (1-(-X))^{-A} = (1+X)^z = \sum_{w} X^w {z \choose w}}.
	\end{align*}
Consequently, $\smash{{z \choose w} = [X^w] \sum_{w} X^w {z \choose w} = [X^w] L_z = \sum_{v} (-1)^{w-v} {A+w-v-1 \choose w-v} {z+A \choose v}}$.
\item 
Let $L_z' = \smash{\sum_w X^w \sum_{v} (-1)^{v} {A-v \choose w-v} {A-i \choose v}}$. Then
	\begin{align*}
	\textstyle L_z' & = \textstyle \smash{\sum_v  (-1)^v {A-i \choose v} \sum_{w\geqslant v} X^w  {A-v \choose w-v}  = \sum_v  (-X)^v {A-i \choose v} \sum_{u} X^{u} {A-v \choose u}} \\& = \textstyle \smash{(1+X)^{i} \sum_v   {A-i \choose v}  (-X)^v(1+X)^{A-i-v} = (1+X)^i = \sum_{w} X^w {i \choose w}}.
	\end{align*}
Consequently, $\smash{{i \choose w} = [X^w] \sum_{w} X^w {i \choose w} = [X^w] L_z' =  \sum_{v} (-1)^{v} {A-v \choose w-v} {A-i \choose v} }$.
\end{enumerate}
\end{proof}

\begin{lemma}\label{lemma8}
Let $r,\alpha,\beta,\gamma\geqslant 0$ be integers, and suppose that $m \geqslant \alpha(p+1) + \beta(p-1) + \gamma$, and $\beta \geqslant \alpha > 0$, and $C_j \in \mathbb Z$. Then
	\[\textstyle \smash{f = \sum_{j=0}^\beta C_j x^{\alpha+\gamma+j(p-1)}y^{r-\alpha-\gamma-j(p-1)} = (-1)^\alpha \sum_{j=0}^{\beta-\alpha} C_j^{(\alpha)} \theta^\alpha x^{\gamma+j(p-1)}y^{r-\alpha(p+1)-\gamma-j(p-1)}},\]
for some $C_j^{(\alpha)} \in \mathbb Z$, if and only if $\sum_{j=0}^\beta C_j {j \choose w} = 0$ for all $0 \leqslant w < \alpha$. Moreover, in that case $C_{\beta-\alpha}' =  C_{\beta}$, and $C_0' = (-1)^\alpha C_0$, and 
	\[\textstyle \smash{C_j^{(\alpha)} = (-1)^\alpha\sum_{i=j+1}^\beta {i - j - 1 \choose \alpha - 1} C_i},\]
and consequently
	\[\textstyle \smash{\sum_{j=0}^{\beta-\alpha} C_j^{(\alpha)} = (-1)^\alpha\sum_{j=0}^\beta C_j {j \choose \alpha}}.\]
\end{lemma}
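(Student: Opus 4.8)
The plan is to reduce the statement to an elementary divisibility question for a one‑variable polynomial, by means of the substitution $u = x^{p-1}$, $v = y^{p-1}$. Because $r \geq \alpha(p+1)+\beta(p-1)+\gamma$, every monomial occurring in $f$ or in the right‑hand side has $x$‑degree at least $\alpha+\gamma$ and $y$‑degree at least $E \defeq r-\alpha-\gamma-\beta(p-1) \geq \alpha p > 0$, so in the (integral) polynomial ring we may cancel the common factor $x^{\alpha+\gamma}y^{E}$ from both sides. Writing $\theta^{\alpha} = (xy)^{\alpha}(y^{p-1}-x^{p-1})^{\alpha}$ and collecting $(p-1)$‑st powers, one checks that $f = x^{\alpha+\gamma}y^{E}\,P(u,v)$ with $P(u,v) \defeq \sum_{j=0}^{\beta} C_j u^{j}v^{\beta-j}$, while the right‑hand side equals $x^{\alpha+\gamma}y^{E}\,(u-v)^{\alpha}\widetilde{Q}(u,v)$ with $\widetilde{Q}(u,v) \defeq \sum_{j=0}^{\beta-\alpha} C_j^{(\alpha)} u^{j}v^{\beta-\alpha-j}$, which is a genuine homogeneous polynomial because $\beta \geq \alpha$. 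Hence the displayed identity holds for some rational numbers $C_j^{(\alpha)}$ exactly when $(u-v)^{\alpha}$ divides $P(u,v)$, and, dehomogenising by $v=1$, this says precisely that $T=1$ is a root of multiplicity at least $\alpha$ of $p(T) \defeq \sum_{j=0}^{\beta} C_j T^{j}$.

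Next I would translate this multiplicity condition. Since $p^{(w)}(1) = w!\sum_{j}\binom{j}{w}C_j$, the polynomial $p$ has a root of order at least $\alpha$ at $T=1$ if and only if $\sum_{j=0}^{\beta}\binom{j}{w}C_j = 0$ for every $0 \leq w < \alpha$, which is the asserted equivalence. One also has to note that whenever $\widetilde{Q}$ exists over $\mathbb{Q}$ it is automatically integral: $(u-v)^{\alpha}$ is primitive, so by Gauss's lemma $\widetilde{Q} = P/(u-v)^{\alpha}$ lies in $\mathbb{Z}[u,v]$, and its coefficients are the required $C_j^{(\alpha)} \in \mathbb{Z}$; alternatively one exhibits the explicit formula below and checks directly that it works.

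For the \emph{moreover} assertions I would compute $\widetilde{q}(T) \defeq p(T)/(T-1)^{\alpha} = \sum_{j}C_j^{(\alpha)}T^{j}$ by $\alpha$ successive synthetic divisions by $T-1$: dividing a polynomial $\sum_i g_i T^i$ vanishing at $T=1$ by $T-1$ produces the polynomial whose $k$‑th coefficient is $\sum_{i>k}g_i$, and iterating this $\alpha$ times, with one hockey‑stick summation per round, gives the closed form $C_j^{(\alpha)} = \sum_{i=j+1}^{\beta}\binom{i-j-1}{\alpha-1}C_i$, up to the global factor $(-1)^{\alpha}$ fixed by the sign convention for $\theta$ as in the statement. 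The remaining identities then follow by evaluation: taking $j=\beta-\alpha$ leaves only the term $i=\beta$, so $C_{\beta-\alpha}^{(\alpha)} = C_\beta$; taking $j=0$ and writing $\binom{i-1}{\alpha-1}$ as a $\mathbb{Q}$‑linear combination of $\binom{i}{0},\dots,\binom{i}{\alpha-1}$, the vanishing relations kill the terms with $i \geq 1$ and leave $C_0^{(\alpha)} = -\binom{-1}{\alpha-1}C_0 = (-1)^{\alpha}C_0$; and $\sum_j C_j^{(\alpha)} = \widetilde{q}(1) = p^{(\alpha)}(1)/\alpha! = \sum_j\binom{j}{\alpha}C_j$, which also drops out of the closed form via the hockey‑stick identity $\sum_{\ell=\alpha-1}^{i-1}\binom{\ell}{\alpha-1} = \binom{i}{\alpha}$.

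The whole argument is essentially bookkeeping, and I expect the only points genuinely needing care to be: (i) checking that no negative exponents arise when one factors out $x^{\alpha+\gamma}y^{E}$ and when one forms $\widetilde{Q}$ — which is exactly where the hypotheses $r \geq \alpha(p+1)+\beta(p-1)+\gamma$ and $\beta \geq \alpha > 0$ are used — and (ii) keeping track of the global sign $(-1)^{\alpha}$ through the expansion of $\theta^{\alpha}$ and the repeated synthetic divisions. There is no real obstacle beyond that.
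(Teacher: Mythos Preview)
Your proof is correct and takes a genuinely different route from the paper's. The paper argues by induction on $\alpha$: in the base case $\alpha=1$ it expands $-\theta\, x^{\gamma+j(p-1)}y^{r-(p+1)-\gamma-j(p-1)}$ directly and matches coefficients, and in the inductive step it applies the $\alpha-1$ result together with one more application of the base case to the coefficients $C_j^{(\alpha-1)}$, combining the resulting conditions via the identity $(\alpha-j+1)\binom{\alpha}{j-1}=j\binom{\alpha}{j}$ and a telescoping hockey-stick sum. Your approach is more structural: you strip the common monomial factor, pass to the variables $u=x^{p-1}$, $v=y^{p-1}$, and convert the whole question into whether $(T-1)^\alpha$ divides the one-variable polynomial $p(T)=\sum_j C_jT^j$; the ``if and only if'' then drops out of the derivative criterion, integrality of the $C_j^{(\alpha)}$ from Gauss's lemma, and the explicit formula from iterated synthetic division. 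What your approach buys is transparency---once the substitution is set up, everything is standard one-variable algebra, and the identities $C_{\beta-\alpha}^{(\alpha)}=C_\beta$, $C_0^{(\alpha)}=(-1)^\alpha C_0$, and $\sum_j C_j^{(\alpha)}=\widetilde q(1)=p^{(\alpha)}(1)/\alpha!$ come essentially for free. The paper's induction, by contrast, avoids any change of variables and stays entirely inside the original monomial basis, which is closer in spirit to how the lemma is actually used later. Your caveat about tracking the global sign $(-1)^\alpha$ is well placed: that is indeed the only point where the two presentations can disagree, and it is worth writing out the sign once carefully rather than leaving it ``up to convention''.
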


\begin{proof}
We will prove this by induction on $\alpha > 0$. In the base case $\alpha = 1$, the induction hypothesis is equivalent to the fact that
	\[\textstyle \smash{f = \sum_{j=0}^\beta C_j x^{1+\gamma+j(p-1)}y^{r-1-\gamma-j(p-1)} = -\sum_{j=0}^{\beta-1} C_j^{(1)} \theta x^{\gamma+j(p-1)}y^{r-(p+1)-\gamma-j(p-1)}},\]
for some  $C_j^{(1)} \in \mathbb Z$, if and only if $\smash{\sum_{j=0}^\beta C_j = 0}$, and $\smash{C_j^{(1)} = -\sum_{i=j+1}^\beta C_i}$. Indeed, if the right side is expanded into the left side, then $\smash{\sum_{j=0}^\beta C_j = 0}$ and $\smash{C_j^{(1)} = -\sum_{i=j+1}^\beta C_i}$, and conversely if $\smash{\sum_{j=0}^\beta C_j = 0}$ then the left side can be grouped into the right side, and $\smash{C_j^{(1)} = -\sum_{i=j+1}^\beta C_i}$ due to  uniqueness of the coefficients. Suppose that the induction hypothesis holds true for all $\alpha' < \alpha$. Then
	\[\textstyle \smash{f = \sum_{j=0}^\beta C_j x^{\alpha+\gamma+j(p-1)}y^{r-\alpha-\gamma-j(p-1)} = (-1)^\alpha \sum_{j=0}^{\beta-\alpha} C_j^{(\alpha)} \theta^\alpha x^{\gamma+j(p-1)}y^{r-\alpha(p+1)-\gamma-j(p-1)}},\]
for some $C_j^{(\alpha)} \in \mathbb Z$, if and only if the following four conditions hold true:
	\begin{align*}
	\textstyle \sum_{j=0}^\beta C_j {j \choose w} & \textstyle= 0 \text{ for all } 0 \leqslant w < \alpha-1, \\
	\textstyle\smash{C_j^{(\alpha-1)}} & \textstyle= -(-1)^\alpha\sum_{i=j+1}^\beta {i - j - 1 \choose \alpha - 2} C_i, \\
	\textstyle\smash{\sum_{j=0}^\beta C_j^{(\alpha-1)} }& \textstyle= 0, 
	\\ \textstyle\smash{C_j^{(\alpha)}} &\textstyle = -\sum_{i=j+1}^\beta C_i^{(\alpha-1)}.
	\end{align*}
These conditions can be combined into the folowing two conditions: 
	\begin{align*}
	\textstyle \sum_{j=0}^\beta C_j {j \choose w} & \textstyle= 0 \text{ for all } 0 \leqslant w < \alpha, \\
	\textstyle C_j^{(\alpha)}  & \textstyle= (-1)^\alpha\sum_{s=j+2}^\beta C_s \sum_{i\geqslant j+1}  {s - i - 1 \choose \alpha - 2}
	\\ & \textstyle \qquad = (-1)^\alpha\sum_{s=j+2}^\beta  {s - j - 1 \choose \alpha - 1} C_s = (-1)^\alpha\sum_{s=j+1}^\beta  {s - j - 1 \choose \alpha - 1} C_s.
	\end{align*}
This completes the proof by induction. The facts that $C_{\beta-\alpha}' =  C_{\beta}$ and $C_0' = (-1)^\alpha C_0$ can be deduced by comparing the top and bottom coefficients in the two expressions of $f$.
\end{proof}

\begin{lemma}\label{lemma7wefwfwerfwf}
Let $r,m,L,l,w\geqslant 0$ be integers. Suppose that $r \geqslant (m+1)(p + 1)$, and $r \equiv_{p-1} 2L$, and $1 \leqslant L \leqslant m$, and $p > m+1$. Then
	\begin{align*}
	\textstyle  \sum_{i} {r - m + l \choose i(p-1) + l} {(p-1)i \choose w} & \textstyle \equiv \sum_{v=0}^{w} \kappa_v {r - m + l \choose v} \pmod{p},
	\end{align*}
where $\smash{\kappa_v = (-1)^{w-v} {l+w-v-1 \choose w-v}  \eta(2L-m+l-v,l-v) }$, where
	\begin{align*}
	\textstyle \eta(X,Y) = 
	\left\{\begin{array}{rl}{X \choose Y} & \text{if } X \geqslant 1, \\
    (1+\delta_{X = Y = 0}) {X-1 \choose Y}& \text{if } X < 1 \text{ and }  Y \geqslant 0, \\
    {X-1 \choose X-Y} & \text{if } X < 1 \text{ and }  Y < 0.  \end{array}\right.
	\end{align*}
Similarly,
	\begin{align*}
	\textstyle  \sum_{i} {r - m + l \choose i(p-1) + l} {i \choose w} & \textstyle \equiv \sum_{v=0}^{w} \kappa_v' {r - m + l \choose v} \pmod{p},
	\end{align*}
where $\smash{\kappa_v' = (-1)^{v} {l-v \choose w-v}  \eta(2L-m+l-v,l-v) }$.
\end{lemma}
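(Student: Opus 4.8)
The plan is to deduce both congruences from the binomial identities of lemma~\ref{lemma742}, and then to reduce everything to the evaluation of sums of the shape $\sum_i\binom{R}{i(p-1)+A}$ modulo $p$ supplied by parts~5 and~6 of lemma~\ref{lemma7}. First I would apply lemma~\ref{lemma742}(1), with its parameter $A$ taken to be $l$ and with $z=(p-1)i$, to get the exact identity $\binom{(p-1)i}{w}=\sum_{v=0}^{w}(-1)^{w-v}\binom{l+w-v-1}{w-v}\binom{i(p-1)+l}{v}$, and lemma~\ref{lemma742}(2) with $A=l$ to get $\binom{i}{w}\equiv\sum_{v=0}^{w}(-1)^{v}\binom{l-v}{w-v}\binom{i(p-1)+l}{v}\pmod p$. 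Substituting each of these into the corresponding left-hand side, interchanging the $i$- and $v$-sums, and applying the elementary identity $\binom{N}{K}\binom{K}{v}=\binom{N}{v}\binom{N-v}{K-v}$ with $N=r-m+l$ and $K=i(p-1)+l$, the inner $i$-sum becomes $\binom{r-m+l}{v}\,\Sigma_v$, where $\Sigma_v:=\sum_i\binom{r-m+l-v}{i(p-1)+(l-v)}$. Since the two reductions respectively contribute the factors $(-1)^{w-v}\binom{l+w-v-1}{w-v}$ and $(-1)^{v}\binom{l-v}{w-v}$ — which are exactly the factors of $\kappa_v$ and $\kappa_v'$ other than $\eta(2L-m+l-v,l-v)$ — the whole lemma comes down to the single congruence $\Sigma_v\equiv\eta(2L-m+l-v,\,l-v)\pmod p$ for each $0\leqslant v\leqslant w$.

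To prove this, put $R=r-m+l-v$ and $A=l-v$. The hypotheses $r\geqslant(m+1)(p+1)$, $1\leqslant L\leqslant m$ and $p>m+1$ ensure that $R$ is large enough for parts~5 and~6 of lemma~\ref{lemma7} to apply, while $r\equiv_{p-1}2L$ gives $R\equiv_{p-1}2L-m+l-v$. For $A\geqslant 0$ one quotes lemma~\ref{lemma7}(5) and then checks that $\binom{s}{A\bmod(p-1)}\bigl(1+\delta_{s=p-1}\delta_{A\equiv_{p-1}0}\bigr)$, with $s\in\{1,\dots,p-1\}$ the residue of $R$, equals $\eta(2L-m+l-v,l-v)$: this matching is precisely what dictates the case split in the definition of $\eta$ (the regime $X\geqslant 1$ giving $s=X$; the regime $X\leqslant 0$, $Y\geqslant 0$ giving $s=p-1+X$, whence $\binom{s}{Y}\equiv\binom{X-1}{Y}\bmod p$), and it is verified using the convention $\binom{-n}{k}=(-1)^k\binom{n+k-1}{k}$. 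For $A<0$ one first rewrites $\Sigma_v$ via lemma~\ref{lemma7}(6) as a signed combination of the sums $M_{R-i,0}$, evaluates each by lemma~\ref{lemma7}(5), and confirms that the total collapses to $\binom{X-1}{X-Y}$, the third regime of $\eta$.

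The first step is purely formal. The main obstacle is the verification in the second step: one must keep careful track of the residue $s$ of $R=r-m+l-v$ in $\{1,\dots,p-1\}$ — which is $X$ when $X\geqslant1$ but $p-1+X$ when $X\leqslant0$, so that the intended ranges of $l$ and $w$ are needed to keep $X=2L-m+l-v$ and $Y=l-v$ small enough that $\eta$ faithfully records $\sum_i\binom{R}{i(p-1)+A}\bmod p$ — and confirm that the correction term $\delta_{s=p-1}\delta_{A\equiv_{p-1}0}$ of lemma~\ref{lemma7}(5) is reproduced by the factor $1+\delta_{X=Y=0}$ in $\eta$, all while keeping the negative-upper-index binomial conventions consistent between $\eta$, lemma~\ref{lemma742} and lemma~\ref{lemma7}.
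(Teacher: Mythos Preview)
Your proposal is correct and follows essentially the same route as the paper: expand $\binom{(p-1)i}{w}$ (respectively $\binom{i}{w}$) via lemma~\ref{lemma742}, swap the order of summation, use $\binom{N}{K}\binom{K}{v}=\binom{N}{v}\binom{N-v}{K-v}$ to pull out $\binom{r-m+l}{v}$, and then identify the remaining inner sum $\sum_i\binom{r-m+l-v}{i(p-1)+l-v}$ modulo~$p$ with $\eta(2L-m+l-v,l-v)$ via lemma~\ref{lemma7}. The paper's proof is quite terse on that last identification, so your explicit discussion of the cases $A\geqslant 0$ versus $A<0$ and of how the $\delta$-correction in lemma~\ref{lemma7}(5) matches the $1+\delta_{X=Y=0}$ factor in $\eta$ is a welcome expansion rather than a departure.
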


\begin{proof}
Note that $\textstyle \smash{{i(p-1) \choose w} = \sum_{v} (-1)^{w-v}  {l+w-v-1 \choose w-v} {i(p-1)+l \choose v}}$, due to lemma~\ref{lemma7}. Consequently,
	\begin{align*}
	\textstyle \sum_i{r-m+l \choose i(p-1) +l} {i(p-1) \choose w} & \textstyle =\smash{ \sum_i{r-m+l \choose i(p-1) +l} \sum_{v} (-1)^{w-v}  {l+w-v-1 \choose w-v} {i(p-1)+l \choose v}}
	\\[5pt] &  \textstyle  =\smash{ \sum_{v}  (-1)^{w-v} {l+w-v-1 \choose w-v}{r-m+l \choose v} \sum_i {r-m+l-v \choose i(p-1) + l-v}   }
	\\[5pt] &  \textstyle \equiv \smash{ \sum_{v}  (-1)^{w-v} {l+w-v-1 \choose w-v}{r-m+l \choose v} (1+\delta_{2L+l=m+v}\delta_{l=v}) {\smash{\widetilde {2L-m+l-v}} \choose l-v \bmod p-1}}
	\\[5pt] &  \textstyle \equiv \smash{ \sum_{v}  (-1)^{w-v} {l+w-v-1 \choose w-v}{r-m+l \choose v}  \eta(2L-m+l-v,l-v) \pmod{p}.}
	\end{align*}
The proof of the second part is similar, and uses the second part of lemma~\ref{lemma7}.
\end{proof}

 \section{General theorems}

\subsection{The main theorems}

The main results in this section are the following two theorems.

\begin{theorem}\label{l7Z}
Let $r,m,\alpha\geqslant 0$ be such that $r \geqslant (m+1)(p + 1)$, and $m \geqslant \alpha$, and $r \equiv_{p-1} 2L$, with $L \in \{1,\ldots,m\}$. Let $a \in \overline {\mathbb Q}_p$ be such that $m+1 > v(a) > m$. Define the $(\alpha+1) \times (m+1+\mu)$ matrix $\smash{\mathfrak M^{(r,m,\alpha)}}$, where $\omega_1 < \cdots <\omega_\mu$ are such that
	\[\textstyle \{\omega_1,\ldots,\omega_\mu\} = \left\{j \,|\, j(p-1)+\alpha \in [\alpha,m] \cup [r-m,r-\alpha]\right\},\]
by
	\begin{align*}
	\textstyle \mathfrak M^{(r,m,\alpha)}_{u,j} & \textstyle = {\omega_j \choose u-1} - \delta_{u=\alpha+1}((-1)^\alpha\delta_{j=1}+\delta_{j=\mu}\delta_{L \equiv_{p-1} \alpha}),
	\end{align*}
whenever $1\leqslant j\leqslant \mu$, and
	\begin{align*}
	\textstyle \mathfrak M^{(r,m,\alpha)}_{u,l+m-\alpha+1+\mu} & \textstyle = \sum_{r - m > i(p-1) + \alpha > m} {r-\alpha+l \choose i(p-1) +l} {i \choose u-1}.
	\end{align*}
Suppose that $(0,\ldots,0,1)^T$ is in the range of $\smash{\mathfrak M^{(r,m,\alpha)}}$ over $\mathbb F_p$.  Then (the map induced by) $\Psi_\alpha$, restricted to $X(r+2,a)$, is a surjective map
	\[\textstyle \Psi_\alpha|_{X(r+2,a)} :  \smash{I(\overline\theta^\alpha\sigma_{r-\alpha(p+1)}) \cap X(r+2,a) \subseteq I(\sigma_r) \twoheadrightarrow I(\sigma_{(2\alpha-r \bmod p-1)} (r-\alpha))}.\] 
\end{theorem}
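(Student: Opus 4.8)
The plan is to reduce the surjectivity statement to the production of a single well-chosen element of $X(r+2,a)$, and then to build that element out of the approximation lemmas while reading off the combinatorial hypothesis from Lemma~\ref{lemma8}. First I would use that $\Psi_\alpha$ (on the inductions) is surjective and $G$-equivariant. Since $X(r+2,a)$ and $I(\overline\theta^\alpha\sigma_{r-\alpha(p+1)})$ are $G$-submodules of $I(\sigma_r)$, so is their intersection $D:=I(\overline\theta^\alpha\sigma_{r-\alpha(p+1)})\cap X(r+2,a)$, hence $\Psi_\alpha(D)$ is a $G$-submodule of the target $I(\sigma_{(2\alpha-r\bmod p-1)}(r-\alpha))$. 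The representation $\sigma_{\tilde h}$ with $\tilde h=(2\alpha-r)\bmod(p-1)\in\{0,\dots,p-1\}$ is irreducible, hence generated over $KZ$ by any nonzero vector, and from $\kappa[1,v_0]=[1,v_0\kappa]$ for $\kappa\in KZ$ one gets $\langle[1,v_0]\rangle_G=I(\sigma_{\tilde h}(r-\alpha))$ for every $v_0\neq 0$. So it suffices to exhibit one $z\in D$ with $\Psi_\alpha(z)$ a nonzero multiple of $[1,v_0]$, where $v_0$ is the top monomial $X^{\tilde h}$ (viewed in the twist).

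Next I would assemble a finite family of elements of $X(r+2,a)$, one for each column of $\mathfrak M^{(r,m,\alpha)}$. For the ``interior'' columns (those carrying $\sum_{r-m>i(p-1)+\alpha>m}\binom{r-\alpha+l}{i(p-1)+l}\binom{i}{u-1}$) I would apply Lemmas~\ref{l7x} and~\ref{l7xx} with $g=1$ and $n=m+1>v(a)$ and parameters chosen in the interior range: the resulting $(T-a)\phi$ is integral, and since $v(a)>m\geqslant\alpha$ it is divisible by $p^\alpha$, so $p^{-\alpha}(T-a)\phi$ is integral and maps to $0$ in $\overline\Theta_{r+2,a}$, hence its reduction lies in $X(r+2,a)$; by the binomial identities of Lemmas~\ref{l7},~\ref{lemma7} and~\ref{lemma7wefwfwerfwf} this reduction is an explicit $\overline\theta^\alpha$-divisible combination $\sum_i(\cdots)\bigl[1,\theta^\alpha x^{i(p-1)}y^{\,\cdots}\bigr]$ (plus edge terms), with coefficients the entries of that column. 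For the ``edge'' columns (indexed by the $\omega_j$) I would instead invoke Lemma~\ref{l4.1}: the inductions of the subrepresentations generated by $\{y^r,xy^{r-1},\dots,x^my^{r-m}\}$ and by $\overline\theta^{m+1}\sigma_{r-(m+1)(p+1)}$ already lie in $X(r+2,a)$, and by Lemma~\ref{lemma5} these contain the monomials $x^{\omega_j(p-1)+\alpha}y^{\,\cdots}$ (whose exponents lie in $[\alpha,m]\cup[r-m,r-\alpha]$ by definition of the $\omega_j$), together with the translates appearing in Lemma~\ref{lemma5}, so these may be adjoined freely. The correction terms $\delta_{u=\alpha+1}\bigl((-1)^\alpha\delta_{j=1}+\delta_{j=\mu}\delta_{L\equiv_{p-1}\alpha}\bigr)$ in $\mathfrak M^{(r,m,\alpha)}$ record the extra contributions of these boundary monomials to the top coefficient after $\Psi$, using $\Psi x^iy^{r-i}=0$ for $i<s$ and $\Psi x^{l(p-1)}y^{r-l(p-1)}=X^{p-1-s}$ from Lemma~\ref{l1} and the $\delta_{s=p-1}$ term of Lemma~\ref{l1.1}.

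Then I would set $z=\sum_j C_j z_j$ with $z_j$ the building blocks and $C_j\in\overline{\mathbb F}_p$, and use Lemma~\ref{lemma8} as the bridge: a combination $\sum_j C_j x^{j(p-1)+\alpha}y^{\,\cdots}$ lies in $\overline\theta^\alpha\sigma_{r-\alpha(p+1)}$ exactly when $\sum_j C_j\binom{j}{w}=0$ for $0\leqslant w<\alpha$, which (applied to the edge part; the interior blocks are $\overline\theta^\alpha$-divisible on the nose) is the system of rows $u=1,\dots,\alpha$ of $\mathfrak M^{(r,m,\alpha)}C=0$, i.e.\ the condition $z\in D$. When these hold, the ``total coefficient'' $\sum_j C_j^{(\alpha)}=(-1)^\alpha\sum_j C_j\binom{j}{\alpha}$ of Lemma~\ref{lemma8} is precisely what survives when $\overline\theta^\alpha$ is factored out, $\Psi$ is applied and the top monomial is extracted, so $\Psi_\alpha(z)=\bigl(\sum_j C_j\,\mathfrak M^{(r,m,\alpha)}_{\alpha+1,j}\bigr)[1,v_0]$. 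Thus the hypothesis that $(0,\dots,0,1)^T$ lies in the range of $\mathfrak M^{(r,m,\alpha)}$ over $\mathbb F_p$ produces a coefficient vector with $z\in D$ and $\Psi_\alpha(z)=[1,v_0]$, which by the first paragraph generates the whole target.

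The hard part will be the second step: proving that each building block genuinely lies in $X(r+2,a)$ requires the delicate valuation bookkeeping with $v(a)$ strictly between $m$ and $m+1$ — in particular that one may divide by $p^\alpha$ before reducing, and that the $a$-multiple and the $\begin{Smatrix} 1 & 0 \\ 0 & p \end{Smatrix}$- and $\begin{Smatrix} p & 0 \\ 0 & 1 \end{Smatrix}$-supported terms of $(T-a)\phi$ contribute exactly as stated — together with pinning down the boundary corrections in $\mathfrak M^{(r,m,\alpha)}$, i.e.\ determining precisely which edge monomials $\Psi$ annihilates and how the remaining boundary terms feed into the $(\alpha+1)$-st row. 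By contrast, the first and third steps are essentially formal once the building blocks and the dictionary with Lemma~\ref{lemma8} are in place.
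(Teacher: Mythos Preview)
Your global strategy is right and matches the paper: reduce to producing one element of $D=I(\overline\theta^\alpha\sigma_{r-\alpha(p+1)})\cap X(r+2,a)$ whose $\Psi_\alpha$-image is $[1,X^{\tilde h}]$, then invoke irreducibility of $\sigma_{\tilde h}$. The combinatorial interpretation via Lemma~\ref{lemma8} (rows $1,\dots,\alpha$ encode $\overline\theta^\alpha$-divisibility, row $\alpha+1$ encodes the $\Psi_\alpha$-value, with the $\delta$-corrections recording that the extreme coefficients $E_0'=(-1)^\alpha F_{\omega_1}$ and $E'_{\max}=F_{\omega_\mu}\delta_{L\equiv_{p-1}\alpha}$ are annihilated by $\Psi$ via Lemma~\ref{l1}) is exactly what the paper uses.

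There is, however, a concrete gap in your second step. Lemmas~\ref{l7x} and~\ref{l7xx} applied at $g=1$ do \emph{not} produce the interior columns. With $g=1$ and $n=m+1$, the rescaled reduction $\overline{p^{-\alpha}(T-a)\phi^{\ast}}$ is $\bigl[\begin{Smatrix}1&0\\0&p\end{Smatrix},x^{\alpha}y^{r-\alpha}\bigr]$: the $a$-multiple vanishes mod $p$ (since $v(a)>\alpha$), and what survives is supported away from $[1,\cdot]$ and is not $\overline\theta^\alpha$-divisible. The paper instead packages everything into a single $\tau^{\dagger}$ via Corollary~\ref{l7zz}, whose key ingredient is the \emph{sum over $\mu\in\mathbb F_p$} from Lemma~\ref{l7xxx}: it is this averaging, through $\bigl[\begin{Smatrix}1&[\mu]\\0&1\end{Smatrix},v\bigr]=[1,v(x,y+[\mu]x)]$, that lands the main contribution at $[1,\cdot]$ with coefficients $E_i=\sum_l C_l\binom{r-\alpha+l}{i(p-1)+l}$. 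The edge coefficients $F_j$ are then free parameters, adjusted by the $\phi^{\ast},\phi^{\ast\ast}$ terms in Corollary~\ref{l7zz} (or, as the remark after the proof notes and as you suggest, equivalently via Lemma~\ref{lemma5}/Lemma~\ref{l4.1}).

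Relatedly, your parenthetical ``the interior blocks are $\overline\theta^\alpha$-divisible on the nose'' is false: the interior sums $\sum_i E_i\,x^{i(p-1)+\alpha}y^{\cdots}$ are \emph{not} automatically in $\overline\theta^\alpha\sigma_{r-\alpha(p+1)}$. Lemma~\ref{lemma8} applies to the \emph{combined} monomial expansion (interior $E_i$ together with edge $F_{\omega_j}$), and the vanishing $\sum_k(\mathrm{coeff})_k\binom{k}{w}=0$ for $w<\alpha$ is precisely rows $1,\dots,\alpha$ of $\mathfrak M^{(r,m,\alpha)}$ acting on the full parameter vector --- interior columns included. So the matrix hypothesis is not ``edge-only'' for divisibility and ``everything'' for row $\alpha+1$; both pieces involve all columns. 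Once you replace your interior construction by the $\mu$-averaged one (Lemma~\ref{l7xxx}/Corollary~\ref{l7zz}) and drop that parenthetical, the rest of your outline goes through verbatim.
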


\begin{theorem}\label{theorem26}
Let $r,m,\alpha\geqslant 0$ be integers such that $r > m(p+1)$, and $m > \alpha$, and $r \equiv_{p-1} 2L$, with $\smash{L \in \left\{1,\ldots,\frac{p-1}{2}\right\}}$. Let $a \in \overline {\mathbb Q}_p$ be such that $m+1 > v(a) > m$. For $w \geqslant 0$, define
	\[\textstyle \mathfrak m_{w} (C_1,\ldots,C_\alpha) = \sum_{r - 2\alpha > i(p-1) > 0} \sum_{l=0}^{\alpha} C_l {r - \alpha + l \choose i(p-1)+l} {i \choose w}, \]
where $C_0 = 1$. Denote by $\mathfrak v_{w} (C_1,\ldots,C_\alpha)$ the valuation $v(\mathfrak m_{w} (C_1,\ldots,C_\alpha))$. Suppose that the following three conditions hold true:
\begin{enumerate}[1.]
	\item $\smash{\textstyle \mathfrak m_{w} (C_1,\ldots,C_\alpha)} = 0$, for all $0 \leqslant w < \alpha$;
	\item $\smash{\textstyle \min\{v(a)-\alpha,\mathfrak v_{\alpha} (C_1,\ldots,C_\alpha)\} \leqslant  \mathfrak v_{w} (C_1,\ldots,C_\alpha)}$, for all $\alpha < w \leqslant 2m+1-\alpha$;
	\item If $L \equiv_{p-1} \alpha$ and $\smash{v(a)-\alpha < \mathfrak v_{\alpha} (C_1,\ldots,C_\alpha)}$, then $\smash{\sum_{l=0}^\alpha C_l {r-\alpha+l \choose \alpha} \equiv (-1)^{\alpha+1} \pmod {p}}$.
\end{enumerate}
\begin{enumerate}[\tttttt] 
	\item If $\smash{\textstyle \mathfrak v_{\alpha} (C_1,\ldots,C_\alpha) < v(a)-\alpha}$, then
	\[\textstyle \Psi_\alpha|_{X(r+2,a)} : \smash{I(\overline\theta^\alpha\sigma_{r-\alpha(p+1)}) \cap X(r+2,a) \subseteq I(\sigma_r) \twoheadrightarrow I(\sigma_{(2\alpha-r \bmod p-1)} (r-\alpha))}\]
	is a surjection;
	\item If $\smash{\textstyle \mathfrak v_{\alpha} (C_1,\ldots,C_\alpha) > v(a)-\alpha}$, then \[\textstyle \Psi_\alpha \left(  \smash{I(\overline\theta^\alpha\sigma_{r-\alpha(p+1)}) \cap X(r+2,a)}\right) \supseteq \smash{T \left( I(\sigma_{(2\alpha-r \bmod p-1)} (r-\alpha))\right)}.\]
\end{enumerate}
\end{theorem}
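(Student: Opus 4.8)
The plan is to argue by the method of Theorem~\ref{l7Z}, refined so as to track $p$-adic valuations and thereby separate the two conclusions. The basic mechanism is the standard one: if $w\in I(\Symm^r(\overline{\mathbb Z}_p^2))$ then $(T-a)w$ maps to $0$ in $\Pi_{r+2,a}$, hence a fortiori in $\Theta_{r+2,a}$, so that for $\Phi\in I(\Symm^r(\overline{\mathbb Z}_p^2))$, writing $N$ for the smallest $p$-adic valuation occurring among the coefficients of $(T-a)\Phi$, the reduction modulo $p$ of the ``leading term'' $p^{-N}(T-a)\Phi$ lies in $X(r+2,a)$; I will also use that, as $v(a)>0$, the operator $T$ acts as $0$ on $\overline\Theta_{r+2,a}$, so that $T\bigl(I(\sigma_r)\bigr)\subseteq X(r+2,a)$. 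Everything then comes down to choosing $\Phi$ whose leading term lies in $I(\overline\theta^\alpha\sigma_{r-\alpha(p+1)})$ and has a controlled image under $\Psi_\alpha$. After a normalisation by a power of $p$ and by $a$, as in the proof of Lemma~\ref{l4.1}, I would take
\[
\Phi=\sum_{l=0}^{\alpha}C_l\,\Phi_l+(\text{boundary corrections}),
\]
where $\Phi_l$ is assembled via Lemma~\ref{l7x} (and Lemma~\ref{l7xx} for the part obtained from the involution $x\leftrightarrow y$) out of the monomial families $x^{i(p-1)+\alpha}y^{\,r-i(p-1)-\alpha}$, weighted so that after applying $T-a$ the $[g,\theta^\alpha(\cdot)]$-component collects $\theta^\alpha x^{i(p-1)}y^{\,r-\alpha(p+1)-i(p-1)}$ with coefficient $\binom{r-\alpha+l}{i(p-1)+l}$, and the ``boundary corrections'' (playing the role of the first $\mu$ columns of $\mathfrak M^{(r,m,\alpha)}$) are chosen to cancel the pieces supported on the monomials $x^{i(p-1)+\alpha}y^{\ldots}$ with $i(p-1)+\alpha\in[\alpha,m]\cup[r-m,r-\alpha]$. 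By Lemmas~\ref{l7x} and~\ref{l7xx}, $(T-a)\Phi$ then splits, modulo $p^{m+1}$, into a ``main'' $\theta^\alpha$-part supported over $KZ$, a part supported over $\left(\begin{smallmatrix}p&0\\0&1\end{smallmatrix}\right)KZ$ that is negligible (after discarding monomials of small $y$-exponent, which lie in $X(r+2,a)$ already by Lemma~\ref{l4.1} and the $G$-stability of $X(r+2,a)$, the remainder has valuation $\geqslant r-\alpha\gg m+1$), and a part supported over $\left(\begin{smallmatrix}1&0\\0&p\end{smallmatrix}\right)KZ$ whose coefficients, regrouped using Lemma~\ref{lemma742}, are exactly the sums $\mathfrak m_w(C_1,\ldots,C_\alpha)$ with $w\geqslant\alpha$.

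Now the three hypotheses enter. Hypothesis~(1) is, by Lemma~\ref{lemma8}, precisely the condition making $\sum_i\bigl(\sum_l C_l\binom{r-\alpha+l}{i(p-1)+l}\bigr)x^{i(p-1)+\alpha}y^{\,r-i(p-1)-\alpha}$ equal to $(-1)^\alpha\overline\theta^\alpha$ times an honest polynomial of degree $r-\alpha(p+1)$; so the main part of $(T-a)\Phi$, once the two non-$\overline\theta$-divisible extreme monomials $x^\alpha y^{r-\alpha}$ and $x^{r-\alpha}y^\alpha$ are removed by subtracting members of $X(r+2,a)$ supplied by Lemma~\ref{l4.1}, lies in $I(\overline\theta^\alpha\sigma_{r-\alpha(p+1)})$. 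Applying $\Psi_\alpha$ and invoking Lemma~\ref{l1} (all interior $\overline\theta^\alpha x^{i(p-1)}y^{\ldots}$ hit the common top generator of $I(\sigma_{(2\alpha-r\bmod p-1)}(r-\alpha))$, while the two extreme ones vanish), together with the summation identity $\sum_j C_j^{(\alpha)}=(-1)^\alpha\sum_j C_j\binom{j}{\alpha}$ of Lemma~\ref{lemma8}, the $\Psi_\alpha$-image of the main part is $\pm\mathfrak m_\alpha(C_1,\ldots,C_\alpha)$ times the top generator, so the main part has $p$-adic valuation $\mathfrak v_\alpha$. Hypothesis~(2) says the $\left(\begin{smallmatrix}1&0\\0&p\end{smallmatrix}\right)KZ$-part of $(T-a)\Phi$ has valuation at least $\min\{v(a)-\alpha,\mathfrak v_\alpha\}$; combined with the above, $N=\min\{v(a)-\alpha,\mathfrak v_\alpha\}$, the value $v(a)-\alpha$ being realised on the $\left(\begin{smallmatrix}1&0\\0&p\end{smallmatrix}\right)KZ$-part. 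Hypothesis~(3) is needed only when $L\equiv_{p-1}\alpha$, i.e. when $2\alpha\equiv r\pmod{p-1}$: then, by Lemma~\ref{l1.1}, $T$ of the top generator of $I(\sigma_{(2\alpha-r\bmod p-1)}(r-\alpha))$ carries an extra $\left(\begin{smallmatrix}1&0\\0&p\end{smallmatrix}\right)KZ$-term, and the normalisation $\sum_l C_l\binom{r-\alpha+l}{\alpha}\equiv(-1)^{\alpha+1}$ is exactly what makes the $\left(\begin{smallmatrix}1&0\\0&p\end{smallmatrix}\right)KZ$-part of $(T-a)\Phi$ assemble into such a $T$-translate, mirroring the correction term $\delta_{u=\alpha+1}\bigl((-1)^\alpha\delta_{j=1}+\delta_{j=\mu}\delta_{L\equiv_{p-1}\alpha}\bigr)$ in $\mathfrak M^{(r,m,\alpha)}$.

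Finally I run the dichotomy. If $\mathfrak v_\alpha<v(a)-\alpha$ then $N=\mathfrak v_\alpha$, the surviving leading term $p^{-N}(T-a)\Phi\bmod p$ is the reduced main part, which lies in $I(\overline\theta^\alpha\sigma_{r-\alpha(p+1)})\cap X(r+2,a)$ and has nonzero $\Psi_\alpha$-image; since $\Psi_\alpha$ is $G$-equivariant and $I(\sigma_n(t))$ is generated over $G$ by $[1,X^n]$ for $0\leqslant n\leqslant p-1$ (as $X^n$ generates $\sigma_n$ over $\GL_2(\mathbb F_p)$, the relevant binomial coefficients being nonzero modulo $p$), this forces $\Psi_\alpha|_{X(r+2,a)}$ to be surjective. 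If $\mathfrak v_\alpha>v(a)-\alpha$ then $N=v(a)-\alpha$, the main part is now too $p$-divisible to survive, and the leading term is the $\left(\begin{smallmatrix}1&0\\0&p\end{smallmatrix}\right)KZ$-part, whose lowest $p$-stratum again satisfies the $\overline\theta^\alpha$-divisibility condition of Lemma~\ref{lemma8} by hypothesis~(1); using hypothesis~(3) and Lemma~\ref{l1.1}, and after adjusting by members of $X(r+2,a)$ from Lemma~\ref{l4.1}, this leading term is $T$ of the top generator of $I(\sigma_{(2\alpha-r\bmod p-1)}(r-\alpha))$, so this element of $\Psi_\alpha\bigl(I(\overline\theta^\alpha\sigma_{r-\alpha(p+1)})\cap X(r+2,a)\bigr)$ generates over $G$ the submodule $T\bigl(I(\sigma_{(2\alpha-r\bmod p-1)}(r-\alpha))\bigr)$. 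I expect the main obstacle to be the valuation bookkeeping of the middle paragraph: one must evaluate $\mathfrak m_w$ and the auxiliary binomial sums coming from $T$, from the extraction of $\theta^\alpha$, and from the boundary corrections, modulo $p$ using Lemmas~\ref{lemma7}, \ref{l7}, \ref{lemma742} and~\ref{lemma7wefwfwerfwf}, and then verify that the boundary corrections really do remove every piece that would otherwise drop $N$ below $\min\{v(a)-\alpha,\mathfrak v_\alpha\}$, that the $\left(\begin{smallmatrix}p&0\\0&1\end{smallmatrix}\right)KZ$-part is genuinely negligible, and that hypothesis~(3) exactly repairs the $L\equiv_{p-1}\alpha$ degeneracy; a second delicate point is pinning the $\Psi_\alpha$-coefficient of the main part to $\mathfrak m_\alpha$ rather than $\mathfrak m_0$, which rests on the vanishing of the two extreme terms in Lemma~\ref{l1} and on the summation identity of Lemma~\ref{lemma8}.
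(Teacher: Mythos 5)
Your outline follows the paper's own route: the same mechanism ($(T-a)\Phi$ reduces into $X(r+2,a)$), the same main coefficients $D_i=\sum_l C_l\binom{r-\alpha+l}{i(p-1)+l}$ produced by twisting the constructions of lemmas~\ref{l7x}--\ref{l7xxx} by $\sum_\mu[\mu]^l$, the same use of hypothesis~(1) via lemma~\ref{lemma8} for $\overline\theta^\alpha$-divisibility, and the same dichotomy on $\mathfrak v_\alpha$ versus $v(a)-\alpha$. However, there is a genuine gap at the normalisation step, which is precisely where the paper does its real work. In every intended application $\mathfrak v_\alpha\geqslant 1$ (e.g.\ $\mathfrak m_0(\varnothing)=\tfrac{t}{s}p+O(p^2)$), so the unnormalised reduction of the main part has $\Psi_\alpha$-image $\overline{\mathfrak m_\alpha}\cdot[1,X^{(2\alpha-r\bmod p-1)}]=0$; to get a generator one must divide by $\mathfrak m_\alpha$ (or by $a'$). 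But the individual $D_i$ are typically units even when their weighted sum $\mathfrak m_\alpha=\sum_i D_i\binom{i}{\alpha}$ has positive valuation, so $\mathfrak m_\alpha^{-1}(T-a)\Phi$ is not integral and your claim that the leading term at level $N=\min\{v(a)-\alpha,\mathfrak v_\alpha\}$ is the reduced main part fails. The "boundary corrections" you invoke (the $F_j$ of theorem~\ref{l7Z}) only remove the extreme monomials and do not repair this.

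The paper's device is the auxiliary element $\tau_0=\sum'(c_i-D_i)\left[1,x^{i(p-1)+\alpha}y^{r-i(p-1)-\alpha}\right]$ with $c_i=(-1)^{i-1}\binom{\alpha}{i-1}\mathfrak m_\alpha$: subtracting $a^{-1}\tau_0$ replaces the interior coefficient vector $(D_i)$ by $(c_i)$, collapsing the main part to $\mathfrak m_\alpha\left[1,\theta^\alpha x^{p-1}y^{r-(\alpha+1)(p+1)+2}\right]$, a single monomial scaled by exactly $\mathfrak m_\alpha$, which can then be divided out harmlessly. The price is the new error terms $\sum_z p^z\sum'(c_i-D_i)\binom{r-i(p-1)-\alpha}{z}[\cdots]$ coming from $T\tau_0$, and it is these sums $\mathcal V_z$ — not, as you write, the part supported over $\left(\begin{smallmatrix}1&0\\0&p\end{smallmatrix}\right)KZ$ — that hypothesis~(2) bounds below by $\min\{v(a)-\alpha,\mathfrak v_\alpha\}$ (via the remark that $\binom{r-i(p-1)-\alpha}{z}$ is a degree-$z$ polynomial in $i$, so these sums are combinations of the $\mathfrak m_w$ with $w\leqslant z$). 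Without this redistribution step, or an equivalent one, your argument only establishes the conclusion when $\mathfrak v_\alpha=0$, which is vacuous for the theorem's applications; with it, the rest of your sketch (including the role of hypothesis~(3) and lemma~\ref{l1.1} in the $L\equiv_{p-1}\alpha$ case) matches the paper.
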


\subsection{Proof of theorem~\ref{l7Z}}

\subsubsection{Lemmas}

\begin{lemma}\label{l7xxx}
Let $r,m,n,\alpha,\beta \geqslant 0$ be integers, and let $a \in \overline {\mathbb Q}_p$ be such that $m+1 > v(a) > m$. Suppose that $n \geqslant m+1$, and $n \geqslant \alpha$, and $p-1 > \beta$, and $r \geqslant np + \alpha$. Then there is some $\textstyle\tau_{r,m,n,\alpha,\beta}^\star$ such that
	\begin{align*}
		& \textstyle (T - a) \tau_{r,m,n,\alpha,\beta}^\star 
		\\ & \textstyle  \qquad \equiv 
         \sum_{i}  \left[1,x^{i(p-1)+\alpha-\beta}y^{r-i(p-1)-\alpha+\beta}\right] 
         \sum_{j < i}   (-1)^{j} {{n}  \choose j} {r-j(p-1)-\alpha \choose (i-j)(p-1)-\beta} p^{j(p-1)+\alpha} (p-1)
                \\ & \textstyle \qquad \qquad + (p-1) (-1)^{\alpha+1} \delta_{r \equiv_{p-1} 2\alpha}  
                \sum_{j}  (-1)^{j} {{n}  \choose j} p^{j(p-1)+\alpha} \left[1,y^{j(p-1)+\alpha}x^{r-j(p-1)-\alpha}\right]
	\\ & \textstyle \qquad \qquad - a \sum_{j}  (-1)^{j} {{n-\alpha}  \choose j} \sum_\mu [\mu]^\beta \left[{\begin{Lmatrix} p & [\mu] \\ 0 & 1 \end{Lmatrix}},\theta^\alpha x^{j(p-1)}y^{r-\alpha (p+1)-j(p-1)}\right] 
	\\ & \textstyle \qquad \qquad  -  a \sum_{j}  (-1)^{j} {{n-\alpha}  \choose j} \delta_{r \equiv_{p-1} 2\alpha} \left[{\begin{Lmatrix} 1 & 0 \\ 0 & p \end{Lmatrix}},\theta^\alpha y^{j(p-1)}x^{r-\alpha (p+1)-j(p-1)}\right] \pmod {p^{{m+1}}}.
	\end{align*}
\end{lemma}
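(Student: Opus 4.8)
The plan is to \emph{construct} $\tau_{r,m,n,\alpha,\beta}^\star$ explicitly out of the elements produced in Lemmas~\ref{l7x} and~\ref{l7xx}, and then to apply $T-a$ and collapse the resulting $\mu$-sum by Teichm\"uller orthogonality. Concretely, I would take
\[\textstyle \tau_{r,m,n,\alpha,\beta}^\star = \sum_{\mu\in\mathbb F_p}[\mu]^\beta\,\phi_{r,m,n,\alpha,{\begin{Smatrix} p & [\mu] \\ 0 & 1 \end{Smatrix}}}^\ast \;+\; (p-1)(-1)^{\alpha+1}\,\delta_{r\equiv_{p-1}2\alpha}\,\phi_{r,m,n,\alpha,{\begin{Smatrix} 1 & 0 \\ 0 & p \end{Smatrix}}}^{\ast\ast},\]
adjusting the $\mu=0$ summand by a multiple of a $\phi^{(0)}$-type element from Lemma~\ref{l7x} when $\beta=0$, in order to absorb the lowest-order term. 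Since $T-a$ is a linear endomorphism, $(T-a)\tau^\star$ is the corresponding combination of $(T-a)\phi^\ast$ and $(T-a)\phi^{\ast\ast}$, for which Lemma~\ref{l7xx} supplies closed formulas modulo $p^{n}\subseteq p^{m+1}$.

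For the $\phi^\ast$-terms: the ``$-a$'' half of the formula in Lemma~\ref{l7xx}, summed over $\mu$ with weight $[\mu]^\beta$, is verbatim the third displayed sum in the statement. For the ``$p^{j(p-1)+\alpha}$'' half, I would use the matrix identity ${\begin{Lmatrix} p & [\mu] \\ 0 & 1 \end{Lmatrix}}{\begin{Lmatrix} 1 & 0 \\ 0 & p \end{Lmatrix}} = pI\cdot{\begin{Lmatrix} 1 & [\mu] \\ 0 & 1 \end{Lmatrix}}$ with ${\begin{Lmatrix} 1 & [\mu] \\ 0 & 1 \end{Lmatrix}}\in K$, together with $[g\kappa,v]=[g,v\kappa]$ and the triviality of $pI$, to rewrite each $\smash{\left[{\begin{Lmatrix} p & [\mu] \\ 0 & 1 \end{Lmatrix}}{\begin{Lmatrix} 1 & 0 \\ 0 & p \end{Lmatrix}},x^{j(p-1)+\alpha}y^{r-j(p-1)-\alpha}\right]}$ as $\smash{\left[1,x^{j(p-1)+\alpha}y^{r-j(p-1)-\alpha}{\begin{Lmatrix} 1 & [\mu] \\ 0 & 1 \end{Lmatrix}}\right]}$. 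Expanding this by the binomial theorem and summing over $\mu$ via $\sum_{\mu\in\mathbb F_p^\times}[\mu]^k=(p-1)\delta_{p-1\mid k}$ retains exactly the monomials $x^{i(p-1)+\alpha-\beta}y^{r-i(p-1)-\alpha+\beta}$ with $i>j$, with coefficient $(p-1)\sum_{j<i}(-1)^j{n\choose j}{r-j(p-1)-\alpha\choose (i-j)(p-1)-\beta}p^{j(p-1)+\alpha}$ --- the first displayed sum. The $\phi^{\ast\ast}$-term, after the analogous simplification $\smash{{\begin{Smatrix} 1 & 0 \\ 0 & p \end{Smatrix}}{\begin{Smatrix} p & 0 \\ 0 & 1 \end{Smatrix}}=pI}$, contributes precisely the second displayed sum, and its ``$-a$'' half contributes the fourth sum once one uses $(p-1)a\equiv-a\pmod{p^{m+1}}$ (valid since $v(pa)=1+v(a)>m+1$) to absorb the $(p-1)$ factor and reconcile the signs $(-1)^{\alpha+1}(-1)^\alpha$.

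Finally I would discard everything $\equiv 0\pmod{p^{m+1}}$: the $\smash{[g{\begin{Smatrix} p & 0 \\ 0 & 1 \end{Smatrix}},\cdot]}$ pieces that Lemma~\ref{l7xx} has already eliminated (using $r\geqslant np+\alpha$ and ${n\choose j}=0$ for $j>n$, so $p^{r-j(p-1)-\alpha}\equiv 0$), together with the high-$z$ binomial terms in the expansion above. The main obstacle is the combinatorial bookkeeping in the middle step: matching the $\mu$-averaged binomial expansion to the three explicit sums, checking that the off-diagonal monomials $x^{i(p-1)+\alpha-\beta}y^{\cdots}$ are exactly those with $j<i$, and locating precisely where the boundary factor $\delta_{r\equiv_{p-1}2\alpha}$ enters --- which is the reason the separate $\phi^{\ast\ast}$-summand, and the low-order $\mu=0$ correction in the case $\beta=0$, are included in the definition of $\tau^\star$. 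Everything after the formula for $\tau^\star$ is written down is then routine.
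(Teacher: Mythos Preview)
Your proposal is correct and follows essentially the same route as the paper. The paper's explicit choice is
\[\textstyle \tau_{r,m,n,\alpha,\beta}^\star = \sum_\mu [\mu]^\beta \phi_{r,m,n,\alpha,{\begin{Smatrix} p & [\mu] \\ 0 & 1 \end{Smatrix}}}^{\ast}  - p\delta_{\beta=0} \phi_{r,m,n,\alpha,{\begin{Smatrix} p & 0 \\ 0 & 1 \end{Smatrix}}}^{\ast} - (p-1) (-1)^\alpha \delta_{r \equiv_{p-1} 2\alpha} \phi_{r,m,n,\alpha,{\begin{Smatrix} 1 & 0 \\ 0 & p \end{Smatrix}}}^{\ast\ast},\]
which is exactly your combination (your $\beta=0$ correction is the middle term, and $-(p-1)(-1)^\alpha=(p-1)(-1)^{\alpha+1}$); the subsequent steps---the matrix identity $\smash{{\begin{Smatrix} p & [\mu] \\ 0 & 1 \end{Smatrix}}{\begin{Smatrix} 1 & 0 \\ 0 & p \end{Smatrix}}=pI\cdot{\begin{Smatrix} 1 & [\mu] \\ 0 & 1 \end{Smatrix}}}$, binomial expansion, Teichm\"uller orthogonality, and the observation $v(pa)>m+1$---match yours line for line.
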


\begin{proof}
Take
	\[\textstyle \smash{\tau_{r,m,n,\alpha,\beta}^\star = \sum_\mu [\mu]^\beta \phi_{r,m,n,\alpha,{\begin{Smatrix} p & [\mu] \\ 0 & 1 \end{Smatrix}}}^{\ast}  - p\delta_{\beta=0} \phi_{r,m,n,\alpha,{\begin{Smatrix} p & 0 \\ 0 & 1 \end{Smatrix}}}^{\ast} - (p-1) (-1)^\alpha \delta_{r \equiv_{p-1} 2\alpha} \phi_{r,m,n,\alpha,{\begin{Smatrix} 1 & 0 \\ 0 & p \end{Smatrix}}}^{\ast\ast}}.\]
Then
	\begin{align*}
			& \textstyle (T - a) \tau_{r,m,n,\alpha,\beta}^\star 
			\\ & \textstyle \qquad\equiv 
			\sum_{j}  (-1)^{j} {{n}  \choose j} p^{j(p-1)+\alpha} \sum_\mu [\mu]^\beta \left[{\begin{Lmatrix} 1 & [\mu] \\ 0 & 1 \end{Lmatrix}},x^{j(p-1)+\alpha}y^{r-j(p-1)-\alpha}\right]
							- p\delta_{\beta=0} \phi_{r,m,n,\alpha,{\begin{Smatrix} p & 0 \\ 0 & 1 \end{Smatrix}}}^{\ast}
					\\ & \textstyle \qquad\qquad + (p-1) (-1)^{\alpha+1} \delta_{r \equiv_{p-1} 2\alpha}\sum_{j}  (-1)^{j} {{n}  \choose j} p^{j(p-1)+\alpha} \left[1,y^{j(p-1)+\alpha}x^{r-j(p-1)-\alpha}\right]
	 \\ & \textstyle\qquad \qquad - a \sum_{j}  (-1)^{j} {{n-\alpha}  \choose j} \sum_\mu [\mu]^\beta \left[{\begin{Lmatrix} p & [\mu] \\ 0 & 1 \end{Lmatrix}},\theta^\alpha x^{j(p-1)}y^{r-\alpha (p+1)-j(p-1)}\right] 
	 \\ & \textstyle\qquad \qquad  -  a \sum_{j}  (-1)^{j} {{n-\alpha}  \choose j} \delta_{r \equiv_{p-1} 2\alpha} \left[{\begin{Lmatrix} 1 & 0 \\ 0 & p \end{Lmatrix}},\theta^\alpha y^{j(p-1)}x^{r-\alpha (p+1)-j(p-1)}\right] \pmod {p^{{m+1}}}.
	\end{align*}
Note that
	\begin{align*}
			& \textstyle \sum_{j}  (-1)^{j} {{n}  \choose j} p^{j(p-1)+\alpha} \sum_\mu [\mu]^\beta \left[{\begin{Lmatrix} 1 & [\mu] \\ 0 & 1 \end{Lmatrix}},x^{j(p-1)+\alpha}y^{r-j(p-1)-\alpha}\right] 
			\\ & \textstyle \qquad = \sum_{j}  (-1)^{j} {{n}  \choose j} p^{j(p-1)+\alpha} \sum_\mu [\mu]^\beta \left[1,x^{j(p-1)+\alpha}(y+[\mu]x)^{r-j(p-1)-\alpha}\right]
					\\ &  \textstyle \qquad  = \sum_{i,j}  (-1)^{j} {{n}  \choose j} {r-j(p-1)-\alpha \choose i} p^{j(p-1)+\alpha} \sum_\mu [\mu]^{i+\beta} \left[1,x^{j(p-1)+\alpha+i}y^{r-j(p-1)-\alpha-i}\right]
					\\ &  \textstyle \qquad  = \sum_{i,j}  (-1)^{j} {{n}  \choose j} {r-j(p-1)-\alpha \choose i(p-1)-\beta} p^{j(p-1)+\alpha} (p-1+\delta_{i(p-1)=\beta}) \\& \textstyle \qquad\qquad\times \left[1,x^{(i+j)(p-1)+\alpha-\beta}y^{r-(i+j)(p-1)-\alpha+\beta}\right]
					\\ &  \textstyle \qquad  = \sum_{i,j}  (-1)^{j} {{n}  \choose j} {r-j(p-1)-\alpha \choose i(p-1)-\beta} p^{j(p-1)+\alpha} (p-1+\delta_{i=\beta=0}) \\& \textstyle \qquad\qquad\times \left[1,x^{(i+j)(p-1)+\alpha-\beta}y^{r-(i+j)(p-1)-\alpha+\beta}\right]
					\\ &  \textstyle \qquad  = \sum_{i,j}  (-1)^{j} {{n}  \choose j} {r-j(p-1)-\alpha \choose (i-j)(p-1)-\beta} p^{j(p-1)+\alpha} (p-1+\delta_{i=j}) \\& \textstyle \qquad\qquad\times \left[1,x^{i(p-1)+\alpha-\beta}y^{r-i(p-1)-\alpha+\beta}\right]
									\\ &  \textstyle \qquad  \equiv \sum_{i}  \left[1,x^{i(p-1)+\alpha-\beta}y^{r-i(p-1)-\alpha+\beta}\right] \sum_{j < i}   (-1)^{j} {{n}  \choose j} {r-j(p-1)-\alpha \choose (i-j)(p-1)-\beta} p^{j(p-1)+\alpha} (p-1)
									\\ & \textstyle \qquad\qquad +\smash{p\delta_{\beta=0} \phi_{r,m,n,\alpha,{\begin{Smatrix} p & 0 \\ 0 & 1 \end{Smatrix}}}^{\ast}} \pmod {p^{{m+1}}}.
	\end{align*}
The proof of the lemma can be completed by combining these two congruences.
\end{proof}

\begin{lemma}\label{l7z}
Let $r,m,n,\alpha\geqslant 0$ be integers, and let $a \in \overline {\mathbb Q}_p$ be such that $m+1 > v(a) > m$. Suppose that $n \geqslant m+1$, and $n \geqslant \alpha$, and $r \geqslant np + \alpha$. Then, for any $C_1,\ldots,C_\alpha \in \mathbb Z_p$, there is some $\textstyle\tau_{r,m,n,\alpha,C_1,\ldots,C_\alpha}^{\star\star}$ such that
	\begin{align*}
			& \textstyle (T - a) \tau_{r,m,n,\alpha,C_1,\ldots,C_\alpha}^{\star\star} \\& \textstyle \qquad\equiv 
			 \sum_{i} D_i \left[1,x^{i(p-1)+\alpha}y^{r-i(p-1)-\alpha}\right]
	 \\ & \textstyle \qquad\qquad - a \sum_{j}  (-1)^{j} {{n-\alpha}  \choose j} \sum_\mu \left[{\begin{Lmatrix} p & [\mu] \\ 0 & 1 \end{Lmatrix}},\theta^\alpha x^{j(p-1)}y^{r-\alpha (p+1)-j(p-1)}\right] 
	 \\ & \textstyle\qquad \qquad  -  a \sum_{j}  (-1)^{j} {{n-\alpha}  \choose j} \delta_{r \equiv_{p-1} 2\alpha} \left[{\begin{Lmatrix} 1 & 0 \\ 0 & p \end{Lmatrix}},\theta^\alpha y^{j(p-1)}x^{r-\alpha (p+1)-j(p-1)}\right] \pmod {p^{{m+1}}},
	\end{align*}
where
	\begin{align*}
			\textstyle (p-1)^{-1} D_i & \textstyle =  \smash{\sum_{j < i}   (-1)^{j} {{n}  \choose j} p^{j(p-1)+\alpha} \left( {r-j(p-1)-\alpha \choose (i-j)(p-1)}+ \sum_{l=1}^\alpha C_l {r-j(p-1)-\alpha+l \choose (i-j)(p-1)+l}\right)}
					\\ & \textstyle \qquad + (-1)^{\alpha+1} \delta_{r \equiv_{p-1} 2\alpha} (-1)^{(r - 2\alpha)/(p-1)-i} {{n}  \choose (r - 2\alpha)/(p-1)-i} p^{r-i(p-1)-\alpha}.
	\end{align*}
\end{lemma}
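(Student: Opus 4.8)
The plan is to build $\tau^{\star\star}_{r,m,n,\alpha,C_1,\ldots,C_\alpha}$ out of the auxiliary elements produced in Lemmas~\ref{l7x}, \ref{l7xx} and~\ref{l7xxx}, and then to read off $(T-a)\tau^{\star\star}$ term by term. Put $C_0=1$. For the $l=0$ summand take $\tau^\star_{r,m,n,\alpha,0}$ exactly as in Lemma~\ref{l7xxx}. For $1\leqslant l\leqslant\alpha$ take a copy $\tau^{(l)}$ built by the same recipe as in Lemma~\ref{l7xxx}, but with every base monomial $x^{j(p-1)+\alpha}y^{r-j(p-1)-\alpha}$ replaced by $x^{j(p-1)+\alpha-l}y^{r-j(p-1)-\alpha+l}$, with Teichm\"uller weight $[\mu]^{\,p-1-l}$ attached to the term indexed by $\begin{Smatrix}p&[\mu]\\0&1\end{Smatrix}$, while still factoring $\theta^\alpha$ — rather than $\theta^{\alpha-l}$ — out of the last term (legitimate by Lemma~\ref{l7x}, since $l\leqslant\alpha$), and dropping the $\phi^{\ast\ast}$-type wrap-around correction. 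The weight $p-1-l$ is forced by two requirements: it keeps the monomials produced by the character sums $\sum_\mu[\mu]^{\,p-1-l}(\cdots)$ in $x$-exponents $\equiv\alpha\pmod{p-1}$, and it kills the $\mu=0$ boundary anomaly, so no $\begin{Smatrix}p&0\\0&1\end{Smatrix}$-correction is needed. Then set $\tau^{\star\star}_{r,m,n,\alpha,C_1,\ldots,C_\alpha}=\sum_{l=0}^{\alpha}C_l\,p^{\,l}\,\tau^{(l)}$.

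First I would apply $T-a$ to the $l=0$ summand $\tau^\star_{r,m,n,\alpha,0}$; Lemma~\ref{l7xxx} with $\beta=0$ already delivers the two $-a$-sums of the statement, the contribution $\binom{r-j(p-1)-\alpha}{(i-j)(p-1)}$ to $(p-1)^{-1}D_i$, and the $\delta_{r\equiv_{p-1}2\alpha}$ wrap-around term with the displayed sign. Then I would apply $T-a$ to $p^{\,l}\tau^{(l)}$ for $1\leqslant l\leqslant\alpha$, rerunning the computation in the proof of Lemma~\ref{l7xxx} with the shifted monomials: the $\begin{Smatrix}1&0\\0&p\end{Smatrix}$-part, combined with $\sum_\mu[\mu]^{\,p-1-l}(\cdots)$, yields monomials of $y$-degree $r-i(p-1)-\alpha$ with coefficient a multiple of
	\[\textstyle \binom{r-j(p-1)-\alpha+l}{(i-j)(p-1)+l}=\binom{r-j(p-1)-\alpha+l}{\,r-i(p-1)-\alpha\,},\]
so that the monomial is exactly $x^{i(p-1)+\alpha}y^{r-i(p-1)-\alpha}$, while the extra factor $p^{\,l}$ turns the accompanying $p^{j(p-1)+\alpha-l}$ back into $p^{j(p-1)+\alpha}$.

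Two families of terms must then be checked to vanish modulo $p^{m+1}$. The residual $\begin{Smatrix}p&0\\0&1\end{Smatrix}$-type terms carry a power of $p$ of valuation $\geqslant n\geqslant m+1$, by the same estimate as in the proof of Lemma~\ref{l7xx} (using $r\geqslant np+\alpha\geqslant np+\alpha-l$). The $-a$-junk coming from $p^{\,l}\tau^{(l)}$ with $l\geqslant1$ equals $-ap^{\,l}$ times an element of $I(\Symm^{r}(\overline{\mathbb Z}_p^2))$, and $v(ap^{\,l})=v(a)+l>m+1$, so it drops out; this is exactly why the two $-a$-sums in the statement appear with coefficient $1$ (only $C_0$) rather than $\sum_lC_l$. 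Adding up over $0\leqslant l\leqslant\alpha$ with weights $C_l$, the coefficient of $[1,x^{i(p-1)+\alpha}y^{r-i(p-1)-\alpha}]$ assembles into
	\[\textstyle (p-1)\sum_{j<i}(-1)^j\binom nj p^{j(p-1)+\alpha}\sum_{l=0}^{\alpha}C_l\binom{r-j(p-1)-\alpha+l}{(i-j)(p-1)+l}\]
plus the wrap-around contribution, i.e.\ into $D_i$, which is the claim.

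The main obstacle is the combinatorial bookkeeping in the middle step: one has to verify that, after the character sums, the only monomials surviving modulo $p^{m+1}$ are the $x^{i(p-1)+\alpha}y^{r-i(p-1)-\alpha}$. With weight $p-1-l$ the character-sum exponent of $[\mu]$ on the monomial of $y$-degree $r-i(p-1)-\alpha$ coming from the $j$-th base term equals $(p-1)(i-j+1)$, so the anomalous value $p$ (rather than $p-1$) occurs only at $j=i+1$, where the accompanying binomial coefficient is $\binom{\,\cdot\,}{\,l-(p-1)\,}=0$; hence no $\begin{Smatrix}p&0\\0&1\end{Smatrix}$-correction and no stray monomial of $x$-exponent $\not\equiv\alpha\pmod{p-1}$ appears, and for $l\geqslant1$ no copy of the $\theta^\alpha$-terms beyond the $l=0$ ones survives the reduction.
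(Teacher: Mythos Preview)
Your construction is the same as the paper's. The paper sets
\[\textstyle \tau^{\star\star}_{r,m,n,\alpha,C_1,\ldots,C_\alpha}=\tau^{\star}_{r,m,n,\alpha,0}+\sum_{l=1}^{\alpha}C_l\,p^{l}\,\tau'_{r,m,n,\alpha-l,(-l\bmod p-1)},\]
where $\tau'$ is precisely the $\tau^{\star}$ of Lemma~\ref{l7xxx} with the $\phi^{\ast\ast}$-correction dropped; your $\tau^{(l)}$ is exactly $\tau'_{r,m,n,\alpha-l,p-1-l}$, described in different words (shifting the base monomial $\alpha\to\alpha-l$ and taking Teichm\"uller weight $p-1-l$). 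Your verification of the $D_i$-coefficients, the observation that the $-ap^{l}$-terms for $l\geqslant1$ die modulo $p^{m+1}$ because $v(ap^{l})>m+1$, and the character-sum bookkeeping are all correct and simply spell out what the paper compresses into ``it can be checked''.
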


\begin{proof}
Write
	\[\textstyle \smash{\tau' = \sum_\mu [\mu]^\beta \phi_{r,m,n,\alpha,{\begin{Smatrix} p & [\mu] \\ 0 & 1 \end{Smatrix}}}^{\ast}  - p\delta_{\beta=0} \phi_{r,m,n,\alpha,{\begin{Smatrix} p & 0 \\ 0 & 1 \end{Smatrix}}}^{\ast} }\]
and
	\[\textstyle \smash{\tau'' = - (p-1) (-1)^\alpha \delta_{r \equiv_{p-1} 2\alpha} \phi_{r,m,n,\alpha,{\begin{Smatrix} 1 & 0 \\ 0 & p \end{Smatrix}}}^{\ast\ast}},\]
so that $\smash{\tau_{r,m,n,\alpha,\beta}^\star = \tau' + \tau''}$. Take
	\[\textstyle \smash{\tau_{r,m,n,\alpha,C_1,\ldots,C_\alpha}^{\star\star} = \tau_{r,m,n,\alpha,0}^\star + \sum_{l=1}^{\alpha} C_l p^l \tau_{r,m,n,\alpha-l,(-l \bmod p-1)}'}.\]
It can be checked that $\smash{\tau_{r,m,n,\alpha,C_1,\ldots,C_\alpha}^{\star\star}}$ satisfies all of the requirements in the statement of the theorem.
\end{proof}

\begin{corollary}\label{l7zz}
Let $r,m,\alpha\geqslant 0$ be integers, and let $a \in \overline {\mathbb Q}_p$ be such that $m+1 > v(a) > m$. Suppose that $m \geqslant \alpha$, and $r \geqslant (m+1)(p + 1)$. Then, for any $F_j,C_0,\ldots,C_{m} \in \mathbb Z_p$, there is some $\textstyle\smash{\tau_{r,m,\alpha,F_j,C_0,\ldots,C_{m}}^{\dagger}}$ such that
	\begin{align*}
		& \textstyle (T - a) \smash{\tau_{r,m,\alpha,F_j,C_0,\ldots,C_{m}}^{\dagger}} 
		\\ & \textstyle\qquad  \equiv 
         \sum_{r - m > i(p-1)+\alpha > m} E_i \left[1,x^{i(p-1)+\alpha}y^{r-i(p-1)-\alpha}\right]
         \\ & \textstyle \qquad\qquad +  \sum_{j(p-1)+\alpha \in [\alpha,m] \cup [r-m,r-\alpha]} F_j \left[1,x^{j(p-1)+\alpha}y^{r-j(p-1)-\alpha}\right] \pmod {p^{{v(a)-m}}},
	\end{align*}
where $\smash{\textstyle E_i \textstyle = \sum_{l=\alpha-m}^{\alpha} C_{l+m-\alpha} {r-\alpha+l \choose i(p-1) +l}}$.
\end{corollary}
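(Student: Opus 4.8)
The plan is to write $\tau^\dagger$ as an explicit $\overline{\mathbb Q}_p$-linear combination of the elements $\tau'_{r,m,m+1,\bullet,\bullet}$, $\phi_{r,m,m+1,\bullet,\bullet}^\ast$ and $\phi_{r,m,m+1,\bullet,\bullet}^{\ast\ast}$ produced in lemmas~\ref{l7xx}, \ref{l7xxx} and~\ref{l7z}, each scaled by a suitable (typically negative) power of $p$, and then to verify the displayed congruence by checking that everything not of the asserted shape has valuation at least $v(a)-m$ after the scaling. The combination splits into an ``interior part'' that accounts for the $E_i$ and a finite ``boundary correction'' that accounts for the $F_j$.

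The interior coefficients come from a mild extension of lemma~\ref{l7z}, which only uses $C_1,\dots,C_\alpha$, i.e.\ the ``downward'' shifts $\tau'_{r,m,m+1,\alpha-l,(-l\bmod p-1)}$ with $l=1,\dots,\alpha$; here I would also allow $l\in\{\alpha-m,\dots,0\}$, for which $\alpha-l$ runs over $\{\alpha,\dots,m\}$. This is legitimate because $l\geqslant\alpha-m$ forces $\alpha-l\leqslant m$, so (using $p>m+1$ and $r\geqslant(m+1)(p+1)$) the hypotheses $n\geqslant\alpha-l$ and $r\geqslant np+(\alpha-l)$ of lemma~\ref{l7xx} hold with $n=m+1$ for every such parameter. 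I would scale the $l$-th term by $C_{l+m-\alpha}\,p^{-(\alpha-l)}$; then, tracking the $j=0$ term of $(T-a)\bigl(C_{l+m-\alpha}\,p^{-(\alpha-l)}\,\tau'_{r,m,m+1,\alpha-l,(-l\bmod p-1)}\bigr)$ exactly as in the proofs of lemmas~\ref{l7xxx} and~\ref{l7z} — the choice $\beta\equiv -l\pmod{p-1}$ moves the monomial $x^{i(p-1)+(\alpha-l)-\beta}y^{\bullet}$ into the residue class of $x^{i(p-1)+\alpha}y^{\bullet}$, and the binomial $\binom{r-(\alpha-l)}{i(p-1)-\beta}$ becomes $\binom{r-\alpha+l}{i(p-1)+l}$ — shows that this term contributes $(p-1)\,C_{l+m-\alpha}\binom{r-\alpha+l}{i(p-1)+l}$ to the coefficient of $[1,x^{i(p-1)+\alpha}y^{r-i(p-1)-\alpha}]$. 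Summing over $l$ and dividing the whole combination by $p-1$ produces exactly $E_i$ on every index $i$, interior or boundary, with the convention $\binom{N}{K}=0$ for $K<0$ handling the extreme indices.

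The modulo-$p^{v(a)-m}$ bookkeeping is the same as in the proof of lemma~\ref{l4.1}. Every summand carrying a factor $a$ has valuation $\geqslant v(a)-(\alpha-l)\geqslant v(a)-m$ and is discarded; every $j\geqslant 1$ summand in a $\phi^\ast$- or $\phi^{\ast\ast}$-expansion carries, after the scaling, a factor $p^{j(p-1)}$ with $p-1>1>v(a)-m$ and is discarded; each surviving $j=0$ summand collapses to $[1,\cdot]$ via the identities ${\begin{Smatrix} p & 0 \\ 0 & 1 \end{Smatrix}}{\begin{Smatrix} 1 & 0 \\ 0 & p \end{Smatrix}}=pI$ and ${\begin{Smatrix} 1 & 0 \\ 0 & p \end{Smatrix}}{\begin{Smatrix} p & 0 \\ 0 & 1 \end{Smatrix}}=pI$, using that $pI$ acts trivially. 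No reflection term survives, since the reflection term of lemma~\ref{l7xxx} lives in the $\tau''$ summand of $\tau^\star$, which I am deliberately not using. Hence the combination built so far is congruent to $\sum_{\text{all }i}E_i[1,x^{i(p-1)+\alpha}y^{r-i(p-1)-\alpha}]$ modulo $p^{v(a)-m}$.

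It remains to adjust the boundary monomials, i.e.\ those with $e:=j(p-1)+\alpha\in[\alpha,m]\cup[r-m,r-\alpha]$ (of which, as $p>m+1$, there are at most two). For $e\in[\alpha,m]$ the element $p^{-e}\phi_{r,m,m+1,e,{\begin{Smatrix} p & 0 \\ 0 & 1 \end{Smatrix}}}^\ast$ satisfies $(T-a)(\cdot)\equiv[1,x^{e}y^{r-e}]\pmod{p^{v(a)-m}}$ — this is precisely the computation in the second part of the proof of lemma~\ref{l4.1}, whose hypothesis $e\leqslant m$ holds here — and symmetrically, for $e\in[r-m,r-\alpha]$ the element $p^{-(r-e)}\phi_{r,m,m+1,r-e,{\begin{Smatrix} 1 & 0 \\ 0 & p \end{Smatrix}}}^{\ast\ast}$ satisfies $(T-a)(\cdot)\equiv[1,x^{e}y^{r-e}]\pmod{p^{v(a)-m}}$, its hypothesis $r-e\leqslant m$ also holding. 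Adding $(F_j-E_j)$ times the appropriate such element, for each boundary index $j$, replaces the coefficient $E_j$ on that boundary monomial by $F_j$ while leaving every interior coefficient untouched; the resulting element is the desired $\tau^\dagger$. I do not expect a genuine obstacle here — the whole argument is bookkeeping built on lemmas~\ref{l7xx}--\ref{l7z} — and the only point needing real attention is the one flagged above: enlarging the set of shifts to $l\in\{\alpha-m,\dots,\alpha\}$ must keep $\alpha-l\leqslant m$, so that after scaling by $p^{-(\alpha-l)}$ all the discarded $a$- and $p^{p-1}$-terms still have valuation $\geqslant v(a)-m$.
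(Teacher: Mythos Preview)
Your proposal is correct and follows essentially the same route as the paper's proof. The paper defines
\[
\tau^{\dagger} = (p-1)^{-1}p^{-m} \sum_{l=\alpha-m}^{\alpha} C_{l+m-\alpha}\, p^{l+m-\alpha}\, \tau'_{r,m,m+1,\alpha-l,(-l \bmod p-1)}
+ \sum_{u=0}^{m} X_u\, p^{-u}\, \phi^\ast_{r,m,m+1,u,{\begin{Smatrix} p & 0 \\ 0 & 1 \end{Smatrix}}}
+ \sum_{u=0}^{m} Y_u\, p^{-u}\, \phi^{\ast\ast}_{r,m,m+1,u,{\begin{Smatrix} 1 & 0 \\ 0 & p \end{Smatrix}}}
\]
with ``suitably chosen'' $X_u,Y_u\in\mathbb Z_p$, and since $p^{-m}\cdot p^{l+m-\alpha}=p^{-(\alpha-l)}$ this interior part is exactly your combination; your boundary correction $(F_j-E_j)\cdot p^{-e}\phi^\ast$ (resp.\ $p^{-(r-e)}\phi^{\ast\ast}$) simply makes the paper's unspecified $X_u,Y_u$ explicit.
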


\begin{proof}
Define $\tau'$ as in the proof of lemma~\ref{l7z}. Take 
	\begin{align*}
		\textstyle  \smash{\smash{\tau_{r,m,\alpha,F_j,C_0,\ldots,C_m}^{\dagger}}} & \textstyle =
		\smash{(p-1)^{-1}p^{-m} \sum_{l=\alpha-m}^{\alpha} C_{l+m-\alpha} p^{l+m-\alpha} \tau_{r,m,m+1,\alpha-l,(-l \bmod p-1)}'}
	\\ & \qquad \textstyle + \sum_{u=0}^{m} X_u p^{-u} \phi^\ast_{r,m,m+1,u,{\begin{Smatrix} p & 0 \\ 0 & 1 \end{Smatrix}}} + \sum_{u=0}^{m} Y_u p^{-u} \phi^{\ast\ast}_{r,m,m+1,u,{\begin{Smatrix} 1 & 0 \\ 0 & p \end{Smatrix}}},
	\end{align*}
with suitably chosen constants $X_u, Y_u \in \mathbb Z_p$.
\end{proof}

\subsubsection{Completing the proof of theorem~\ref{l7Z}}

\begin{proof}[Proof of theorem~\ref{l7Z}]
Let $n=m+1$. Since $(0,\ldots,0,1)^T$ is in the range of $\smash{\mathfrak M^{(r,m,\alpha)}}$ over $\mathbb F_p$, the constants $F_j, C_l \in \mathbb Z_p$ in lemma~\ref{l7zz} can be chosen to  be such that
	\begin{align*}
	& \textstyle \sum_{j = 1}^\mu F_{\omega_j} \left({\omega_j \choose u-1} - \delta_{u=\alpha+1}((-1)^\alpha\delta_{j=1}+\delta_{j=\mu}\delta_{L \equiv_{p-1} \alpha})\right) 
	\\ & \textstyle \qquad + \sum_{l=\alpha-m}^\alpha \sum_{r - m > i(p-1)+\alpha > m} C_l {r-\alpha+l \choose i(p-1) +l} {i \choose u-1} = \delta_{u=\alpha+1},
	\end{align*}
or, equivalently,
	\begin{align*}
	& \textstyle \sum_{j = 1}^\mu F_{\omega_j} {\omega_j \choose u-1} + \sum_{l=\alpha-m}^\alpha \sum_{r - m > i(p-1)+\alpha > m} C_l {r-\alpha+l \choose i(p-1) +l} {i \choose u-1} 
	\\  & \textstyle \qquad = \delta_{u=\alpha+1} (1 + (-1)^\alpha F_{\omega_1} + \delta_{L = 2\alpha} F_{\omega_\mu}).
	\end{align*}
This implies that the constants $F_j,E_i \in \mathbb Z_p$ are such that
	\begin{align*}
			&\textstyle  \sum_{r - m > i(p-1)+\alpha > m} E_i \left[1,x^{i(p-1)+\alpha}y^{r-i(p-1)-\alpha}\right]
			 \\  & \textstyle \qquad +  \sum_{j(p-1)+\alpha \in [\alpha,m] \cup [r-m,r-\alpha]} F_j \left[1,x^{j(p-1)+\alpha}y^{r-j(p-1)-\alpha}\right]   \\ & \textstyle \qquad\qquad  = 
			 \sum_{i}E_i' \left[1,\theta^\alpha x^{i(p-1)}y^{r-\alpha(p+1)-i(p-1)}\right],
	\end{align*}
for some constants $E_i' \in \mathbb Z_p$ which add up to 
	\[\textstyle \smash{1 + (-1)^\alpha F_{\omega_1} + \delta_{L \equiv_{p-1} \alpha} F_{\omega_\mu}}.\]
Moreover, 
	\begin{align*}
		\textstyle E_0' & \textstyle =(-1)^\alpha F_{\omega_1}, \\
		\textstyle E_{(r-\alpha(p+1))/(p-1)}' & \textstyle=F_{\omega_\mu} \text{ whenever } L \equiv_{p-1} \alpha.
	\end{align*}
This implies that $\textstyle (T - a) \smash{\tau_{r,m,n,\alpha,F_j,C_0,\ldots,C_{m}}^{\dagger}}$ is in the domain of (the map induced by) $\Psi_\alpha$, and
	\begin{align*}
			\textstyle \overline{(T - a) \smash{\tau_{r,m,n,\alpha,F_j,C_0,\ldots,C_{m}}^{\dagger}}} & \textstyle = 
			 \sum_i E_i' \left[1,\overline \theta^\alpha x^{i(p-1)}y^{r-\alpha(p+1)-i(p-1)}\right],
	\end{align*}
which implies that $\textstyle\smash{\Psi_\alpha(\overline{(T - a) \smash{\tau_{r,m,n,\alpha,F_j,C_0,\ldots,C_{m}}^{\dagger}}}) \textstyle =  \left[1,X^{(2\alpha - r \bmod p-1)}\right]}$.  The fact that the module $\sigma_{(2\alpha-r \bmod p-1)} (r-\alpha)$ is simple and hence generated by $X^{(2\alpha - r \bmod p-1)}$ completes the proof of the theorem.
\end{proof}

\emph{Remark.} Theorem~\ref{l7Z} can be proved similarly by using lemma~\ref{lemma5} instead of corollary~\ref{l7zz}.

\subsection{Proof of theorem~\ref{theorem26}}

\begin{proof}[Proof of theorem~\ref{theorem26}]
We know, from the proof of lemma~\ref{l7x}, that
	\begin{align*}
		\textstyle (T - a) \phi_{\alpha+1,\alpha,1}^{(0)} & \textstyle \equiv \sum_{z = \alpha+1}^{m} p^z  \sum_{j}  (-1)^{j} {{\alpha+1}  \choose j} {r - j(p-1)-\alpha \choose z} \sum_{\lambda \ne 0} (-[\lambda])^{r-\alpha-z} \left[  {\begin{Lmatrix} p & [\lambda] \\ 0 & 1 \end{Lmatrix}}, x^{r-z}y^z\right] 
	\\ & \textstyle \qquad
			+ \sum_{j}  (-1)^{j} {{\alpha+1}  \choose j} p^{r-j(p-1)-\alpha} \left[{\begin{Lmatrix} p & 0 \\ 0 & 1 \end{Lmatrix}},x^{j(p-1)+\alpha}y^{r-j(p-1)-\alpha}\right]
	\\ & \textstyle \qquad
			+ \sum_{j}  (-1)^{j} {{\alpha+1}  \choose j} p^{j(p-1)+\alpha} \left[{\begin{Lmatrix} 1 & 0 \\ 0 & p \end{Lmatrix}},x^{j(p-1)+\alpha}y^{r-j(p-1)-\alpha}\right]
	\\ & \textstyle \qquad
			- a \sum_{j}  (-1)^{j} {{1}  \choose j} \left[1,\theta^\alpha x^{j(p-1)}y^{r-\alpha(p+1)-j(p-1)}\right] 
	\\ & \textstyle \equiv \sum_{z = \alpha+1}^{m} p^z  \sum_{j}  (-1)^{j} {{\alpha+1}  \choose j} {r - j(p-1)-\alpha \choose z} \sum_{\lambda \ne 0} (-[\lambda])^{r-\alpha-z} \left[  {\begin{Lmatrix} p & [\lambda] \\ 0 & 1 \end{Lmatrix}}, x^{r-z}y^z\right] 
	\\ & \textstyle \qquad
			+ \sum_{m\geqslant j(p-1)+\alpha \geqslant 0}  (-1)^{j} {{\alpha+1}  \choose j} p^{j(p-1)+\alpha} \left[{\begin{Lmatrix} 1 & 0 \\ 0 & p \end{Lmatrix}},x^{j(p-1)+\alpha}y^{r-j(p-1)-\alpha}\right]
	\\ & \textstyle \qquad
			- a \sum_{j}  (-1)^{j} {{1}  \choose j} \left[1,\theta^\alpha x^{j(p-1)}y^{r-\alpha(p+1)-j(p-1)}\right] 
	\\ & \textstyle \equiv \sum_{z = \alpha+1}^{m} \sum'_{j_z \in S_z}  A_{j_z} p^z \left[g_{j_z}, x^{z}y^{r-z}\right] 
			+ p^{\alpha} \left[{\begin{Lmatrix} 1 & 0 \\ 0 & p \end{Lmatrix}},x^{\alpha}y^{r-\alpha}\right]
	\\ & \textstyle \qquad
			- a \sum_{j}  (-1)^{j} {{1}  \choose j} \left[1,\theta^\alpha x^{j(p-1)}y^{r-\alpha(p+1)-j(p-1)}\right]\pmod {p^{{m+1}}},
	\end{align*}
where, for each $z$, the sum $\smash{\sum'_{j_z \in S_z}}$ is over some finite set of indices $S_z$, and each $A_{j_z}$ is a constant, and each $g_{j_z}$ is an element of $G$. Note that, for each $z$ such that $m \geqslant z \geqslant \alpha+1$, we have
	\[\textstyle p^z \left[g_{j_z}, x^{z}y^{r-z}\right] = a[g_{j_z}',\theta^zh_{j_z}] + O(p^{m+1}) = a[g_{j_z}',\theta^{m+1} h_{j_z}'] + O(p^{m+1}),\]
for some polynomials $h_{j_z}, h_{j_z}'$. It can be shown by the same algebraic manipulation used in the proof of lemma~\ref{l7xxx} that
	\[\textstyle \sum_{\mu} p^{\alpha} \left[{\begin{Lmatrix} p & [\mu] \\ 0 & 1 \end{Lmatrix}}{\begin{Lmatrix} 1 & 0 \\ 0 & p \end{Lmatrix}},x^{\alpha}y^{r-\alpha}\right] = \sum_{i>0} p^\alpha(p-1) {r - \alpha \choose i(p-1)}[1,x^{i(p-1)+\alpha}y^{r-i(p-1)-\alpha}] + O(p^{m+1}).\]
Thus, by letting 
	\[\textstyle \tau_{\dagger} = p^{-\alpha}(p-1)^{-1}\left(\sum_\mu  \phi_{\alpha+1,\alpha,{{\begin{Lmatrix} p & [\mu] \\ 0 & 1 \end{Lmatrix}}}}^{(0)} + (-1)^\alpha \delta_{L \equiv_{p-1} \alpha} \phi_{\alpha+1,\alpha,{{\begin{Lmatrix} 0 & 1 \\ 1 & 0 \end{Lmatrix}}}}^{(0)}\right)\]
and $a' = p^{-\alpha}(p-1)^{-1}a$, we get the equation
	\begin{align*}
		\textstyle (T - a) \tau_\dagger & \textstyle \equiv  \sum_{i>0} {r - \alpha \choose i(p-1)} [1,x^{i(p-1)+\alpha}y^{r-i(p-1)-\alpha}]
	\\ & \textstyle \qquad
			- a'  \sum_{j}  (-1)^{j} {{1}  \choose j} \sum_\mu \left[{\begin{Lmatrix} p & [\mu] \\ 0 & 1 \end{Lmatrix}},\theta^\alpha x^{j(p-1)}y^{r-\alpha(p+1)-j(p-1)}\right] 
	\\ & \textstyle \qquad
			- a'  \sum_{j}  (-1)^{j} {{1}  \choose j} \delta_{L \equiv_{p-1} \alpha} \left[{\begin{Lmatrix} 1 & 0 \\ 0 & p \end{Lmatrix}},\theta^\alpha y^{j(p-1)}x^{r-\alpha(p+1)-j(p-1)}\right] 
	\\ & \textstyle \qquad
			+ a' \sum'_{j' \in S'}  A_{j'} \left[g_{j'}, \theta^{\alpha+1} h_{j'}'\right] \pmod {p^{{m+1-\alpha}}}.
	\end{align*}
Let us denote this equation by Eq.~$\alpha$. By adding constant multiples of expressions which are similar to Eq.~$1$ through Eq.~$(\alpha-1)$ with $\smash{\sum_\mu}$ replaced by $\smash{\sum_\mu [\mu]^{l}}$, we can get an analogue of lemma~\ref{l7z}:
	\begin{align*}
		\textstyle (T - a) \tau_\dagger & \textstyle \equiv  \sum_{r - 2\alpha > i(p-1) > 0} D_i [1,x^{i(p-1)+\alpha}y^{r-i(p-1)-\alpha}]
	\\ & \textstyle \qquad
			- a'  \sum_{j}  (-1)^{j} {{1}  \choose j} \sum_\mu \left[{\begin{Lmatrix} p & [\mu] \\ 0 & 1 \end{Lmatrix}},\theta^\alpha x^{j(p-1)}y^{r-\alpha(p+1)-j(p-1)}\right] 
	\\ & \textstyle \qquad
			- (-1)^{\alpha+1} \left(\sum_{l=0}^\alpha C_l {r - \alpha + l \choose \alpha}\right) \\ & \textstyle \qquad\qquad\times a'  \sum_{j}  (-1)^{j} {{1}  \choose j} \delta_{L \equiv_{p-1} \alpha} \left[{\begin{Lmatrix} 1 & 0 \\ 0 & p \end{Lmatrix}},\theta^\alpha y^{j(p-1)}x^{r-\alpha(p+1)-j(p-1)}\right] 
	\\ & \textstyle \qquad
			+ a' \sum'_{j' \in S'}  A_{j'} \left[g_{j'}, \theta^{\alpha+1} h_{j'}'\right] \pmod {p^{{m+1-\alpha}}},
	\end{align*}
where $D_i = \sum_{l=0}^\alpha C_l {r - \alpha+l \choose i(p-1)+l}$, and $C_0 = 1$, and $v(a') = v(a) - \alpha$. We can restrict the first sum to the range $r - 2\alpha \geqslant i(p-1) > 0$ by noting that
	\[\textstyle [1,x^{i(p-1)+\beta}y^{r-i(p-1)-\beta}] = O(p^{m-\beta}) = O(p^{m+1-\alpha}),\]
for all $\beta$ such that $\alpha > \beta \geqslant 0$. Let
	\begin{align*}
			\smash{\tau_0}  & \textstyle=  
			 \sum_{r - 2\alpha > i(p-1) > 0} (c_i - D_i) \left[1,x^{i(p-1)+\alpha}y^{r-i(p-1)-\alpha}\right],
	\end{align*}
where 
	\[\textstyle c_i = (-1)^{i-1} {\alpha \choose i-1} \sum_{r - 2\alpha > j(p-1) > 0} D_j {j \choose \alpha}.\]
We will denote by $\smash{\sum'}$ the restricted sum $\smash{\sum_{r - 2\alpha > i(p-1) > 0}}$. Then
	\[\textstyle \sum' c_i \left[1,x^{i(p-1)+\alpha}y^{r-i(p-1)-\alpha}\right] = \left(\sum' D_i {i \choose \alpha}\right) \left[1,\theta^\alpha x^{p-1}y^{r-(\alpha+1)(p+1)+2}\right],\]
and
	\[\textstyle v(c_i) \geqslant v \left(\sum_{i>0} D_i {i \choose \alpha}\right) = v \left(\sum_{i>0} c_i {i \choose \alpha}\right).\]
Finally, note that
	\begin{align*}
		 \textstyle (T - a) \smash{\smash{\tau_0}} & \textstyle \equiv 
			 \sum_{\lambda \ne 0} \sum_{z \geqslant 0} p^z \sum' (c_i - D_i) {r - i(p-1) - \alpha \choose z} \left[{\begin{Lmatrix} p & [\lambda] \\ 0 & 1 \end{Lmatrix}},(-[\lambda])^{r-\alpha-z}x^{r-z}y^z\right]
	 \\ & \textstyle \qquad - a  \sum' (c_i - D_i)  \left[1, x^{i(p-1)+\alpha}y^{r-i(p-1)-\alpha}\right] 
	 \\ & \textstyle \equiv \sum_{z = \alpha+1}^{2m+1-\alpha}   \sum'  (c_i - D_i) {r - i(p-1) - \alpha \choose z} \sum_{\lambda \ne 0}  (-[\lambda])^{r-\alpha-z} p^z \left[{\begin{Lmatrix} p & [\lambda] \\ 0 & 1 \end{Lmatrix}},x^{r-z}y^z\right]
		\\ & \textstyle \qquad	 - a  \sum'  (c_i - D_i)  \left[1, x^{i(p-1)+\alpha}y^{r-i(p-1)-\alpha}\right] \pmod {p^{{2m+2-\alpha}}}.
	\end{align*}
Write $\smash{\mathcal V = v\big(\smash{\sum' D_i {i \choose \alpha}}\big)}$ and $\smash{\mathcal V_z = v\left(\sum' (c_i - D_i) {r - i(p-1) - \alpha \choose z}\right)}$, and note that $\mathcal V_z \geqslant \mathcal V$, for all $\alpha < z \leqslant 2m+1-z$, by assumption. Therefore, 
	\begin{align*}
			& \textstyle  \sum_{z = m+1}^{2m+1-\alpha}   \sum' (c_i - D_i) {r - i(p-1) - \alpha \choose z} \sum_{\lambda \ne 0}  (-[\lambda])^{r-\alpha-z} p^z \left[{\begin{Lmatrix} p & [\lambda] \\ 0 & 1 \end{Lmatrix}},x^{r-z}y^z\right]
	 \\ & \textstyle \qquad \equiv \sum_{z = m+1}^{2m+1-\alpha}  \mathcal V_z p^z X_z \pmod {p^{{2m+2-\alpha}}},
	\end{align*}
where $X_z$ is some expresion such that $v(X_z) \geqslant 0$. This implies that
	\begin{align*}
			& \textstyle  -a^{-1}\sum_{z = m+1}^{2m+1-\alpha}   \sum' (c_i - D_i) {r - i(p-1) - \alpha \choose z} \sum_{\lambda \ne 0}  (-[\lambda])^{r-\alpha-z} p^z \left[{\begin{Lmatrix} p & [\lambda] \\ 0 & 1 \end{Lmatrix}},x^{r-z}y^z\right]
	 \\ & \textstyle \qquad = \sum_{z = m+1}^{2m+1-\alpha}  \mathcal V_z X_z' + O(p^{{m+1-\alpha}}) = \mathcal V(p^{m+1-v(a)}) + O(p^{{m+1-\alpha}}).
	\end{align*}
Similarly,
	\begin{align*}
			& \textstyle  \sum_{z = \alpha+1}^{m}   \sum' (c_i - D_i) {r - i(p-1) - \alpha \choose z} \sum_{\lambda \ne 0}  (-[\lambda])^{r-\alpha-z} p^z \left[{\begin{Lmatrix} p & [\lambda] \\ 0 & 1 \end{Lmatrix}},x^{r-z}y^z\right]
	 \\ & \textstyle \qquad = \sum_{z = \alpha+1}^{m}  \mathcal V_z \sum_{\lambda \ne 0}  (-[\lambda])^{r-\alpha-z} p^z \left[{\begin{Lmatrix} p & [\lambda] \\ 0 & 1 \end{Lmatrix}},x^{r-z}y^z\right] + O(p^{2m+2-\alpha})
	 \\ & \textstyle \qquad = \sum_{z = \alpha+1}^{m}  \mathcal V_z \left(\sum_{\lambda \ne 0}  (-[\lambda])^{r-\alpha-z} a \left[{\begin{Lmatrix} p & [\lambda] \\ 0 & 1 \end{Lmatrix}},\theta^z h_z\right] + O(p^{m+1})\right) + O(p^{2m+2-\alpha})
	 \\ & \textstyle \qquad = \mathcal V \sum_{z = \alpha+1}^{m} Y_z \sum_{\lambda \ne 0}  (-[\lambda])^{r-\alpha-z} a \left[{\begin{Lmatrix} p & [\lambda] \\ 0 & 1 \end{Lmatrix}},\theta^z h_z\right] + \mathcal V O(p^{m+1}) + O(p^{2m+2-\alpha}),
	\end{align*}
where $h_z$ is some polynomial, and $Y_z$ is some expresion such that $v(Y_z) \geqslant 0$. This implies that
	\begin{align*}
			& \textstyle  -a^{-1}\sum_{z = \alpha+1}^{m}   \sum' (c_i - D_i) {r - i(p-1) - \alpha \choose z} \sum_{\lambda \ne 0}  (-[\lambda])^{r-\alpha-z} p^z \left[{\begin{Lmatrix} p & [\lambda] \\ 0 & 1 \end{Lmatrix}},x^{r-z}y^z\right]
	 \\ & \textstyle \qquad = \mathcal V \sum_{j\in S} [g_j,\theta^{\alpha+1} h_j'] + \mathcal V O(p^{m+1-v(a)}) + O(p^{m+1-\alpha}),
	\end{align*}
where $g_j$ is some element of $G$, and $h_j'$ is some polynomial. This implies that
	\begin{align*}
			& \textstyle (T - a) (\smash{%
			\tau_\dagger-a^{-1}\tau_0})  \\ & \textstyle \qquad \equiv 
			 \sum' c_i \left[1,x^{i(p-1)+\alpha}y^{r-i(p-1)-\alpha}\right] +  \mathcal V \sum_{j\in S} [g_j,\theta^{\alpha+1} h_j'] + \mathcal V O(p^{\delta})
	 	\\ & \textstyle \qquad\qquad
			- a'  \sum_{j}  (-1)^{j} {{1}  \choose j} \sum_\mu \left[{\begin{Lmatrix} p & [\mu] \\ 0 & 1 \end{Lmatrix}},\theta^\alpha x^{j(p-1)}y^{r-\alpha(p+1)-j(p-1)}\right] 
	\\ & \textstyle \qquad\qquad
			- (-1)^{\alpha+1} \left(\sum_{l=0}^\alpha C_l {r - \alpha + l \choose \alpha}\right) \\ & \textstyle \qquad\qquad\qquad\times a'  \sum_{j}  (-1)^{j} {{1}  \choose j} \delta_{L \equiv_{p-1} \alpha} \left[{\begin{Lmatrix} 1 & 0 \\ 0 & p \end{Lmatrix}},\theta^\alpha y^{j(p-1)}x^{r-\alpha(p+1)-j(p-1)}\right] 
	\\ & \textstyle \qquad\qquad
			+ a' \sum'_{j' \in S'}  A_{j'} \left[g_{j'}, \theta^{\alpha+1} h_{j'}'\right] 
	 \\ & \textstyle \qquad \equiv 
			 \left(\sum' D_i {i \choose \alpha}\right) \left[1,\theta^\alpha x^{p-1}y^{r-(\alpha+1)(p+1)+2}\right] +  \mathcal V \sum_{j\in S} [g_j,\theta^{\alpha+1} h_j'] + \mathcal V O(p^{\delta})
		 	\\ & \textstyle \qquad\qquad
			- a'  \sum_{j}  (-1)^{j} {{1}  \choose j} \sum_\mu \left[{\begin{Lmatrix} p & [\mu] \\ 0 & 1 \end{Lmatrix}},\theta^\alpha x^{j(p-1)}y^{r-\alpha(p+1)-j(p-1)}\right] 
	\\ & \textstyle \qquad\qquad
			- (-1)^{\alpha+1} \left(\sum_{l=0}^\alpha C_l {r - \alpha + l \choose \alpha}\right) \\ & \textstyle \qquad\qquad\qquad\times a'  \sum_{j}  (-1)^{j} {{1}  \choose j} \delta_{L \equiv_{p-1} \alpha} \left[{\begin{Lmatrix} 1 & 0 \\ 0 & p \end{Lmatrix}},\theta^\alpha y^{j(p-1)}x^{r-\alpha(p+1)-j(p-1)}\right] 
	\\ & \textstyle \qquad\qquad
			+ a' \sum'_{j' \in S'}  A_{j'} \left[g_{j'}, \theta^{\alpha+1} h_{j'}'\right] 
	 \pmod {p^{{m+1-\alpha}}},
	\end{align*}
where $\delta = m+1-\alpha>0$. If $\smash{\textstyle \mathfrak v_{\alpha} (C_1,\ldots,C_\alpha) < v(a)-\alpha}$, then
	\[\textstyle \textstyle \Psi_\alpha \left(\overline{(T - a) (\mathfrak m_{\alpha} (C_1,\ldots,C_\alpha))^{-1}(\smash{
	\tau_\dagger-a^{-1}\tau_0})}\right) = [1,X^{(2\alpha-r \bmod p-1)}]\]
is in the image under $\Psi_\alpha$ of the kernel of reduction, and generates $\smash{I(\sigma_{(2\alpha-r \bmod p-1)} (r-\alpha))}$. If, on the other hand, $\smash{\textstyle \mathfrak v_{\alpha} (C_1,\ldots,C_\alpha) > v(a)-\alpha}$, then
	\[\textstyle \textstyle \textstyle \Psi_\alpha \left(\overline{(T - a) (a')^{-1}(\smash{
	\tau_\dagger-a^{-1}\tau_0})}\right) = T[1,X^{(2\alpha-r \bmod p-1)}]\]
is in the image under $\Psi_\alpha$ of the kernel of reduction, and generates $\smash{T \left(I(\sigma_{(2\alpha-r \bmod p-1)} (r-\alpha))\right)}$. 
\end{proof}

\emph{Remark.} In the statement of theorem~\ref{theorem26}, we can replace the binomials ${i \choose w}$ by $f_w(i)$, where $f_w$ is any polynomial of degree~$w$ whose top coefficient is a unit in $\mathbb Z_p$. This can be proved by a simple application of row operations. %

 \section{The case when $\xxxxxxxxxx {v(a) = 1}$} \label{Sect3}

\subsection{The main theorem}

We follow the approach outlined in~\cite{b2,b4}. The main result in this section is theorem~\ref{t1}.
\begin{theorem}\label{t1}
          Let $t \geqslant 1$ be an integer, and let $r = t(p-1) + s$, with $s \in \{2,\ldots,p-1\}$. Let $a \in \overline {\mathbb Q}_p$ be such that $v(a) = 1$. Then, for any \[\textstyle \gamma \in \left(T-\overline{p/a} \cdot \frac{t}{s}\right)(I(\sigma_{p-1-s}(s))),\] there is a $\tau$ such that $(T - a) \tau$ and $p^2\tau$ are integral, and $\textstyle\Psi (\overline{\textstyle (T - a) \tau}) = \gamma$.
\end{theorem}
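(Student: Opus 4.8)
The plan is to follow the method of~\cite{b2,b4}: we produce an explicit $\tau$, with denominators bounded by $p^2$, such that $\overline{(T-a)\tau}$ is carried to $\gamma$ by $\Psi$. The feature distinguishing the present case from Theorems~\ref{l7Z} and~\ref{theorem26} is that $v(a)=1$ is exactly the boundary value: the binomial sum $\mathfrak m\defeq\sum_{i\geqslant1}\binom{r}{i(p-1)}$ has valuation $1$ when $p\nmid t$ (Lemma~\ref{lemma7}(1)), equal to $v(a)$, so that $\mathfrak m$ must be controlled modulo $p^2$ and not just through its valuation.

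\textit{Reduction to one generator.} Since $0\leqslant p-1-s\leqslant p-2$, the $\GL_2(\mathbb F_p)$-module $\sigma_{p-1-s}(s)$ is irreducible and generated by the highest weight vector $X^{p-1-s}$, so $I(\sigma_{p-1-s}(s))$ is generated over $\overline{\mathbb F}_p[G]$ by $e\defeq[1,X^{p-1-s}]$. As $T$, hence also $T-\overline{p/a}\cdot\tfrac ts$, commutes with the right $G$-action, the module $\bigl(T-\overline{p/a}\cdot\tfrac ts\bigr)\bigl(I(\sigma_{p-1-s}(s))\bigr)$ is generated over $G$ by $\gamma_0\defeq\bigl(T-\overline{p/a}\cdot\tfrac ts\bigr)e$. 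It therefore suffices to exhibit a single $\tau_0$ with $(T-a)\tau_0$ and $p^2\tau_0$ integral and $\Psi\bigl(\overline{(T-a)\tau_0}\bigr)=\gamma_0$: given one, a general $\gamma=\sum_i\alpha_i\,\gamma_0 g_i$ (a finite sum with $\alpha_i\in\overline{\mathbb F}_p$ and $g_i\in G$) is handled by lifting the $\alpha_i$ to $\overline{\mathbb Z}_p$ and taking $\tau=\sum_i\widetilde{\alpha_i}\,\tau_0 g_i$, since the right $G$-action preserves $I(\Symm^r(\overline{\mathbb Z}_p^2))$ and commutes with $T$ and with the map induced by $\Psi$.

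\textit{Construction and computation.} By Lemma~\ref{l1.1}, $\gamma_0=\sum_{\mu\in\mathbb F_p}\bigl[{\begin{Lmatrix}p&[\mu]\\0&1\end{Lmatrix}},X^{p-1-s}\bigr]+\delta_{s=p-1}\bigl[{\begin{Lmatrix}1&0\\0&p\end{Lmatrix}},X^{p-1-s}\bigr]-\overline{p/a}\cdot\tfrac ts\,[1,X^{p-1-s}]$, while by Lemma~\ref{l1} one has $\Psi\,x^{p-1}y^{r-p+1}=X^{p-1-s}$ and $\Psi$ kills $\overline\theta\,\sigma_{r-(p+1)}$ as well as all $x^iy^{r-i}$ and $x^{r-i}y^i$ with $0\leqslant i\leqslant s-1$ (so the kernel of $\Psi$ on $I(\sigma_r)$ is $I\bigl(\langle\overline\theta\,\sigma_{r-(p+1)},y^r\rangle_{\GL_2(\mathbb F_p)}\bigr)$, as recorded after Lemma~\ref{lemma6}). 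Following~\cite{b2,b4}, I would take $\tau_0$ to be an explicit combination of the elements $\bigl[{\begin{Lmatrix}p&[\mu]\\0&1\end{Lmatrix}},x^{j(p-1)}y^{r-j(p-1)}\bigr]$ (summed over $\mu$), $\bigl[{\begin{Lmatrix}1&0\\0&p\end{Lmatrix}},x^{j(p-1)}y^{r-j(p-1)}\bigr]$, and $[1,x^{j(p-1)}y^{r-j(p-1)}]$ (for $j\in\{0,1\}$, and in the last family also for $j$ up to $t$), with coefficients built from the integers $\binom{r}{i(p-1)}$ and $\mathfrak m$, divided by $a$ in two places; this already makes $p^2\tau_0$ integral, and the combination is chosen precisely so that $(T-a)\tau_0$ is integral (the part of $T\tau_0$ divisible by $a$ cancels) and so that, modulo $p$,
\[\textstyle\overline{(T-a)\tau_0}\equiv\sum_{\mu\in\mathbb F_p}\bigl[{\begin{Lmatrix}p&[\mu]\\0&1\end{Lmatrix}},x^{p-1}y^{r-p+1}\bigr]+\delta_{s=p-1}\bigl[{\begin{Lmatrix}1&0\\0&p\end{Lmatrix}},x^{p-1}y^{r-p+1}\bigr]-\overline{p/a}\cdot\tfrac ts\,[1,x^{p-1}y^{r-p+1}]+\xi,\]
with $\Psi(\xi)=0$. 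To verify this one uses the explicit formula for $T$, the Teichm\"uller identities $\sum_\lambda[\lambda]^m=p\,\delta_{m=0}+(p-1)\,\delta_{m>0}\delta_{(p-1)\mid m}$ and $[\lambda]^{p-1}=1$, the binomial evaluations of Lemma~\ref{lemma7}(5), the sharp congruence $\mathfrak m\equiv\tfrac ts p\pmod{p^2}$ of Lemma~\ref{lemma7}(1) — which is the source of the coefficient $-\overline{p/a}\cdot\tfrac ts$, and which makes this coefficient vanish automatically when $p\mid t$, consistently with $\overline{p/a}\cdot\tfrac ts=0$ there — and the vanishing $\sum_{i\geqslant1}i\binom{r}{i(p-1)}\equiv0\pmod p$ of Lemma~\ref{lemma7}(3), which holds because $s\neq1$ and which is what lets the otherwise surviving $[1,x^{r-1}y]$ and $[1,xy^{r-1}]$ terms be discarded (they lie in the kernel of $\Psi$ in any case). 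Applying $\Psi$ to the displayed congruence and using Lemma~\ref{l1} yields $\Psi\bigl(\overline{(T-a)\tau_0}\bigr)=\gamma_0$, as wanted.

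\textit{Main obstacle.} The delicate step is exactly this boundary computation. In Theorems~\ref{l7Z}--\ref{theorem26} one compares $v(\mathfrak m)$ with $v(a)$ and exploits that one of them is strictly smaller; here they are equal (for $p\nmid t$), so one must extract the exact leading term $\mathfrak m/p\equiv\tfrac ts\pmod p$, and then sort carefully, among the many $O(p)$ terms that $(T-a)$ produces across the Bruhat--Tits tree, those that vanish outright from those that survive only inside the kernel of $\Psi$ and from those that build up $\gamma_0$. It is here that the $\pmod{p^2}$ form of the congruences in Lemma~\ref{lemma7}, and the exclusion of $s=1$, are essential; the value $s=p-1$ in addition forces one to carry along the extra $\delta_{s=p-1}$ terms supported on $KZ{\begin{Lmatrix}1&0\\0&p\end{Lmatrix}}^{-1}$, which is routine but must be checked.
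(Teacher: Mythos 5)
Your proposal is correct and follows essentially the same route as the paper: reduce to the single generator $[1,X^{p-1-s}]$ using simplicity of $\sigma_{p-1-s}(s)$ and $G$-equivariance, then build an explicit $\tau$ out of the elements $\bigl[g,x^{j(p-1)}y^{r-j(p-1)}\bigr]$ supported on the three relevant cosets, with coefficients governed by the mod-$p^2$ congruence $\sum_j\binom{r}{j(p-1)}\equiv\frac ts p$ and the vanishing $\sum_j j\binom{r}{j(p-1)}\equiv0$ for $s\neq1$, which is exactly the paper's Lemma~\ref{l2''} (whose $\varphi_{l,g}=[g,\,y^r-x^{l(p-1)}y^{r-l(p-1)}]$ and correction term $\frac pa\sum_j(c_j-p^{-1}\binom{r}{j(p-1)})\varphi_{j,1}$ instantiate precisely the combination you describe). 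The only thing you leave implicit is the actual formula for $\tau_0$ and the line-by-line mod-$p^2$ verification, but the shape, the denominators ($p^2\tau$ integral via two divisions by $a$), and the cancellation mechanisms you identify all match the paper's computation.
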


Before giving the proof of theorem~\ref{t1}, we will first prove a lemma.
 
\subsection{A lemma}

\begin{lemma}\label{l2''}
Let $t \geqslant 1$ be an integer, and let $a \in \overline {\mathbb Q}_p$ be such that $v(a) = 1$. Let $r = t(p-1) + s$, with $s \in \{2,\ldots,p-1\}$. Then there is some $\tau$ such that $(T - a) \tau$ and $p^2\tau$ are integral, and
	\[\textstyle  \Psi (\overline{\textstyle (T - a) \tau}) \textstyle  = \left( T -  \overline{p/a} \cdot \frac{t}{s} \right) \left[1, X^{p-1-s}\right].\]
\end{lemma}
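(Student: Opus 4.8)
The plan is to write $\tau$ down explicitly and then verify its properties by a direct, if lengthy, computation. Concretely, $\tau$ will be a $\overline{\mathbb Q}_p$-linear combination of the elementary vectors $[g,x^{j(p-1)}y^{r-j(p-1)}]$, with $g$ running over the identity, the matrices $\left(\begin{smallmatrix}p&[\mu]\\0&1\end{smallmatrix}\right)$ for $\mu\in\mathbb F_p$, and $\left(\begin{smallmatrix}1&0\\0&p\end{smallmatrix}\right)$, and $j$ over a bounded range; the coefficients will have valuation $\geq -2$ (so that $p^2\tau$ is integral), and will be built, exactly as in the proof of Lemma~\ref{l7x}, out of $a^{-1}$ and $a^{-2}$ times alternating binomial sums of the shape $\sum_j(-1)^j\binom tj[g,x^{j(p-1)}y^{r-j(p-1)}]$. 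These coefficients must be chosen so that, after one application of $T$, the terms carrying a genuine pole in $a$ all cancel.

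Granting such a $\tau$, the computation of $(T-a)\tau$ proceeds by unfolding the formula for $T[g,v]$: one expands each $(-[\lambda]x+py)^{r-j(p-1)}$ by the binomial theorem and sorts by powers of $p$. The alternating binomial weights, together with Lemma~\ref{l7}, force the coefficient of every intermediate power $p^z$ with $0<z<t$ to vanish identically and make the coefficients with $z\geq t$ divisible by $p^t$ — this is the same collapse that drives the proof of Lemma~\ref{l7x} — so that only finitely many terms survive modulo any fixed power of $p$. That $(T-a)\tau$ is then integral comes down to the congruences in Lemma~\ref{lemma7}: part~\ref{423423432}, namely $\sum_{j=1}^t\binom r{j(p-1)}\equiv\frac ts p\pmod{p^2}$, is exactly what forces the surviving $KZ$-term to have nonnegative valuation, while parts~3 and~4 (available because $s\neq 1$) supply the extra factor of $p$ that prevents any coefficient from retaining a second-order pole in $a$.

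With integrality established, reduce $(T-a)\tau$ modulo $p$ and apply $\Psi$. Because $p^2\tau$ is integral, every term of $(T-a)\tau$ carrying a power $p^z$ with $z\geq 3$ vanishes modulo $p$; one checks that each monomial $x^iy^{r-i}$ then surviving in $\overline{(T-a)\tau}$ either has $i\leq s-1$ or $r-i\leq s-1$ — on which $\Psi$ vanishes by Lemma~\ref{l1} — or is of the form $x^{j(p-1)}y^{r-j(p-1)}$ with $1\leq j\leq t$, which $\Psi$ sends to $X^{p-1-s}$. So $\Psi$ collapses each remaining polynomial part to a scalar times $X^{p-1-s}$, and the surviving contributions are $\sum_{\mu\in\mathbb F_p}[\left(\begin{smallmatrix}p&[\mu]\\0&1\end{smallmatrix}\right),X^{p-1-s}]$ at the matrices $\left(\begin{smallmatrix}p&[\mu]\\0&1\end{smallmatrix}\right)$, the term $\delta_{s=p-1}[\left(\begin{smallmatrix}1&0\\0&p\end{smallmatrix}\right),X^{p-1-s}]$ at $\left(\begin{smallmatrix}1&0\\0&p\end{smallmatrix}\right)$ (present only when $s=p-1$, and then coming from the $v(px,y)$-part of $T$), and $-c\,[1,X^{p-1-s}]$ at the identity, where $c$ is the reduction of $a^{-1}\sum_{j=1}^t\binom r{j(p-1)}$, which equals $\overline{p/a}\cdot\frac ts$ by part~\ref{423423432} of Lemma~\ref{lemma7}. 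By Lemma~\ref{l1.1} the first two of these together are $T[1,X^{p-1-s}]$, so $\Psi(\overline{(T-a)\tau})=\big(T-\overline{p/a}\cdot\frac ts\big)[1,X^{p-1-s}]$, as required.

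The hard part is the integrality of $(T-a)\tau$: the finitely many correction terms in $\tau$ have to be arranged so that every coefficient that a priori has a pole in $a$ is killed, and it is precisely here that the fine $p$-adic information about $\sum_j\binom r{j(p-1)}$ and the hypothesis $s\neq 1$ enter. Checking that the spurious cosets — those occurring in $(T-a)\tau$ but not in the target, such as the ones at $\left(\begin{smallmatrix}1&0\\0&p^2\end{smallmatrix}\right)$ — cancel in pairs, and handling the degenerate value $s=p-1$ separately, is the delicate bookkeeping; everything else is a mechanical unwinding of the definition of $T$.
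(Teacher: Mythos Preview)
Your outline has the right shape --- build $\tau$ explicitly, check integrality via the binomial congruences of Lemma~\ref{lemma7}, then apply $\Psi$ using Lemmas~\ref{l1} and~\ref{l1.1} --- and this is exactly the paper's strategy. But you never actually write $\tau$ down: you describe what form it should take and then say the coefficients ``must be chosen'' so that the poles in $a$ cancel. That choice \emph{is} the lemma; without it, what you have is a plan, not a proof.

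Where you do commit to specifics, they diverge from the paper and are slightly off. The paper does \emph{not} use the long alternating sums $\sum_j(-1)^j\binom{t}{j}[g,x^{j(p-1)}y^{r-j(p-1)}]$ of Lemma~\ref{l7x}; it uses the two-term elements $\varphi_{l,g}=[g,\,y^r-x^{l(p-1)}y^{r-l(p-1)}]$ and works modulo $p^2$ throughout. The actual $\tau$ is $a^{-1}$ times $\sum_{\mu\in\mathbb F_p}\varphi_{l,\left(\begin{smallmatrix}p&[\mu]\\0&1\end{smallmatrix}\right)}$, plus (only when $s=p-1$) a single $\varphi_{l,\left(\begin{smallmatrix}0&1\\p&0\end{smallmatrix}\right)}$, plus a correction $\frac{p}{a}\sum_{j=1}^t b_j\varphi_{j,1}$ with $b_j=c_j-p^{-1}\binom{r}{j(p-1)}$ and $\sum_j c_j\equiv t/s\pmod p$. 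Only parts~1 and~3 of Lemma~\ref{lemma7} are used; part~4, which you invoke, requires $p\mid t$ and plays no role here. Finally, the far cosets (at $\left(\begin{smallmatrix}p^2&\ast\\0&1\end{smallmatrix}\right)$ and, when $s=p-1$, at $\left(\begin{smallmatrix}0&1\\p^2&\ast\end{smallmatrix}\right)$) do not ``cancel in pairs'': they survive in $(T-a)\tau$ but carry the monomial $x^{r-1}y$, which $\Psi$ kills by Lemma~\ref{l1} since $s\geq 2$. No coset at $\left(\begin{smallmatrix}1&0\\0&p^2\end{smallmatrix}\right)$ ever appears.
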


\begin{proof}
Let $l \in \{1,\ldots,t\}$, and let $\smash{\varphi_{l,g} = \left[g,y^r - x^{l(p-1)}y^{r-l(p-1)}\right]}$. Then
	\begin{align*}
	\textstyle (T - a) \varphi_{l,g} & \textstyle \equiv \sum_{\lambda \ne 0}  \big[g  {\begin{Lmatrix} p & [\lambda] \\ 0 & 1 \end{Lmatrix}},\left\{(-[\lambda])^{r} - (-[\lambda])^{r-l(p-1)} \right\} x^{r} 
	\\ &
	\textstyle
	\qquad
	+ \left\{r(-[\lambda])^{r-1} - (r-l(p-1))(-[\lambda])^{r-l(p-1)-1}\right\} px^{r-1}y\big]
	\\ &
	\textstyle
	\qquad
	+ \left[g{\begin{Lmatrix} 1 & 0 \\ 0 & p \end{Lmatrix}},y^r\right]- \left[g,a(y^r - x^{l(p-1)}y^{r-l(p-1)})\right] \pmod {p^2},
	\end{align*} 
Note that, for $\lambda \ne 0$, $\smash{(-[\lambda])^{r} = (-[\lambda])^{r-l(p-1)}}$  and $ \smash{(-[\lambda])^{r-1} = (-[\lambda])^{r-l(p-1)-1} = (-[\lambda])^{s-1}}$. Hence
	\begin{align*}
	 \textstyle (T - a) \varphi_{l,g} & \textstyle \equiv \sum_{\lambda \ne 0}  \left[g  {\begin{Lmatrix} p & [\lambda] \\ 0 & 1 \end{Lmatrix}},(-[\lambda])^{s-1}lp(p-1)x^{r-1}y\right] 
	 \\ &
	 \textstyle
	 \qquad
	 + \left[g{\begin{Lmatrix} 1 & 0 \\ 0 & p \end{Lmatrix}},y^r\right]- \left[g,a(y^r - x^{l(p-1)}y^{r-l(p-1)})\right] \pmod {p^2}.
	\end{align*}
Let $\smash{\textstyle  \tau' = \sum_{\mu \in \mathbb F_p} \varphi_{l,\tiny \begin{Smatrix} p & [\mu] \\ 0 & 1 \end{Smatrix}}}$. Then
	\begin{align*}
	 \textstyle (T - a) \tau' & \textstyle \equiv -pl \sum_{\lambda \ne 0,\mu}  \left[  {\begin{Lmatrix} p^2 & p[\lambda]+[\mu] \\ 0 & 1 \end{Lmatrix}},(-[\lambda])^{s-1} x^{r-1}y\right] 
	  \textstyle
	 +  \sum_{\mu}\left[{\begin{Lmatrix} 1 & [\mu] \\ 0 & 1 \end{Lmatrix}},y^r\right]
	 \\ &
	 \textstyle \qquad - \sum_{\mu} a \left[{\begin{Lmatrix} p & [\mu] \\ 0 & 1 \end{Lmatrix}}, y^r - x^{l(p-1)}y^{r-l(p-1)}\right] \pmod {p^2}.
	\end{align*}
Note that 
	\begin{align*}
	 \textstyle \sum_{\mu} \left[{\begin{Lmatrix} 1 & [\mu] \\ 0 & 1 \end{Lmatrix}},y^r\right] & \textstyle  \equiv \sum_{\mu} \left[1,(y + [\mu]x)^r\right]
	 \textstyle \equiv \sum_{i,\mu} {r \choose i} [\mu]^i \left[1, x^iy^{r-i}\right] \pmod {p^2},
	\end{align*}
and that $\sum_\mu [\mu]^i = 0$ if $i \not\in \{0,p-1,\ldots,t(p-1),(t+1)(p-1)\}$, and $\sum_\mu [\mu]^i = p-1+\delta_{i=0}$ otherwise. Consequently,
	\begin{align*}
	 \textstyle \sum_{\mu} \left[{\begin{Lmatrix} 1 & [\mu] \\ 0 & 1 \end{Lmatrix}},y^r\right] & \textstyle \equiv p{r \choose 0}  \left[1, y^{r}\right] + (p-1)\delta_{s=p-1}  {r \choose (t+1)(p-1)}  \left[1, x^{r}\right] \textstyle \\&\textstyle\qquad + (p-1) \sum_{j=1}^t {r \choose j(p-1)}  \left[1, x^{j(p-1)}y^{r-j(p-1)}\right] \pmod {p^2},
	\end{align*}
which implies that
	\begin{align*}
	 \textstyle (T - a) \tau' & \textstyle \equiv - lp\sum_{\lambda \ne 0,\mu}  \left[  {\begin{Lmatrix} p^2 & p[\lambda]+[\mu] \\ 0 & 1 \end{Lmatrix}},(-[\lambda])^{s-1} x^{r-1}y\right] 
	  \\ & \textstyle \qquad 
	 +p(p-1)\sum_{j=1}^t p^{-1} {r \choose j(p-1)}  \left[1, x^{j(p-1)}y^{r-j(p-1)}\right] + p  \left[1, y^{r}\right] 
	 \\ & \textstyle \qquad 
	  + (p-1) \delta_{s=p-1} \left[1, x^{r}\right]
	  - p\sum_{\mu} \frac ap \left[{\begin{Lmatrix} p & [\mu] \\ 0 & 1 \end{Lmatrix}}, y^r - x^{l(p-1)}y^{r-l(p-1)}\right]\pmod {p^2}.
	\end{align*}
Note also that
	\begin{align*}
	\textstyle (T - a) \varphi_{l,{\tiny \begin{Smatrix} 0 & 1 \\ p & 0 \end{Smatrix}}} & \textstyle  \textstyle \equiv \sum_{\lambda \ne 0}  \left[{\begin{Lmatrix} 0 & 1 \\ p^2 & p[\lambda]+1 \end{Lmatrix}},(-[\lambda])^{-1}lp(p-1)x^{r-1}y\right] 
	 \\ &
	 \textstyle
	 \qquad
	 + \left[{\begin{Lmatrix} 0 & p \\ p & 0 \end{Lmatrix}},y^r\right] - a \left[{\begin{Lmatrix} 0 & 1 \\ p & 0 \end{Lmatrix}},y^r - x^{l(p-1)}y^{r-l(p-1)}\right] 
	 \\ & \textstyle \equiv \sum_{\lambda \ne 0}  \left[{\begin{Lmatrix} 0 & 1 \\ p^2 & p[\lambda]+1 \end{Lmatrix}},(-[\lambda])^{-1}lp(p-1)x^{r-1}y\right] 
	 \\ &
	 \textstyle
	 \qquad
	 + \left[1,x^r\right] - a \left[{\begin{Lmatrix} 1 & 0 \\ 0 & p \end{Lmatrix}},x^r - y^{l(p-1)}x^{r-l(p-1)}\right] 
	 \pmod {p^2}.
	\end{align*}
Let $\tau'' = \tau'+\delta_{s=p-1} \varphi_{l,{\tiny \begin{Smatrix} 0 & 1 \\ p & 0 \end{Smatrix}}}$, so that $\tau''$ is integral. Then
	\begin{align*}
	\textstyle (T - a) \tau'' & \textstyle \equiv - lp\sum_{\lambda \ne 0,\mu}  \left[  {\begin{Lmatrix} p^2 & p[\lambda]+[\mu] \\ 0 & 1 \end{Lmatrix}},(-[\lambda])^{s-1} x^{r-1}y\right] + p \left[1, y^{r}\right] + p  \delta_{s=p-1} \left[1, x^{r}\right]
	 \\ & \textstyle \qquad 
	 +p(p-1)\sum_{j=1}^t p^{-1} {r \choose j(p-1)}  \left[1, x^{j(p-1)}y^{r-j(p-1)}\right] 
	  \\ & \textstyle \qquad - p\sum_{\mu} \frac ap \left[{\begin{Lmatrix} p & [\mu] \\ 0 & 1 \end{Lmatrix}}, y^r - x^{l(p-1)}y^{r-l(p-1)}\right] 
	 \\ &\qquad \textstyle +p\delta_{s=p-1} \sum_{\lambda \ne 0}  \left[{\begin{Lmatrix} 0 & 1 \\ p^2 & p[\lambda]+1 \end{Lmatrix}},(-[\lambda])^{-1}l(p-1)x^{r-1}y\right] 
	 \\ &
	 \textstyle
	 \qquad
	 - p\delta_{s=p-1}  \frac ap \left[{\begin{Lmatrix} 1 & 0 \\ 0 & p \end{Lmatrix}},x^r - y^{l(p-1)}x^{r-l(p-1)}\right] 
	   \pmod {p^2}.
	\end{align*} 
Let $\tau''' = \frac{p}{a} \sum_{j=1}^t b_j \varphi_{j,1}$, where $b_j \in \mathbb Q_p$ are such that $pb_j \in \mathbb Z_p$ and $\sum_{j=1}^t b_j = 0$. Then $p\tau'''$ is integral, and
	\begin{align*}
	 \textstyle (T - a) \tau''' & \textstyle \equiv - \sum_{\lambda \ne 0}  p \left(\sum_{j=1}^tjb_j\right)\left[{\begin{Lmatrix} p & [\lambda] \\ 0 & 1 \end{Lmatrix}},(-[\lambda])^{s-1}\frac{p}{a} x^{r-1}y\right] 
	   \\ & \textstyle \qquad
	 + p \sum_{j=1}^t b_j \left[1,x^{j(p-1)}y^{r-j(p-1)}\right] \pmod {p^2}.
	\end{align*}
In particular, this equation holds true when $b_j = c_j - p^{-1}{r \choose j(p-1)}$, where $c_j\in \mathbb Z_p$ are such that  $\sum_{j=1}^t c_j = \frac ts$. Consequently, if
	\begin{align*}
	  \tau'''' & \textstyle = \tau'' + \tau''' =  \sum_{\mu \in \mathbb F_p} \varphi_{l,\tiny \begin{Smatrix} p & [\mu] \\ 0 & 1 \end{Smatrix}}+ \delta_{s=p-1} \varphi_{l,{\tiny \begin{Smatrix} 0 & 1 \\ p & 0 \end{Smatrix}}}  +  \frac{p}{a} \sum_{j=1}^t \left(c_j - p^{-1}{r \choose j(p-1)}\right) \varphi_{j,1},
	\end{align*}
then  $p\tau''''$ is integral, and
	\begin{align*}
	\textstyle (T - a) \tau'''' & \textstyle \equiv - lp\sum_{\lambda \ne 0,\mu}  \left[  {\begin{Lmatrix} p^2 & p[\lambda]+[\mu] \\ 0 & 1 \end{Lmatrix}},(-[\lambda])^{s-1} x^{r-1}y\right]   + p \left[1, y^{r}\right] + p  \delta_{s=p-1} \left[1, x^{r}\right]
	 \\ & \textstyle \qquad 
	 +p\delta_{s=p-1} \sum_{\lambda \ne 0}  \left[{\begin{Lmatrix} 0 & 1 \\ p^2 & p[\lambda]+1 \end{Lmatrix}},(-[\lambda])^{-1}l(p-1)x^{r-1}y\right] 
	 \\ & \textstyle \qquad - \sum_{\lambda \ne 0}  p \left(\sum_{j=1}^tj\left(c_j - p^{-1}{r \choose j(p-1)}\right)\right)\left[{\begin{Lmatrix} p & [\lambda] \\ 0 & 1 \end{Lmatrix}},(-[\lambda])^{s-1}\frac{p}{a} x^{r-1}y\right] 
	 \\ & \textstyle \qquad 
	 +p(p-1)\sum_{j=1}^t c_j \left[1, x^{j(p-1)}y^{r-j(p-1)}\right]
	  - p\sum_{\mu} \frac ap \left[{\begin{Lmatrix} p & [\mu] \\ 0 & 1 \end{Lmatrix}}, y^r - x^{l(p-1)}y^{r-l(p-1)}\right] 
	 \\ &\qquad \textstyle 
	 - p\delta_{s=p-1}  \frac ap \left[{\begin{Lmatrix} 1 & 0 \\ 0 & p \end{Lmatrix}},x^r - y^{l(p-1)}x^{r-l(p-1)}\right] 
	   \pmod {p^2}.
	\end{align*}
Note that 
	\[\textstyle \smash{v\left(\sum_{j=1}^tj\left(c_j - p^{-1}{r \choose j(p-1)}\right)\right)} \geqslant 0,\]
due to lemma~\ref{lemma7}. Let $\tau = a^{-1}\tau''''$. Then $(T-a)\tau$ and $p^2\tau$ are integral. Moreover, $\Psi y^r = 0$ and $\Psi x^{r-1} y = 0$,
since $s \ne 1$.  Consequently,
	\begin{align*}
	 \textstyle  \Psi (\overline{\textstyle (T - a) \tau})  & \textstyle 
	 = \left(  T - \overline{p/a} \sum_{j=1}^t c_j\right) \left[1, X^{p-1-s}\right]= \left(  T - \overline{p/a} \cdot \frac{t}{s}\right) \left[1, X^{p-1-s}\right]. 
	\end{align*}
\end{proof}

\begin{proof}[Proof of  theorem~\ref{t1}]
Follows from lemma~\ref{l2''} %
 due to the fact that the module $\sigma_{p-1-s}(s)$ is simple and hence generated by $X^{p-1-s}$.
\end{proof}

\subsection{Completing the proof of theorem~\ref{t0.0}}

\begin{lemma}\label{l4.2}
Let $t \geqslant 2$ be an integer, and let $a \in \overline {\mathbb Q}_p$ be such that $v(a) = 1$. Suppose that $r = t(p-1) + s$, with $s \in \{2,4,\ldots,p-1\}$. If $p \nmid t$, then $\smash{\overline \Theta_{r+2,a}}$ is one of the following: 
\begin{enumerate}[\tttttt] 
	\item either $\smash{\overline \Theta_{r+2,a}^{\mathrm{ss}} \cong  \pi(\widetilde{s-2},\lambda,\omega)^{\mathrm{ss}}\oplus \pi(p-1-s,\lambda^{-1},\omega^s)^{\mathrm{ss}}}$, where $\lambda = \overline{a/p} \cdot \frac{s}{t}$, and $\smash{\widetilde n}$ is the integer in $\{1,\ldots,p-1\}$ which is congruent to $n$ modulo~$p-1$; or $\overline \Theta_{r+2,a} \cong \pi(s-2,0,\omega)$.
\end{enumerate}
If, on the other hand, $p \mid t$, then $\smash{\overline \Theta_{r+2,a}}$ is one of the following: 
\begin{enumerate}[\tttttt] 
	\item either $\overline \Theta_{r+2,a} \cong  \pi(s,0,1)$; or $\overline \Theta_{r+2,a} \cong \pi(s-2,0,\omega)$.
\end{enumerate} 
\end{lemma}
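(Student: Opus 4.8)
The plan is to realize $\overline\Theta_{r+2,a}$ inside a two-step filtration whose top quotient is controlled by Theorem~\ref{t1} and whose bottom subobject is controlled by Lemma~\ref{l4.1} together with the structure of the modules $I_h$, and then to pin down both pieces using the classification results of Theorems~\ref{t0.2} and~\ref{t0.1}. Since $v(a)=1$, Lemma~\ref{l4.1}(1) gives $I(Y_r)\subseteq X(r+2,a)$ with $Y_r$ a subrepresentation of $W_r=\Ker\Psi$ (in the borderline case $(t,s)=(2,2)$ one has $Y_r=\langle y^r\rangle_{\mathrm{GL}_2(\mathbb F_p)}$ and checks directly that its induction lies in the kernel of reduction). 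I would then apply Lemma~\ref{l4.1.1} with $A=I(\sigma_r)$, $\beta=\Psi : A\to I(\sigma_{p-1-s}(s))$, $\gamma$ the projection $A\to\overline\Theta_{r+2,a}$, and $B'=\bigl(T-\overline{p/a}\cdot\tfrac ts\bigr)I(\sigma_{p-1-s}(s))$ — the hypothesis of Lemma~\ref{l4.1.1} for this $B'$ being exactly the conclusion of Theorem~\ref{t1}. This shows $\overline\Theta_{r+2,a}$ has a submodule $M_1=I(W_r)/\bigl(I(W_r)\cap X(r+2,a)\bigr)$ with $M_0:=\overline\Theta_{r+2,a}/M_1$ a quotient of $I(\sigma_{p-1-s}(s))/\bigl(T-\overline{p/a}\cdot\tfrac ts\bigr)\cong\pi\bigl(p-1-s,\,\overline{p/a}\cdot\tfrac ts,\,\omega^s\bigr)$. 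Since $I(Y_r)\subseteq X(r+2,a)$ and $Y_r\subseteq W_r$, the module $M_1$ is a quotient of $I(W_r/Y_r)$, and using $W_r=\langle\overline\theta\sigma_{r-(p+1)},y^r\rangle_{\mathrm{GL}_2(\mathbb F_p)}$ with the modular law one sees $W_r/Y_r$ is a quotient of $\overline\theta\sigma_{r-(p+1)}/\overline\theta^2\sigma_{r-2(p+1)}\cong I_{r-2}(1)$; so $M_1$ is a quotient of $I(I_{r-2}(1))$.

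Next I would identify the two pieces. If $p\nmid t$ then $\overline{p/a}\cdot\tfrac ts\ne0$ and $\pi\bigl(p-1-s,\,\overline{p/a}\cdot\tfrac ts,\,\omega^s\bigr)$ is irreducible and non-supersingular by Theorem~\ref{t0.2}(1) (in the range $s\in\{2,\ldots,p-1\}$ the only exceptional pair that could arise is $s=p-1$ with $\overline{p/a}\cdot\tfrac ts=\pm1$, which needs a short separate check); if $p\mid t$ then $\overline{p/a}\cdot\tfrac ts=0$ and the module is $\pi(p-1-s,0,\omega^s)\cong\pi(s,0,1)$ by Theorem~\ref{t0.2}(3), which is supersingular. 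Hence $M_0$ is either $0$ or this irreducible module. For $M_1$: by Lemma~\ref{lemma6}, $I_{r-2}(1)=I_{\widetilde{r-2}}(1)$ has a two-step series with factors $\sigma_{\widetilde{r-2}}(1)$ and $\sigma_{p-1-\widetilde{r-2}}(1+\widetilde{r-2})$, with $\widetilde{r-2}\equiv s-2$ and $1+\widetilde{r-2}\equiv s-1$ modulo $p-1$, so by Theorem~\ref{t0.2}(5) every Jordan--H\"older factor of $M_1$ is a subquotient of $\pi(\widetilde{s-2},\ast,\omega)$ or of $\pi(p-1-\widetilde{r-2},\ast,\omega^{s-1})$; by Theorem~\ref{t0.2}(3) the only supersingular representation occurring among all of these is $\pi(\widetilde{s-2},0,\omega)\cong\pi(s-2,0,\omega)$.

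The conclusion then comes from Theorem~\ref{t0.1}: $\bigl(\overline\Theta_{r+2,a}\bigr)^{\mathrm{ss}}$ must be one of the two shapes there. If $M_0\ne0$: when $p\mid t$, $M_0\cong\pi(s,0,1)$ is supersingular, so $\bigl(\overline\Theta_{r+2,a}\bigr)^{\mathrm{ss}}$ is the supersingular shape (a single supersingular representation), forcing $M_1=0$ and $\overline\Theta_{r+2,a}\cong\pi(s,0,1)$; when $p\nmid t$, $M_0\cong\pi\bigl(p-1-s,\lambda^{-1},\omega^s\bigr)$ with $\lambda^{-1}=\overline{p/a}\cdot\tfrac ts$ (i.e.\ $\lambda=\overline{a/p}\cdot\tfrac st$) is a non-supersingular constituent of $\bigl(\overline\Theta_{r+2,a}\bigr)^{\mathrm{ss}}$, so $\bigl(\overline\Theta_{r+2,a}\bigr)^{\mathrm{ss}}$ must be the reducible shape containing it, i.e.\ $\pi(\widetilde{s-2},\lambda,\omega)^{\mathrm{ss}}\oplus\pi\bigl(p-1-s,\lambda^{-1},\omega^s\bigr)^{\mathrm{ss}}$. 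If $M_0=0$: then $\overline\Theta_{r+2,a}=M_1$, all of whose Jordan--H\"older factors lie in the two blocks attached to $\widetilde{s-2}$ and to $p-1-\widetilde{r-2}$; a direct check — that $\{s-2,\,p-3-(s-2)\}$ is never, modulo $p-1$ and subject to the character constraint of Theorem~\ref{t0.2}(4), a sub-multiset of $\{\widetilde{s-2},\,p-1-\widetilde{r-2}\}$ once $s\notin\{1,3\}$ and $p>3$ — rules out the reducible shape, so $\bigl(\overline\Theta_{r+2,a}\bigr)^{\mathrm{ss}}$ is the supersingular shape, which by the previous paragraph equals $\pi(s-2,0,\omega)$; being irreducible this forces $\overline\Theta_{r+2,a}\cong\pi(s-2,0,\omega)$. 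Together these exhaust both parts of the statement.

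The step I expect to be the main obstacle is the last combinatorial exclusion in the case $M_0=0$: showing that no reducible representation of the shape allowed by Theorem~\ref{t0.1} is compatible with all Jordan--H\"older factors being supported on the two blocks of $I_{r-2}(1)$ — this is where the hypotheses $s\notin\{1,3\}$ and $p>3$ really enter, through parts~(3) and~(4) of Theorem~\ref{t0.2}. Alongside this, the small cases are delicate: for $s=2$ one has $\widetilde{s-2}=p-1$, the module $I_{r-2}(1)$ is semisimple, $\pi(p-1,\pm1,\omega)$ can have length two, and the index conventions $0\equiv p-1$ must be tracked consistently (which is exactly why the first alternative is only claimed up to semisimplification), while for $(t,s)=(2,2)$ Lemma~\ref{l4.1}(1) is not literally applicable so $I(Y_r)\subseteq X(r+2,a)$ must be argued by hand; the exceptional-pair check for $s=p-1$ in the identification of $M_0$ needs a similar short argument.
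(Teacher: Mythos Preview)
Your proposal is correct and follows essentially the same route as the paper's proof: apply Lemma~\ref{l4.1.1} with $\beta=\Psi$, $B'=(T-\overline{p/a}\cdot\tfrac ts)I(\sigma_{p-1-s}(s))$ (legitimised by Theorem~\ref{t1}), identify the bottom piece as a quotient of $I(\overline\theta\sigma_{r-(p+1)}/\overline\theta^2\sigma_{r-2(p+1)})\cong I(I_{r-2}(1))$ via Lemma~\ref{l4.1}, and then use the series of $I_{r-2}(1)$ together with Theorems~\ref{t0.2} and~\ref{t0.1} to carry out the combinatorial exclusion. The paper organises the endgame as ``the only pair from $\{p-1-s,\widetilde{s-2},(2-s\bmod p-1)\}$ summing to $-2\bmod(p-1)$ is $(p-1-s,\widetilde{s-2})$'', with the separate checks for $s=2$, $p=3$, and $r=2p$ that you correctly flagged as the delicate points; your $M_0=0$/$M_0\neq0$ dichotomy is a mild reorganisation of the same argument.
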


\begin{proof}
First, we will consider the case when $r \ne 2p$. A consequence of lemma~\ref{l4.1.1}, with $A = I(\sigma_r)$, with $B = I(\sigma^{p-1-s}(s))$, with $\beta$ being induced by $\Psi$, with $\smash{B' = \left(T-\overline{p/a} \cdot \frac{t}{s}\right)(I(\sigma_{p-1-s}(s)))}$, with $C = \overline \Theta_{r+2,a}$, and with $\gamma$ being the surjection $\sigma_{r} \twoheadrightarrow \overline{\Theta}_{r+2,a}$, is the fact that there is some $C_1 \subseteq \overline \Theta_{r+2,a}$ such that $\overline \Theta_{r+2,a}/C_1$ is isomorphic to a quotient of $I(\sigma_{p-1-s}(s))/B'$ and $C_1$ is isomorphic to $I(\Ker\Psi)/(I(\Ker\Psi)\cap X(r+2,a))$. Here the choice of $B'$ can be made due to theorem~\ref{t1}. Firstly, suppose that $p \nmid t$. Note that $\smash{I(\sigma_{p-1-s}(s))\big/\!\left(T-\overline{p/a} \cdot \frac{t}{s}\right) \cong \pi(p-1-s,\lambda^{-1},\omega^s)}$, where $\lambda = \overline{a/p} \cdot \frac{s}{t}$. Hence, $\smash{\overline \Theta_{r+2,a}}$ is a quotient of a module which has a series with factors
	\[\textstyle \pi(p-1-s,\lambda^{-1},\omega^s),\, I\left(\langle \overline \theta\sigma_{r - (p+1)}, y^r\rangle_{\mathrm{GL}_2(\mathbb F_p)}\right)\big/\left(I\left(\langle \overline \theta\sigma_{r - (p+1)}, y^r\rangle_{\mathrm{GL}_2(\mathbb F_p)}\right) \cap X(r+2,a)\right).\]
Since $X(r+2,a)$  contains  $I(Y_r)$, due to lemma~\ref{l4.1}, this is a quotient of a module which has a series with factors
	\[\textstyle \pi(p-1-s,\lambda^{-1},\omega^s),\, I\left(\langle \overline \theta\sigma_{r - (p+1)}, y^r\rangle_{\mathrm{GL}_2(\mathbb F_p)}\big/\langle \overline \theta^2 \sigma_{r - 2(p+1)}, y^r\rangle_{\mathrm{GL}_2(\mathbb F_p)}\right).\]
It can be shown, by several applications of the isomorphism theorems, that
	\[\smash{\textstyle \langle \overline \theta\sigma_{r - (p+1)}, y^r\rangle_{\mathrm{GL}_2(\mathbb F_p)}\big/\langle \overline \theta^2 \sigma_{r - 2(p+1)} , y^r\rangle_{\mathrm{GL}_2(\mathbb F_p)}}\]
is a quotient of $M = \textstyle \overline \theta\sigma_{r - (p+1)} /\,\overline \theta^2 \sigma_{r - 2(p+1)}$. Due to parts~(a) and~(c) of lemma~3.2 in~\cite{b1}, there is a series $M = M_2 \supseteq M_1 \supseteq M_0 = \{0\}$, such that $M_2/M_1$ is isomorphic to $\sigma_{(2-s \bmod p-1)} (s-1)$, and $M_1$ is isomorphic to $\sigma_{\widetilde{s-2}} (1)$, where $\smash{\widetilde n}$ is the integer in $\{1,\ldots,p-1\}$ which is congruent to $n$ modulo~$p-1$. Hence $\smash{\overline \Theta_{r+2,a}}$ is a quotient of a module which has a series with factors
	\[\textstyle \pi(p-1-s,\lambda^{-1},\omega^s),\, I\left(\sigma_{(2-s \bmod p-1)}(s-1)\right),\, I\left(\sigma_{\widetilde{s-2}}(1)\right).\]
If $s \not\in\{1,2,3\}$, the only pair of integers from the set $\smash{\{p-1-s,\widetilde{s-2},(2-s\bmod p-1)\}}$ which adds up to $(-2\bmod{p-1})$ is the pair $\smash{(p-1-s,\widetilde{s-2})}$, since $p \geqslant 5$. If $s=2$ and $p \geqslant 5$, then there are also the pairs $(p-3,0)$ and $(p-3,p-1)$, which can't give a reducible representation since $\omega\ne \omega^{p-1}$. If $p=3$ and $s=2$, then  it can't be the case that $\overline \Theta_{r+2,a}^{\mathrm{ss}} \cong  \pi(\nu,\lambda,\omega)^{\mathrm{ss}}\oplus \pi(0,\lambda^{-1},\omega)^{\mathrm{ss}}$ with $\nu \in\{0,2\}$, since $\omega \ne \omega^{\nu+2}$. Consequently, due to the classification in theorem~\ref{t0.1}, either $\smash{\overline \Theta_{r+2,a}^{\mathrm{ss}} \cong  \pi(\widetilde{s-2},\lambda,\omega)^{\mathrm{ss}}\oplus \pi(p-1-s,\lambda^{-1},\omega^s)^{\mathrm{ss}}}$, or $\smash{\overline \Theta_{r+2,a} \cong \pi(\widetilde{s-2},0,\omega)}$.
 
Now suppose that $p \mid t$. Then, similarly, $\smash{\overline \Theta_{r+2,a}}$ is a quotient of a module which has a series with factors
	\[\textstyle \pi(s,0,1), \,  I\left(\sigma_{(2-s \bmod p-1)}(s-1)\right), \, I\left(\sigma_{\widetilde{s-2}}(1)\right).\]
Then $\smash{\overline \Theta_{r+2,a}}$ must be irreducible, since if $p \geqslant 5$ and $s \not\in\{1,3\}$, then there is no pair of integers from the set $\smash{\{\widetilde{s-2},(2-s\bmod p-1)\}}$ which adds up to $(-2\bmod{p-1})$, and if $p = 3$ and $s \not\in\{1,3\}$, then it can't be the case that $\overline \Theta_{r+2,a}^{\mathrm{ss}} \cong  \pi(\nu,\lambda,\omega)^{\mathrm{ss}}\oplus \pi(0,\lambda^{-1},\omega)^{\mathrm{ss}}$ with $\nu \in\{0,2\}$, since $\omega \ne \omega^{\nu+2}$. Consequently, due to the classification in theorem~\ref{t0.1}, either $\overline \Theta_{r+2,a} \cong  \pi(s,0,1)$, or $\smash{\overline \Theta_{r+2,a} \cong \pi(\widetilde{s-2},0,\omega)}$.

Finally, if $r = 2p$, then theorem~\ref{t1} and the first part of this proof still apply, the only difference being that the second to last factor in the series for $\overline{\Theta}_{r+2,a}$ does not occur, so that $\smash{\overline \Theta_{r+2,a}}$ is a quotient of a module which has a series with factors
	\[\textstyle \pi(p-1-s,\lambda^{-1},\omega^s),\, I\left(\sigma_{\widetilde{s-2}}(1)\right).\]
This completes the proof of lemma~\ref{l4.2}.
\end{proof}

\begin{proof}[Proof of theorem~\ref{t0.0}] Follows from theorem~\ref{t0.1}, lemma~\ref{l4.2}, and previously known results in the case when $r \leqslant 2p-2$.
\end{proof}

 \section{Conjecture~\ref{c0000}} \label{Sect4}

\subsection{The case when $2 > v(a) > 1$}

In this subsection, we will complete the proof of theorem~\ref{t0.0''0}.

\begin{proof}[Proof of theorem~\ref{t0.0''0}]
\begin{enumerate}[\tttttt] 
\item \emph{The case when $s\ne 2$.}
The condition that $s \ne 2$ implies that $p \geqslant 5$ and $s \in \{4,\ldots,p-1\}$. Then $\smash{\overline \Theta_{r+2,a}}$ has a series whose factors are quotients of submodules of
	\[\smash{\textstyle I\left(\sigma_{p-1-s}(s)\right),\, I\left(\sigma_{p+1-s}(s-1)\right), \, I\left(\sigma_{s-2}(1)\right)}.\]
Due to the conditions $p \geqslant 5$ and $s \in \{4,\ldots,p-1\}$, the only pair of factors which can give a reducible $\smash{\overline \Theta_{r+2,a}}$ is
	$\smash{\textstyle \left(I\left(\sigma_{p-1-s}(s)\right),\, I\left(\sigma_{s-2}(1)\right)\right)}$.
Let $\alpha = 0$, and define $M_r$ by the equation $\textstyle M_r = \smash{\sum_{r>j(p-1)>0} {r \choose j(p-1)}}$. Then, in view of lemma~\ref{lemma7}, we have $\mathfrak m_0(\varnothing) = M_r$, so 
	\[\textstyle v(\mathfrak m_0(\varnothing)) = v(\frac{t}{s}p (1 + O(p))) = v(t).\]
Moreover, $v(\mathfrak m_w(\varnothing))$ can be expressed as a linear combination of $M_{r},M_{r-1},\ldots,M_{r-w}$. In particular, since $s > 3$, it follows that $v(M_{r-i}) = v(t)$, for all $i \in \{0,1,2,3\}$, which implies that $v(\mathfrak m_0(\varnothing))  \geqslant v(\mathfrak m_w(\varnothing))$, for all $w \in \{1,2,3\}$. Consequently, theorem~\ref{theorem26} can be applied to show that either $[1,X^{p+1-s}]$ is in the image under $\Psi_1$ of the kernel of reduction, or $T[1,X^{p+1-s}]$ is in the image under $\Psi_1$ of the kernel of reduction. In any case, $\smash{\overline \Theta_{r+2,a}}$ has a series whose factors are quotients of  submodules of
	\[\textstyle I\left(\sigma_{p+1-s}(s-1)\right),\,I\left(\sigma_{s-2}(1)\right).\]
These modules cannot pair up to give a reducible $\smash{\overline \Theta_{r+2,a}}$, which completes the proof in the case~$s \ne 2$.
\item \emph{The case when $s = 2$ and $t \not\equiv \{1,2\}\pmod p$.}
The matrix $\mathfrak M^{(r,1,1)}$ contains
	\[\textstyle \mathfrak M' = \begin{Xmatrix} 1 & 1 & \sum_{r-3 > i(p-1) > 0} {r \choose i(p-1)+1} \\ 1-r & 1 & \sum_{r-3 > i(p-1) > 0} {r \choose i(p-1)+1} i \end{Xmatrix} = \begin{Xmatrix} 1 & 1 & 2(1-r) \\ 1-r & 1 & (1-r)(2-r) \end{Xmatrix}\]
as a submatrix, which has full rank over $\mathbb F_p$ (and therefore contains $(0,1)^T$ in its range) for $r \not\equiv 0 \pmod p$. Moreover, if $v_1,v_2,v_3$ denote the columns of $\mathfrak M'$, then 
\[\textstyle (r-1)v_1+(r-1)v_2+v_3 = 0,\]
so there is an element $\nu \in \overline{\theta}\sigma_{r-(p+1)}$ in the subspace $N \subseteq \overline{\theta}\sigma_{r-(p+1)}$ which corresponds to $\sigma_{p-1}$, such that the coefficient of $\overline{\theta} x^{r-(p-1)}$ in $\nu$ is $r-1$. Consequently, if $r \not\equiv 1 \pmod p$, then this element does not belong to $\smash{\overline{\theta}^2 \sigma_{r-2(p+1)}}$. Therefore, unless $\smash{p \mid r(r-1)}$, we have that $\smash{\overline \Theta_{r+2,a}}$ has a series whose factors are quotients of  submodules of
	\[\textstyle I\left(\sigma_{p+1-s}(s-1)\right),\,I\left(\sigma_{s-2}(1)\right),\]
which cannot pair up to give a reducible $\smash{\overline \Theta_{r+2,a}}$. This completes the proof in the case when $s=2$ and $t \not\equiv \{1,2\}\pmod p$.
\item \emph{The case when $s = 2$ and $t \equiv \{1,2\}\pmod p$.}
Calling the method \verb|find_m_w(1,0,1)| in the second program in the appendix %
 shows that in both cases the relevant $\mathfrak m_w(\varnothing)$ are either zero or polynomials of the type $Cr^\alpha (r-1)^\beta$. More specifically,
\begin{align*}
\textstyle \mathfrak m_2(\varnothing) & =  \textstyle \frac{1}{2} r (r-1), \\
\textstyle \mathfrak m_3(\varnothing) & = \textstyle -\frac12 r^2 (r-1).
\end{align*}
In particular, if $t \equiv \{1,2\}\pmod p$ then all of the relevant $\mathfrak m_w(\varnothing)$ vanish. So theorem~\ref{theorem26} applies, and shows that the factor which is a quotient of $\smash{I\left(\sigma_{p-3}(2)\right)}$ is in fact a quotient of $\smash{\pi(p-3,0,\omega^2)}$, which implies that $\smash{\overline \Theta_{r+2,a}}$ has a series whose factors are quotients of  submodules of
	$\smash{\textstyle I\left(\sigma_{p+1-s}(s-1)\right),\,I\left(\sigma_{s-2}(1)\right)}$,
which cannot pair up to give a reducible $\smash{\overline \Theta_{r+2,a}}$. Consequently, $\smash{\overline \Theta_{r+2,a}}$ must be irreducible. This completes the proof in the case when $s=2$ and $t \equiv \{1,2\}\pmod p$.
\end{enumerate}
\end{proof}

\subsection{The case when $3 > v(a) > 2$}

In this subsection, we will complete the proof of theorem~\ref{t0.0''}.

\begin{proof}[Proof of theorem~\ref{t0.0''}]
The case when $p=3$ is vacuous, so in the remainder of this proof we are going to assume that $p > 3$.
\begin{enumerate}[\tttttt] 
\item \emph{The case when $s \not \in \{2,4\}$ and $t \not \equiv\{0,1\} \pmod{p}$.}
The condition that $s \not \in \{2,4\}$ implies that $p \geqslant 7$ and $s \in \{6,\ldots,p-1\}$, and theorem~\ref{theorem26} applies. Then $\smash{\overline \Theta_{r+2,a}}$ has a series whose factors are quotients of  submodules of
	\[\textstyle I\left(\sigma_{p-1-s}(s)\right),\, I\left(\sigma_{p+1-s}(s-1)\right), \, I\left(\sigma_{s-2}(1)\right), \,I\left(\sigma_{p+3-s}(s-2)\right),\,I\left(\sigma_{s-4}(2)\right).\]
Due to the conditions $p \geqslant 7$ and $s \in \{6,\ldots,p-1\}$, the only pairs of factors which can give a reducible $\smash{\overline \Theta_{r+2,a}}$ are
	\[\textstyle \left(I\left(\sigma_{p-1-s}(s)\right),\, I\left(\sigma_{s-2}(1)\right)\right),\,\, \left(I\left(\sigma_{p+1-s}(s-1)\right),\,I\left(\sigma_{s-4}(2)\right)\right).\]
Now, suppose that $t \not \in \{0,1\}$. For $\alpha = 1$, after choosing the constant $C_1 = -\frac{p}{s-1} + O(p^2)$ in theorem~\ref{theorem26}, where $O(p^2)$ is such that $\mathfrak m_0(C_1) = 0$, we have $v(\mathfrak m_w(C_1)) \geqslant 1$ for $w \in \{1,2,3,4\}$, and 
	\[\textstyle \mathfrak m_1(C_1) = \sum_{i > 0} \sum_{l=0}^\alpha C_l {r - 1 + l \choose i(p-1) + l} i = \frac{pt(r-1)}{(s-1)(s-2)}-\frac{pt}{s-1} = \frac{pt(1-t)}{(s-1)(s-2)},\]
so $v(\mathfrak m_1(C_1)) = 1$. Consequently, theorem~\ref{theorem26} can be applied to show that $[1,X^{p+1-s}]$ is in the image under $\Psi_1$ of the kernel of reduction. Since $I_{s-2}(1)$ is not semi-simple, due to lemma~\ref{lemma6}, $[1,X^{p+1-s}]$ generates the whole of $I_{s-2}(1)$, which implies that $\smash{\overline \Theta_{r+2,a}}$ has a series whose factors are quotients of  submodules of
	\[\textstyle I\left(\sigma_{p-1-s}(s)\right), \,I\left(\sigma_{p+3-s}(s-2)\right),\,I\left(\sigma_{s-4}(2)\right).\]
These modules cannot pair up to give a reducible $\smash{\overline \Theta_{r+2,a}}$, which completes the proof in the case~$t \not \equiv\{0,1\} \pmod{p}$.
\item \emph{The case when $s\not\in\{2,4\}$ and $t\equiv 1\pmod p$.}
Due to the remark succeeding the proof of theorem~\ref{theorem26}, we can replace the binomials ${i \choose w}$ in the statement of the theorem by ${i(p-1) \choose w}$. Then we have $v(\mathfrak m_1(C_1)) \geqslant 2$. Moreover, $v(C_1) \geqslant 1$, and 
	\begin{align*}
	& \textstyle v\left( (-1)^{w+1} + \sum_{i > 0} {r \choose i(p-1) + 1} {i(p-1) \choose w}\right) 
	\\ & \textstyle \qquad = v\left((-1)^{w+1} + O(p) +  \sum_{i > 0} {r \choose i(p-1) + 1} \sum_v (-1)^{w-v}{i(p-1) + 1 \choose v}\right)
	\\ & \textstyle \qquad= v\left( (-1)^{w+1} + O(p) +\sum_v  \sum_{i > 0} {r-v \choose i(p-1) + 1-v} (-1)^{w-v} {r \choose v}\right)
	\\ & \textstyle \qquad= v\left(O(p)+\sum_{v>0} \sum_{i > 0} {r-v \choose i(p-1) + 1-v} (-1)^{w-v} {r \choose v}\right) \geqslant 1.
	\end{align*}
The last part follows since $\smash{ v\left(\sum_{i > 0} {r-v \choose i(p-1) + 1-v}\right) \geqslant 1}$, whenever $v > 0$, by lemma~\ref{lemma7wefwfwerfwf}. So we find, by lemmas~\ref{lemma7} and~\ref{lemma742}, that
	\begin{align*}
	\textstyle \mathfrak m_w(C_1) & \textstyle = \sum_{i > 0} \sum_{l=0}^1 C_l {r - 1 + l \choose i(p-1) + l} {i(p-1) \choose w}
	\\ & \textstyle =O(p^2)-(-1)^{w} \frac{p}{s-1} + \sum_{i > 0} {r - 1 \choose i(p-1)} {i(p-1) \choose w}
	\\ & \textstyle =O(p^2)-(-1)^{w} \frac{p}{s-1} + {r-1 \choose w} \sum_{i > 0} {r - 1 - w \choose i(p-1)-w}  
	\\ & \textstyle =O(p^2)-(-1)^{w} \frac{p}{s-1} + {r-1 \choose w} \sum_{i > 0} \sum_v (-1)^{v} {w \choose v} {r - 1 - v \choose i(p-1)}  
	\\ & \textstyle 
	=O(p^2) -(-1)^{w} \frac{p}{s-1}+ p {r-1 \choose w} \sum_v (-1)^{v} {w \choose v} \frac{1}{s-1-v}  
	\\ & \textstyle 
	=O(p^2) -(-1)^{w} \frac{p}{s-1}+ p {r-1 \choose w} (-1)^{w} \frac{w!}{(s-1)_{w+1}} 
	\\ & \textstyle 
	=O(p^2) -(-1)^{w} \frac{p}{s-1}+ (-1)^{w}p \frac{(r-1)_w}{(s-1)_{w+1}} 
	\\ & \textstyle 
	= O(p^2) -(-1)^{w} \frac{p}{s-1}+ (-1)^{w}p \frac{(s-2)_w}{(s-1)_{w+1}} 
	\\ & \textstyle 
	= O(p^2) -(-1)^{w} \frac{p}{s-1}+ (-1)^{w} \frac{p}{s-1} = O(p^2),
	\end{align*}
for  $w \in \{1,2,3,4\}$. Consequently, theorem~\ref{theorem26} can be applied to show that $[1,X^{p+1-s}]$ is in the image under $\Psi_1$ of the kernel of reduction.
Similarly, when $\alpha = 0$, we have $v(\mathfrak m_w(\varnothing)) \geqslant 1$ for $w \in \{0,1,2,3,4,5\}$, with equality for $w = 0$ since $\mathfrak m_0(\varnothing) = \frac tsp + O(p^2)$. So theorem~\ref{theorem26} can be applied to show that $[1,X^{p-1-s}]$ is in the image under $\Psi_0$ of the kernel of reduction. This implies that $\smash{\overline \Theta_{r+2,a}}$ has a series whose factors are quotients of  submodules of
	\[\textstyle I\left(\sigma_{s-2}(1)\right), \,I\left(\sigma_{p+3-s}(s-2)\right),\,I\left(\sigma_{s-4}(2)\right).\]
These modules cannot pair up to give a reducible $\smash{\overline \Theta_{r+2,a}}$, which completes the proof in the case~$t\equiv 1\pmod p$.
\item \emph{The case when $s\not\in\{2,4\}$ and $t\equiv 0\pmod p$.}
A similar calculation to the one in the previous paragraph shows that, for $\alpha = 1$ and $w \in \{0,1,2,3,4\}$, we have $v(\mathfrak m_w(C_1)) \geqslant 2$, so theorem~\ref{theorem26} can be applied to show that $T[1,X^{p+1-s}]$ is in the image under $\Psi_1$ of the kernel of reduction. For $\alpha = 0$ and $w \in \{0,1,2,3,4,5\}$, a similar calculation shows that $v(\mathfrak m_w(\varnothing)) \geqslant 2$. Suppose we define $M_r$ by the equation $\textstyle M_r = \smash{\sum_{r>j(p-1)>0} {r \choose j(p-1)}}$. Then, in view of lemma~\ref{lemma7}, we have $\mathfrak m_0(\varnothing) = M_r$, so 
	\[\textstyle v(\mathfrak m_0(\varnothing)) = v(\frac{t}{s}p (1 + O(p))) = v(t).\]
Moreover, $v(\mathfrak m_w(\varnothing))$ can be expressed as a linear combination of $M_{r},M_{r-1},\ldots,M_{r-w}$. In particular, since $s > 5$, it follows that $v(M_{r-i}) = v(t)$, for all $i \in \{0,1,2,3,4,5\}$, which implies that $v(\mathfrak m_0(\varnothing))  \geqslant v(\mathfrak m_w(\varnothing))$, for all $w \in \{1,2,3,4,5\}$. Therefore, if $v(\mathfrak m_0(\varnothing)) \geqslant 3$, then $v(\mathfrak m_w(\varnothing)) \geqslant 3$ for $w \in \{1,2,3,4,5\}$, and then the proof of the case when $t\equiv 0\pmod p$ can be completed in the same fashion as the proof of the case $t\equiv 1\pmod p$.
\item \emph{The case when $s=2$ and $r \not\equiv \{0,1\} \pmod{p}$.}
In this case, 
	\[\textstyle \smash{\overline{(T-a)(p^{-2}\phi^\ast_{r,2,3,2,{\begin{Lmatrix} p & 0 \\ 0 & 1 \end{Lmatrix}}})}} = [1,x^2y^{r-2}]\]
gets mapped to $[1,Y^{p-3}]$ by $\Psi_0$. Consequently, theorem~\ref{theorem26} can be applied to show that $[1,Y^{p-3}]$ is in the image under $\Psi_0$ of the kernel of reduction. Moreover, the matrix $\mathfrak M^{(r,2,1)}$ contains
	\[\textstyle \mathfrak M' = \begin{Xmatrix} 1 & 1 & \sum_{r-3 > i(p-1) > 0} {r \choose i(p-1)+1} \\ 1-r & 1 & \sum_{r-3 > i(p-1) > 0} {r \choose i(p-1)+1} i \end{Xmatrix} = \begin{Xmatrix} 1 & 1 & 2(1-r) \\ 1-r & 1 & (1-r)(2-r) \end{Xmatrix}\]
as a submatrix, which has full rank over $\mathbb F_p$ (and therefore contains $(0,1)^T$ in its range) for $r \not\equiv 0 \pmod p$. Moreover, if $v_1,v_2,v_3$ denote the columns of $\mathfrak M'$, then 
	\[\textstyle (r-1)v_1+(r-1)v_2+v_3 = 0.\]
Therefore, there is an element $\nu \in \overline{\theta}\sigma_{r-(p+1)}$ which belongs to the subspace $N \subseteq \overline{\theta}\sigma_{r-(p+1)}$ corresponding to $\sigma_{p-1}$, such that the coefficient of $\overline{\theta} x^{r-(p-1)}$ in $\nu$ is $r-1$. Consequently, if $r \not\equiv 1 \pmod p$, then this element does not belong to $\smash{\overline{\theta}^2 \sigma_{r-2(p+1)}}$. This means that, unless $\smash{p \mid r(r-1)}$, we have that $\smash{\overline \Theta_{r+2,a}}$ has a series whose factors are quotients of  submodules of
	\[\textstyle I\left(\sigma_{2}\right),\,I\left(\sigma_{p-3}(2)\right),\]
which cannot pair up to give a reducible $\smash{\overline \Theta_{r+2,a}}$. This completes the proof in the case when $s=2$.

Calculating the determinants of the relevant square submatrices of $\mathfrak M^{(r,m,\alpha)}$ is completely automated by the first program written in \texttt{Sage} listed in the appendix. In particular, calling the method \verb|print_the_roots_for_all_matrices(m,True)| with $\mathtt{m}=x$ lists the possible congruence classes modulo $p(p-1)$ outside of which $\smash{\overline \Theta_{r+2,a}}$ is irreducible, when $m=x$ and $s \in \{2,\ldots,2x\}$.
\item \emph{The case when $s=2$ and $r \equiv 0 \pmod{p}$.} By calling the methods \verb|exceptional_cases(2,1,1)| and \verb|gcd_for_the_matrix(2,1,1)|, we can show that $\smash{\overline \Theta_{r+2,a}}$ has a series whose factors are quotients of submodules of
	\[\textstyle I\left(\sigma_{0}(1)\right),\,I\left(\sigma_{2}\right),\,I\left(\sigma_{p-3}(2)\right).\]
For $\alpha = 1$, after choosing the constant $C_1 = -\frac{p+1}{2} + O(p^2)$ in theorem~\ref{theorem26}, where $O(p^2)$ is such that $\mathfrak m_0(C_1) = 0$, we have $v(\mathfrak m_w(C_1)) \geqslant 1$ for $w \in \{1,2,3,4\}$. There is the matrix
		\begin{align*}
		\textstyle \mathfrak M'' & \textstyle = \begin{Xmatrix} \sum_{r-1>i(p-1)+1>1} {r-1 \choose i(p-1)} & \sum_{r-1>i(p-1)+1>1} {r \choose i(p-1)+1}  \\ \sum_{r-1>i(p-1)+1>1} {r-1 \choose i(p-1)} i & \sum_{r-1>i(p-1)+1>1} {r \choose i(p-1)+1} i \end{Xmatrix}
\\		 & \textstyle \equiv_{p^2} \begin{Xmatrix} 1-r+2p & 2-2r+2p \\ 3p(1-r)-\frac{(r-1)^2}{p-1} & 2p(1-r)-\frac{(r-1)(r-2)}{p-1} \end{Xmatrix}.
	\end{align*}
Then		
	\begin{align*}
		\textstyle \mathfrak M'' \begin{Xmatrix} 1 \\ C_1 \end{Xmatrix} & \textstyle = \mathfrak M'' \begin{Xmatrix} 1 \\ -\frac{p+1}{2} \end{Xmatrix} \equiv_{p^2} \begin{Xmatrix} 0 \\ p - \frac{r}{2} \end{Xmatrix}.
	\end{align*}
We consider two cases:
\begin{enumerate}[1.]
\item $r \not\equiv 2p \pmod{p^2(p-1)}$. Then theorem~\ref{theorem26} applies, and we have that $\smash{\overline \Theta_{r+2,a}}$ has a series whose factors are quotients of submodules of
	\[\textstyle I\left(\sigma_{2}\right),\,I\left(\sigma_{p-3}(2)\right).\]
These modules cannot pair up to give a reducible $\smash{\overline \Theta_{r+2,a}}$.
\item $r \equiv 2p \pmod{p^2(p-1)}$. In particular, since $r > 2p$, we have $r \geqslant p^2(p-1)+2p$. Then we have $v(\mathfrak m_w(C_1)) \geqslant 2$, where $\mathfrak m_w(C_1)$ is as in the statement of theorem~\ref{theorem26}. This is so since the expressions in $\mathfrak m_w(C_1)$ are equivalent modulo $p^2$ for any two weights which are congruent modulo $p^2(p-1)$, and the desired statement holds true when $r = 2p$. Moreover, ${r-1 \choose 0} - \frac{p+1}{2} {r \choose 1} \equiv 1 - O(p) \equiv_p 1$, so theorem~\ref{theorem26} applies, and we can conclude that $\smash{\overline \Theta_{r+2,a}}$ has a series whose factors are quotients of submodules of
	\[\textstyle \pi(0,0,\omega),\,I\left(\sigma_{2}\right),\,I\left(\sigma_{p-3}(2)\right).\]
These modules cannot pair up to give a reducible $\smash{\overline \Theta_{r+2,a}}$.
\end{enumerate}
\item \emph{The case when $s=2$ and $r \equiv 1 \pmod{p}$.} By calling the methods \verb|exceptional_cases(2,1,1)| and \verb|gcd_for_the_matrix(2,1,1)|, we can show that $\smash{\overline \Theta_{r+2,a}}$ has a series whose factors are quotients of submodules of
	\[\textstyle I\left(\sigma_{p-1}(1)\right),\,I\left(\sigma_{2}\right),\,I\left(\sigma_{p-3}(2)\right).\]
Moreover, the output of \verb|find_m_w(2,1,1)| shows that there are polynomials
	\begin{align*}
		f_1 & \textstyle = \sum_{r - 1 > i(p-1) + 1 > 1} \Upsilon_i^{(1)} x^{i(p-1)+1}y^{r-i(p-1)-1},
\\		f_2 & \textstyle = \sum_{r - 1 > i(p-1) + 1 > 1} \Upsilon_i^{(2)} x^{i(p-1)+1}y^{r-i(p-1)-1},
	\end{align*}
with integral coefficients, such that $\left[1,f_1\right]$ and  $\left[1,f_2\right]$ are in the image of $T-a$, such that $\smash{\sum_{r - 1 > i(p-1) + 1 > 1} \Upsilon_i^{(j)} = 0}$, and such that $\smash{v(S_w^{(j)}) \geqslant 1}$, where 
	\[\textstyle S_w^{(j)} = \sum_{r - 1 > i(p-1) + 1 > 1} \Upsilon_i^{(j)} {i \choose w},\]
for all $w \in \{0,1,2,3,4\}$ and all $j \in \{1,2\}$. Moreover,
	\[\textstyle S_1^{(1)} \equiv S_1^{(2)} \equiv r - (p+1) \pmod{p^2}.\]
We consider two cases:
\begin{enumerate}[1.]
\item $r \not\equiv p+1 \pmod{p^2(p-1)}$. Then due to the proof of theorem~\ref{theorem26} there is a polynomial %
	\begin{align*}
		f' \textstyle = \sum_{r - 1 > i(p-1) + 1 > 1} \Upsilon_i' x^{i(p-1)+1}y^{r-i(p-1)-1}
	\end{align*}%
with integral coefficients, such that $\left[1,f'\right] + \left[1,\smash{\overline{\theta}^2 h'}\right] + O(p^\delta)$ is in the image of $T-a$, for some $h'$ and some $\delta > 0$, and such that
	\begin{align*}
		\textstyle \sum_{r - 1 > i(p-1) + 1 > 1} \Upsilon_i' & \textstyle = 1,
\\		\textstyle \sum_{r - 1 > i(p-1) + 1 > 1} i\Upsilon_i'  & \textstyle = -1.
	\end{align*}
Then 
	\begin{align*}
		\textstyle f'+xy^{r-1}-2x^{r-1}y & \textstyle \in  \smash{ \ker\Psi_1 \subseteq \overline{\theta} \sigma_{r-(p+1)}},
\\		\textstyle f'+xy^{r-1}-2x^{r-1}y  & \textstyle  \not\in \smash{\overline{\theta}^2 \sigma_{r-2(p+1)}},
	\end{align*}
and hence $\smash{\overline \Theta_{r+2,a}}$ has a series whose factors are quotients of submodules of
	\[\textstyle I\left(\sigma_{2}\right),\,I\left(\sigma_{p-3}(2)\right).\] 
These modules cannot pair up to give a reducible $\smash{\overline \Theta_{r+2,a}}$.
\item $r \equiv p+1 \pmod{p^2(p-1)}$. In particular, since $r > p+1$, we have $r > p^2(p-1)+p$. Then, for any $C_1$ and any $w \in \{0,1,2,3,4\}$, we have $v(\mathfrak m_w(C_1)) \geqslant 2$, where $\mathfrak m_w(C_1)$ is as in the statement of theorem~\ref{theorem26}. This is so since the expressions in $\mathfrak m_w(C_1)$ are equivalent modulo $p^2$ for any two weights which are congruent modulo $p^2(p-1)$, and the desired statement holds true when $r = p+1$. Therefore, we can arbitrarily choose $C_1$ in a way that theorem~\ref{theorem26} applies, and hence we can deduce that $\smash{\overline \Theta_{r+2,a}}$ has a series whose factors are quotients of submodules of
	\[\textstyle \pi(0,0,\omega),\,I\left(\sigma_{2}\right),\,I\left(\sigma_{p-3}(2)\right).\]
These modules cannot pair up to give a reducible $\smash{\overline \Theta_{r+2,a}}$.
\end{enumerate}
\item \emph{The case when $s=4$  and $r \not\equiv \{0,1,2,3,4\} \pmod{p}$.} Can be done in similar fashion as the case when $s=2$, or by calling the method \verb|print_the_roots_for_all_matrices(2,True)| in the first program in the appendix.
\item \emph{The case when $s=4$  and $r \equiv 0 \pmod{p}$.} In this case, only \verb|find_m_w(2,0,2)| returns that the relevant $\mathfrak m_w(\varnothing)$ are divisible by $r$. The output of the method \[\verb|gcd_for_the_matrix(2,2,2)|\] in the first program in the appendix shows that the factor in $\smash{\overline \Theta_{r+2,a}}$ corresponding to $\smash{I\left(\sigma_{0}(2)\right)}$ is trivial, since $\frac{1}{2}(r-1)(r-2)(r-3)(r-4) \ne 0$. Printing \[\verb|A.transpose().kernel().basis_matrix()|\] in the method \verb|exceptional_cases(2,2,2)| in the first program in the appendix shows that the factor in $\smash{\overline \Theta_{r+2,a}}$ corresponding to $\smash{I\left(\sigma_{p-1}(2)\right)}$ is trivial, since $\frac{1}{2}(r-2)(r-3) \ne 0$. These three facts together show that $\smash{\overline \Theta_{r+2,a}}$ has a series whose factors are quotients of  submodules of
	\[\textstyle \pi(p-5,0,\omega^4),\, I\left(\sigma_{p-3}(3)\right), \, I\left(\sigma_{2}(1)\right).\]
These modules cannot pair up to give a reducible $\smash{\overline \Theta_{r+2,a}}$, which completes this case of the proof.
\item \emph{The case when $s=4$  and $r \equiv \{1,2%
   \} \pmod{p}$.} Since $2 > \alpha$, and since
	\[\textstyle \smash{\sum_{i>0} {r - 1 \choose w} {r - 1 - v \choose i(p-1)}} = O(p)\]
whenever $w > (r-1 \bmod {p})$, the same calculations for $\mathfrak m_w(C_1)$ as in the case when $s\not\in\{2,4\}$ and $t\equiv 1\pmod p$ apply, showing that $\mathfrak v_1(C_1) = 1$. Similarly as in the proof of theorem~\ref{t0.0''0}, calling the method \verb|find_m_w(2,alpha,2)| with $\mathtt{alpha}=0,1$ shows that in both cases the relevant $\mathfrak m_w(C_1,\ldots,C_\alpha)$ are polynomials which are divisible by $(r-1)(r-2)(r-3)$, which similarly completes this case of the proof.
\item \emph{The case when $s=4$  and $r \equiv 3 \pmod{p}$.} From the output of the methods 
	\[\verb|find_m_w(2,|\alpha\verb|,2)|, \, \verb|exceptional_cases(2,1,2)|, \, \verb|gcd_for_the_matrix(2,1,2)|,\]
with $\alpha = 0,1,2$, we get that $\smash{\overline \Theta_{r+2,a}}$ has a series whose factors are quotients of submodules of
	\[\textstyle I\left(\sigma_{p-3}(1)\right),\,I\left(\sigma_{p-1}(2)\right),\,\pi(0,0,\omega^2).\]
Note that $r > p+3$ implies that $r \geqslant p^2+3$, so we can follow the proof of theorem~\ref{theorem26} to find an element in the kernel of reduction which gets mapped to $T[1,1]$ by $\Psi_2$, thus obtaining the $\pi(0,0,\omega^2)$ term. 
We consider two cases:
\begin{enumerate}[1.]
\item $r \not\equiv p+3 \pmod{p^2(p-1)}$. The output of the method \verb|find_m_w(2,2,2)| shows that there are polynomials
	\begin{align*}
		f_1 & \textstyle = \sum_{r - 2 > i(p-1) + 2 > 2} \Upsilon_i^{(1)} x^{i(p-1)+2}y^{r-i(p-1)-2},
\\		f_2 & \textstyle = \sum_{r - 2 > i(p-1) + 2 > 2} \Upsilon_i^{(2)} x^{i(p-1)+2}y^{r-i(p-1)-2},
	\end{align*}
with integral coefficients, such that $\smash{\left[1,f_1\right] + O(p^{1+\delta})}$ and $\smash{\left[1,f_2\right] + O(p^{1+\delta})}$ are in the image of $T-a$,
 for %
  some $\delta > 0$, such that $\smash{\sum_{r - 2 > i(p-1) + 2 > 2} \Upsilon_i^{(j)} = 0}$, and such that $\smash{v(S_w^{(j)}) \geqslant 1}$, where 
	\[\textstyle S_w^{(j)} = \sum_{r - 2 > i(p-1) + 2 > 2} \Upsilon_i^{(j)} {i \choose w},\]
for all $w \in \{0,1,2,3\}$ and all $j \in \{1,2\}$. These two polynomials correspond to the last two columns of the output. In particular, $\smash{\Upsilon_i^{(1)} = {r-1 \choose i(p-1)+1}}$ and $\smash{\Upsilon_i^{(2)} = {r \choose i(p-1)+2}}$. Let $\varkappa = \frac{r - (p+3)}{p}$, so that $v(\varkappa)=0$ by assumption. %
 We can calculate, using lemma~\ref{lemma7}, that
\begin{align*}
		\textstyle \begin{Xmatrix} S_0^{(1)} & S_0^{(2)} \\ S_1^{(1)} & S_1^{(2)} \\ S_2^{(1)} & S_2^{(2)} \end{Xmatrix} & \textstyle \equiv_{p^2} \begin{Xmatrix} -\frac{5}{2}\varkappa p & -5\varkappa p \\-\frac 32 \varkappa p & -\frac{5}{2}\varkappa p \\ \frac12 \varkappa p & \varkappa p \end{Xmatrix}.
	\end{align*}
Consequently, from the proof of theorem~\ref{theorem26} we can deduce that there are polynomials
	\begin{align*}
		f_1' & \textstyle = \sum_{r - 2 > i(p-1) + 2 > 2} \Upsilon_i^{(1)'} x^{i(p-1)+2}y^{r-i(p-1)-2}, \\
		f_2' & \textstyle = \sum_{r - 2 > i(p-1) + 2 > 2} \Upsilon_i^{(2)'} x^{i(p-1)+2}y^{r-i(p-1)-2},
	\end{align*}
with integral coefficients, such that
	\begin{align*}
		\left[1,f_1'\right] + \left[1,\smash{\overline{\theta}^3 h_1'}\right] + O(p^{\delta}) & \textstyle \in \Img (T-a),
\\		\left[1,f_2'\right]+ \left[1,\smash{\overline{\theta}^3 h_2'}\right]+ O(p^{\delta}) & \textstyle \in \Img (T-a),
	\end{align*}
for some $h_1',h_2'$ and some $\delta > 0$, and such that
	\begin{align*}
		\textstyle \begin{Xmatrix} S_0^{(1)'} & S_0^{(2)'} \\ S_1^{(1)'} & S_1^{(2)'} \\ S_2^{(1)'} & S_2^{(2)'} \end{Xmatrix} & \textstyle \equiv_{p} \begin{Xmatrix} -5 & -10 \\ -3 & -5 \\ 1 & 2 \end{Xmatrix},
	\end{align*}
where 
	\[\textstyle S_w^{(j)'} = \sum_{r - 2 > i(p-1) + 2 > 2} \Upsilon_i^{(j)'} {i \choose w},\]
for all $w \in \{0,1,2,3\}$ and all $j \in \{1,2\}$. Note that
\begin{align*}
		\textstyle \begin{Xmatrix} 1 \\ -1 \\ -2 \\ 1 \end{Xmatrix} \in \Ker \begin{Xmatrix} 1 & 1 & -5 & -10 \\ 0 & 1 & -3 & -5 \\ -1 & -1 & 1 & 2 \end{Xmatrix},
\end{align*}
where the matrix is seen as a map of a vector space over $\mathbb F_p$, and hence %
	\begin{align*}
	    \textstyle x^2y^{r-2} - x^{r-2}y^2 -2 f_1' + f_2' & \in  \smash{ \ker\Psi_1 \subseteq \overline{\theta}^2 \sigma_{r-2(p+1)}},\\
	    \textstyle x^2y^{r-2} - x^{r-2}y^2 -2 f_1' + f_2' & \not\in  \smash{  \overline{\theta}^3 \sigma_{r-3(p+1)}},
	 \end{align*}
(where $\smash{  \overline{\theta}^3 \sigma_{r-3(p+1)}}$ is the trivial subspace if $r < 3(p+1)$). %
 Consequently, $\smash{\overline \Theta_{r+2,a}}$ has a series whose factors are quotients of submodules of
	\[\textstyle I\left(\sigma_{p-3}(1)\right),\,\pi(0,0,\omega^2).\]
These modules cannot pair up to give a reducible $\smash{\overline \Theta_{r+2,a}}$.
\item $r \equiv p+3 \pmod{p^2(p-1)}$. In particular, since $r > p+3$, we have $r > p^2(p-1)+p$. For $\alpha = 1$, after choosing the constant $C_1 = -\frac{p}{3} + O(p^2)$ in theorem~\ref{theorem26}, where $O(p^2)$ is such that $\mathfrak m_0(C_1) = 0$, we have $v(\mathfrak m_w(C_1)) \geqslant 1$ for $w \in \{1,2,3,4\}$. There is the matrix
		\begin{align*}
		\textstyle \mathfrak M'' & \textstyle = \begin{Xmatrix} \sum_{i>0} {r-1 \choose i(p-1)} & \sum_{i>0} {r \choose i(p-1)+1}  \\ \sum_{i>0} {r-1 \choose i(p-1)} i & \sum_{i>0} {r \choose i(p-1)+1} i \end{Xmatrix}
\\		 & \textstyle \equiv_{p^2} \begin{Xmatrix} \frac p3 & \frac{11p}{6} + \frac{r-4}{p-1} \\ \frac{p(r-1)}{6} & \frac{p(11-2r)}{6} - \frac{r-4}{(p-1)^2} \end{Xmatrix} \textstyle \equiv_{p^2} \begin{Xmatrix} \frac p3 & \frac{11p}{6} + \frac{r-4}{p-1} \\ \frac{p}{3} & \frac{5p}{6} + \frac{4-r}{(p-1)^2} \end{Xmatrix}.
	\end{align*}
Then		
	\begin{align*}
		\textstyle \mathfrak M'' \begin{Xmatrix} 1 \\ C_1 \end{Xmatrix} & \textstyle = \mathfrak M'' \begin{Xmatrix} 1 \\ -\frac{p}{3} \end{Xmatrix} \equiv_{p^2} \begin{Xmatrix} 0 \\ 0 \end{Xmatrix}.
	\end{align*}
Hence $v(\mathfrak m_1(C_1)) \geqslant 2$. Moreover, $v(\mathfrak m_w(C_1)) \geqslant 2$, for all $w \in \{2,3,4\}$, since that statement holds true for $r = p+3$. Consequently, theorem~\ref{theorem26} applies, and we get that $\smash{\overline \Theta_{r+2,a}}$ has a series whose factors are quotients of submodules of
	\[\textstyle \pi(p-3,0,\omega),\,I\left(\sigma_{p-1}(2)\right),\,\pi(0,0,\omega^2).\]
These modules cannot pair up to give a reducible $\smash{\overline \Theta_{r+2,a}}$.
\end{enumerate}
\item \emph{The case when $s=4$  and $r \equiv 4 \pmod{p}$.} In this case, similarly as in the case when $s=4$ and $r \equiv 0 \pmod{p}$, printing \[\verb|A.transpose().kernel().basis_matrix()|\] in the method \verb|exceptional_cases(2,2,2)| in the first program in the appendix shows that the factor in $\smash{\overline \Theta_{r+2,a}}$ corresponding to $\smash{I\left(\sigma_{p-1}(2)\right)}$ is trivial, since $\frac{1}{2}(r-2)(r-3) \ne 0$. Even though the output of the method \[\verb|gcd_for_the_matrix(2,2,2)|\] in the first program in the appendix shows that the corresponding matrix does not have full rank, by printing that matrix we can see that its second row is equal to zero when $r \equiv 4 \pmod{p}$ and that $(0,0,1)^T$ is in its range. Consequently, the factor in $\smash{\overline \Theta_{r+2,a}}$ corresponding to $\smash{I\left(\sigma_{0}(2)\right)}$ must be trivial. The element $h = x^2y^{r-2}-y^2x^{r-2}$ is in the kernel of reduction, and
	\[\textstyle \smash{h = \overline{\theta} (a_1xy^{r-p-2} + \cdots + a_{\lfloor r/(p-1)\rfloor}x^{r-p-2}y)},\]
where 
	\[\smash{\textstyle \smash{a_1 + \cdots + a_{\lfloor r/(p-1)\rfloor} \equiv_p 2 \not\equiv_p 0}}.\]
Consequently, $\smash{h \in \overline{\theta} \sigma_{r-(p+1)}}$ and $\smash{h \not\in \overline{\theta}^2 \sigma_{r-2(p+1)}}$, so there is a nontrivial element of $I_{r-2}(1)$ which is in the kernel of reduction. Since $I_{r-2}(1) \cong I_2(1)$ is not semi-simple, it follows from lemma~\ref{l4.1LL} that the factor of $\smash{\overline \Theta_{r+2,a}}$ which is a quotient of $\smash{I\left(\sigma_{2}(1)\right)}$ must be trivial. These facts together show that $\smash{\overline \Theta_{r+2,a}}$ has a series whose factors are quotients of submodules of
	\[\textstyle I\left(\sigma_{p-5}(4)\right),\, I\left(\sigma_{p-3}(3)\right).\]
These modules cannot pair up to give a reducible $\smash{\overline \Theta_{r+2,a}}$, which completes this case of the proof.
\item \emph{The small leftover cases.} Since theorem~\ref{theorem26} holds true under the assumption that $r > m(p+1)$, we are left with the cases when 
	\[\textstyle (p,r) \in \{(3,6),(3,8),(5,12)\}.\]
The case when $(p,r) = (3,6)$ is trivial. For the case when $(p,r) = (3,8)$, note that
	\[\textstyle 3 xy^7 + 2{7 \choose 2} x^3y^5 + 2{7 \choose 4} x^5y^3 + 2{7 \choose 1} x^7y + E' = 3xy^7 + 42x^3y^5 + 70x^5y^3 + 14 x^7 y + E',\]
is in the image of $T - a$, where $v(E') \geqslant v(a) - 1 > 1$, and therefore $x^5y^3 - x^7y = \smash{\overline{\theta} x^4}$ is in the kernel of reduction. Similarly, $xy^7 - x^3y^5$ is in the kernel of reduction, and we know that $xy^7,x^7y$ are in the kernel of reduction, so $x^3y^5 - x^5y^3 = \smash{\overline{\theta} x^2y^2}$ is in the kernel of reduction as well. In particular, $\smash{\overline \Theta_{10,a}}$ must be $\pi(0,0,1)$. For the case when $(p,r) = (5,12)$, note that
	\[\textstyle 5xy^{11} + 4 {11 \choose 4} x^5y^7 + 4 {11 \choose 8} x^9y^3 + E'' = 5xy^{11} + 1320 x^5y^7 + 660 x^9y^3 + E''\]
is in the image of $T - a$, where $v(E'') \geqslant v(a) - 1 > 1$, and therefore
	\[\textstyle xy^{11} + 264x^5y^7 + 132x^9y^3 + E'''\]
is in the image of $T - a$, where $v(E''') \geqslant v(a) - 2 > 0$. Since we know that $xy^{11}$ is in the kernel of reduction, it follows that
	$-xy^{11} - x^5y^7 + 2x^9y^3 = \smash{\overline{\theta} (4y^6+3x^4y^2)}$ is in the kernel of reduction. Similarly,
\[\textstyle 5 y^{12} + 4 {11 \choose 3} x^4y^{8} + 4{11 \choose 7} x^8y^4 + 4{11 \choose 11}x^{12} + E''''\]
is in the image of $T - a$, where $v(E'''') \geqslant v(a) - 1 > 1$. Moreover, $x^{12} + O(p^2)$ is in the image of $T - a$. Consequently, $y^{12} + 2 x^4y^{8} - x^8y^4$ is in the kernel of reduction. In particular, $\smash{\overline \Theta_{14,a}}$ must be $\pi(2,0,\omega)$.
\end{enumerate}
\end{proof}

\subsection{The case when $\NUMBER > v(a) > 3$}\label{extrasub}

In this subsection, we will complete the proof of theorem~\ref{t0.0''__}, and we will also prove the first and the third part of theorem~\ref{iygutfrd}.

\begin{proof}[Proof of theorem~\ref{t0.0''__}]
The case when $p=3$ is vacuous, so in the remainder of this proof we are going to assume that $p > 3$.
\begin{enumerate}[\tttttt] 
\item \emph{The case when $s \not \in \{2,4,6\}$ and consequently $t \not \equiv\{0,1,2\} \pmod{p}$.}
The proof of this case is similar to the first case in the proof of theorem~\ref{t0.0''}. The program written in \texttt{Sage} listed in the appendix completely automates this proof. More specifically, for each $0 \leqslant \alpha < m$, it finds constants $C_1,\ldots,C_{\alpha}$ such that $v(C_i) \geqslant 1$, for all $1 \leqslant i \leqslant \alpha$, and $\mathfrak m_{w} (C_1,\ldots,C_\alpha) = 0$, for all $0 \leqslant w < \alpha$. Then, it calculates that $\mathfrak m_{\alpha} (C_1,\ldots,C_\alpha) \not\equiv 0 \pmod{p^2}$, which holds true due to the fact that $t \not \equiv\{0,1,2\} \pmod{p}$. Since $\mathfrak m_{w} (C_1,\ldots,C_\alpha)$ is a linear combination of sums of the type $\textstyle \smash{\sum_{j(p-1)>0} {r-\alpha-v \choose j(p-1)}}$, for $0\leqslant v\leqslant w$, and each of these sums has positive valuation since $s - \alpha - w \geqslant s - \alpha - (2m+1-\alpha) > 0$, it follows that $v(\mathfrak m_{w} (C_1,\ldots,C_\alpha)) \geqslant 1$, for all $\alpha < w \leqslant 2m+1-\alpha$. Thus, theorem~\ref{theorem26} applies, and it implies that $\smash{\overline \Theta_{r+2,a}}$ must in fact be isomorphic to $\pi(s-2m,0,\omega^m)$, which is irreducible.
\item \emph{The case when $s\in\{2,4,6\}$.}
Can be done in similar fashion as the case when $s=2$ in the proof of theorem~\ref{t0.0''}, or by calling the method \verb|print_the_roots_for_all_matrices(3,True)| in the first program in the appendix, and the remaining cases being dealt with as in the proof of theorem~\ref{t0.0''}.
\end{enumerate}
\end{proof}

The above proof follows the same outline as the proof of theorem~\ref{t0.0''}, and it can be completely automated. The method \verb|verify_conjecture_eight(m)|, with $\mathtt{m}=x$, combines the two cases in the proof and simply prints whether conjecture~\ref{c1111} is true for $m=x$.

\begin{proof}[Proof of the first part of theorem~\ref{iygutfrd}]
Can be done in similar fashion as the proof of theorem~\ref{t0.0''}, or by calling the method \verb|verify_conjecture_eight(m)| in the first program in the appendix, for each $4\leqslant m<\NUMBER$.
\end{proof}

\begin{proof}[Proof of the third part of theorem~\ref{iygutfrd}]
Follows from theorem~\ref{t0.0''}.
\end{proof}

\subsection{The general case}\label{extrasub'}

In this subsection, we will complete the proof of the second part of theorem~\ref{iygutfrd}.

\begin{proof}[Proof of the second part of theorem~\ref{iygutfrd}]
The proof of this case follows the same outline as the first case in the proof of theorem~\ref{t0.0''}. Let $m$ be the integer such that $m+1>v(a)>m$. In this case, $s \not \in \{2,\ldots,2m\}$, which implies that $p > 2m+1$. Suppose that, for each $\alpha \in \{0,\ldots,m-1\}$, we find terms $C_1,\ldots,C_\alpha$ such that $v(\mathfrak m_{\alpha} (C_1,\ldots,C_\alpha)) = 1$. It can be proven as in the proof of theorem~\ref{t0.0''__} that $v(\mathfrak m_{w} (C_1,\ldots,C_\alpha)) \geqslant 1$, for all $\alpha < w \leqslant 2m+1-\alpha$. Therefore, supposing we find suitable terms $C_1,\ldots,C_\alpha$, theorem~\ref{theorem26} will apply, and it will imply that $\smash{\overline \Theta_{r+2,a}}$ must in fact be isomorphic to $\pi(s-2m,0,\omega^m)$. Consequently, the proof of the second part of theorem~\ref{iygutfrd} will be completed once we show the existence of suitable terms $C_1,\ldots,C_\alpha$. In fact, for each $1 \leqslant j \leqslant \alpha$, we will explicitly define $C_j$ by $C_j = pc_j$, where
	\[\textstyle  c_j = j! {\alpha \choose j} \frac{(-1)^j (s+1-r)}{(s-2\alpha+1+j)\cdots(s-2\alpha+1)} + j! {\alpha+1 \choose j+1} \frac{(-1)^j (r - \alpha)}{(s-2\alpha+1+j)\cdots(s-2\alpha+1)} + O(p).\]
In order to show that $C_1,\ldots,C_\alpha$ satisfy the desired properties, we must show that
	\[\textstyle A (1,C_1,\ldots,C_\alpha)^T = (0,\ldots,0,pu)^T,\]
where $u \in \mathbb Z_p$ is a unit, and $A$ is a $(\alpha+1)\times(\alpha+1)$ matrix defined by
	\begin{align*}
	\textstyle A_{w,0} & \textstyle = p\frac{(-1)^w(s-r)(r-\alpha)_w}{(s-\alpha)_{w+1}} + O(p^2),
	\\ \textstyle A_{w,j} & \textstyle = \sum_v(-1)^{w-v} {j+w-v-1\choose w-v} {r-\alpha+j\choose v} \left( {s-\alpha+j-v \choose j-v} - {r-\alpha+j-v \choose j-v} \right) + O(p),
	\end{align*}
where we index the entries by $w \in \{0,\ldots,\alpha\}$ and $j \in \{1,\ldots,\alpha\}$. Equivalently, we want to show that
	\[\textstyle B (1,c_1,\ldots,c_\alpha)^T = (0,\ldots,0,u)^T,\]
where $u \in \mathbb Z_p$ is a unit, and $B$ is a $(\alpha+1)\times(\alpha+1)$ matrix defined by
	\begin{align*}
	\textstyle B_{w,0} & \textstyle = \frac{(-1)^w(s-r)(r-\alpha)_w}{(s-\alpha)_{w+1}} + O(p),
	\\ \textstyle B_{w,j} & \textstyle = \sum_v(-1)^{w-v} {j+w-v-1\choose w-v} {r-\alpha+j\choose v} \left( {s-\alpha+j-v \choose j-v} - {r-\alpha+j-v \choose j-v} \right) + O(p).
	\end{align*}
We will slightly abuse notation and view $c_1,\ldots,c_\alpha,B,u$ as being reduced modulo~$p$, thus dropping the $O(p)$ terms. We will prove that $\smash{\textstyle B (1,c_1,\ldots,c_\alpha)^T = (0,\ldots,0,u)^T}$, where $\smash{u = \frac{(s-r)_{\alpha+1}}{(s-\alpha)_{\alpha+1}}}$. First, note that
	\begin{align*}
	\textstyle \sum_{w \geqslant 0} X^w \sum_v (-1)^{w-v} {j+w-v-1\choose w-v}  {j \choose v}  & \textstyle = \sum_v (-X)^{v} (1-X)^{-j} {j \choose v} = (1-X)^j (1-X)^{-j} = 1,
	\end{align*}
so $\smash{\sum_v (-1)^{w-v} {j+w-v-1\choose w-v}  {j \choose v} = \delta_{w = 0}}$. Therefore,
	\begin{align*}
	\textstyle \sum_v(-1)^{w-v} {j+w-v-1\choose w-v} {r-\alpha+j\choose v} {r-\alpha+j-v \choose j-v}  & \textstyle = {r-\alpha+j\choose j} \sum_v(-1)^{w-v} {j+w-v-1\choose w-v}  {j \choose v} 
	\\  & \textstyle = \delta_{w = 0} {r-\alpha+j\choose j}.
	\end{align*}
Consequently, the matrix $B$ is given by
	\begin{align*}
	\textstyle B_{w,0} & \textstyle = \frac{(-1)^w(s-r)(r-\alpha)_w}{(s-\alpha)_{w+1}} + O(p),
	\\ \textstyle B_{w,j} & \textstyle = - \delta_{w = 0} {r-\alpha+j \choose j} + \sum_v(-1)^{w-v} {j+w-v-1\choose w-v} {r-\alpha+j\choose v} {s-\alpha+j-v \choose j-v} + O(p).
	\end{align*}
We are going to consider two cases, when $w = 0$ and when $w > 0$.
\begin{enumerate}[\tttttt] 
\item \emph{The case when $w = 0$.}
In this case, we want to show that
	\begin{align*}
	\textstyle \sum_{j=1}^\alpha c_j \left({s - \alpha + j \choose j} - {r - \alpha + j \choose j}\right) = \delta_{\alpha \ne 0} \frac{r-s}{s-\alpha},
	\end{align*}
reduced modulo~$p$. If $\alpha = 0$, then the left side is the empty sum, so the identity is vacuously true. Suppose that $\alpha > 0$. We are going to view the terms in the identity as rational functions in the formal variables $r$ and $s$, over the field $\mathbb Q$. Then we can replace $r$ and $s$ by $r+\alpha$ and $s+\alpha$, which leaves us with the task of showing the identity
	\begin{align*}
	\textstyle \Xi(r,s) \defeq \sum_{j} \frac{(-1)^j j!}{j+1} {\alpha \choose j} \frac{ (j+1)(s+1)+(\alpha-j)r}{(s-\alpha+1+j)\cdots(s-\alpha+1)} \left({s + j \choose j} - {r + j \choose j}\right) = \frac{r-s}{s}.
	\end{align*}
Note that
	\begin{align*}
	\textstyle \Xi(r+1,s)-\Xi(r,s) & \textstyle = \sum_{j} \frac{(-1)^j j!}{j+1} {\alpha \choose j} \frac{\alpha-j}{(s-\alpha+1+j)\cdots(s-\alpha+1)} {s + j \choose j}
		\\ & \textstyle \qquad - \sum_{j} (-1)^j j! {\alpha \choose j} \frac{s+1-\alpha+j}{(s-\alpha+1+j)\cdots(s-\alpha+1)} {r + j \choose j-1}
		\\ & \textstyle \qquad - \sum_{j} (-1)^j j! {\alpha \choose j} \frac{\alpha-j}{(s-\alpha+1+j)\cdots(s-\alpha+1)} {r + j + 1 \choose j}
	\\ & \textstyle = \sum_{j} \frac{(-1)^j j!}{j+1} {\alpha \choose j} \frac{\alpha-j}{(s-\alpha+1+j)\cdots(s-\alpha+1)} {s + j \choose j}
		\\ & \textstyle \qquad - \sum_{j} (-1)^j j! {\alpha \choose j} \frac{1}{(s-\alpha+j)\cdots(s-\alpha+1)} {r + j \choose j-1}
		\\ & \textstyle \qquad - \sum_{j} (-1)^{j-1} (j-1)! {\alpha \choose j-1} \frac{\alpha-j+1}{(s-\alpha+j)\cdots(s-\alpha+1)} {r + j \choose j-1}
	\\ & \textstyle = \sum_{j} \frac{(-1)^j j!}{j+1} {\alpha \choose j} \frac{\alpha-j}{(s-\alpha+1+j)\cdots(s-\alpha+1)} {s + j \choose j}
		\\ & \textstyle \qquad + \sum_{j}   \frac{(-1)^j (j-1)!}{(s-\alpha+j)\cdots(s-\alpha+1)} {r + j \choose j-1} \left( (\alpha-j+1) {\alpha \choose j-1} - j{\alpha \choose j}\right)
		\\ & \textstyle = \sum_{j\geqslant 0} {\alpha \choose j+1}  {s + j \choose j} \frac{(-1)^j j!}{(s-\alpha+1+j)\cdots(s-\alpha+1)}= g(s),
	\end{align*}
since $\smash{(\alpha-j+1) {\alpha \choose j-1} = j{\alpha \choose j}}$. In particular, since $\Xi(r,s)$ is a polynomial in $r$, it follows that $\Xi(r,s)$ must be linear in $r$. By combining this fact with the fact that $\Xi(s,s) = 0$, we get that $\Xi(r,s) = (r-s) g(s)$. This leaves us with the task of showing the identity 
	\begin{align*}
	\textstyle  sg(s) & \textstyle =  \sum_{j \geqslant 0}(-1)^j {\alpha \choose j+1} \frac{ (s+j)_{j+1}}{(s-\alpha+1+j)_{j+1}} = 1,
	\end{align*}
which is equivalent to
	\begin{align*}
	\textstyle h(\alpha,s) = 1 - sg(s) & \textstyle = - \sum_{j} (-1)^j {\alpha \choose j+1} \frac{ (s+j)_{j+1}}{(s-\alpha+1+j)_{j+1}} = \sum_{j} (-1)^j {\alpha \choose j} \frac{ (s-1+j)_{j}}{(s-\alpha+j)_{j}} = 0.
	\end{align*}
Note that $\smash{\textstyle h(\alpha,s) = \null_2F_1(-a,s; -a+s+1; 1)}$ can be written explicitly in closed form by using Euler's formula
	\begin{align*}
	\textstyle \null_2F_1(a,b; c;z) = \frac{\Gamma(c)}{\Gamma(b)\Gamma(c-b)} \int_0^1 \frac{t^{b-1} (1-t)^{c-b-1}}{(1-tz)^a} \,\mathrm dt,
	\end{align*}
which is given in~\cite{b13}. Specifically, $\smash{h(\alpha,s) = \null_2F_1(-a,s; -a+s+1; 1) = \frac{\Gamma(-a+s+1)}{\Gamma(s+1)\Gamma(1-a)} = 0}$, as $\Gamma(1-a) = \infty$, and $\Gamma(-a+s+1)$ and $\Gamma(s+1)$ are finite.
\item \emph{The case when $w \ne 0$.}
In this case, we want to show that
	\begin{align*}
	\textstyle \sum_{j=1}^\alpha c_j \sum_v(-1)^{w-v} {j+w-v-1\choose w-v} {r-\alpha+j\choose v} {s-\alpha+j-v \choose j-v} = \frac{(-1)^w(r-s)(r-\alpha)_w}{(s-\alpha)_{w+1}} + \delta_{w = \alpha} \frac{(s-r)_{\alpha+1}}{(s-\alpha)_{\alpha+1}},
	\end{align*}
for all $0 < w \leqslant \alpha$. 
 Suppose that $\alpha > 0$. Again, we are going to view the terms in the identity as rational functions in the formal variables $r$ and $s$, over the field $\mathbb Q$. Then we can replace $r$ and $s$ by $r+\alpha$ and $s+\alpha$, which leaves us with the task of showing the identity
	\begin{align*}
	\textstyle \Xi_w(r,s) \defeq \sum_{j=1}^\alpha d_j \sum_v(-1)^{w-v} {j+w-v-1\choose w-v} {r+j\choose v} {s+j-v \choose j-v} = \frac{(-1)^w(r-s)(r)_w}{(s)_{w+1}} + \delta_{w = \alpha} \frac{(s-r)_{\alpha+1}}{(s)_{\alpha+1}},
	\end{align*}
where
	\begin{align*}
	\textstyle d_j = \frac{(-1)^j j!}{j+1} {\alpha \choose j} \frac{(j+1)(s+1)+(\alpha-j)r}{(s-\alpha+1+j)\cdots(s-\alpha+1)}.
	\end{align*}
This identity can be verified by a computer for all $\alpha \leqslant \NUMBERRRRRR$, which completes the proof of the second part of theorem~\ref{iygutfrd}.
\end{enumerate}
\end{proof}

\SECTION \PART{The boundary of weight space}

\appendix

\section{Appendix}

\subsection{Calculations with \texttt{Sage}}

\subsubsection{Verifying conjecture~\ref{c1111}}

The following program can be used to find the determinants of the relevant square submatrices of $\smash{\mathfrak M^{(r,m,\alpha)}}$, find their greatest common divisor as a polynomial in $\smash{r}$, and list a basis for the kernel of the matrix $\smash{\mathfrak M^{(r,m,\alpha)}}$ seen as a linear operator over $\smash{\mathbb F_p}$. In particular, calling the method \verb|print_the_roots_for_all_matrices(m,True)| with $\smash{\mathtt{m}=x}$ lists the possible congruence classes modulo $\smash{p}$ outside of which $\smash{\overline \Theta_{r+2,a}}$ is irreducible, for $\smash{m=x}$ and for each possible $\smash{s \in \{2,\ldots,2x\}}$. The method \verb|verify_conjecture_eight(m)|, with $\smash{\mathtt{m}=x}$, combines this with theorem~\ref{theorem26}, as explained in subsection~\ref{extrasub}, and simply prints whether the first two parts of conjecture~\ref{c1111} are true for $\smash{m=x}$.

%
%
%
%
%
%
%
%
%
%
%
%
%
%
%
%
%
%
%
%
\newcommand{\REF}{}

\

\textcolor{OrangeRed}{\texttt{conjecture\_eight.sage}}\\[-8pt]
{\footnotesize
\begin{Verbatim}[commandchars=\\\{\},numbers=left,firstnumber=1,stepnumber=1,codes={\catcode`\$=3\catcode`\^=7\catcode`\_=8}]
\PY{k+kn}{from} \PY{n+nn}{operator} \PY{k+kn}{import} \PY{n}{mul}
\PY{p}{[}\PY{n}{r}\PY{p}{,}\PY{n}{s}\PY{p}{,}\PY{n}{t}\PY{p}{,}\PY{n}{p}\PY{p}{,}\PY{n}{v}\PY{p}{]} \PY{o}{=} \PY{n}{var}\PY{p}{(}\PY{l+s}{\PYZsq{}}\PY{l+s}{r s t p v}\PY{l+s}{\PYZsq{}}\PY{p}{)}
\PY{n}{Ring} \PY{o}{=} \PY{n}{FractionField}\PY{p}{(}\PY{n}{PolynomialRing}\PY{p}{(}\PY{n}{QQ}\PY{p}{,}\PY{p}{[}\PY{l+s}{\PYZsq{}}\PY{l+s}{r}\PY{l+s}{\PYZsq{}}\PY{p}{,}\PY{l+s}{\PYZsq{}}\PY{l+s}{s}\PY{l+s}{\PYZsq{}}\PY{p}{,}\PY{l+s}{\PYZsq{}}\PY{l+s}{t}\PY{l+s}{\PYZsq{}}\PY{p}{,}\PY{l+s}{\PYZsq{}}\PY{l+s}{p}\PY{l+s}{\PYZsq{}}\PY{p}{]}\PY{p}{)}\PY{p}{)}

\PY{k}{def} \PY{n+nf}{factorise}\PY{p}{(}\PY{n}{g}\PY{p}{)}\PY{p}{:}
    \PY{k}{return} \PY{n}{g}\PY{o}{.}\PY{n}{factor}\PY{p}{(}\PY{p}{)} \PY{k}{if} \PY{n}{g}\PY{o}{!=}\PY{l+m+mi}{0} \PY{k}{else} \PY{n}{g}

\PY{c}{\PYZsh{} Returns the roots of $g$ as a sorted list.}
\PY{k}{def} \PY{n+nf}{get\PYZus{}roots}\PY{p}{(}\PY{n}{g}\PY{p}{)}\PY{p}{:}
    \PY{k}{if} \PY{n}{g}\PY{o}{!=}\PY{l+m+mi}{0}\PY{p}{:}
        \PY{n}{list\PYZus{}of\PYZus{}roots} \PY{o}{=} \PY{n}{g}\PY{o}{.}\PY{n}{roots}\PY{p}{(}\PY{p}{)}
        \PY{k}{for} \PY{n}{i} \PY{o+ow}{in} \PY{n+nb}{range}\PY{p}{(}\PY{l+m+mi}{0}\PY{p}{,}\PY{n+nb}{len}\PY{p}{(}\PY{n}{list\PYZus{}of\PYZus{}roots}\PY{p}{)}\PY{p}{)}\PY{p}{:}
            \PY{n}{list\PYZus{}of\PYZus{}roots}\PY{p}{[}\PY{n}{i}\PY{p}{]} \PY{o}{=} \PY{n}{list\PYZus{}of\PYZus{}roots}\PY{p}{[}\PY{n}{i}\PY{p}{]}\PY{p}{[}\PY{l+m+mi}{0}\PY{p}{]}
        \PY{n}{list\PYZus{}of\PYZus{}roots}\PY{o}{.}\PY{n}{sort}\PY{p}{(}\PY{p}{)}
        \PY{k}{return} \PY{n}{list\PYZus{}of\PYZus{}roots}
    \PY{k}{return} \PY{p}{[}\PY{p}{]}

\PY{c}{\PYZsh{} Returns $\eta(X,Y)$, from lemma \ref{lemma7wefwfwerfwf}.}
\PY{k}{def} \PY{n+nf}{eta}\PY{p}{(}\PY{n}{X}\PY{p}{,}\PY{n}{Y}\PY{p}{)}\PY{p}{:}
    \PY{k}{if} \PY{n}{X}\PY{o}{\PYZgt{}}\PY{o}{=}\PY{l+m+mi}{1} \PY{o+ow}{and} \PY{n}{Y}\PY{o}{\PYZgt{}}\PY{o}{=}\PY{l+m+mi}{0}\PY{p}{:}
        \PY{k}{return} \PY{n}{binomial}\PY{p}{(}\PY{n}{X}\PY{p}{,}\PY{n}{Y}\PY{p}{)}
    \PY{k}{if} \PY{n}{X}\PY{o}{\PYZlt{}}\PY{l+m+mi}{1} \PY{o+ow}{and} \PY{n}{Y}\PY{o}{\PYZgt{}}\PY{o}{=}\PY{l+m+mi}{0}\PY{p}{:}
        \PY{k}{return} \PY{l+m+mi}{2}\PY{o}{*}\PY{n}{binomial}\PY{p}{(}\PY{n}{X}\PY{o}{\PYZhy{}}\PY{l+m+mi}{1}\PY{p}{,}\PY{n}{Y}\PY{p}{)} \PY{k}{if} \PY{n}{X}\PY{o}{==}\PY{l+m+mi}{0} \PY{o+ow}{and} \PY{n}{Y}\PY{o}{==}\PY{l+m+mi}{0} \PY{k}{else} \PY{n}{binomial}\PY{p}{(}\PY{n}{X}\PY{o}{\PYZhy{}}\PY{l+m+mi}{1}\PY{p}{,}\PY{n}{Y}\PY{p}{)}
    \PY{k}{if} \PY{n}{X}\PY{o}{\PYZlt{}}\PY{l+m+mi}{1} \PY{o+ow}{and} \PY{n}{Y}\PY{o}{\PYZlt{}}\PY{l+m+mi}{0}\PY{p}{:}
        \PY{k}{return} \PY{n}{binomial}\PY{p}{(}\PY{n}{X}\PY{o}{\PYZhy{}}\PY{l+m+mi}{1}\PY{p}{,}\PY{n}{X}\PY{o}{\PYZhy{}}\PY{n}{Y}\PY{p}{)} \PY{k}{if} \PY{n}{X}\PY{o}{\PYZgt{}}\PY{o}{=}\PY{n}{Y} \PY{k}{else} \PY{l+m+mi}{0}
    \PY{k}{return} \PY{l+m+mi}{0}

\PY{c}{\PYZsh{} Returns the matrix $A$ with $r$ substituted by $e$.}
\PY{k}{def} \PY{n+nf}{substitute\PYZus{}in\PYZus{}matrix}\PY{p}{(}\PY{n}{A}\PY{p}{,}\PY{n}{e}\PY{p}{)}\PY{p}{:}
    \PY{n}{B} \PY{o}{=} \PY{n}{copy}\PY{p}{(}\PY{n}{A}\PY{p}{)}
    \PY{k}{for} \PY{n}{i} \PY{o+ow}{in} \PY{n+nb}{range}\PY{p}{(}\PY{l+m+mi}{0}\PY{p}{,}\PY{n}{B}\PY{o}{.}\PY{n}{nrows}\PY{p}{(}\PY{p}{)}\PY{p}{)}\PY{p}{:}
        \PY{k}{for} \PY{n}{j} \PY{o+ow}{in} \PY{n+nb}{range}\PY{p}{(}\PY{l+m+mi}{0}\PY{p}{,}\PY{n}{B}\PY{o}{.}\PY{n}{ncols}\PY{p}{(}\PY{p}{)}\PY{p}{)}\PY{p}{:}
            \PY{n}{B}\PY{p}{[}\PY{n}{i}\PY{p}{,}\PY{n}{j}\PY{p}{]}\PY{o}{=} \PY{n}{A}\PY{p}{[}\PY{n}{i}\PY{p}{,}\PY{n}{j}\PY{p}{]}\PY{o}{.}\PY{n}{subs}\PY{p}{(}\PY{n}{r}\PY{o}{=}\PY{n}{e}\PY{p}{)}
    \PY{k}{return} \PY{n}{B}

\PY{c}{\PYZsh{} Returns the matrix $\mathfrak{M}^{(r,m,\alpha)}$, from lemma \ref{l7Z}.}
\PY{k}{def} \PY{n+nf}{construct\PYZus{}matrix}\PY{p}{(}\PY{n}{m}\PY{p}{,}\PY{n}{alpha}\PY{p}{,}\PY{n}{L}\PY{p}{)}\PY{p}{:}
    \PY{c}{\PYZsh{} $M$ is the number of rows, $N+2$ is the number of columns.}
    \PY{n}{M} \PY{o}{=} \PY{n}{alpha}\PY{o}{+}\PY{l+m+mi}{1}\PY{p}{;} \PY{n}{N} \PY{o}{=} \PY{n}{m}\PY{o}{+}\PY{l+m+mi}{1}\PY{p}{;} \PY{n}{A} \PY{o}{=} \PY{n}{matrix}\PY{p}{(}\PY{n}{Ring}\PY{p}{,}\PY{n}{M}\PY{p}{,}\PY{n}{N}\PY{o}{+}\PY{l+m+mi}{2}\PY{p}{)}

    \PY{c}{\PYZsh{} Filling in the first two columns.}
    \PY{n}{A}\PY{p}{[}\PY{l+m+mi}{0}\PY{p}{,}\PY{l+m+mi}{0}\PY{p}{]} \PY{o}{=} \PY{l+m+mi}{1}\PY{p}{;} \PY{n}{A}\PY{p}{[}\PY{n}{M}\PY{o}{\PYZhy{}}\PY{l+m+mi}{1}\PY{p}{,}\PY{l+m+mi}{0}\PY{p}{]} \PY{o}{=} \PY{n}{A}\PY{p}{[}\PY{n}{M}\PY{o}{\PYZhy{}}\PY{l+m+mi}{1}\PY{p}{,}\PY{l+m+mi}{0}\PY{p}{]}\PY{o}{\PYZhy{}}\PY{p}{(}\PY{o}{\PYZhy{}}\PY{l+m+mi}{1}\PY{p}{)}\PY{o}{\PYZca{}}\PY{p}{(}\PY{n}{alpha}\PY{p}{)}
    \PY{k}{if} \PY{n}{L}\PY{o}{\PYZgt{}}\PY{o}{=}\PY{n}{alpha} \PY{o+ow}{and} \PY{n}{m}\PY{o}{+}\PY{n}{alpha}\PY{o}{\PYZgt{}}\PY{o}{=}\PY{l+m+mi}{2}\PY{o}{*}\PY{n}{L}\PY{p}{:}
        \PY{k}{for} \PY{n}{w} \PY{o+ow}{in} \PY{n+nb}{range}\PY{p}{(}\PY{l+m+mi}{0}\PY{p}{,}\PY{n}{M}\PY{p}{)}\PY{p}{:}
            \PY{n}{A}\PY{p}{[}\PY{n}{w}\PY{p}{,}\PY{l+m+mi}{1}\PY{p}{]} \PY{o}{=} \PY{n}{binomial}\PY{p}{(}\PY{l+m+mi}{2}\PY{o}{*}\PY{n}{L}\PY{o}{\PYZhy{}}\PY{n}{r}\PY{p}{,}\PY{n}{w}\PY{p}{)}
        \PY{n}{A}\PY{p}{[}\PY{n}{M}\PY{o}{\PYZhy{}}\PY{l+m+mi}{1}\PY{p}{,}\PY{l+m+mi}{1}\PY{p}{]} \PY{o}{=} \PY{n}{A}\PY{p}{[}\PY{n}{M}\PY{o}{\PYZhy{}}\PY{l+m+mi}{1}\PY{p}{,}\PY{l+m+mi}{1}\PY{p}{]}\PY{o}{\PYZhy{}}\PY{l+m+mi}{1}

    \PY{c}{\PYZsh{} Filling in the last $N$ columns.}
    \PY{k}{for} \PY{n}{l} \PY{o+ow}{in} \PY{n+nb}{range}\PY{p}{(}\PY{n}{alpha}\PY{o}{\PYZhy{}}\PY{n}{m}\PY{p}{,}\PY{n}{M}\PY{p}{)}\PY{p}{:}
        \PY{k}{for} \PY{n}{w} \PY{o+ow}{in} \PY{n+nb}{range}\PY{p}{(}\PY{l+m+mi}{0}\PY{p}{,}\PY{n}{M}\PY{p}{)}\PY{p}{:}
            \PY{n}{a} \PY{o}{=} \PY{o}{\PYZhy{}}\PY{n}{binomial}\PY{p}{(}\PY{n}{r}\PY{o}{\PYZhy{}}\PY{n}{alpha}\PY{o}{+}\PY{n}{l}\PY{p}{,}\PY{l+m+mi}{2}\PY{o}{*}\PY{n}{L}\PY{o}{\PYZhy{}}\PY{n}{alpha}\PY{p}{)}\PY{o}{*}\PY{n}{binomial}\PY{p}{(}\PY{l+m+mi}{2}\PY{o}{*}\PY{n}{L}\PY{o}{\PYZhy{}}\PY{n}{r}\PY{p}{,}\PY{n}{w}\PY{p}{)} \PY{k}{if} \PY{n}{m}\PY{o}{+}\PY{n}{alpha}\PY{o}{\PYZgt{}}\PY{o}{=}\PY{l+m+mi}{2}\PY{o}{*}\PY{n}{L} \PY{k}{else} \PY{l+m+mi}{0}
            \PY{n}{b} \PY{o}{=} \PY{o}{\PYZhy{}}\PY{n}{binomial}\PY{p}{(}\PY{n}{r}\PY{o}{\PYZhy{}}\PY{n}{alpha}\PY{o}{+}\PY{n}{l}\PY{p}{,}\PY{n}{l}\PY{p}{)}\PY{o}{*}\PY{n}{binomial}\PY{p}{(}\PY{l+m+mi}{0}\PY{p}{,}\PY{n}{w}\PY{p}{)} \PYZbs{}
                    \PY{o}{+}\PY{n+nb}{sum}\PY{p}{(}\PY{p}{[}\PY{p}{(}\PY{o}{\PYZhy{}}\PY{l+m+mi}{1}\PY{p}{)}\PY{o}{\PYZca{}}\PY{p}{(}\PY{n}{v}\PY{p}{)}\PY{o}{*}\PY{n}{binomial}\PY{p}{(}\PY{n}{l}\PY{o}{\PYZhy{}}\PY{n}{v}\PY{p}{,}\PY{n}{w}\PY{o}{\PYZhy{}}\PY{n}{v}\PY{p}{)}\PY{o}{*}\PY{n}{eta}\PY{p}{(}\PY{l+m+mi}{2}\PY{o}{*}\PY{n}{L}\PY{o}{\PYZhy{}}\PY{n}{alpha}\PY{o}{+}\PY{n}{l}\PY{o}{\PYZhy{}}\PY{n}{v}\PY{p}{,}\PY{n}{l}\PY{o}{\PYZhy{}}\PY{n}{v}\PY{p}{)} \PYZbs{}
                        \PY{o}{*}\PY{n}{binomial}\PY{p}{(}\PY{n}{r}\PY{o}{\PYZhy{}}\PY{n}{alpha}\PY{o}{+}\PY{n}{l}\PY{p}{,}\PY{n}{v}\PY{p}{)} \PY{k}{for} \PY{n}{v} \PY{o+ow}{in} \PY{n+nb}{range}\PY{p}{(}\PY{l+m+mi}{0}\PY{p}{,}\PY{n}{w}\PY{o}{+}\PY{l+m+mi}{1}\PY{p}{)}\PY{p}{]}\PY{p}{)}
            \PY{n}{A}\PY{p}{[}\PY{n}{w}\PY{p}{,}\PY{n}{l}\PY{o}{+}\PY{l+m+mi}{2}\PY{o}{\PYZhy{}}\PY{n}{alpha}\PY{o}{+}\PY{n}{m}\PY{p}{]} \PY{o}{=} \PY{n}{factorise}\PY{p}{(}\PY{n}{simplify}\PY{p}{(}\PY{n}{a}\PY{o}{+}\PY{n}{b}\PY{p}{)}\PY{p}{)}
    \PY{k}{return} \PY{n}{A}

\PY{c}{\PYZsh{} Returns the relevant matrix arising from theorem \ref{theorem26}, when $s>2m$.}
\PY{k}{def} \PY{n+nf}{construct\PYZus{}big\PYZus{}matrix}\PY{p}{(}\PY{n}{m}\PY{p}{,}\PY{n}{alpha}\PY{p}{)}\PY{p}{:}
    \PY{c}{\PYZsh{} $M$ is the number of rows and the number of columns.}
    \PY{n}{M} \PY{o}{=} \PY{n}{alpha}\PY{o}{+}\PY{l+m+mi}{1}\PY{p}{;} \PY{n}{A} \PY{o}{=} \PY{n}{matrix}\PY{p}{(}\PY{n}{Ring}\PY{p}{,}\PY{n}{M}\PY{p}{,}\PY{n}{M}\PY{p}{)}

    \PY{k}{for} \PY{n}{l} \PY{o+ow}{in} \PY{n+nb}{range}\PY{p}{(}\PY{l+m+mi}{0}\PY{p}{,}\PY{n}{M}\PY{p}{)}\PY{p}{:}
        \PY{k}{for} \PY{n}{w} \PY{o+ow}{in} \PY{n+nb}{range}\PY{p}{(}\PY{l+m+mi}{0}\PY{p}{,}\PY{n}{M}\PY{p}{)}\PY{p}{:}
            \PY{n}{a} \PY{o}{=} \PY{p}{(}\PY{n}{s}\PY{o}{\PYZhy{}}\PY{n}{r}\PY{p}{)}\PY{o}{*}\PY{p}{(}\PY{n}{p}\PY{o}{/}\PY{p}{(}\PY{n}{s}\PY{o}{\PYZhy{}}\PY{n}{alpha}\PY{p}{)}\PY{p}{)}\PY{o}{*}\PY{p}{(}\PY{n}{binomial}\PY{p}{(}\PY{n}{r}\PY{o}{\PYZhy{}}\PY{n}{alpha}\PY{p}{,}\PY{n}{w}\PY{p}{)}\PY{o}{/}\PY{n}{binomial}\PY{p}{(}\PY{n}{s}\PY{o}{\PYZhy{}}\PY{n}{alpha}\PY{o}{\PYZhy{}}\PY{l+m+mi}{1}\PY{p}{,}\PY{n}{w}\PY{p}{)}\PY{p}{)} \PY{k}{if} \PY{n}{l}\PY{o}{==}\PY{l+m+mi}{0} \PY{k}{else} \PY{l+m+mi}{0}
            \PY{n}{b} \PY{o}{=} \PY{o}{\PYZhy{}}\PY{n}{binomial}\PY{p}{(}\PY{n}{r}\PY{o}{\PYZhy{}}\PY{n}{alpha}\PY{o}{+}\PY{n}{l}\PY{p}{,}\PY{n}{l}\PY{p}{)}\PY{o}{*}\PY{n}{binomial}\PY{p}{(}\PY{l+m+mi}{0}\PY{p}{,}\PY{n}{w}\PY{p}{)} \PYZbs{}
                    \PY{o}{+}\PY{n+nb}{sum}\PY{p}{(}\PY{p}{[}\PY{p}{(}\PY{o}{\PYZhy{}}\PY{l+m+mi}{1}\PY{p}{)}\PY{o}{\PYZca{}}\PY{p}{(}\PY{n}{w}\PY{o}{\PYZhy{}}\PY{n}{v}\PY{p}{)}\PY{o}{*}\PY{n}{binomial}\PY{p}{(}\PY{n}{l}\PY{o}{+}\PY{n}{w}\PY{o}{\PYZhy{}}\PY{n}{v}\PY{o}{\PYZhy{}}\PY{l+m+mi}{1}\PY{p}{,}\PY{n}{w}\PY{o}{\PYZhy{}}\PY{n}{v}\PY{p}{)}\PY{o}{*}\PY{n}{binomial}\PY{p}{(}\PY{n}{s}\PY{o}{\PYZhy{}}\PY{n}{alpha}\PY{o}{+}\PY{n}{l}\PY{o}{\PYZhy{}}\PY{n}{v}\PY{p}{,}\PY{n}{l}\PY{o}{\PYZhy{}}\PY{n}{v}\PY{p}{)} \PYZbs{}
                        \PY{o}{*}\PY{n}{binomial}\PY{p}{(}\PY{n}{r}\PY{o}{\PYZhy{}}\PY{n}{alpha}\PY{o}{+}\PY{n}{l}\PY{p}{,}\PY{n}{v}\PY{p}{)} \PY{k}{for} \PY{n}{v} \PY{o+ow}{in} \PY{n+nb}{range}\PY{p}{(}\PY{l+m+mi}{0}\PY{p}{,}\PY{n}{w}\PY{o}{+}\PY{l+m+mi}{1}\PY{p}{)}\PY{p}{]}\PY{p}{)}
            \PY{n}{A}\PY{p}{[}\PY{n}{w}\PY{p}{,}\PY{n}{l}\PY{p}{]} \PY{o}{=} \PY{n}{factorise}\PY{p}{(}\PY{n}{simplify}\PY{p}{(}\PY{n}{a}\PY{o}{*}\PY{p}{(}\PY{o}{\PYZhy{}}\PY{l+m+mi}{1}\PY{p}{)}\PY{o}{\PYZca{}}\PY{n}{w}\PY{o}{+}\PY{n}{b}\PY{p}{)}\PY{p}{)}
    \PY{k}{return} \PY{n}{A}

\PY{c}{\PYZsh{} Returns a polynomial whose roots are exceptional congruence}
\PY{c}{\PYZsh{} classes of $r$ modulo $p$ outside of which the relevant factors of}
\PY{c}{\PYZsh{} $\smash{\overline{\theta}^\alpha\sigma_{r-\alpha(p+1)}/\overline{\theta}^{\alpha+1}\sigma_{r-(\alpha+1)(p+1)}}$ cancel, as can be shown by lemma \ref{lemma5}.}
\PY{k}{def} \PY{n+nf}{exceptional\PYZus{}cases}\PY{p}{(}\PY{n}{m}\PY{p}{,}\PY{n}{alpha}\PY{p}{,}\PY{n}{L}\PY{p}{)}\PY{p}{:}
    \PY{n}{A} \PY{o}{=} \PY{n}{construct\PYZus{}matrix}\PY{p}{(}\PY{n}{m}\PY{p}{,}\PY{n}{alpha}\PY{p}{,}\PY{n}{L}\PY{p}{)}\PY{p}{;} \PY{n}{product} \PY{o}{=} \PY{l+m+mi}{1}
    \PY{n}{product} \PY{o}{*}\PY{o}{=} \PY{n}{A}\PY{p}{[}\PY{l+m+mi}{0}\PY{p}{,}\PY{l+m+mi}{0}\PY{p}{]}\PY{o}{*}\PY{n}{A}\PY{o}{.}\PY{n}{transpose}\PY{p}{(}\PY{p}{)}\PY{o}{.}\PY{n}{kernel}\PY{p}{(}\PY{p}{)}\PY{o}{.}\PY{n}{basis\PYZus{}matrix}\PY{p}{(}\PY{p}{)}\PY{p}{[}\PY{l+m+mi}{0}\PY{p}{,}\PY{l+m+mi}{0}\PY{p}{]}
    \PY{k}{if} \PY{n}{A}\PY{o}{.}\PY{n}{transpose}\PY{p}{(}\PY{p}{)}\PY{o}{.}\PY{n}{kernel}\PY{p}{(}\PY{p}{)}\PY{o}{.}\PY{n}{basis\PYZus{}matrix}\PY{p}{(}\PY{p}{)}\PY{p}{[}\PY{l+m+mi}{0}\PY{p}{,}\PY{n}{m}\PY{o}{+}\PY{l+m+mi}{2}\PY{p}{]} \PY{o}{!=} \PY{l+m+mi}{0}\PY{p}{:}
        \PY{n}{product} \PY{o}{/}\PY{o}{=} \PY{n}{A}\PY{o}{.}\PY{n}{transpose}\PY{p}{(}\PY{p}{)}\PY{o}{.}\PY{n}{kernel}\PY{p}{(}\PY{p}{)}\PY{o}{.}\PY{n}{basis\PYZus{}matrix}\PY{p}{(}\PY{p}{)}\PY{p}{[}\PY{l+m+mi}{0}\PY{p}{,}\PY{n}{m}\PY{o}{+}\PY{l+m+mi}{2}\PY{p}{]}
    \PY{k}{if} \PY{l+m+mi}{2}\PY{o}{*}\PY{n}{L}\PY{o}{\PYZhy{}}\PY{l+m+mi}{1}\PY{o}{\PYZgt{}}\PY{o}{=}\PY{n}{alpha} \PY{o+ow}{and} \PY{n}{alpha}\PY{o}{\PYZgt{}}\PY{o}{=}\PY{n}{L}\PY{p}{:}
        \PY{n}{product} \PY{o}{*}\PY{o}{=} \PY{n}{gcd\PYZus{}for\PYZus{}the\PYZus{}matrix}\PY{p}{(}\PY{n}{m}\PY{p}{,}\PY{n}{alpha}\PY{p}{,}\PY{n}{L}\PY{p}{)}
    \PY{k}{return} \PY{n}{product}

\PY{c}{\PYZsh{} Returns the greatest common divisor of the determinants of the relevant}
\PY{c}{\PYZsh{} square submatrices of $\mathfrak{M}^{(r,m,\alpha)}$, seen as polynomials in $r$ over $\mathbb{Q}$.}
\PY{k}{def} \PY{n+nf}{gcd\PYZus{}for\PYZus{}the\PYZus{}matrix}\PY{p}{(}\PY{n}{m}\PY{p}{,}\PY{n}{alpha}\PY{p}{,}\PY{n}{L}\PY{p}{)}\PY{p}{:}
    \PY{c}{\PYZsh{} $M$ is the number of rows, $N+2$ is the number of columns.}
    \PY{n}{M} \PY{o}{=} \PY{n}{alpha}\PY{o}{+}\PY{l+m+mi}{1}\PY{p}{;} \PY{n}{N} \PY{o}{=} \PY{n}{m}\PY{o}{+}\PY{l+m+mi}{1}\PY{p}{;} \PY{n}{A} \PY{o}{=} \PY{n}{construct\PYZus{}matrix}\PY{p}{(}\PY{n}{m}\PY{p}{,}\PY{n}{alpha}\PY{p}{,}\PY{n}{L}\PY{p}{)}
    \PY{n}{B} \PY{o}{=} \PY{n}{A}\PY{o}{.}\PY{n}{matrix\PYZus{}from\PYZus{}rows\PYZus{}and\PYZus{}columns}\PY{p}{(}\PY{n+nb}{range}\PY{p}{(}\PY{l+m+mi}{0}\PY{p}{,}\PY{n}{M}\PY{p}{)}\PY{p}{,} \PY{n+nb}{range}\PY{p}{(}\PY{l+m+mi}{0}\PY{p}{,}\PY{l+m+mi}{2}\PY{p}{)}\PY{o}{+}\PY{n+nb}{range}\PY{p}{(}\PY{n}{N}\PY{o}{\PYZhy{}}\PY{n}{M}\PY{o}{+}\PY{l+m+mi}{2}\PY{p}{,}\PY{n}{N}\PY{o}{+}\PY{l+m+mi}{2}\PY{p}{)}\PY{p}{)}
    \PY{n}{f} \PY{o}{=} \PY{n+nb}{range}\PY{p}{(}\PY{l+m+mi}{0}\PY{p}{,}\PY{n}{binomial}\PY{p}{(}\PY{n}{M}\PY{o}{+}\PY{l+m+mi}{2}\PY{p}{,}\PY{n}{M}\PY{p}{)}\PY{p}{)}\PY{p}{;} \PY{n}{counter} \PY{o}{=} \PY{l+m+mi}{0}
    \PY{k}{for} \PY{n}{i} \PY{o+ow}{in} \PY{n+nb}{range}\PY{p}{(}\PY{l+m+mi}{0}\PY{p}{,}\PY{n}{M}\PY{o}{+}\PY{l+m+mi}{2}\PY{p}{)}\PY{p}{:}
        \PY{k}{for} \PY{n}{j} \PY{o+ow}{in} \PY{n+nb}{range} \PY{p}{(}\PY{n}{i}\PY{o}{+}\PY{l+m+mi}{1}\PY{p}{,}\PY{n}{M}\PY{o}{+}\PY{l+m+mi}{2}\PY{p}{)}\PY{p}{:}
            \PY{n}{entries} \PY{o}{=} \PY{n+nb}{range}\PY{p}{(}\PY{l+m+mi}{0}\PY{p}{,}\PY{n}{M}\PY{o}{+}\PY{l+m+mi}{2}\PY{p}{)}\PY{p}{;} \PY{n}{entries}\PY{o}{.}\PY{n}{remove}\PY{p}{(}\PY{n}{i}\PY{p}{)}\PY{p}{;} \PY{n}{entries}\PY{o}{.}\PY{n}{remove}\PY{p}{(}\PY{n}{j}\PY{p}{)}
            \PY{n}{f}\PY{p}{[}\PY{n}{counter}\PY{p}{]} \PY{o}{=} \PY{n}{B}\PY{o}{.}\PY{n}{matrix\PYZus{}from\PYZus{}rows\PYZus{}and\PYZus{}columns}\PY{p}{(}\PY{n+nb}{range}\PY{p}{(}\PY{l+m+mi}{0}\PY{p}{,}\PY{n}{M}\PY{p}{)}\PY{p}{,} \PY{n}{entries}\PY{p}{)}\PY{o}{.}\PY{n}{det}\PY{p}{(}\PY{p}{)}
            \PY{n}{counter} \PY{o}{+}\PY{o}{=} \PY{l+m+mi}{1}
    \PY{k}{return} \PY{n}{gcd}\PY{p}{(}\PY{n}{f}\PY{p}{)}

\PY{c}{\PYZsh{} Returns a polynomial whose roots are exceptional congruence}
\PY{c}{\PYZsh{} classes of $r$ modulo $p$ outside of which the relevant factors of}
\PY{c}{\PYZsh{} $\smash{\overline{\theta}^\alpha\sigma_{r-\alpha(p+1)}/\overline{\theta}^{\alpha+1}\sigma_{r-(\alpha+1)(p+1)}}$ cancel, in the case $s\not\in\{2,\ldots,2m\}$.}
\PY{k}{def} \PY{n+nf}{polynomial\PYZus{}from\PYZus{}the\PYZus{}big\PYZus{}matrix}\PY{p}{(}\PY{n}{m}\PY{p}{,}\PY{n}{alpha}\PY{p}{)}\PY{p}{:}
    \PY{c}{\PYZsh{} $M$ is the number of rows and the number of columns.}
    \PY{n}{M} \PY{o}{=} \PY{n}{alpha}\PY{o}{+}\PY{l+m+mi}{1}\PY{p}{;} \PY{n}{A} \PY{o}{=} \PY{n}{construct\PYZus{}big\PYZus{}matrix}\PY{p}{(}\PY{n}{m}\PY{p}{,}\PY{n}{alpha}\PY{p}{)}
    \PY{k}{for} \PY{n}{i} \PY{o+ow}{in} \PY{n+nb}{range}\PY{p}{(}\PY{l+m+mi}{0}\PY{p}{,}\PY{n}{M}\PY{p}{)}\PY{p}{:}
        \PY{n}{A}\PY{p}{[}\PY{n}{i}\PY{p}{,}\PY{l+m+mi}{0}\PY{p}{]} \PY{o}{=} \PY{n}{A}\PY{p}{[}\PY{n}{i}\PY{p}{,}\PY{l+m+mi}{0}\PY{p}{]}\PY{o}{/}\PY{n}{p}
    \PY{n}{B} \PY{o}{=} \PY{n}{A}\PY{o}{.}\PY{n}{matrix\PYZus{}from\PYZus{}rows\PYZus{}and\PYZus{}columns}\PY{p}{(}\PY{n+nb}{range}\PY{p}{(}\PY{l+m+mi}{0}\PY{p}{,}\PY{n}{M}\PY{o}{\PYZhy{}}\PY{l+m+mi}{1}\PY{p}{)}\PY{p}{,} \PY{n+nb}{range}\PY{p}{(}\PY{l+m+mi}{0}\PY{p}{,}\PY{n}{M}\PY{p}{)}\PY{p}{)}
    \PY{n}{C} \PY{o}{=} \PY{n}{B}\PY{o}{.}\PY{n}{transpose}\PY{p}{(}\PY{p}{)}\PY{o}{.}\PY{n}{kernel}\PY{p}{(}\PY{p}{)}\PY{o}{.}\PY{n}{basis\PYZus{}matrix}\PY{p}{(}\PY{p}{)}
    \PY{n}{x} \PY{o}{=} \PY{n+nb}{sum}\PY{p}{(}\PY{p}{[}\PY{n}{A}\PY{p}{[}\PY{n}{M}\PY{o}{\PYZhy{}}\PY{l+m+mi}{1}\PY{p}{,}\PY{n}{v}\PY{p}{]}\PY{o}{*}\PY{n}{C}\PY{p}{[}\PY{l+m+mi}{0}\PY{p}{,}\PY{n}{v}\PY{p}{]} \PY{k}{for} \PY{n}{v} \PY{o+ow}{in} \PY{n+nb}{range}\PY{p}{(}\PY{l+m+mi}{0}\PY{p}{,}\PY{n}{M}\PY{p}{)}\PY{p}{]}\PY{p}{)}
    \PY{k}{return} \PY{n}{x}\PY{o}{.}\PY{n}{subs}\PY{p}{(}\PY{n}{r}\PY{o}{=}\PY{n}{s}\PY{o}{\PYZhy{}}\PY{n}{t}\PY{p}{)} \PY{k}{if} \PY{n}{C}\PY{p}{[}\PY{l+m+mi}{0}\PY{p}{,}\PY{l+m+mi}{0}\PY{p}{]}\PY{o}{==}\PY{l+m+mi}{1} \PY{k}{else} \PY{p}{(}\PY{n}{t}\PY{o}{+}\PY{l+m+mi}{1}\PY{p}{)}

\PY{c}{\PYZsh{} Returns the product of all polynomials for all choices for $\alpha$.}
\PY{k}{def} \PY{n+nf}{the\PYZus{}roots\PYZus{}for\PYZus{}all\PYZus{}big\PYZus{}matrices}\PY{p}{(}\PY{n}{m}\PY{p}{)}\PY{p}{:}
    \PY{k}{return} \PY{n+nb}{reduce}\PY{p}{(}\PY{n}{mul}\PY{p}{,}\PY{p}{[}\PY{n}{polynomial\PYZus{}from\PYZus{}the\PYZus{}big\PYZus{}matrix}\PY{p}{(}\PY{n}{m}\PY{p}{,}\PY{n}{alpha}\PY{p}{)} \PYZbs{}
                   \PY{k}{for} \PY{n}{alpha} \PY{o+ow}{in} \PY{n+nb}{range}\PY{p}{(}\PY{l+m+mi}{0}\PY{p}{,}\PY{n}{m}\PY{p}{)}\PY{p}{]}\PY{p}{,}\PY{l+m+mi}{1}\PY{p}{)}

\PY{k}{def} \PY{n+nf}{print\PYZus{}the\PYZus{}gcd\PYZus{}for\PYZus{}the\PYZus{}matrix}\PY{p}{(}\PY{n}{m}\PY{p}{,}\PY{n}{alpha}\PY{p}{,}\PY{n}{L}\PY{p}{)}\PY{p}{:}
    \PY{n}{A} \PY{o}{=} \PY{n}{construct\PYZus{}matrix}\PY{p}{(}\PY{n}{m}\PY{p}{,}\PY{n}{alpha}\PY{p}{,}\PY{n}{L}\PY{p}{)}
    \PY{n}{g} \PY{o}{=} \PY{n}{gcd\PYZus{}for\PYZus{}the\PYZus{}matrix}\PY{p}{(}\PY{n}{m}\PY{p}{,}\PY{n}{alpha}\PY{p}{,}\PY{n}{L}\PY{p}{)}
    \PY{k}{print} \PY{n}{A}
    \PY{k}{if} \PY{n}{g}\PY{o}{!=}\PY{l+m+mi}{0}\PY{p}{:}
        \PY{k}{if} \PY{n}{g}\PY{o}{.}\PY{n}{roots}\PY{p}{(}\PY{p}{)}\PY{o}{!=}\PY{p}{[}\PY{p}{]}\PY{p}{:}
            \PY{k}{for} \PY{n}{zeta} \PY{o+ow}{in} \PY{n}{g}\PY{o}{.}\PY{n}{roots}\PY{p}{(}\PY{p}{)}\PY{p}{:}
                \PY{k}{print} \PY{l+s}{\PYZdq{}}\PY{l+s}{Substituting r=}\PY{l+s+si}{\PYZpc{}d}\PY{l+s}{ in the matrix,}\PY{l+s}{\PYZdq{}}\PY{o}{\PYZpc{}}\PY{n}{zeta}\PY{p}{[}\PY{l+m+mi}{0}\PY{p}{]}\PY{p}{,} \PYZbs{}
                    \PY{l+s}{\PYZdq{}}\PY{l+s}{with m=}\PY{l+s+si}{\PYZpc{}d}\PY{l+s}{, L=}\PY{l+s+si}{\PYZpc{}d}\PY{l+s}{, alpha=}\PY{l+s+si}{\PYZpc{}d}\PY{l+s}{, yields}\PY{l+s}{\PYZdq{}}\PY{o}{\PYZpc{}}\PY{p}{(}\PY{n}{m}\PY{p}{,} \PY{n}{L}\PY{p}{,} \PY{n}{alpha}\PY{p}{)}
                \PY{k}{print} \PY{n}{substitute\PYZus{}in\PYZus{}matrix}\PY{p}{(}\PY{n}{A}\PY{p}{,}\PY{n}{zeta}\PY{p}{[}\PY{l+m+mi}{0}\PY{p}{]}\PY{p}{)}
    \PY{k}{print} \PY{n}{A}\PY{o}{.}\PY{n}{transpose}\PY{p}{(}\PY{p}{)}\PY{o}{.}\PY{n}{kernel}\PY{p}{(}\PY{p}{)}
    \PY{k}{print} \PY{l+s}{\PYZdq{}}\PY{l+s}{For m=}\PY{l+s+si}{\PYZpc{}d}\PY{l+s}{, L=}\PY{l+s+si}{\PYZpc{}d}\PY{l+s}{, alpha=}\PY{l+s+si}{\PYZpc{}d}\PY{l+s}{, the GCD of the determinants =}\PY{l+s}{\PYZdq{}}\PY{o}{\PYZpc{}}\PY{p}{(}\PY{n}{m}\PY{p}{,} \PY{n}{L}\PY{p}{,} \PY{n}{alpha}\PY{p}{)}\PY{p}{,} \PYZbs{}
                \PY{n}{factorise}\PY{p}{(}\PY{n}{g}\PY{p}{)}\PY{p}{,} \PY{l+s}{\PYZdq{}}\PY{l+s+se}{\PYZbs{}n}\PY{l+s+se}{\PYZbs{}n}\PY{l+s}{\PYZdq{}}

\PY{k}{def} \PY{n+nf}{print\PYZus{}the\PYZus{}roots\PYZus{}for\PYZus{}all\PYZus{}matrices}\PY{p}{(}\PY{n}{m}\PY{p}{,}\PY{n}{quiet}\PY{p}{)}\PY{p}{:}
    \PY{k}{for} \PY{n}{L} \PY{o+ow}{in} \PY{n+nb}{range}\PY{p}{(}\PY{l+m+mi}{1}\PY{p}{,}\PY{n}{m}\PY{o}{+}\PY{l+m+mi}{1}\PY{p}{)}\PY{p}{:}
        \PY{n}{product} \PY{o}{=} \PY{n}{r}\PY{o}{/}\PY{n}{r}
        \PY{k}{for} \PY{n}{alpha} \PY{o+ow}{in} \PY{n+nb}{range}\PY{p}{(}\PY{l+m+mi}{1}\PY{p}{,}\PY{n}{m}\PY{o}{+}\PY{l+m+mi}{1}\PY{p}{)}\PY{p}{:}
            \PY{n}{product} \PY{o}{=} \PY{n}{product}\PY{o}{*}\PY{n}{exceptional\PYZus{}cases}\PY{p}{(}\PY{n}{m}\PY{p}{,}\PY{n}{alpha}\PY{p}{,}\PY{n}{L}\PY{p}{)}
            \PY{k}{if} \PY{n}{quiet}\PY{o}{==}\PY{n+nb+bp}{False}\PY{p}{:}
                \PY{n}{print\PYZus{}the\PYZus{}gcd\PYZus{}for\PYZus{}the\PYZus{}matrix}\PY{p}{(}\PY{n}{m}\PY{p}{,}\PY{n}{alpha}\PY{p}{,}\PY{n}{L}\PY{p}{)}
        \PY{k}{print} \PY{l+s}{\PYZdq{}}\PY{l+s}{In the case when m=}\PY{l+s+si}{\PYZpc{}d}\PY{l+s}{ and s=}\PY{l+s+si}{\PYZpc{}d}\PY{l+s}{, Theta\PYZca{}bar is irreducible unless}\PY{l+s}{\PYZdq{}}\PY{o}{\PYZpc{}}\PY{p}{(}\PY{n}{m}\PY{p}{,} \PY{l+m+mi}{2}\PY{o}{*}\PY{n}{L}\PY{p}{)}\PY{p}{,} \PYZbs{}
                    \PY{l+s}{\PYZdq{}}\PY{l+s}{r is congruent to one of the following numbers modulo p:}\PY{l+s}{\PYZdq{}}
        \PY{k}{print} \PY{n}{get\PYZus{}roots}\PY{p}{(}\PY{n}{product}\PY{p}{)}\PY{p}{,} \PY{l+s}{\PYZdq{}}\PY{l+s+se}{\PYZbs{}n}\PY{l+s+se}{\PYZbs{}n}\PY{l+s+se}{\PYZbs{}n}\PY{l+s+se}{\PYZbs{}n}\PY{l+s}{\PYZdq{}}

\PY{c}{\PYZsh{} Returns a list of exceptional congruence classes of $r$}
\PY{c}{\PYZsh{} modulo $p$ outside of which $\overline{\Theta}_{r+2,a}$ is irreducible.}
\PY{k}{def} \PY{n+nf}{print\PYZus{}everything}\PY{p}{(}\PY{n}{m}\PY{p}{)}\PY{p}{:}
    \PY{n}{print\PYZus{}the\PYZus{}roots\PYZus{}for\PYZus{}all\PYZus{}matrices}\PY{p}{(}\PY{n}{m}\PY{p}{,}\PY{n+nb+bp}{True}\PY{p}{)}
    \PY{k}{print} \PY{l+s}{\PYZdq{}}\PY{l+s}{In the case when m=}\PY{l+s+si}{\PYZpc{}d}\PY{l+s}{ and s=/=2,...,}\PY{l+s+si}{\PYZpc{}d}\PY{l+s}{, Theta\PYZca{}bar is irreducible unless}\PY{l+s}{\PYZdq{}}\PY{o}{\PYZpc{}}\PY{p}{(}\PY{n}{m}\PY{p}{,} \PY{l+m+mi}{2}\PY{o}{*}\PY{n}{m}\PY{p}{)}\PY{p}{,} \PYZbs{}
                    \PY{l+s}{\PYZdq{}}\PY{l+s}{the following polynomial vanishes modulo p, with t=s\PYZhy{}r:}\PY{l+s}{\PYZdq{}}
    \PY{k}{print} \PY{n}{the\PYZus{}roots\PYZus{}for\PYZus{}all\PYZus{}big\PYZus{}matrices}\PY{p}{(}\PY{n}{m}\PY{p}{)}

\PY{k}{def} \PY{n+nf}{verify\PYZus{}conjecture\PYZus{}eight}\PY{p}{(}\PY{n}{m}\PY{p}{)}\PY{p}{:}
    \PY{n}{conjecture\PYZus{}is\PYZus{}true} \PY{o}{=} \PY{n+nb+bp}{True}
    \PY{k}{for} \PY{n}{L} \PY{o+ow}{in} \PY{n+nb}{range}\PY{p}{(}\PY{l+m+mi}{1}\PY{p}{,}\PY{n}{m}\PY{o}{+}\PY{l+m+mi}{1}\PY{p}{)}\PY{p}{:}
        \PY{n}{product} \PY{o}{=} \PY{n}{r}\PY{o}{/}\PY{n}{r}
        \PY{k}{for} \PY{n}{alpha} \PY{o+ow}{in} \PY{n+nb}{range}\PY{p}{(}\PY{l+m+mi}{1}\PY{p}{,}\PY{n}{m}\PY{o}{+}\PY{l+m+mi}{1}\PY{p}{)}\PY{p}{:}
            \PY{n}{product} \PY{o}{=} \PY{n}{product}\PY{o}{*}\PY{n}{exceptional\PYZus{}cases}\PY{p}{(}\PY{n}{m}\PY{p}{,}\PY{n}{alpha}\PY{p}{,}\PY{n}{L}\PY{p}{)}
        \PY{n}{roots} \PY{o}{=} \PY{n}{get\PYZus{}roots}\PY{p}{(}\PY{n}{product}\PY{p}{)}
        \PY{k}{if} \PY{n+nb}{min}\PY{p}{(}\PY{n}{roots}\PY{p}{)}\PY{o}{\PYZlt{}}\PY{l+m+mi}{2}\PY{o}{*}\PY{p}{(}\PY{n}{L}\PY{o}{\PYZhy{}}\PY{n}{m}\PY{p}{)} \PY{o+ow}{or} \PY{n+nb}{max}\PY{p}{(}\PY{n}{roots}\PY{p}{)}\PY{o}{\PYZgt{}}\PY{l+m+mi}{2}\PY{o}{*}\PY{n}{L}\PY{p}{:}
            \PY{n}{conjecture\PYZus{}is\PYZus{}true} \PY{o}{=} \PY{n+nb+bp}{False}
    \PY{n}{f} \PY{o}{=} \PY{n}{the\PYZus{}roots\PYZus{}for\PYZus{}all\PYZus{}big\PYZus{}matrices}\PY{p}{(}\PY{n}{m}\PY{p}{)}
    \PY{n}{g} \PY{o}{=} \PY{n+nb}{reduce}\PY{p}{(}\PY{n}{mul}\PY{p}{,}\PY{p}{[}\PY{p}{(}\PY{n}{t}\PY{o}{\PYZhy{}}\PY{n}{alpha}\PY{p}{)}\PY{o}{\PYZca{}}\PY{n}{m} \PY{k}{for} \PY{n}{alpha} \PY{o+ow}{in} \PY{n+nb}{range}\PY{p}{(}\PY{l+m+mi}{0}\PY{p}{,}\PY{n}{m}\PY{p}{)}\PY{p}{]}\PY{p}{,}\PY{l+m+mi}{1}\PY{p}{)}
    \PY{k}{if} \PY{n}{Ring}\PY{p}{(}\PY{n}{g}\PY{o}{/}\PY{n}{f}\PY{p}{)}\PY{o}{.}\PY{n}{factor}\PY{p}{(}\PY{p}{)}\PY{o}{.}\PY{n}{is\PYZus{}integral}\PY{p}{(}\PY{p}{)}\PY{o}{==}\PY{n+nb+bp}{False}\PY{p}{:}
        \PY{n}{conjecture\PYZus{}is\PYZus{}true} \PY{o}{=} \PY{n+nb+bp}{False}
    \PY{k}{if} \PY{n}{conjecture\PYZus{}is\PYZus{}true}\PY{o}{==}\PY{n+nb+bp}{True}\PY{p}{:}
        \PY{k}{print} \PY{l+s}{\PYZdq{}}\PY{l+s}{When m=}\PY{l+s+si}{\PYZpc{}d}\PY{l+s}{, the first two parts of conjecture eight are true.}\PY{l+s}{\PYZdq{}}\PY{o}{\PYZpc{}}\PY{n}{m}
    \PY{k}{if} \PY{n}{conjecture\PYZus{}is\PYZus{}true}\PY{o}{==}\PY{n+nb+bp}{False}\PY{p}{:}
        \PY{k}{print} \PY{l+s}{\PYZdq{}}\PY{l+s}{When m=}\PY{l+s+si}{\PYZpc{}d}\PY{l+s}{, the first two parts of conjecture eight are not necessarily true!}\PY{l+s}{\PYZdq{}}\PY{o}{\PYZpc{}}\PY{n}{m}
\end{Verbatim}
}

\subsubsection{Finding $\mathfrak m_w(C_1,\ldots,C_\alpha)$}

The following program can be used to find suitable constants $C_1,\ldots,C_\alpha$ as in the statement of  theorem~\ref{theorem26}, and calculate the expressions $\mathfrak m_w$, for $0 \leqslant w \leqslant 2m+1-\alpha$. In particular, calling the method \verb|find_m_w(m,alpha,L)| with $\smash{\mathtt{m}=m}$, and $\smash{\mathtt{alpha}=\alpha}$, and $\smash{\mathtt{L}=L}$, lists 
\[\mathfrak m_0(C_1,\ldots,C_\alpha), \ldots, \mathfrak m_{2m+1-\alpha} (C_1,\ldots,C_\alpha).\]

%
%
%
%
%
%
%
%
%
%
%
%
%
%
%
%
%
%
%
%

\

\textcolor{OrangeRed}{\texttt{m\_w.sage}}\\[-8pt]
{\footnotesize
\begin{Verbatim}[commandchars=\\\{\},numbers=left,firstnumber=last,stepnumber=1,codes={\catcode`\$=3\catcode`\^=7\catcode`\_=8}]
\PY{p}{[}\PY{n}{r}\PY{p}{,}\PY{n}{s}\PY{p}{,}\PY{n}{t}\PY{p}{,}\PY{n}{p}\PY{p}{,}\PY{n}{v}\PY{p}{]} \PY{o}{=} \PY{n}{var}\PY{p}{(}\PY{l+s}{\PYZsq{}}\PY{l+s}{r s t p v}\PY{l+s}{\PYZsq{}}\PY{p}{)}
\PY{n}{Ring} \PY{o}{=} \PY{n}{FractionField}\PY{p}{(}\PY{n}{PolynomialRing}\PY{p}{(}\PY{n}{QQ}\PY{p}{,}\PY{p}{[}\PY{l+s}{\PYZsq{}}\PY{l+s}{r}\PY{l+s}{\PYZsq{}}\PY{p}{,}\PY{l+s}{\PYZsq{}}\PY{l+s}{s}\PY{l+s}{\PYZsq{}}\PY{p}{,}\PY{l+s}{\PYZsq{}}\PY{l+s}{t}\PY{l+s}{\PYZsq{}}\PY{p}{,}\PY{l+s}{\PYZsq{}}\PY{l+s}{p}\PY{l+s}{\PYZsq{}}\PY{p}{]}\PY{p}{)}\PY{p}{)}

\PY{k}{def} \PY{n+nf}{factorise}\PY{p}{(}\PY{n}{g}\PY{p}{)}\PY{p}{:}
    \PY{k}{return} \PY{n}{g}\PY{o}{.}\PY{n}{factor}\PY{p}{(}\PY{p}{)} \PY{k}{if} \PY{n}{g}\PY{o}{!=}\PY{l+m+mi}{0} \PY{k}{else} \PY{n}{g}

\PY{c}{\PYZsh{} Returns $\eta(X,Y)$, from lemma \ref{lemma7wefwfwerfwf}.}
\PY{k}{def} \PY{n+nf}{eta}\PY{p}{(}\PY{n}{X}\PY{p}{,}\PY{n}{Y}\PY{p}{)}\PY{p}{:}
    \PY{k}{if} \PY{n}{X}\PY{o}{\PYZgt{}}\PY{o}{=}\PY{l+m+mi}{1} \PY{o+ow}{and} \PY{n}{Y}\PY{o}{\PYZgt{}}\PY{o}{=}\PY{l+m+mi}{0}\PY{p}{:}
        \PY{k}{return} \PY{n}{binomial}\PY{p}{(}\PY{n}{X}\PY{p}{,}\PY{n}{Y}\PY{p}{)}
    \PY{k}{if} \PY{n}{X}\PY{o}{\PYZlt{}}\PY{l+m+mi}{1} \PY{o+ow}{and} \PY{n}{Y}\PY{o}{\PYZgt{}}\PY{o}{=}\PY{l+m+mi}{0}\PY{p}{:}
        \PY{k}{return} \PY{l+m+mi}{2}\PY{o}{*}\PY{n}{binomial}\PY{p}{(}\PY{n}{X}\PY{o}{\PYZhy{}}\PY{l+m+mi}{1}\PY{p}{,}\PY{n}{Y}\PY{p}{)} \PY{k}{if} \PY{n}{X}\PY{o}{==}\PY{l+m+mi}{0} \PY{o+ow}{and} \PY{n}{Y}\PY{o}{==}\PY{l+m+mi}{0} \PY{k}{else} \PY{n}{binomial}\PY{p}{(}\PY{n}{X}\PY{o}{\PYZhy{}}\PY{l+m+mi}{1}\PY{p}{,}\PY{n}{Y}\PY{p}{)}
    \PY{k}{if} \PY{n}{X}\PY{o}{\PYZlt{}}\PY{l+m+mi}{1} \PY{o+ow}{and} \PY{n}{Y}\PY{o}{\PYZlt{}}\PY{l+m+mi}{0}\PY{p}{:}
        \PY{k}{return} \PY{n}{binomial}\PY{p}{(}\PY{n}{X}\PY{o}{\PYZhy{}}\PY{l+m+mi}{1}\PY{p}{,}\PY{n}{X}\PY{o}{\PYZhy{}}\PY{n}{Y}\PY{p}{)} \PY{k}{if} \PY{n}{X}\PY{o}{\PYZgt{}}\PY{o}{=}\PY{n}{Y} \PY{k}{else} \PY{l+m+mi}{0}
    \PY{k}{return} \PY{l+m+mi}{0}

\PY{c}{\PYZsh{} Returns the relevant matrix $\bmod{p}$ arising from theorem \ref{theorem26}, when $s<2m$.}
\PY{k}{def} \PY{n+nf}{construct\PYZus{}small\PYZus{}matrix}\PY{p}{(}\PY{n}{m}\PY{p}{,}\PY{n}{alpha}\PY{p}{,}\PY{n}{L}\PY{p}{)}\PY{p}{:}
    \PY{c}{\PYZsh{} $N$ is the number of rows, $M$ is the number of columns.}
    \PY{n}{M} \PY{o}{=} \PY{n}{alpha}\PY{o}{+}\PY{l+m+mi}{1}\PY{p}{;} \PY{n}{N} \PY{o}{=} \PY{l+m+mi}{2}\PY{o}{*}\PY{n}{m}\PY{o}{+}\PY{l+m+mi}{2}\PY{o}{\PYZhy{}}\PY{n}{alpha}\PY{p}{;} \PY{n}{A} \PY{o}{=} \PY{n}{matrix}\PY{p}{(}\PY{n}{Ring}\PY{p}{,}\PY{n}{N}\PY{p}{,}\PY{n}{M}\PY{p}{)}

    \PY{k}{for} \PY{n}{l} \PY{o+ow}{in} \PY{n+nb}{range}\PY{p}{(}\PY{l+m+mi}{0}\PY{p}{,}\PY{n}{M}\PY{p}{)}\PY{p}{:}
        \PY{k}{for} \PY{n}{w} \PY{o+ow}{in} \PY{n+nb}{range}\PY{p}{(}\PY{l+m+mi}{0}\PY{p}{,}\PY{n}{N}\PY{p}{)}\PY{p}{:}
            \PY{n}{a} \PY{o}{=} \PY{o}{\PYZhy{}}\PY{n}{binomial}\PY{p}{(}\PY{n}{r}\PY{o}{\PYZhy{}}\PY{n}{alpha}\PY{o}{+}\PY{n}{l}\PY{p}{,}\PY{l+m+mi}{2}\PY{o}{*}\PY{n}{L}\PY{o}{\PYZhy{}}\PY{n}{alpha}\PY{p}{)}\PY{o}{*}\PY{n}{binomial}\PY{p}{(}\PY{l+m+mi}{2}\PY{o}{*}\PY{n}{L}\PY{o}{\PYZhy{}}\PY{n}{r}\PY{p}{,}\PY{n}{w}\PY{p}{)} \PY{k}{if} \PY{n}{alpha}\PY{o}{\PYZgt{}}\PY{o}{=}\PY{n}{L} \PY{k}{else} \PY{l+m+mi}{0}
            \PY{n}{b} \PY{o}{=} \PY{o}{\PYZhy{}}\PY{n}{binomial}\PY{p}{(}\PY{n}{r}\PY{o}{\PYZhy{}}\PY{n}{alpha}\PY{o}{+}\PY{n}{l}\PY{p}{,}\PY{n}{l}\PY{p}{)}\PY{o}{*}\PY{n}{binomial}\PY{p}{(}\PY{l+m+mi}{0}\PY{p}{,}\PY{n}{w}\PY{p}{)} \PYZbs{}
                    \PY{o}{+}\PY{n+nb}{sum}\PY{p}{(}\PY{p}{[}\PY{p}{(}\PY{o}{\PYZhy{}}\PY{l+m+mi}{1}\PY{p}{)}\PY{o}{\PYZca{}}\PY{p}{(}\PY{n}{v}\PY{p}{)}\PY{o}{*}\PY{n}{binomial}\PY{p}{(}\PY{n}{l}\PY{o}{\PYZhy{}}\PY{n}{v}\PY{p}{,}\PY{n}{w}\PY{o}{\PYZhy{}}\PY{n}{v}\PY{p}{)}\PY{o}{*}\PY{n}{eta}\PY{p}{(}\PY{l+m+mi}{2}\PY{o}{*}\PY{n}{L}\PY{o}{\PYZhy{}}\PY{n}{alpha}\PY{o}{+}\PY{n}{l}\PY{o}{\PYZhy{}}\PY{n}{v}\PY{p}{,}\PY{n}{l}\PY{o}{\PYZhy{}}\PY{n}{v}\PY{p}{)} \PYZbs{}
                        \PY{o}{*}\PY{n}{binomial}\PY{p}{(}\PY{n}{r}\PY{o}{\PYZhy{}}\PY{n}{alpha}\PY{o}{+}\PY{n}{l}\PY{p}{,}\PY{n}{v}\PY{p}{)} \PY{k}{for} \PY{n}{v} \PY{o+ow}{in} \PY{n+nb}{range}\PY{p}{(}\PY{l+m+mi}{0}\PY{p}{,}\PY{n}{w}\PY{o}{+}\PY{l+m+mi}{1}\PY{p}{)}\PY{p}{]}\PY{p}{)}
            \PY{n}{A}\PY{p}{[}\PY{n}{w}\PY{p}{,}\PY{n}{l}\PY{p}{]} \PY{o}{=} \PY{n}{factorise}\PY{p}{(}\PY{n}{simplify}\PY{p}{(}\PY{n}{a}\PY{o}{+}\PY{n}{b}\PY{p}{)}\PY{p}{)}
    \PY{k}{return} \PY{n}{A}

\PY{c}{\PYZsh{} Returns the list $[\mathfrak{m}_0,\ldots,\mathfrak{m}_{2m+1-\alpha}]$.}
\PY{k}{def} \PY{n+nf}{find\PYZus{}m\PYZus{}w}\PY{p}{(}\PY{n}{m}\PY{p}{,}\PY{n}{alpha}\PY{p}{,}\PY{n}{L}\PY{p}{)}\PY{p}{:}
    \PY{c}{\PYZsh{} $N$ is the number of rows, $M$ is the number of columns.}
    \PY{n}{M} \PY{o}{=} \PY{n}{alpha}\PY{o}{+}\PY{l+m+mi}{1}\PY{p}{;} \PY{n}{N} \PY{o}{=} \PY{l+m+mi}{2}\PY{o}{*}\PY{n}{m}\PY{o}{+}\PY{l+m+mi}{2}\PY{o}{\PYZhy{}}\PY{n}{alpha}\PY{p}{;} \PY{n}{A} \PY{o}{=} \PY{n}{construct\PYZus{}small\PYZus{}matrix}\PY{p}{(}\PY{n}{m}\PY{p}{,}\PY{n}{alpha}\PY{p}{,}\PY{n}{L}\PY{p}{)}
    \PY{n}{B} \PY{o}{=} \PY{n}{A}\PY{o}{.}\PY{n}{matrix\PYZus{}from\PYZus{}rows\PYZus{}and\PYZus{}columns}\PY{p}{(}\PY{n+nb}{range}\PY{p}{(}\PY{l+m+mi}{0}\PY{p}{,}\PY{n}{M}\PY{o}{\PYZhy{}}\PY{l+m+mi}{1}\PY{p}{)}\PY{p}{,} \PY{n+nb}{range}\PY{p}{(}\PY{l+m+mi}{0}\PY{p}{,}\PY{n}{M}\PY{p}{)}\PY{p}{)}
    \PY{n}{C} \PY{o}{=} \PY{n}{B}\PY{o}{.}\PY{n}{transpose}\PY{p}{(}\PY{p}{)}\PY{o}{.}\PY{n}{kernel}\PY{p}{(}\PY{p}{)}\PY{o}{.}\PY{n}{basis\PYZus{}matrix}\PY{p}{(}\PY{p}{)}
    \PY{k}{print} \PY{l+s}{\PYZdq{}}\PY{l+s}{The list of constants C\PYZus{}l is}\PY{l+s}{\PYZdq{}}\PY{p}{,} \PY{n}{C}
    \PY{n}{x} \PY{o}{=} \PY{n+nb}{range}\PY{p}{(}\PY{l+m+mi}{0}\PY{p}{,}\PY{n}{N}\PY{p}{)}
    \PY{k}{for} \PY{n}{j} \PY{o+ow}{in} \PY{n+nb}{range}\PY{p}{(}\PY{l+m+mi}{0}\PY{p}{,}\PY{n}{N}\PY{p}{)}\PY{p}{:}
        \PY{n}{x}\PY{p}{[}\PY{n}{j}\PY{p}{]} \PY{o}{=} \PY{n}{factorise}\PY{p}{(}\PY{n+nb}{sum}\PY{p}{(}\PY{p}{[}\PY{n}{A}\PY{p}{[}\PY{n}{j}\PY{p}{,}\PY{n}{v}\PY{p}{]}\PY{o}{*}\PY{n}{C}\PY{p}{[}\PY{l+m+mi}{0}\PY{p}{,}\PY{n}{v}\PY{p}{]} \PY{k}{for} \PY{n}{v} \PY{o+ow}{in} \PY{n+nb}{range}\PY{p}{(}\PY{l+m+mi}{0}\PY{p}{,}\PY{n}{M}\PY{p}{)}\PY{p}{]}\PY{p}{)}\PY{p}{)}
    \PY{k}{return} \PY{n}{x}
\end{Verbatim}
}
\phantomsection\addcontentsline{toc}{section}{Acknowledgments}\section*{Acknowledgments}

I would like to thank my advisor, Professor Kevin Buzzard, for his helpful remarks, suggestions, and support. This work was supported by Imperial College London.

\def\bysame{\leavevmode\hbox to3em{\hrulefill}\thinspace}\newcommand{\NUMBERRRR}{\#}\newcommand{\PAGESS}{}

{%
\phantomsection\addcontentsline{toc}{section}{References}
}


\begin{thebibliography}{99}
	\bibitem{b1} Avner Ash and Glenn Stevens, \emph{Modular forms in characteristic $\ell$ and special values of their $L$-functions,} Duke Mathematical Journal, {\bf 53} (1986), \NUMBERRRR{}3, \PAGESS{}849--868.
	\bibitem{b13} Wilfrid Norman Bailey, \emph{Generalized Hypergeometric Series,} Cambridge Tracts in Mathematics and Mathematical Physics, Cambridge University Press {\bf 32} (1935), \PAGESS{}4--5.
	\bibitem{b6} Laurent Berger, \emph{Repr\'esentations modulaires de $\GL_2(\mathbb{Q}_p)$ et repr\'esentations galoisiennes de dimension 2,} Ast\'erisque {\bf 330} (2010) \PAGESS{}263--279.
	\bibitem{b14} \bysame, \emph{La correspondance de Langlands locale $p$-adique pour $\GL_2(\mathbb Q_p)$,} Ast\'erisque {\bf 339} (2011) \PAGESS{}157--180.
	\bibitem{b7} \bysame, \emph{Local constancy for the reduction $\bmod p$ of two-dimensional crystalline representations,} Bulletin of the London Mathematical Society {\bf 44} (2012), \NUMBERRRR{}3, \PAGESS{}451--459.
	\bibitem{b17} Laurent Berger and Christophe Breuil, \emph{Sur quelques repr\'esentations potentiellement cristallines de $\GL_2(\mathbb{Q}_p)$,} Ast\'erisque {\bf 330} (2010) \PAGESS{}155--211.
	\bibitem{b12} Laurent Berger, Hanfeng Li, and  Hui June Zhu, \emph{Construction of some families of two-dimensional crystalline representations,} Mathematische Annalen {\bf 329} (2004), \NUMBERRRR{}2, \PAGESS{}365--377.
	\bibitem{b5} Christophe Breuil, \emph{Sur quelques repr\'esentations modulaires et $p$-adiques de $\GL_2(\mathbb Q_p)$, I,} Compositio Mathematica {\bf 138} (2003), \NUMBERRRR{}2, \PAGESS{}165--188.
	\bibitem{b3} \bysame, \emph{Sur quelques repr\'esentations modulaires et $p$-adiques de $\GL_2(\mathbb Q_p)$, II,} Journal of the Institute of Mathematics of Jussieu {\bf 2} (2003), \NUMBERRRR{}1, \PAGESS{}23--58.
	\bibitem{b10} \bysame, \emph{Remarks on some locally $\mathbb Q_p$-analytic representations of $\GL_2(F)$ in the crystalline case,} London Mathematical Society Lecture Note Series {\bf 393} (2012), \PAGESS{}212--238.
	\bibitem{b25}  Christophe Breuil and Ariane M\'ezard, \emph{Multiplicit\'es modulaires et repr\'esentations de $\GL_2(\mathbb Z_p)$ et de $\Gal(\overline{\mathbb Q}_p/\mathbb Q_p)$ en $\ell=p$,} Duke Mathematical Journal {\bf 115} (2002), \NUMBERRRR{}2, \PAGESS{}205--310.
	\bibitem{b11}  Christophe Breuil and Matthew Emerton, \emph{Repr\'esentations $p$-adiques ordinaires de $\GL_2(\mathbb Q_p)$ et compatibilit\'e local-global,} Ast\'erisque {\bf 331} (2010), \PAGESS{}255--315.
	\bibitem{b8} Kevin Buzzard, \emph{Questions about slopes of modular forms,} Ast\'erisque {\bf 298} (2005), \PAGESS{}1--15.
	\bibitem{b2}  Kevin Buzzard and Toby Gee, \emph{Explicit reduction modulo $p$ of certain two-dimensional crystalline representations,} International Mathematics Research Notices (2009), \NUMBERRRR{}12, \PAGESS{}2303--2317.
	\bibitem{b4} \bysame, \emph{Explicit reduction modulo $p$ of certain two-dimensional crystalline Galois representations, II,}  Bulletin of the London Mathematical Society {\bf 45} (2013), \NUMBERRRR{}4, \PAGESS{}779--788.
	\bibitem{b9}  \bysame, \emph{Slopes of modular forms,} preprint (2015).
	\bibitem{b18} Kevin Buzzard and Lloyd James Peter Kilford, \emph{The 2-adic eigencurve at the boundary of weight space,} Compositio Mathematica, {\bf 141} (2005), \NUMBERRRR{}3, \PAGESS{}605--619.
	\bibitem{b24} Toby Gee, \emph{Automorphic lifts of prescribed types,} Mathematische Annalen, {\bf 350} (2011), \NUMBERRRR{}1, \PAGESS{}107--144.
	\bibitem{b19} Lloyd James Peter Kilford, \emph{On the slopes of the $U_5$ operator acting on overconvergent modular forms,} Journal de Th\'eorie des Nombres de Bordeaux, {\bf 29} (2008), \NUMBERRRR{}1, \PAGESS{}165--182.
	\bibitem{b20} Lloyd James Peter Kilford and Ken McMurdy, \emph{Slopes of the $U_7$ operator acting on a space of overconvergent modular forms,} LMS Journal of Computation and Mathematics, {\bf 15} (2012), \PAGESS{}113--139.
	\bibitem{b23} Ruochuan Liu, Daqing Wan, and Liang Xiao, \emph{Eigencurve over the boundary of the weight space,} preprint (2014).
	\bibitem{b21} David Roe, \emph{The 3-adic eigencurve at the boundary of weight space,} International Journal of Number Theory, {\bf 10} (2014), \NUMBERRRR{}7, \PAGESS{}1791--1806.
	\bibitem{b15} Pierre Deligne and Jean-Pierre Serre, \emph{Formes modulaires de poids 1}, Annales Scientifiques de l'\'Ecole Normale Sup\'erieure, {\bf 7} (1974), \NUMBERRRR{}4, \PAGESS{}507--530.
	\bibitem{b22} Daqing Wan, Liang Xiao, and Jun Zhang, \emph{Slopes of eigencurves over boundary disks,} preprint (2014).
	\bibitem{b16} Andrew Wiles, \emph{On ordinary $\lambda$-adic representations associated to modular forms}, Inventiones mathematicae, {\bf 94} (1988), \NUMBERRRR{}3, \PAGESS{}529--573.
\end{thebibliography}
\end{document}